\documentclass[11pt]{amsart}
\usepackage{amsmath}
\usepackage{amssymb}
\usepackage{amsfonts}
\usepackage{amsthm}
\usepackage{enumerate}
\usepackage[mathscr]{eucal}
\usepackage{graphicx}
\usepackage[pdftex, bookmarksnumbered, bookmarksopen, colorlinks, citecolor=blue, linkcolor=blue]{hyperref}

\newtheorem{theorem}{Theorem}[section]
\newtheorem{lemma}[theorem]{Lemma}
\newtheorem{corollary}[theorem]{Corollary}

\newtheorem*{conj}{Conjecture}
\theoremstyle{definition}
\newtheorem{definition}[theorem]{Definition}
\newtheorem{prop}[theorem]{Proposition}
\newtheorem{proposition}[theorem]{Proposition}

\newtheorem{claim}[theorem]{Claim}

\theoremstyle{remark}
\newtheorem{remark}[theorem]{Remark}

\numberwithin{equation}{section}

\def\intslash{\rlap{\kern  .32em $\mspace {.5mu}\backslash$ }\int}
\def\qsl{{\rlap{\kern  .32em $\mspace {.5mu}\backslash$ }\int_{Q_x}}}
\renewcommand\Re{\operatorname{Re\,}}
\renewcommand\Im{\operatorname{Im\,}}

\newcommand\off{{\text{\rm off}}}

\newcommand{\R}{{\mathbb R}}
\newcommand\Z{{\mathbb Z}}
\newcommand\M{{\mathcal M}}

\newcommand\F{{\mathcal F}}
\newcommand\A{{\mathcal A}}
\newcommand\Q{{\mathcal Q}}
\newcommand{\mfp}{\mathfrak{p}}
\newcommand\PM{{\mathcal{P}(\mathcal{M})}}
\newcommand\PA{{\mathcal{P}(\mathcal{A})}}

\newcommand\ot{\overline{\otimes}}
\newcommand\wh{\widehat}

\renewcommand\Q{\mathcal Q}
\newcommand\N{\mathbb N}
\newcommand{\K}{\mathcal{K}}
\newcommand{\B}{{\mathcal B}}

\newcommand{\norm}[1]{{ \left|  #1 \right| }}
\newcommand{\Norm}[1]{{ \left\|  #1 \right\| }}

\newcommand\Id{\text{\sl Id}}

\newcommand\Rn{{\mathbb R^n}}

\newcommand\supp{{\text{\rm supp}}}

\newcommand{\inn}[2]{\langle #1,#2 \rangle}

\newcommand\zet{\zeta}

\newcommand\vphi{\varphi}

\newcommand{\Be}{\begin{equation}}
	\newcommand{\Ee}{\end{equation}}
\newcommand{\Bes}{\begin{equation*}}
	\newcommand{\Ees}{\end{equation*}}
\newcommand{\Bsp}{\begin{split}}
	\newcommand{\Esp}{\end{split}}
\newcommand{\Bm}{\begin{multline}}
	\newcommand{\Em}{\end{multline}}
\newcommand{\Bea}{\begin{eqnarray}}
	\newcommand{\Eea}{\end{eqnarray}}
\newcommand{\Beas}{\begin{eqnarray*}}
	\newcommand{\Eeas}{\end{eqnarray*}}
\newcommand{\Benu}{\begin{enumerate}}
	\newcommand{\Eenu}{\end{enumerate}}
\newcommand{\Bi}{\begin{itemize}}
	\newcommand{\Ei}{\end{itemize}}

\begin{document}
	
	\title[Noncommutative Maximal Averages]{Noncommutative Maximal Averages over Submanifolds and Variable Hypersurfaces}
	\author{Xudong Lai and Siyu Liu}
	
\address{Xudong Lai:
Institute for Advanced Study in Mathematics\\
Harbin Institute of Technology\\
Harbin
150001\\
China;
Zhengzhou Research Institute\\
Harbin Institute of Technology\\
Zhengzhou
450000\\
China}
\email{xudonglai@hit.edu.cn\ xudonglai@mail.bnu.edu.cn}
\thanks{This work is supported by National Natural Science Foundation of China (No. 12322107, No. 12271124 and No. W2441002) and Heilongjiang Provincial Natural Science Foundation of China (No. YQ2022A005).}
	
\address{Siyu Liu:
Institute for Advanced Study in Mathematics\\
Harbin Institute of Technology\\
Harbin
150001\\
China}
\email{syliu@stu.hit.edu.cn\ siyulliu@icloud.com}

	\date{\today}
	
\subjclass[2010]{Primary 46L52, 46L51, Secondary 42B25, 42B20}	
	\keywords{Maximal averages, curve, finite type, noncommutative $L^p$ space, noncommutative Calder\'on-Zygmund decomposition, ergodic theorem}
	
	\begin{abstract}
		We establish maximal inequalities for geometric averages of operator-valued functions in noncommutative \(L^p\)-spaces associated with semifinite von Neumann algebras. 
		For averages over a fixed smooth submanifold of finite type at the parameter origin, we prove local maximal bounds for every \(1<p\leq\infty\). 
		For polynomial parametrizations, we obtain bounds over all positive scales without a finite-type assumption, with constants only depending on the degree and dimensions. We also prove local maximal inequalities for variable hypersurfaces in \(\mathbb R^n\), \(n\geq3\), satisfying a uniform rotational curvature condition, in the range \(p>n/(n-1)\).
        The finite-type and polynomial estimates rely on a weak type \((1,1)\) inequality for a regularized auxiliary family adapted to non-isotropic dilations. Its proof uses a noncommutative Calderón-Zygmund decomposition based on Cuculescu projections. 
		Interpolation with Fourier-based \(L^2\) bounds recovers the maximal inequalities for the original averages. The variable-hypersurface result uses a separate argument based on oscillatory \(L^2\) estimates and localization.
        As applications, we obtain some noncommutative maximal ergodic inequalities for trace-preserving actions of \(\mathbb R^n\) (corresponding to the geometric averages considered above) and bilateral almost uniform convergence for normalized ergodic averages. 
	\end{abstract}
	
	\maketitle
	\newpage
	\tableofcontents
	
\section{Introduction and the main results}

\subsection{Background and motivation}

Noncommutative harmonic analysis studies analytic inequalities for operators and operator-valued functions. It has many connections with quantum mechanics, operator algebras, noncommutative geometry and quantum probability; see \cite{ChenXuYin2013,GonzalezPerezJungeParcet2021,JungeMei2010,JungeMeiParcet2014,Mei2007,PisierXu1997,XiongXuYin2018}. 
An important part of this subject concerns maximal inequalities for families of averaging operators. These inequalities are closely tied to noncommutative ergodic theory, where maximal bounds provide a basic tool for establishing bilateral almost uniform convergence \cite{JungeXu2007,Yeadon1977}.

For geometric averages over balls, curves and hypersurfaces, a natural question is how the geometry of the averaging sets influences the corresponding noncommutative maximal estimates. The semi-commutative setting provides a framework for studying this question while retaining the underlying Euclidean geometry: the spatial variable remains Euclidean, whereas functions take values in a von Neumann algebra. For translation-invariant families, transference method also connects these estimates with maximal ergodic inequalities for trace-preserving actions of $\mathbb{R}^n$ (see \cite{Hong2020}). 
Although the geometric structure of the averages is unchanged, the passage from scalar to operator-valued functions involves many essential difficulties, first of which is formulating a maximal inequality.

A family of noncommuting positive operators has no literal pointwise supremum in general. Strong maximal estimates are instead formulated in the spaces $L^p(\M;\ell^\infty)$, originating in Pisier's vector-valued theory \cite{Pisier1998} and extended to general von Neumann algebras by Junge \cite{Junge2002}. Noncommutative martingale inequalities \cite{PisierXu1997} provide the probabilistic background. 
At the weak $(1,1)$ endpoint, Cuculescu \cite{Cuculescu1971} and Yeadon \cite{Yeadon1977} control martingale and ergodic averages by compression with a common projection. Junge and Xu \cite{JungeXu2007} developed strong maximal ergodic inequalities and the interpolation framework linking them to these endpoint estimates. Mei's operator-valued Hardy and BMO theory \cite{Mei2007} includes Hardy--Littlewood maximal inequalities obtained by relating spatial averages to martingales.

For geometric averaging operators, Hong \cite{Hong2020} proved maximal and individual ergodic theorems for Euclidean balls and spheres; the spherical results hold for $n\geq3$ and $p>n/(n-1)$. Hong, Lai and Wang \cite{hong2025noncommutative} obtained the operator-valued circular maximal theorem for $p>2$ through local smoothing. Lai \cite{Lai2024} treated maximal ball averages with rough angular densities. To the best of our knowledge, the semi-commutative maximal inequalities established below for general lower-dimensional submanifolds of finite type and of polynomial parametrizations, and for variable hypersurfaces under rotational curvature, are not covered by the existing theory.
(For the definition of finite-type, see Definition \ref{def:finite type def} below.)

\subsection{Classical averages and geometric setting}
We first recall the classical averaging families that motivate our results. 
For each positive integer $k$ and $r>0$, let
\[
 B_k(r):=\{t\in\R^k:|t|<r\}
\]
be the $k$-dimensional Euclidean ball centered at the origin with radius $r$. 
For a smooth map $\gamma:\R^k\to\R^n$ with $\gamma(0)=0$, consider the normalized averages
\[
 M_r(f)(x)=\frac1{|B_k(r)|}\int_{|t|<r}f(x-\gamma(t))dt.
\]
These translation-invariant operators average over portions of a fixed parametrized submanifold determined by expanding parameter balls, rather than dilates of a sphere. The classical theory includes the parabola theorem of Nagel, Rivi\`ere and Wainger \cite{NagelRiviereWainger1976} and the theorem of Stein and Wainger \cite[Theorem 1]{SteinWainger1976} for curves whose derivatives at the origin span the ambient space. More generally, finite-type condition gives Fourier decay for localized surface measures, while non-isotropic dilations reflect the different orders in the parametrization. 
These methods yield local maximal bounds for smooth finite-type submanifolds for every \(1<p\leq\infty\). For polynomial parametrizations, global maximal bounds hold in the same range without a finite-type assumption;
see \cite[Chapter XI, Section 2]{Stein1993} and \cite{SteinWainger1978}.

\medskip 

A second class consists of hypersurface averages whose geometry may depend on the base point, so that the operators need not be translation invariant.
The relevant nondegeneracy is rotational curvature, which couples the surface and base-point variables, as studied by Phone and Stein \cite{PhongStein1989,PhongStein1991}. Stein's theorem gives local maximal bounds for uniformly nondegenerate scaled families in the range $p>n/(n-1)$, $n\geq3$ \cite[Chapter XI, Section 3, Theorem 2]{Stein1993}. 
Thus finite type governs averages over a fixed parametrized submanifold, whereas rotational curvature controls hypersurfaces varying with the base point.
Their operator-valued counterparts form the two main components of this paper: although the analytic mechanisms differ, both concern maximal averaging over lower-dimensional geometric families in the semi-commutative setting, and both lead to noncommutative ergodic consequences.

\subsection{Main results}
Let $\M$ be a von Neumann algebra equipped with a normal semifinite faithful trace $\tau$, and let $L^p(\M)$ denote the associated noncommutative $L^p$-spaces. We write $\mathcal P(\M)$ for the projections. The semi-commutative algebra and its trace are
\[
 \A=L^\infty(\R^n)\ot\M,\qquad
 \nu=\int_{\R^n}\otimes\tau.
\]
For $1\leq p<\infty$, $L^p(\A)$ is isometrically identified with $L^p(\R^n;L^p(\M))$. We use the convention notation
\[
 \Big\|\sup_{i\in I}a_i\Big\|_{L^p(\M)}
 :=\Norm{\{a_i\}_{i\in I}}_{L^p(\M;\ell^\infty(I))}.
\]
For a positive family this is the infimum of $\Norm{a}_{L^p(\M)}$ over all positive $a\in L^p(\M)$ satisfying $a_i\leq a$ for every $i$. In particular, the displayed supremum denotes a norm of a family, rather than an operator supremum. 
We use the same convention for $\A$. The general definitions are recalled in Appendix A.

\medskip
\noindent{\itshape (A) Finite-type and polynomial maximal averages.}
Suppose $n\geq2$ and $1\leq k\leq n-1$, and let $S$ be parametrized by a smooth map $\gamma:\R^k\to\R^n$. We use the following finite type condition.
\begin{definition}[Finite-type condition]
 \label{def:finite type def}
 The submanifold $S$ is \emph{of finite type at $t_0$} if, for every unit vector $\eta\in\R^n$, there is a multi-index $\alpha$ with $|\alpha|\geq1$ such that
 \begin{equation*}
  \left.\partial_t^\alpha\big(\gamma(t)\cdot\eta\big)\right|_{t=t_0}\neq0.
 \end{equation*}
 The least integer $m$ for which one may always choose $1\leq|\alpha|\leq m$ is the \emph{type} at $t_0$. If $\gamma$ is of finite type on a compact set $V$, its type on $V$ is the maximum of the types at points of $V$.
\end{definition}
\begin{remark}
 Equivalently, the vectors $\partial_t^\alpha\gamma(t_0)$, $1\leq|\alpha|\leq m$, span $\R^n$ for some finite $m$. This is the finite-order contact condition with affine hyperplanes used in \cite[Chapter VIII, Section 3.2]{Stein1993}. For example, the parabola $\gamma(t)=(t,t^2)$ has type two.
\end{remark}

Assume that $\gamma$ is of finite type at $0$ and that $\gamma(0)=0$. For the averages $M_r$ defined above, our first result is local in scale:

\begin{theorem}
 \label{thm:k-submanifold case}
 Suppose $\gamma$ and $M_r$ are as above. For each $1<p\leq\infty$,
 \[
  \Big\|\sup_{0<r<1}M_r(f)\Big\|_{L^p(\A)}
  \lesssim_{p,\gamma}\Norm{f}_{L^p(\A)}.
 \]
 The map $f\mapsto\{M_r(f)\}_{0<r<1}$ extends boundedly from $L^p(\A)$ to $L^p(\A;\ell^\infty((0,1)))$.
\end{theorem}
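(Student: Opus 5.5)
Since every $M_r$ is positivity preserving, it suffices to treat $f\ge0$. The plan follows the abstract: compare $M_r$ with a regularized auxiliary family adapted to non-isotropic dilations, prove a weak type $(1,1)$ bound for that family, and recover $M_r$ by interpolating with Fourier-based $L^2$ estimates for the difference.

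\emph{Step 1: reduction to dyadic scales and a smooth model.} Since $f\ge0$ and $r\in[2^{-j-1},2^{-j}]$, we have $M_r f\le 2^k M_{2^{-j}}f$ as operators. So it suffices to control $\sup_{j\ge0}A_j f$, where $A_jf=f*\mu_j$ and $\mu_j$ is the pushforward of the normalized measure on $B_k(2^{-j})$ by $\gamma$, replaced by a smooth cutoff $\psi(2^jt)$. By finite type at $0$, pick a basis $v_1,\dots,v_n$ of $\R^n$ among the vectors $\partial_t^\alpha\gamma(0)$, with orders $a_i=|\alpha_i|\le m$, chosen by the usual Taylor-expansion flag construction. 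Then define the non-isotropic dilations $\delta_{2^{-j}}$ by $\delta_{2^{-j}}v_i=2^{-ja_i}v_i$. Let $\phi$ be a Schwartz bump and $\phi_j=\det(\delta_{2^{-j}})^{-1}\phi\circ\delta_{2^{j}}$. The auxiliary family is $P_jf=f*\phi_j$. These are averages over non-isotropic "balls"; these balls form a space of homogeneous type and carry a dyadic martingale structure, as in Christ's cubes.

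\emph{Step 2: strong $L^p$ bound for $\sup_j P_jf$.} For $f\ge0$, $P_jf$ is dominated by non-isotropic ball averages. Comparing these averages with conditional expectations of a finite number of shifted dyadic filtrations, as in Mei's operator-valued argument, gives the bound from Doob's noncommutative maximal inequality (Junge). Alternatively, one can run the noncommutative Calder\'on--Zygmund decomposition with Cuculescu projections adapted to the non-isotropic dyadic system. This yields weak $(1,1)$, and interpolation with the trivial $L^\infty$ bound then gives all $1<p\le\infty$.

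\emph{Step 3: the difference $D_j=A_j-P_j$.} Write $\widehat{\mu_j}-\widehat{\phi_j}$. Using van der Corput with the finite-type condition, $|\widehat{\mu}(\xi)|\lesssim|\xi|^{-1/m}$ for the rescaled measure $\mu=\mu_0$ after the dilation. Uniformity in $j$ holds because $\delta_{2^j}\gamma(2^{-j}t)\to$ a polynomial curve of finite type. Hence $|\widehat{\mu_0}(\xi)-\widehat{\phi}(\xi)|\lesssim\min(|\xi|,|\xi|^{-\epsilon})$. Let $\Delta_l$ be a Littlewood--Paley decomposition adapted to $\delta$. Splitting $D_j f=\sum_l D_j\Delta_{j+l}f$ and bounding the supremum by the square function, we get $\|\sup_j|D_j\Delta_{j+l}f|\|_2\lesssim 2^{-\epsilon|l|}\|f\|_2$ via Plancherel in $L^2(\A)=L^2(\R^n;L^2(\M))$. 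Here the $\ell^\infty$ norm of a family is dominated by the $\ell^2$ column/row square function norm.

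\emph{Step 4: the $L^p$ interpolation.} The pieces $\sup_j|D_j\Delta_{j+l}f|$ must also be controlled on $L^{p_0}$ with at most polynomial growth in $l$. For that we use $|D_j\Delta_{j+l}f|\lesssim (A_j+P_j)(|\Delta_{j+l}f|)$, which is where the weak-type estimate for the regularized family enters, together with noncommutative Littlewood--Paley theory (valid for $1<p<\infty$). Interpolating between $L^2$ decay and $L^{p_0}$ growth gives geometric decay on $L^p$ for $p$ near $2$. A bootstrap, as in Stein's argument, where the maximal bound at one exponent feeds the vector-valued estimate at the next, then extends the range down to all $p>1$; $p=\infty$ is trivial.

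The main obstacle is Step 4. There is no pointwise $|\cdot|$ for families of operators, so both the square-function domination of the maximal function and the bootstrap must be formulated with $L^p(\ell^\infty)$ and column/row Hardy-type norms, and these are less flexible. I would try first to avoid the bootstrap altogether: prove weak $(1,1)$ directly for $\sup_j|\sum_l D_j\Delta_{j+l}|$ through the Calder\'on--Zygmund decomposition. The good part is handled by the $L^2$ decay, and the bad part is handled by the off-diagonal smoothness of $\phi_j$ and $\mu_j$ convolved with $\Delta$, with an $\epsilon$-loss absorbed by the decay $2^{-\epsilon|l|}$.
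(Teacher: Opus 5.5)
\textbf{Where your proposal stands.} Steps 1--3 are sound. The dyadic reduction, the Hardy--Littlewood bound for $\{P_j\}$, and the $L^2$ square-function estimate for $D_j=A_j-P_j$ reproduce the $L^2$ half of the paper's argument. Your flag-adapted dilations on $\R^n$ are a legitimate substitute for the paper's lift to the free monomial map on $\R^N$ followed by descent. Together with the trivial $L^\infty$ bound and complex interpolation of $L^p(\ell^\infty)$, this already covers $2\le p\le\infty$.

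\textbf{The gap.} Step 4, the whole range $1<p<2$, is left open, and neither of your two routes works.

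\emph{First route.} There is no inequality $|D_j\Delta_{j+l}f|\lesssim(A_j+P_j)(|\Delta_{j+l}f|)$ for the operator modulus; for real kernels and self-adjoint inputs you only get two-sided bounds. More seriously, the input $|\Delta_{j+l}f|$ depends on $j$. So you would need three things, none of which you supply:
\begin{enumerate}
\item a maximal inequality for $\{A_jg_j\}_j$ controlled by a square function of $\{g_j\}$, which is an $\ell^2$-valued form of the theorem you are proving;
\item noncommutative Littlewood--Paley estimates, which for $p<2$ hold only in the mixed column-plus-row norm;
\item a domination of $L^p(\ell^\infty)$ norms by that mixed norm.
\end{enumerate}
The weak-type bound for $\{P_j\}$ only handles the $P_j$ term. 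The $A_j$ term needs the unknown maximal bound for $A_j$, i.e.\ your bootstrap, and its noncommutative induction step is exactly what is missing.

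\emph{Second route.} At best you could show the pieces $\{D_j\Delta_{j+l}\}_j$ are of weak type $(1,1)$ with constants growing in $l$. To sum in $l$ you must interpolate each piece against its $L^2$ decay $2^{-\epsilon|l|}$. But a weak-type endpoint cannot enter complex interpolation, and the noncommutative Marcinkiewicz theorem for $L^p(\ell^\infty)$ (Theorem \ref{thm:NC interpolation}) requires positivity-preserving maps. The $D_j\Delta_{j+l}$ are not positive, and splitting their kernels into positive and negative parts destroys the cancellation that gives the decay. You also cannot run Calder\'on--Zygmund on $\{A_j\}$ directly: $\mu$ is singular, so $\Norm{\mu(\cdot-y)-\mu}\not\to0$ as $y\to0$, and no integral H\"ormander condition holds.

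\textbf{The missing idea.} The paper avoids all of this with Stein's analytic family $A_j^sf=f*\nu_j^s$, where $\widehat{\nu^s}=\widehat{d\mu}\,(1+|\xi|^2)^{s/2}$ is dilated non-isotropically. For general $\gamma$, $\mu$ is replaced by the uniformly controlled rescaled measures $dm^j$.
\begin{enumerate}
\item For real $s<0$, $\nu^s=G_s*\mu$ is a positive integrable function. The difference $K=\nu^s-\psi$ satisfies $\Norm{K(\cdot-y)-K}_{L^1}\lesssim\rho(y)^{\varepsilon}$, decays rapidly, and has Fourier decay at $0$ and $\infty$.
\item These are exactly the inputs Parcet's Cuculescu-based decomposition needs to give weak $(1,1)$ for $\{A_j^s\}_j$.
\item Positivity then makes Theorem \ref{thm:NC interpolation} applicable, giving $L^p(\ell^\infty)$ bounds for every $p>1$.
\item Complex $s$ with $\Re s<0$ is reached via $A_j^sf=A_j^{s'}f*(G_{s''})_j$ and duality with $L^{p'}(\ell^1)$, with constants growing at most like $e^{|\Im s|}$.
\item Your Step 3 argument gives $L^2$ bounds for $\Re s$ below the Fourier-decay exponent, with polynomial growth in $\Im s$.
\item Stein's complex interpolation in $s$, between $L^2$ at $\Re s=s_0>0$ and $L^{p_1}$ at $\Re s=s_1<0$ with $p_1$ close to $1$, then reaches $s=0$ for every $1<p<2$.
\end{enumerate}
This uses no square function in $L^p$ for $p<2$ and no bootstrap.
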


Only the behavior near the parameter origin enters the finite type assumption. Within $0<r<1$, scales bounded away from zero are controlled by convolution with a fixed finite measure. For polynomial maps, the next result is global in scale and requires no finite type assumption on the image. Let $P=(P_1,\dots,P_n):\R^k\to\R^n$, where each $P_i$ is a real polynomial of degree at most $d$, and set
\[
 M_r^P(f)(x)=\frac1{|B_k(r)|}\int_{|t|<r}f(x-P(t))dt.
\]
\begin{theorem}
 \label{thm:polynomial case}
 For each $1<p\leq\infty$,
 \[
  \Big\|\sup_{r>0}M_r^P(f)\Big\|_{L^p(\A)}
  \lesssim_{p,d}\Norm{f}_{L^p(\A)}.
 \]
 The map $f\mapsto\{M_r^P(f)\}_{r>0}$ extends boundedly from $L^p(\A)$ to $L^p(\A;\ell^\infty(\R_+))$.
\end{theorem}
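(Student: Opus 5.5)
The plan follows the classical Stein--Wainger scheme for polynomial maps, adapted to the semi-commutative setting as the abstract indicates. The first step is a reduction by a change of variables. By a linear change of coordinates in $\R^n$, which preserves $L^p(\A)$ norms up to a Jacobian factor, we may assume that the span of the coefficient vectors of $P$ is a coordinate subspace $\R^{n'}$. The averages then act only in those variables, and by Fubini we may take $P$ nondegenerate, with $n$ replaced by $n'$. Next we group the monomials by degree: $P(t)=\sum_{1\leq|\alpha|\leq d}c_\alpha t^\alpha$ with $c_\alpha\in\R^n$. We lift to the universal polynomial curve $\Gamma(t)=(t^\alpha)_{1\leq|\alpha|\leq d}\in\R^N$, where $N$ depends only on $k,d$, and write $P=L\circ\Gamma$ with $L$ linear. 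A transference argument shows that it suffices to bound the maximal operator for $\Gamma$ on $L^p(L^\infty(\R^N)\ot\M)$. We embed $\R^n$ into $\R^N$ through a complement of $\ker L$ and apply the $\Gamma$-bound fiberwise, which is legitimate since the $\ell^\infty$-valued norm is compatible with tensoring by a commutative factor. This gives constants depending only on $d,k,n$, which is the uniformity claimed. The point is that $\Gamma$ has the exact non-isotropic homogeneity $\Gamma(rt)=\delta_r\Gamma(t)$, where $\delta_r x=(r^{|\alpha|}x_\alpha)_\alpha$, and $\Gamma$ is of finite type at every point.

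For $\Gamma$, the averages are $M_rf=f*\mu_r$, with $\mu_r$ the $\delta_r$-dilate of the pushforward measure $\mu$ of normalized Lebesgue measure on $B_k(1)$. Let $\varphi$ be a Schwartz function with $\int\varphi=1$, and let $\varphi_r$ be its $\delta_r$-dilate normalized in $L^1$. We split
\[
M_rf=f*\varphi_r+f*(\mu_r-\varphi_r).
\]
The first term is a regularized non-isotropic average. Its maximal function is pointwise (as positive operators) dominated, for $f\geq0$, by a sum of dyadic non-isotropic ball averages. These are handled by a noncommutative Calder\'on--Zygmund decomposition relative to the doubling quasi-metric $\rho(x)=\max_\alpha|x_\alpha|^{1/|\alpha|}$, using Cuculescu projections for the associated dyadic martingale filtration, as in Mei's treatment of the Hardy--Littlewood maximal function. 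This yields weak type $(1,1)$ and the trivial $L^\infty$ bound, hence $L^p(\ell^\infty)$ bounds for $1<p\leq\infty$ by the Junge--Xu interpolation.

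For the second term we use a square function. We write $\mu_r-\varphi_r=\sum_{j\ge0}\sigma_{j,r}$ with $\widehat{\sigma_{j,r}}(\xi)=\widehat\mu(\delta_r\xi)-\widehat\varphi(\delta_r\xi)$ localized to $|\delta_r\xi|\sim 2^j$. Van der Corput / finite-type decay gives $|\widehat\mu(\xi)|\lesssim|\xi|^{-\epsilon}$ with $\epsilon=\epsilon(d,k)$. Combined with the usual derivative-in-$r$ (Sobolev embedding in $\log r$) argument, this yields
\[
\Big\|\sup_{r}|f*\sigma_{j,r}|\Big\|_{L^2(\A)}\lesssim 2^{-\epsilon' j}\|f\|_2 .
\]
In $L^2(\A)$ we can use $\sup_r|a_r|\lesssim(\sum|a_r|^2)^{1/2}$-type column/row square functions, which are Plancherel-controlled for operator values. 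Separately, each piece has an $L^{1}\to L^{1,\infty}$ (or $L^{p_0}$) bound growing like $2^{Cj}$, again via the Calder\'on--Zygmund machinery, since $\sigma_{j,r}$ is a smooth kernel at scale $2^{-j}$ relative to $\delta_r$. Interpolating the two bounds gives geometric decay in $j$ for $p$ in some range $(p_0,\infty]$. Summing in $j$ gives the bound there, and a bootstrap (the Nagel--Stein--Wainger iteration: a maximal bound on $L^{p}$ improves the range for the pieces) extends it to all $p>1$.

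The main obstacle is the endpoint weak $(1,1)$ estimate for the regularized pieces $\sup_r|f*\sigma_{j,r}|$ with growth only polynomial in $2^j$. This requires a noncommutative Calder\'on--Zygmund decomposition adapted to the non-isotropic dilations, handling the off-diagonal terms of the good/bad decomposition, which in the operator-valued case are not positive. My first attempt would be to follow Parcet's pseudo-localization-free approach: use the Cuculescu projections $q_k$ and estimate the bad part on the complement of a dilated projection, $\bigvee_k$ of the $\rho$-enlarged supports. There the H\"ormander-type smoothness of $\sigma_{j,r}$ in the quasi-metric gives $L^1$ control with constant $2^{Cj}$.
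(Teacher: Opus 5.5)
Your lifting/descent step and your $L^2$ estimate for the frequency pieces are sound, but the passage to $1<p<2$ has a genuine gap.

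First, the $\ell^\infty$-valued noncommutative Marcinkiewicz theorem (the Junge--Xu/Pisier--Xu result quoted in Section~5) applies only to positivity-preserving maps, that is, subadditive maps with values in $L^0_+$. The map $f\mapsto\{f*\sigma_{j,r}\}_r$ is not positive. So a weak type $(1,1)$ bound for these pieces, however it is proved, cannot be interpolated against the $L^2$ bound.

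Second, suppose you use an $L^{p_0}(\ell^\infty)$ bound with growth $2^{Cj}$ instead. That bound is trivial and needs no Calder\'on--Zygmund decomposition. For self-adjoint $f$ one has $\pm f*\sigma_{j,r}\le C2^{Cj}\,\Phi_r*|f|$, where $\Phi$ is a rapidly decaying bump and $\Phi_r$ is its $\delta_r$-dilate, and the operator-valued non-isotropic Hardy--Littlewood inequality finishes. But complex interpolation of growth $2^{Cj}$ at $L^{p_0}$ with decay $2^{-\epsilon' j}$ at $L^2$ gives the exponent $-(1-\theta)\epsilon'+\theta C$. This is negative only for $\theta<\epsilon'/(\epsilon'+C)$, i.e.\ only for $p$ in a left neighbourhood of $2$. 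So the obstacle you single out, the weak $(1,1)$ bound with growth polynomial in $2^j$, is not where the difficulty lies. The difficulty is the range.

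The full range therefore rests entirely on your bootstrap, which you assert only by analogy. As formulated it does not improve anything. A maximal bound on $L^q$ yields at best $O(1)$ bounds for $\sup_r|f*\sigma_{j,r}|$ on $L^q$, and interpolating these with the $L^2$ decay only recovers $(q,2]$. In the scalar Nagel--Stein--Wainger/Duoandikoetxea--Rubio de Francia argument the gain comes from square functions. One uses an $\ell^2$-valued inequality for positive measures, obtained from the $\ell^1$ bound by duality with the maximal operator at the dual exponent, together with Littlewood--Paley theory. A semicommutative version would need all of the following, none of which appears in your proposal:
\begin{itemize}
\item column and row square functions;
\item the Kadison--Schwarz inequality;
\item duality for $L^p(\M;\ell^2_c)$;
\item the row-plus-column form of Littlewood--Paley for $p<2$;
\item a comparison between $L^p(\ell^2_{rc})$ and $L^p(\ell^\infty)$.
\end{itemize}

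The paper avoids the iteration altogether, in four steps.
\begin{itemize}
\item It embeds the smoothly truncated dyadic averages $A_j$ in the analytic family $A_j^sf=f*\nu_j^s$, with $\widehat{\nu^s}=\widehat{d\mu}\,(1+|\xi|^2)^{s/2}$. For real $s<0$ the kernels $G_s*d\mu$ are positive.
\item The weak type $(1,1)$ bound is proved by Parcet's decomposition with non-isotropic Cuculescu projections for $A_j^s-B_j$, and by Hardy--Littlewood for $B_j$. Because $A_j^s$ is positive, this bound does interpolate, giving $L^p(\ell^\infty)$ bounds for every $p>1$.
\item Complex $s$ with $\Re s<0$ are reached through $A_j^sf=A_j^{s'}f*(G_{s''})_j$ and $|G_s|\le C_sG_{\Re s}$, with controlled growth in $\Im s$.
\item Stein's interpolation with the $L^2$ bound for $\Re s<1/d$ lands at $s=0$ for every $p>1$ in one step.
\end{itemize}
In your language, this amounts to a near-$L^1$ bound for the frequency-$2^j$ pieces with growth $2^{\eta j}$ for every $\eta>0$, obtained through a positive family.

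The continuous parameter is also handled by positivity: for $f\ge0$, $M_r^Pf\lesssim A_jf$ when $2^{-j-1}<r\le2^{-j}$, so no Sobolev embedding in $\log r$ is needed.
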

Here and below the dimensions are fixed. The constant in Theorem \ref{thm:polynomial case} is independent of the coefficients of $P$, and the constants in both theorems are independent of $\M$.

\medskip
\noindent{\itshape (B) Variable hypersurfaces.}
We next consider averages over hypersurfaces that depend on the base point.
Let $\Phi^t$ be a smooth family of defining functions, smooth up to $t=0$, and put
\[
 \Phi_t(x,y)=\Phi^t\Big(x,x+\frac{y-x}{t}\Big),\qquad
 S_{x,t}=\{y:\Phi_t(x,y)=0\}.
\]
With the smooth compactly supported cutoff and scaled measures specified in (\ref{eq:A_t(f) on hypersurface}), define
\[
 A_t(f)(x)=\int_{S_{x,t}}f(y)d\sigma_{x,t}(y),\qquad 0<t\leq1.
\]
The uniform rotational curvature hypothesis is
\[
 t^{2n}\operatorname{rotcurv}(\Phi_t)(x,y)\geq c>0
 \quad\text{where }\Phi_t(x,y)=0,\qquad 0<t\leq1,
\]
with $c$ independent of $t$; the determinant defining $\operatorname{rotcurv}$ and the full setup are given in Section 7.
\begin{theorem}
 \label{thm:variable hypersurfaces intro}
 Suppose $n\geq3$ and the scaled family satisfies the uniform rotational curvature condition (\ref{eq:t^2n rotcurv>c>0}). Then, for $p>n/(n-1)$,
 \[
  \Big\|\sup_{0<t\leq1}A_t(f)\Big\|_{L^p(\A)}
  \lesssim_p\Norm{f}_{L^p(\A)}.
 \]
 The map $f\mapsto\{A_t(f)\}_{0<t\leq1}$ extends boundedly from $L^p(\A)$ to $L^p(\A;\ell^\infty((0,1]))$.
\end{theorem}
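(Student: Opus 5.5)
We prove Theorem \ref{thm:variable hypersurfaces intro} by combining a square-function argument in the scale parameter with Littlewood--Paley pieces. The abstract already indicates the route: oscillatory $L^2$ estimates plus localization, with no weak type endpoint. Since the operators $A_t$ are given by positive kernels, we may reduce to positive $f\in L^p(\A)$. We then write $A_t=\sum_{j\geq0}A_t^j$, where $A_t^0$ is a smooth, mollified version of $A_t$ at scale $t$ (the kernel of $\sigma_{x,t}$ convolved with $\varphi_t$) and $A_t^j$ corresponds to frequencies $\sim 2^j/t$ in the $y$-variable. The term $A_t^0$ is dominated pointwise by a noncommutative Hardy--Littlewood type maximal operator, because its kernel is bounded by $C t^{-n}\mathbf 1_{|x-y|\lesssim t}$. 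This gives a bound on $L^p(\A;\ell^\infty)$ for all $1<p\le\infty$ via Mei's result, or via Junge--Xu/Hong for ball averages. The $A^j_t$ are not positive, so we cannot use order domination for them. Instead we use the standard noncommutative estimate
\[
\Big\|\sup_{t\in[1,2]\cdot 2^{-l}} g_t\Big\|_{L^p} \lesssim \|g_{t_0}\|_{L^p(L^p)}+\Big\|\Big(\int |g_t|^{2}dt\Big)^{1/2}\Big\|^{1/2}\Big\|\Big(\int |t\partial_t g_t|^2dt/t\Big)^{1/2}\Big\|^{1/2},
\]
or more simply the fundamental theorem of calculus bound $\|\sup_t g_t\|_{L^p(\ell^\infty)}\le \|g_{1}\|_p+\int_0^1\|\partial_t g_t\|_p dt$ in its $L^p(\ell^\infty)$ form, which holds since $L^p(\ell^\infty)$ is a normed space. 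We combine this with the symmetrization $g=\Re+i\Im$ and the positive/negative decompositions $\pm$, which are needed to place families into $L^p(\ell^\infty)$.

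The key steps, in order, are these. \emph{(i) $L^2$ estimate.} Since $L^2(\A)=L^2(\R^n)\otimes L^2(\M)$ as Hilbert spaces, the scalar oscillatory estimate of Phong--Stein tensorizes. The uniform rotational curvature yields $\|A^j_t\|_{L^2\to L^2}\lesssim 2^{-j(n-1)/2}$ and $\|t\partial_tA^j_t\|_{L^2\to L^2}\lesssim 2^{j}2^{-j(n-1)/2}$, uniformly in $t$ because of the $t^{2n}$ normalization. A Sobolev-in-$t$ argument on each dyadic block $t\in[2^{-l},2^{-l+1}]$ then gives a maximal bound of $2^{-j(n-2)/2}$. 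To sum over the blocks $l$, we localize in $x$ at scale $2^{-l}$. Because $A_t^j$ has kernel concentrated (with rapid decay $2^{-jN}$ away from this) within distance $\sim t$ of $x$, the dyadic blocks become almost orthogonal in the $\ell^\infty$ sense. We dominate $\sup_l$ by $(\sum_l|\cdot|^2)^{1/2}$ using $\|\{a_l\}\|_{L^2(\ell^\infty)}\le\|\{a_l\}\|_{L^2(\ell^2)}$, together with the Littlewood--Paley orthogonality of the frequency projections at scale $2^j2^{l}$. \emph{(ii) $L^1$ estimate.} The trivial kernel bound gives $\|\sup_t |A^j_t f|\|_{L^1(\ell^\infty)}$ control only of weak/strong type with growth $2^{j}$, coming from the $t$-derivative. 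Alternatively, we take the $L^{1+\epsilon}$ bound with constant $2^{j(1+\epsilon')}$, obtained from domination by $2^{j}\times$ the Hardy--Littlewood maximal function after splitting into positive parts of Schwartz tails. \emph{(iii) Interpolation.} We interpolate in the $L^p(\ell^\infty)$ scale (Junge--Xu) between $p_0$ near $1$ with growth $2^{j}$ and $p=2$ with decay $2^{-j(n-2)/2}$. The result is summable in $j$ exactly when $p>n/(n-1)$, and this summability closes the proof.

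The main obstacle is step (i), and more precisely the passage from single-scale $L^2$ maximal estimates to all $t\in(0,1]$ without a positive pointwise supremum. We cannot write $\sup_l\le\sum_l$ in an efficient way, so we rely on the $\ell^2$ domination. This in turn requires a vector-valued almost-orthogonality, which we expect to hold because $L^2(\A;\ell^2)$ is a Hilbert space and the Cotlar--Stein estimates of the scalar theory transfer verbatim. A second subtlety is that the non-translation-invariant operators $A^j_t$ must be realized as $\mathrm{id}_\M$-tensor extensions of scalar integral operators. For positive kernels this realization is automatic. For the oscillatory pieces on $L^p$ with $p\ne2$, we bound them via absolute kernels, and this costs only the factor $2^{j}$ already used in (ii).
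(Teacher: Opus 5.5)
Your outline matches the paper's architecture. The common steps are:
\begin{itemize}
\item a splitting $A_t=\sum_{j\ge0}A_t^j$, with $j=0$ dominated by Hardy--Littlewood averages;
\item single-scale $L^2$ bounds $2^{-j(n-1)/2}$ for $A_t^j$ and $t^{-1}2^{j}2^{-j(n-1)/2}$ for $\partial_tA_t^j$;
\item the balanced operator-valued Sobolev inequality on each $I_l=[2^{-l},2^{-l+1}]$;
\item gluing the block majorants into $a_j=(\sum_l a_{j,l}^2)^{1/2}$, using operator monotonicity of the square root;
\item a crude $L^{p_1}$ bound of size $2^j$ by kernel domination;
\item complex interpolation in $L^p(\ell^\infty)$.
\end{itemize}

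The real difference is the decomposition. The paper splits $\delta(\Phi_t)$ in the normal frequency $\mu$. Those pieces are only approximately localized in $y$-frequency. So for small $t$ the paper passes to a graph representation with amplitude independent of $y_n$. It then uses a projection $\Delta_{m+j}$ acting in $\xi_n$ alone to get the exact identity $\tilde A_t^j\Delta_{m+j}=\tilde A_t^j$ for $t\in I_m$, and treats $\tilde A_t^j-A_t^j$ separately by positive kernels.

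Your $A_t^0$ (the measure $\sigma_{x,t}$ convolved with $\varphi_t$) indicates that you take $A_t^j=A_t\circ P_j^{(t)}$, with $P_j^{(t)}$ the multiplier $\beta(2^{-j}t\xi)$. Then $A_t^j=A_t^j\tilde P_{j+l}$ for $t\in I_l$ is trivial. The bound $\sum_l\|a_{j,l}\|_2^2\lesssim 2^{-j(n-2)}\sum_l\|\tilde P_{j+l}f\|_2^2\lesssim 2^{-j(n-2)}\|f\|_2^2$ is then immediate, with no graph representation and no error terms.

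The price is in the single-scale bounds for $A_tP_j^{(t)}$ and its $t$-derivative. These must come from the $L^2$-Sobolev smoothing of order $(n-1)/2$ of rotational-curvature averages, which is a Fourier integral operator fact. After rescaling the $x$-support becomes unbounded, so this still requires a local-to-global step. The paper instead uses the direct $TT^*$ bound for the $\mu$-pieces.

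You are right that all these $L^2$ operator and square-function estimates are Hilbert-space statements that tensorize with $L^2(\mathcal{M})$. This makes the paper's operator-valued $TT^*$ and Schur arguments unnecessary. Only the Sobolev lemma, the square-root gluing, the kernel domination and the interpolation are genuinely noncommutative.

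Several statements in your sketch need correcting:
\begin{itemize}
\item The near-orthogonality of the $t$-blocks comes only from the frequency localization. Spatial concentration of the kernels at scale $t$ does not make different blocks orthogonal; it only serves the uniformity of single-scale bounds and the Hardy--Littlewood domination.
\item The fundamental-theorem-of-calculus bound you float loses $2^{j/2}$ against the balanced Sobolev inequality. It gives $2^{-j(n-3)/2}$, which is not even summable at $p=2$ when $n=3$, so it cannot replace Sobolev.
\item No $L^1(\ell^\infty)$ endpoint is available as $t\to0$, so $p_0>1$ is forced. The kernel bound $C2^jt^{-n}(1+|x-y|/t)^{-N}$ gives the constant $C_{p_0}2^j$, with no $\epsilon'$ loss.
\item Your step (iii) only covers $n/(n-1)<p\le 2$. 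The range $2<p\le\infty$ needs the trivial $L^\infty$ bound together with interpolation.
\end{itemize}
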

The full statement and proof appear in Theorem \ref{thm:A_t bounded for p>n/(n-1)}; the implicit constant may depend on the geometric data and cutoff. 

\medskip
\noindent{\itshape (C) Ergodic applications and b.a.u. convergence.}
The translation-invariant inequalities have ergodic consequences. Let $s\mapsto T_s$ be a $w^*$-continuous action of $\R^n$ by trace-preserving $*$-automorphisms of $\M$. For $a\in L^p(\M)$, set
\[
 N_r(a)=\frac1{|B_k(r)|}\int_{|t|<r}T_{\gamma(t)}(a)dt,
 \qquad
 N_r^P(a)=\frac1{|B_k(r)|}\int_{|t|<r}T_{P(t)}(a)dt.
\]
Calder\'on transference, in the noncommutative framework of Hong \cite{Hong2020}, gives the following local and global estimates, respectively; see Proposition \ref{prop:transference to ergodic}.
\begin{theorem}
 \label{thm:sup N_r(a) Lp bounded}
 Under the assumptions on $\gamma$ in Theorem \ref{thm:k-submanifold case}, for $1<p\leq\infty$,
 \[
  \Big\|\sup_{0<r<1}N_r(a)\Big\|_{L^p(\M)}
  \lesssim_{p,\gamma}\Norm{a}_{L^p(\M)}.
 \]
\end{theorem}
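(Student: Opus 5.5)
The plan is to deduce this from Theorem \ref{thm:k-submanifold case} by a noncommutative Calder\'on transference argument, following Hong \cite{Hong2020}. The key structural point is that only scales $0<r<1$ occur. Hence $N_r(a)$ involves $T_s$ only for $s$ in the compact set $K=\gamma(\overline{B_k(1)})$, and $K\subset B_n(R)$ for some $R$ depending on $\gamma$. The case $p=\infty$ is trivial, since $\|N_r(a)\|_\infty\le\|a\|_\infty$ and each $N_r$ is positive. So assume $1<p<\infty$. By density and positivity (split $a$ into four positive parts), it suffices to treat positive $a\in L^p(\M)\cap\M$.

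Fix a large $L>R$. Define the operator-valued function $F(x)=T_x(a)\chi_{B_n(L+R)}(x)$ on $\R^n$, which lies in $L^p(\A)$. For $|x|<L$ and $0<r<1$, each point $x-\gamma(t)$ with $|t|<r$ lies in $B_n(L+R)$. Using $T_{x-\gamma(t)}$ in the definition of $M_r$, with the sign convention adjusted as needed (replace $\gamma$ by $-\gamma$, which is still of finite type), we get
\[
 T_x(N_r(a))=M_r(F)(x),\qquad |x|<L,\ 0<r<1 .
\]
Here the integral over $t$ commutes with the normal automorphism $T_x$ by $w^*$-continuity.

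Then use that $T_x$ is a trace-preserving $*$-automorphism. It maps $L^p(\M;\ell^\infty)$ isometrically onto itself, since it preserves the order and the $L^p$ norms of majorants $b$ in the factorization/majorant definition. This gives
\[
 \Big\|\sup_{0<r<1}N_r(a)\Big\|_{L^p(\M)}^p=\Big\|\sup_{0<r<1}T_x(N_r(a))\Big\|_{L^p(\M)}^p
\]
for every $x$. Averaging over $|x|<L$, the left side equals
\[
 \frac1{|B_n(L)|}\int_{B_n(L)}\Big\|\sup_r M_r(F)(x)\Big\|_{L^p(\M)}^pdx .
\]

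The main obstacle is the step that makes this average legitimate. It requires that the restriction to $B_n(L)$ of a majorant in $L^p(\A;\ell^\infty)$ gives pointwise majorants, and that $\int\|\sup_r g_r(x)\|^p_{L^p(\M)}dx\le\|\sup_r g_r\|^p_{L^p(\A)}$. For positive families this is immediate from the majorant description: if $M_r(F)\le b$ in $\A$, then $M_r(F)(x)\le b(x)$ for a.e. $x$. To avoid measurability issues with uncountably many $r$, restrict to a countable dense set of $r$ and use the continuity of $r\mapsto N_r(a)$ in $L^p$, or the monotone convergence property of the $\ell^\infty$ norm over increasing finite subsets. Applying Theorem \ref{thm:k-submanifold case} then gives
\[
 \Big\|\sup_{0<r<1}N_r(a)\Big\|_{L^p(\M)}^p\le \frac{C_{p,\gamma}^p}{|B_n(L)|}\|F\|^p_{L^p(\A)}=C_{p,\gamma}^p\frac{|B_n(L+R)|}{|B_n(L)|}\|a\|_p^p,
\]
where we used $\|T_x a\|_p=\|a\|_p$. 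Letting $L\to\infty$ gives the claim with the same constant. The extension from finite families to $(0,1)$ follows by the standard sup over finite subsets, which is the content of Proposition \ref{prop:transference to ergodic}.
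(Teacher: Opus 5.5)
Your proposal is correct and is essentially the paper's argument: the paper deduces this theorem from Theorem \ref{thm:k-submanifold case} via the Calder\'on transference of Proposition \ref{prop:transference to ergodic}, which truncates $x\mapsto T_{-x}(a)$ to a large ball, uses that each $T_x$ preserves the $L^p(\M;\ell^\infty)$ norm, averages over $x$, and lets the ball grow. The differences are cosmetic. You use $T_x$ instead of $T_{-x}$, which forces the passage to $-\gamma$ that the paper's choice avoids. You run the argument directly on the local range $0<r<1$ instead of citing the proposition as stated for $r>0$; its proof adapts verbatim with $R=1$. You also spell out the Fubini-type step and the countable-index step, which the paper leaves implicit.
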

\begin{theorem}
 \label{thm:sup N_r^P(a) Lp bounded}
 Under the assumptions on $P$ in Theorem \ref{thm:polynomial case}, for $1<p\leq\infty$,
 \[
  \Big\|\sup_{r>0}N_r^P(a)\Big\|_{L^p(\M)}
  \lesssim_{p,d}\Norm{a}_{L^p(\M)}.
 \]
 The constants are independent of the action.
\end{theorem}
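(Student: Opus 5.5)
We deduce Theorem \ref{thm:sup N_r^P(a) Lp bounded} from Theorem \ref{thm:polynomial case} by noncommutative Calder\'on transference, following Hong \cite{Hong2020}. By monotone convergence in $L^p(\M;\ell^\infty)$ it suffices to bound $\sup_{r\in F}N_r^P(a)$ for an arbitrary finite set $F\subset(0,R_0]$, uniformly in $F$ and $R_0$. We may also take $a$ positive, since any element of $L^p(\M)$ is a combination of four positive ones. For $p=\infty$ the bound is trivial because each $N_r^P$ is a unital positive contraction, so we assume $1<p<\infty$.

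\textbf{Transference.} Fix a large $L>0$ and let $R'$ be such that $|P(t)|\le R'$ for $|t|\le R_0$. Set $F_a(x)=\chi_{B_n(L+R')}(x)\,T_x(a)$, an element of $L^p(\A)$. The action is trace preserving, so $\|F_a\|_{L^p(\A)}^p=|B_n(L+R')|\,\|a\|_p^p$. For $|x|<L$ and $r\in F$, the group law $T_x T_{-P(t)}=T_{x-P(t)}$ gives
\[
M_r^P(F_a)(x)=\frac1{|B_k(r)|}\int_{|t|<r}T_{x-P(t)}(a)\,dt = T_x\big(N_r^P(a')\big),
\]
where we choose $a'$ so that the signs match: either replace $P$ by $-P$, which Theorem \ref{thm:polynomial case} also covers since $-P$ has degree at most $d$, or equivalently use $T_{-s}$.

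\textbf{Pointwise domination.} Take $b\ge0$ in $L^p(\A)$ with $M_r^P(F_a)\le b$ for all $r\in F$, and $\|b\|_p\le 2C_{p,d}\|F_a\|_p$ by Theorem \ref{thm:polynomial case}. Since $T_{-x}$ is an order preserving automorphism, $N_r^P(a)\le T_{-x}(b(x))$ for every $r\in F$ and a.e.\ $|x|<L$. Hence
\[
\Big\|\sup_{r\in F}N_r^P(a)\Big\|_p^p\le\frac1{|B_n(L)|}\int_{|x|<L}\|T_{-x}b(x)\|_p^p\,dx\le\frac{(2C)^p\,|B_n(L+R')|}{|B_n(L)|}\,\|a\|_p^p.
\]
Letting $L\to\infty$ gives the bound with constant $2C_{p,d}$. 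This constant is independent of the action, of $F$ and of $R_0$.

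The main subtlety is the averaging step above. The $\ell^\infty$ norm is an infimum over dominating elements, and not every dominating element comes from a fixed $x$. For $p\ge1$ one can average without convexity issues: pick $x$ in a set of positive measure where $\|b(x)\|_p^p$ is at most its mean. Alternatively, one can work with the column/row factorization $N_r=\alpha\, y_r\,\beta$ in Junge's definition for general (non-positive) $a$. Measurability of $x\mapsto T_x(a)$ uses $w^*$-continuity together with strong continuity on $L^p$. We also use that $\M$-valued $L^p(\A)$ elements restrict a.e.\ to fibres, which is standard for $\A=L^\infty\ot\M$.
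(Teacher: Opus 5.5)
Your proposal is correct and follows the paper's route (Proposition \ref{prop:transference to ergodic}): Calder\'on transference via the orbit function $x\mapsto T_{\pm x}(a)$ truncated to a large ball, the maximal inequality of Theorem \ref{thm:polynomial case} on $\A$, invariance of the fibrewise maximal norm under the trace-preserving automorphisms $T_x$, and averaging over the ball before letting its radius tend to infinity. The differences are cosmetic. You reduce to positive $a$ and to finite index sets, and transport a single dominating element. The paper instead transports a factorization $uy_rv$ for general $a$ and works directly with the index set $r<R$.
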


For variable hypersurfaces, we work instead on the semi-commutative algebra $\mathcal A$, retaining the spatial variable.
With nonnegative averaging measures, define
\[
 \mathcal N_t(f)(x)=\int_{S_{x,t}}T_{y-x}(f(y))d\sigma_{x,t}(y).
\]
Here the transfer is an exact conjugacy on $\A$: if $(Uf)(x)=T_x(f(x))$, then $\mathcal N_t=U^{-1}A_tU$. Since $U$ preserves the trace and the maximal norms, Theorem \ref{thm:variable hypersurfaces intro} gives the corresponding estimate for $\mathcal N_t$ on $0<t<1$, for $n\geq3$ and $n/(n-1)<p<\infty$; see Corollary \ref{cor:sup_t N_t Lp bound for hypersurface}.

The maximal estimates also extend convergence from dense classes to all $L^p$ elements. The averages $N_r(a)$ converge bilaterally almost uniformly (b.a.u.) to $a$ as $r\to0^+$ for $1<p<\infty$; the same holds for $N_r^P(a)$ when $P(0)=0$. For the variable averages, convergence to $f$ holds in the stated finite $p$ range when the nonnegative measures have mass one and are supported in $|y-x|\leq Rt$ for a fixed $R$; these are additional assumptions, as in Corollary \ref{cor:variable ergodic bau at zero}. The definition of b.a.u. convergence is recalled in Appendix \ref{subsec:bau convergence}.

At large scales, the normalized polynomial averages converge b.a.u. along $r=2^m$, $m\to+\infty$, to the projection onto the fixed-point subspace, for $1<p<\infty$, provided $P$ is of finite type at the origin. This additional hypothesis is not needed for the global maximal inequality. Convergence over the full parameter range $r\to\infty$ remains conjectural.

\subsection{Proof ideas and noncommutative difficulties}
The finite-type argument begins with classical Fourier decay. If $\mu$ is the pushforward of $\eta(t)dt$ under $\gamma$, with $\eta$ a smooth cutoff supported where $\gamma$ has type at most $m$, then
\begin{equation}
 \label{eq:Fourer decay of d mu}
 |\widehat{d\mu}(\xi)|\leq A(1+|\xi|)^{-1/m};
\end{equation}
see \cite[Chapter VIII, Section 3]{Stein1993} and Lemma \ref{lem:decay of Fourier transform of measure} below. An analytic regularization and operator-valued Plancherel give the $L^2$ side of the argument, following the classical square-function method.

The noncommutative step is a weak type $(1,1)$ estimate for the regularized auxiliary family. Cuculescu projections and Parcet's decomposition replace scalar stopping regions but introduce off-diagonal terms: orthogonal projections need not commute with the operator between them. 
Since the kernels need not satisfy the usual pointwise Lipschitz estimates, the existing smooth-kernel and maximal-truncation results (see, e.g., \cite{Parcet2009,HongLaiXu2023,LaiXu2026,HongLaiRayXu2026}) may not directly give the required bound. 
We work directly with their integral regularity, combining localization, H\"ormander-type integral estimates, non-isotropic Littlewood--Paley analysis and Fourier decay to control these terms. 

Cadilhac, Conde-Alonso and Parcet \cite{CadilhacCondeAlonsoParcet2022} introduced an alternative decomposition that removes the off-diagonal good part at the expense of a more complicated bad part. 
It simplifies the \(L^2\) treatment of the good part but changes the structure of the off-diagonal bad terms.
For the regularized averaging family considered here, we adopt Parcet's decomposition because its cancellation and martingale-difference structure are better adapted to the non-isotropic integral-regularity estimates used here. 
A related choice of decomposition for rough kernels is discussed in \cite[p.~1443]{Lai2024}.

Noncommutative interpolation then recovers the original maximal averages. The weak endpoint concerns the auxiliary regularized family; the conclusions for the geometric averages are strong bounds for $p>1$. Polynomial maps are treated by lifting to a monomial map and descent, and smooth finite-type maps by Taylor reduction, following \cite[Chapter XI, Section 2]{Stein1993}.

\medskip

For variable hypersurfaces, 
we use a different argument based on rotational curvature and oscillatory $L^2$ estimates obtained by the $TT^*$ method.
An operator-valued Sobolev-type inequality provides maximal control in the averaging parameter. 
Near $t=0$, localization in the base point and frequency decomposition keep the estimates uniform across scales. Interpolation in the noncommutative maximal spaces then yields the range $p>n/(n-1)$ for $n\geq3$, without using scalar measurable selection. 

\subsection{Organization of the paper}
Sections 2--5 develop the parabola model: Section 2 introduces the analytic family, Section 3 proves the $L^2$ estimates, Section 4 establishes the weak endpoint for the regularized averaging family by noncommutative C-Z decomposition, and Section 5 completes the interpolation argument. Section 6 treats polynomial maps and smooth finite-type submanifolds. Section 7 proves the variable-hypersurface estimate, and Section 8 gives the transference and individual ergodic results. Appendix A recalls the noncommutative $L^p$-space and maximal-space notation. Appendix B collects the properties of Bessel potential multipliers and kernels used in Sections 3--6.

\medskip

{\itshape Notation.} The letters $C$ and $A$ denote positive constants that may change from line to line. We write $X\lesssim Y$ if $X\leq CY$, and $X\lesssim_\alpha Y$ when the constant is allowed to depend on $\alpha$; fixed dimensions and geometric data are suppressed when no confusion can arise. We write $X\approx Y$ if both $X\lesssim Y$ and $Y\lesssim X$, and $X(r)\sim Y(r)$ when their ratio tends to one in the limit under discussion. Our Fourier transform convention is
\[
 \hat f(\xi)=\int_{\R^d}f(x)e^{-2\pi i\inn{x}{\xi}}dx,
 \qquad
 \check f(x)=\int_{\R^d}f(\xi)e^{2\pi i\inn{\xi}{x}}d\xi.
\]
We also write $\F[f]=\hat f$ and $\F^{-1}[f]=\check f$. For $s\in\R$, $\lfloor s\rfloor$ is the greatest integer not exceeding $s$, and $\chi_E$ denotes the indicator of a set $E$.

\section{A model case: the parabola}
\par The simplest case of this problem concerns a smooth curve $\gamma:\R^1\to\R^2$. We consider the averages along $\gamma$
\begin{equation}
	\label{eq:average on parabola}
	M_rf(x)=\frac1{2r}\int_{-r}^r f(x-\gamma(t))dt, \quad r>0.
\end{equation}
Here $|B_1(r)|=2r$, so this agrees with the normalization in Section 1.
Let $\mathcal S_\M^+$ denote the set of bounded positive operators with $\tau$-finite support, and let $\mathcal S_\M$ be their linear span. We write $\M_+$ and $L^p_+(\M)$ for the positive cones, and use the same convention for $\A$. It suffices initially to consider positive functions in the usual dense class
\[
 \A_{c,+}=\{f:\R^n\to\mathcal S_\M:
 f\in\A_+,\ \overset{\longrightarrow}{\supp}f\text{ is compact}\},
\]
where $\overset{\longrightarrow}{\supp}f$ denotes spatial support. Density is understood in $L^p_+(\A)$ for $p<\infty$; the $L^\infty$ estimates follow directly from the masses of the averaging measures.
Throughout this section, we initially assume that $f$ belongs to $\A_{c,+}$. In this case, $n=2$, $k=1$ and $\A=\M\ot L^{\infty}(\R^2)$.
\par Before proving Theorem \ref{thm:k-submanifold case}, we consider a model example. Let $\gamma=(t,t^2)$ be the parabola in $\R^2$. The parabola is a submanifold of type $2$ at each point.
\begin{theorem}
	Let $\gamma$ be the parabola above and let $M_r(f)$ be given by (\ref{eq:average on parabola}). Then for each $1<p\leq\infty$, we have
	\[ \Big\|{\sup_{r>0}M_rf}\Big\|_{L^p(\A)}\lesssim_p \Norm{f}_{L^p(\A)}. \]
	In other words, the operator $f\to \{M_rf\}_{r>0}$ extends to a bounded operator from $L^p(\A)$ to $L^p(\A,\ell^{\infty})$.
\end{theorem}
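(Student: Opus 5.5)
We follow the paper's own outline: an analytic family, $L^2$ bounds, a weak $(1,1)$ endpoint for a regularized family, and interpolation. Since $f\ge 0$ and $M_rf\le 2\cdot\frac{1}{4r}\int_{-2r}^{2r}\dots$ for $2^{j}\le r<2^{j+1}$, it suffices to bound the dyadic family $\{M_{2^j}f\}_{j\in\Z}$, or the smoothed version $\mu_j*f$. Here $\mu$ is the pushforward of $\eta(t)\,dt$ under $t\mapsto(t,t^2)$, with $\eta\ge 0$ a smooth bump, and $\mu_j$ is its non-isotropic dilate by $\delta_{2^j}(x_1,x_2)=(2^jx_1,2^{2j}x_2)$. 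The parabola is invariant under these dilations, which is why the estimate is global in $r$. The case $p=\infty$ is trivial because the measures have mass one.

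Next we embed $\mu$ in an analytic family $\mu^{z}=\mu*_{x_2}K^{z}$, where $K^z$ is a Bessel-potential-type kernel in the $x_2$-variable (Appendix B) with multiplier $(1+|\xi_2|^2)^{-z/2}$ adjusted so that $\mu^0=\mu$. For $\Re z=1+\epsilon$, the kernel $\mu^z$ is an $L^1$ function with non-isotropic integral (Hörmander-type) regularity. Its maximal function over $j$ is dominated by a non-isotropic Hardy--Littlewood-type average, so the family $\{\mu^z_j*f\}_j$ should be bounded from $L^1(\A)$ to $L^{1,\infty}(\A;\ell^\infty)$.

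We prove this weak $(1,1)$ bound with Parcet's noncommutative Calderón--Zygmund decomposition, built on Cuculescu projections for the non-isotropic dyadic filtration of rectangles $2^{j}\times2^{2j}$. We split $f=g+b_d+b_{\off}$. The good part is handled through the $L^2$ bound. The diagonal bad part uses the mean zero of $b_d$ on cubes together with the integral regularity of $\mu_j^z$ away from the dilated cube. The off-diagonal terms $p_i f q_j$ require their own estimates: because the kernel lacks pointwise Lipschitz bounds, we use the martingale-difference structure, localization of the kernel to the complement of the support, and a non-isotropic Littlewood--Paley decomposition in $j$ combined with the Fourier decay \eqref{eq:Fourer decay of d mu}, which has $m=2$.

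For $\Re z=-\tfrac12+\epsilon$, i.e. past the decay order $1/m=1/2$, we obtain the $L^2(\A;\ell^\infty)$ bound via operator-valued Plancherel and the classical square function. Set $\phi_j=\widehat{\mu^z}(\delta_{2^j}\xi)-\widehat{\mu^z}(\delta_{2^{j}}\cdot 0)\hat\Psi_j$. Then $\sup_j|\mu^z_j*f|$ is dominated by a non-isotropic Hardy--Littlewood maximal function, which is bounded on $L^2$ by Mei's martingale method, plus the column/row square function $(\sum_j|\phi_j(D)f|^2)^{1/2}$. Using the decay $|\widehat{\mu^z}(\xi)|\lesssim(1+|\xi|)^{-1/2+\Re(-z)}$ with exponential growth in $\Im z$, the square function is bounded because $\sum_j|\phi_j(\delta_{2^j}\xi)|^2\lesssim 1$ uniformly. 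Since a positive family satisfies $\sup_j a_j\le(\sum_j a_j^2)^{1/2}$-type domination in $L^2(\ell^\infty)$ via the $\ell^2\subset\ell^\infty$ embedding for column spaces, this gives the strong $L^2$ bound. Finally, Stein-type complex interpolation in the noncommutative maximal spaces $L^p(\A;\ell^\infty)$ (Junge--Xu), between weak $(1,1)$ at $\Re z=1+\epsilon$ and $L^2$ at $\Re z=-\frac12+\epsilon$, yields $z=0$ for $p$ slightly above $1$. We first pass from weak type to strong $L^{p_0}$ bounds by Marcinkiewicz interpolation for the maximal spaces. This covers $1<p\le2$ after also interpolating with $\Re z$ large, and $p\ge2$ follows from interpolation with $L^\infty$.

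The main obstacle is the off-diagonal bad part in the weak $(1,1)$ estimate. There we must estimate $\|\sum_j\!\!\sum_k \mu^z_j*(p_{k}f q_{k'})\|$ outside the Cuculescu support projection with only integral regularity. I would try to bound it in $L^2$ after truncating to $j$ larger than the cube scale, using almost orthogonality from the Littlewood--Paley pieces and the $|\xi|^{-1/2}$ decay to get geometric decay in the scale gap.
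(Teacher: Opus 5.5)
\textbf{There is a genuine gap in your interpolation step, and the work you skip is exactly the paper's main technical step.}

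\emph{The exponent range.} Your two endpoints are:
\begin{itemize}
\item an $L^2(\A;\ell^\infty)$ bound at $\Re z=a$, with $a>-\tfrac12$ necessarily, since $|\widehat{d\mu}(0,\xi_2)|\approx|\xi_2|^{-1/2}$, so the square-function argument stops at $a=-\tfrac12$;
\item a weak $(1,1)$ bound, hence an $L^{p_0}(\A;\ell^\infty)$ bound with $p_0$ near $1$, at $\Re z=b=1+\epsilon$.
\end{itemize}
The point $z=0$ then sits at $\theta=-a/(b-a)<\tfrac13$. Hence
\[
\frac1p=\frac{1-\theta}{2}+\frac{\theta}{p_0}<\frac{1+\theta}{2}<\frac23 .
\]
So you only obtain $p>3/2$, not ``$p$ slightly above $1$''. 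Interpolating further with larger $\Re z$ only makes $\theta$ smaller.

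To reach every $p>1$ you need $\theta\to1$, that is $b\to0^+$. In other words, you need an $L^{p_0}$ (or weak $(1,1)$) maximal bound for the regularized family with arbitrarily small smoothing. This is what the paper does: it proves the weak type estimate for every $\Re s<0$ (Proposition \ref{prop:A_j^s weak L1 bound}) and then interpolates with $s_1\to0^-$ and $p_1\to1$.

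\emph{Why the CZ machinery is needed where you did not use it.} Your own remark shows that $\Re z=1+\epsilon$ is the wrong place for it. There the kernel $\eta(x_1)K^z(x_2-x_1^2)$ is bounded and rapidly decaying, so $\{\mu^z_j*f\}_j$ is already controlled by the non-isotropic Hardy--Littlewood family, with one common projection for all radii. The Calder\'on--Zygmund decomposition is superfluous at that exponent.

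It is indispensable for $0<\Re z\ll1$. In that range the kernel blows up like $|x_2-x_1^2|^{\Re z-1}$ along the parabola and has no integrable non-isotropically decreasing majorant. The only tools available are the integral H\"ormander condition $\int|K(x-y)-K(x)|\,dx\lesssim\rho(y)^\varepsilon$ and the Fourier decay.

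Your one-variable regularization does satisfy both properties, so it could be run in that regime. You would then need the factorization $A_j^s=A_j^{s'}*(G_{s''})_j$ together with a maximal lemma as in Lemma \ref{lem:s'+s''} and Lemma \ref{lem:sup_j (G_s)_j*h_j Lp-bounded}, to pass from real to complex $z$ with admissible growth in $\Im z$.

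\emph{Where the difficulty actually lies in Parcet's decomposition.} Your sketch places it in the wrong term.
\begin{itemize}
\item The whole bad part, diagonal and off-diagonal together, is handled in $L^1$. This uses cancellation on cubes of $\Q_{k+s}$, the projection $\zeta$ of Lemma \ref{lem:zeta projection} (which kills $b_{k,s}$ near the relevant cubes), and the H\"ormander condition.
\item The term that the plain $L^2$ bound cannot handle is the off-diagonal \emph{good} part $g_\off$. For it one only has $\sup_s\|g_{(s)}\|_{L^2(\A)}^2\lesssim\lambda\|f\|_{L^1(\A)}$.
\item The summable decay $2^{-\alpha s}$ must therefore come from three ingredients: truncating $K_j$ at scale $2^{-j+\varepsilon s}$, the non-isotropic Littlewood--Paley pieces, and the $\rho(\xi)^{-1/2}$ decay (Lemmas \ref{lem:K(phi) estimate} and \ref{lem:K(1-phi) estimate}).
\end{itemize}
The $L^2$, Littlewood--Paley and Fourier-decay argument you propose for the ``off-diagonal bad part'' is really what is needed for $g_\off$.
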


\par We first replace the sharp cutoff by a smooth cutoff. Let $\eta:\R\to\R$ be a nonnegative smooth function supported on $[-2,2]$ and equal to $1$ on $[-1,1]$. For $j\in\Z$, define
\begin{equation}
	\label{eq:average on parabola eta}
	A_j(f)(x)=2^j\int_{\R}f(x-\gamma(t))\eta(2^jt)dt.
\end{equation}
Then for each $r>0$, $M_r(f)$ is bounded by a constant multiple of $A_j(f)$ for some $j\in\Z$, so it suffices to bound $\Norm{\sup_{j\in\Z}A_jf}_p$. More precisely, we prove the following theorem. 
\begin{theorem}
	\label{thm:sup A_jf bound}
	Let $\gamma$ be the parabola above and let $A_j(f)$ be given by (\ref{eq:average on parabola eta}). Then for each $1<p\leq\infty$, we have
	\[ \Big\|{\sup_{j\in\Z}A_jf}\Big\|_{L^p(\A)}\lesssim_p \Norm{f}_{L^p(\A)}. \]
\end{theorem}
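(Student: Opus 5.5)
The plan follows the classical square-function approach of Stein, adapted to the noncommutative setting along the lines described in the introduction. The case $p=\infty$ is immediate, since each $A_j$ is convolution with a positive measure of bounded mass. For $1<p<\infty$ we work with $f\in\A_{c,+}$ and compare $A_j$ with a smooth non-isotropic average. Write $\delta_{2^{-j}}(x_1,x_2)=(2^{-j}x_1,2^{-2j}x_2)$ for the parabolic dilation, and let $\mu$ be the pushforward of $\eta(t)dt$ under $\gamma$. Then $A_jf=f*\mu_j$, where $\mu_j$ is the $\delta_{2^{-j}}$-dilate of $\mu$. Fix a Schwartz function $\phi\ge0$ with $\hat\phi(0)=\hat\mu(0)$, and let $\phi_j$ be its $L^1$-normalized parabolic dilate. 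The family $\{f*\phi_j\}$ is dominated by a non-isotropic Hardy--Littlewood maximal operator. Its $L^p(\A;\ell^\infty)$ bound for $1<p\le\infty$ follows from Mei's martingale comparison, applied to the parabolic dyadic filtration; the doubling structure of parabolic balls suffices for this. The remaining difference is $\mu_j-\phi_j$.

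To handle the difference, embed it in an analytic family, as announced at the start of Section 2. For $z\in\mathbb C$, set
\[
\widehat{K^z_j}(\xi)=(\hat\mu-\hat\phi)(\delta_{2^{-j}}\xi)\,\big(1+|\delta_{2^{-j}}\xi|^2\big)^{z/2},
\]
so that $z=0$ gives $\mu_j-\phi_j$. The plan is to bound $\sup_j|f*K^z_j|$ at two endpoints and then interpolate.

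On the line $\Re z=\sigma>0$ with $\sigma<1/2$, we use the decay $|\hat\mu(\xi)|\lesssim(1+|\xi|)^{-1/2}$ from (\ref{eq:Fourer decay of d mu}) with $m=2$, together with the vanishing of $\hat\mu-\hat\phi$ at the origin. Together they give
\[
\sum_j|\widehat{K^z_j}(\xi)|^2\lesssim_\sigma e^{C|\Im z|}.
\]
The operator-valued Plancherel theorem then yields the square-function bound $\|(\sum_j|f*K^z_j|^2)^{1/2}\|_{L^2(\A)}\lesssim\|f\|_2$. For positive-part control we use $\sup_j a_j\le(\sum_j|a_j|^2)^{1/2}$ in the sense of $L^2(\A;\ell^\infty)$; more precisely, we factor $a_j=u_j|a_j|$, so that the column square function controls the $\ell^\infty$ norm at $p=2$. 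This gives an $L^2(\A;\ell^\infty)$ bound at $\Re z=\sigma$.

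On the line $\Re z=-1-\epsilon$, the kernels $K^z_j$ are integrable. Appendix B supplies Bessel-potential smoothing and uniform $L^1$ bounds, and non-isotropic Hörmander integral regularity of the form
\[
\sup_{y}\sum_j\int_{\rho(x)>2\rho(y)}|K^z_j(x-y)-K^z_j(x)|\,dx\lesssim e^{C|\Im z|},
\]
where $\rho$ is the parabolic norm. Here we must prove a weak type $(1,1)$ estimate for $f\mapsto\{f*K^z_j\}_j$ into $L^{1,\infty}(\A;\ell^\infty)$. We intend to take this in the Cuculescu projection form: for each $\lambda>0$ there is a projection $q$ with $\nu(1-q)\lesssim\|f\|_1/\lambda$ and $\|q(f*K^z_j)q\|_\infty\lesssim\lambda$ for all $j$.

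Interpolation then proceeds as follows. Stein's analytic interpolation in the scale $L^p(\A;\ell^\infty)$ uses the Junge--Xu real/complex interpolation for weak-type endpoints; we pass first through the weak endpoint with Marcinkiewicz to get strong $L^{p_0}$ bounds for $1<p_0<2$. It gives $z=0$ strong bounds for $1<p\le 2$. Combining these with the trivial $L^\infty$ bound covers $2<p<\infty$.

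The main obstacle is the weak $(1,1)$ step. Our plan is to run Parcet's noncommutative Calderón--Zygmund decomposition with respect to the parabolic dyadic filtration, $f=g+b$, where $g$ is diagonal good plus off-diagonal good and $b$ is diagonal bad plus off-diagonal bad. The good parts are handled by the $L^2$ bound at a nearby $\Re z$. Equivalently, one can prove the $L^2$ estimate directly for the $\Re z=-1-\epsilon$ operator, since its multipliers are square-summable. The diagonal bad part is handled by using its cancellation inside each dyadic parabolic cube and the Hörmander integral condition, outside the projection $\zeta=1-\bigvee 5Q$ of dilated cubes. The off-diagonal bad terms $p_k f q_k$-type pieces are the delicate point, because the kernel is not pointwise Lipschitz. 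For these we would first try the argument of \cite{Lai2024}. We split $K_j^z$ by a non-isotropic Littlewood--Paley decomposition into pieces at scale $2^{-j-s}$. Each piece is then estimated in $L^2$ using the martingale-difference orthogonality of the off-diagonal terms, combined with Fourier decay that gives a gain $2^{-\epsilon s}$. The decay in $s$ is obtained by taking $\Re z=-1-\epsilon$ sufficiently negative relative to the $L^2$ losses, which is why we leave freedom in the choice of $\epsilon$.
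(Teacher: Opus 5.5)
Your architecture matches the paper's: a smooth comparison average, a Bessel-type analytic family, the $L^2$ bound via Plancherel for $0<\Re z<\tfrac12$, weak type $(1,1)$ via Parcet's decomposition on the parabolic filtration, then Marcinkiewicz and Stein interpolation. The genuine gap is your choice of the weak-type line, which breaks the conclusion.

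The $L^2$ line is limited to $\Re z=\sigma_0<\tfrac12$ by the decay $|\widehat{d\mu}(\xi)|\lesssim|\xi|^{-1/2}$. With the $L^{p_0}$ line at $\Re z=-1-\epsilon$, the point $z=0$ sits at $\theta=\sigma_0/(\sigma_0+1+\epsilon)<1/(3+2\epsilon)$. The exponent you obtain therefore satisfies
\[
\frac1p=\frac{1-\theta}{2}+\frac{\theta}{p_0}<\frac{1+\theta}{2}<\frac{2+\epsilon}{3+2\epsilon}\leq\frac23 ,
\]
so you only get $p>3/2$, and the range shrinks further as $\epsilon$ grows. Your claim that this yields $1<p\le2$ is false.

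To reach every $p>1$ you need the weak endpoint on lines $\Re z=\sigma_1$ with $\sigma_1\to0^-$, so that $\theta\to1$ and $p_0\to1$. This is why the paper proves Proposition~\ref{prop:A_j^s weak L1 bound} for every $\Re s<0$. Nothing in the argument requires $\Re s$ to be very negative:
\begin{itemize}
\item the $L^1$ H\"ormander condition holds with any exponent $0<\varepsilon<\min\{1,-s\}$, by the Bessel translation estimate;
\item the summable decay in the off-diagonal estimates comes from the fixed Fourier decay $|\widehat K(\xi)|\lesssim\rho(\xi)^{-1/2}$, the cancellation of the martingale differences (Schur's lemma against the non-isotropic Littlewood--Paley pieces), and the truncation $\phi_{j-\varepsilon s}$.
\end{itemize}
Your stated reason for taking $\Re z$ ``sufficiently negative'' is exactly what loses $1<p\le 3/2$.

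Two further steps need repair.

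\textbf{Marcinkiewicz and positivity.} The noncommutative Marcinkiewicz theorem (Theorem~\ref{thm:NC interpolation}) applies to positive subadditive maps. The map $f\mapsto\{f*K_j^z\}_j$ is not positivity preserving, and for $\Im z\neq0$ it is not even real. The paper handles this in three steps:
\begin{itemize}
\item it adds $B_j$ back, so that $A_j^s$ is positive for real $s<0$, and interpolates there;
\item it reaches complex $s$ through $A_j^s=A_j^{s'}*(G_{s''})_j$ together with the domination $|G_s|\le\frac{\Gamma(-\sigma/2)}{|\Gamma(-s/2)|}G_\sigma$;
\item this domination also supplies the growth in $\Im s$ needed for the three-lines argument.
\end{itemize}

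\textbf{Which piece of Parcet's decomposition is delicate.} The piece the $L^2$ bound cannot handle is the off-diagonal \emph{good} part. Only $\sup_s\Norm{g_{(s)}}_{L^2(\A)}^2\lesssim\lambda\Norm{f}_{L^1(\A)}$ is available, not a bound for $g_{\mathrm{off}}=\sum_s g_{(s)}$. So your statement that ``the good parts are handled by the $L^2$ bound'' fails for $g_{\mathrm{off}}$. Conversely, the off-diagonal bad terms $b_{k,s}$ have mean zero on cubes of $\Q_{k+s}$. After compressing by $\zeta$, they are disposed of by the H\"ormander estimate, just like the diagonal ones. The Littlewood--Paley/martingale-difference argument you describe is the right tool, but it must be applied to $g_{\mathrm{off}}$. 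There you split $K_j=K_j\phi_{j-\varepsilon s}+K_j(1-\phi_{j-\varepsilon s})$ to produce a factor $2^{-\alpha s}$ that is summable in $s$.
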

\par We begin with the $L^2$ estimate. In the commutative setting, a square function argument and Plancherel's identity yield Theorem \ref{thm:sup A_jf bound} for $p=2$ (see \cite[Chapter XI]{Stein1993}). The same method applies in the noncommutative setting. In fact, it yields a stronger $L^2$ result. Note that the operator $A_j$ can be written as a convolution operator, i.e.
\[ A_jf=f*d\mu_j, \]
where $d\mu_j$ is a Borel measure on $\R^2$ given by
\[ \int_{\R^2}gd\mu_j=\int_{\R}g(\gamma(2^{-j}t))\eta(t)dt. \]
Equivalently, $\wh{A_jf}(\xi)=\wh{f}(\xi)\wh{d\mu_j}(\xi)$, and each $\wh{d\mu_j}$ is a smooth function satisfying
\[ \wh{d\mu_j}(\xi_1,\xi_2)=\wh{d\mu_0}(2^{-j}\xi_1,2^{-2j}\xi_2), \]
where $\xi=(\xi_1,\xi_2)\in\R^2$. For convenience, write $d\mu:=d\mu_0$, and use the notation
\[ \delta\circ x=(\delta x_1, \delta^2 x_2) \quad \text{ for }  \delta>0, \ x=(x_1,x_2)\in\R^2 \]
to denote the one-parameter non-isotropic dilation. We then have
\[ \wh{d\mu_j}(\xi)=\wh{d\mu}(2^{-j}\circ\xi). \]
As stated in (\ref{eq:Fourer decay of d mu}) (for details, see Lemma \ref{lem:decay of Fourier transform of measure}), the measure $d\mu$ satisfies the Fourier decay estimate
\begin{equation*}
	|\widehat{d\mu}(\xi)|\lesssim (1+|\xi|)^{-\frac12}.
\end{equation*}
\par For each complex number $s$, we denote by $\nu^s$ the distribution on $\R^2$ with Fourier transform
\[ \widehat{\nu^s}(\xi)=\wh{d\mu}(\xi)(1+|\xi|^2)^{s/2}. \]
Similarly, for each $j\in\Z$, let $\nu^s_j$ be the distribution determined by
\[ \wh{\nu^s_j}=\wh{\nu^s}(2^{-j}\circ\xi), \]
and then define
\[ A^s_jf=f*\nu^s_j. \]
Note that $\nu^0_j=d\mu_j$ and the operators $A^0_j$ coincide with $A_j$.
\par The same $L^2$ argument applies to the more general kernels $\nu_j^s$ $(\Re(s)<1/2)$.
\begin{prop}
	\label{prop:A_j^s L2 bound}
	For every $s\in\mathbb{C}$ with $\Re(s)<1/2$ we have
	\[ \Big\|{\sup_{j\in\Z}A_j^sf}\Big\|_{L^2(\A)}\lesssim_s \Norm{f}_{L^2(\A)}. \]
\end{prop}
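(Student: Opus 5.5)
The plan is to follow the classical square-function argument of \cite[Chapter XI]{Stein1993}, replacing the pointwise supremum by the elementary operator-valued bound: for self-adjoint $a_j$ one has $-(\sum_j a_j^2)^{1/2}\le a_j\le(\sum_j a_j^2)^{1/2}$. We first reduce to self-adjoint $f\in L^2(\A)$. The kernel $\nu^s_j$ is not real, so we split $A^s_j=\Re A^s_j+i\Im A^s_j$ via the real and imaginary parts of the multiplier, taken as $\tfrac12(m(\xi)\pm\overline{m(-\xi)})$; these preserve self-adjointness and obey the same estimates. For a self-adjoint family $\{b_j\}$ with $-b\le b_j\le b$ one gets $\|\sup_j b_j\|_{L^2(\A;\ell^\infty)}\lesssim\|b\|_2$, using the standard description of $L^p(\ell^\infty)$ for self-adjoint families recalled in Appendix A. So it suffices to dominate $A^s_jf$ by a single self-adjoint element with controlled $L^2$ norm.

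Fix a Schwartz function $\phi$ on $\R^2$ with $\widehat\phi(0)=1$, $\widehat\phi$ supported near the origin, and $\widehat\phi$ even and real. Set $\phi_j$ by $\widehat{\phi_j}(\xi)=\widehat\phi(2^{-j}\circ\xi)$, and write
\[
A^s_jf=\widehat{\nu^s}(0)\,(f*\phi_j)+f*(\nu^s_j-\widehat{\nu^s}(0)\phi_j).
\]
Here $\widehat{\nu^s}(0)=\widehat{d\mu}(0)$ is a constant. The first term is a non-isotropic smooth maximal average. In the noncommutative setting I would bound it by the operator-valued Hardy--Littlewood maximal inequality for non-isotropic dilations, taking Mei's martingale comparison \cite{Mei2007} adapted to non-isotropic dyadic cubes as known. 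Alternatively, I would repeat the same square-function trick against a second scale, using $\phi_j-\phi_{j-1}$ and a telescoping bound. For the second term, write $m_j(\xi)=m(2^{-j}\circ\xi)$ with $m=\widehat{\nu^s}-\widehat{\nu^s}(0)\widehat\phi$. Then
\[
|A^s_jf-\widehat{\nu^s}(0)(f*\phi_j)|\le\Big(\sum_j |f*\check m_j|^2\Big)^{1/2}
\]
in the operator sense, after the real/imaginary reduction. By operator-valued Plancherel (valid since $L^2(\A)=L^2(\R^2;L^2(\M))$), the square of the $L^2$ norm of the right side equals $\int\sum_j|m(2^{-j}\circ\xi)|^2\|\hat f(\xi)\|^2_{L^2(\M)}\,d\xi$.

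The key step is then the scalar estimate $\sup_\xi\sum_j|m(2^{-j}\circ\xi)|^2<\infty$, and I expect the main care to be needed here. Use the non-isotropic norm $\rho(\xi)\approx|\xi_1|+|\xi_2|^{1/2}$, which satisfies $\rho(2^{-j}\circ\xi)=2^{-j}\rho(\xi)$.

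For large $\xi$, combine the Fourier decay $|\widehat{d\mu}(\xi)|\lesssim(1+|\xi|)^{-1/2}$ with the Bessel factor $(1+|\xi|^2)^{\Re s/2}$. This gives $|m(\xi)|\lesssim(1+|\xi|)^{\Re s-1/2}\lesssim(1+\rho(\xi))^{\Re s-1/2}$ when $\rho\ge1$, since $|\xi|\gtrsim\rho(\xi)$ there. The exponent is negative precisely because $\Re s<1/2$, so the terms with $2^{-j}\rho(\xi)\ge1$ sum geometrically.

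For small $\xi$, smoothness of $\widehat{d\mu}$ (compactly supported measure) and of the Bessel factor gives $|m(\xi)|\lesssim|\xi|\lesssim\rho(\xi)$ when $\rho\le1$. The terms with $2^{-j}\rho(\xi)<1$ then also sum geometrically.

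The constants depend on $s$ through $|(1+|\xi|^2)^{i\Im s/2}|=1$ and through derivatives of the Bessel factor, which grow at most polynomially in $|\Im s|$; this is harmless for a $\lesssim_s$ statement. Combining the two pieces via the triangle inequality in $L^2(\A;\ell^\infty)$ finishes the proof.
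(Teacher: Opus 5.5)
Your proposal is correct and follows essentially the same route as the paper. Both arguments subtract a smooth non-isotropic comparison average with matching value at $\xi=0$ (the paper uses $B_jf=f*\psi_j$ with $\psi\in C_c^\infty$ and $\widehat\psi(0)=\widehat{\nu^s}(0)$) and control it by the non-isotropic operator-valued Hardy--Littlewood maximal inequality. Both then bound the difference by a square function via operator-valued Plancherel and the estimates $|\widehat{\nu^s}(\xi)-\widehat\psi(\xi)|\lesssim\min\{\rho(\xi),\rho(\xi)^{\Re(s)-1/2}\}$, and split complex multipliers as $\frac12\bigl(m(\xi)\pm\overline{m(-\xi)}\bigr)$ to preserve self-adjointness.

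The differences are cosmetic. Your Schwartz $\phi$ with compact Fourier support needs $|\phi_j|$ dominated by a rapidly decaying sum of non-isotropic ball averages rather than by a single one. The paper takes the maximal inequality from Hong--Liao--Wang on doubling metric measure spaces rather than adapting Mei's dyadic comparison, and your telescoping alternative is not needed.
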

\par To prove Theorem \ref{thm:sup A_jf bound}, we also establish the following $L^p$ bound when $\Re(s)<0$.
\begin{prop}
	\label{prop:A_j^s Lp bound}
	For every $s\in\mathbb{C}$ with $\Re(s)<0$ and $1<p\leq\infty$, we have
	\[ \Big\|{\sup_{j\in\Z}A_j^sf}\Big\|_{L^p(\A)}\lesssim_{s,p} \Norm{f}_{L^p(\A)}. \]
\end{prop}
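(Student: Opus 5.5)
The plan is to prove a weak type $(1,1)$ estimate for the family $\{A_j^s\}_{j\in\Z}$, combine it with the $L^2$ bound of Proposition \ref{prop:A_j^s L2 bound} (available since $\Re(s)<0<1/2$) and a trivial $L^\infty$ bound, and conclude by noncommutative interpolation.

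\textbf{Step 1: structure of the kernel.} Since $\Re(s)<0$, I write $\nu^s=d\mu*G_{-s}$, where $G_{-s}$ is the Bessel potential kernel of order $-s$ (Appendix B). This kernel is integrable, is singular at the origin like $|x|^{-\Re(s)-2}$, and decays exponentially at infinity. Consequently $\nu^s\in L^1(\R^2)$ and $\nu_j^s(x)=2^{3j}\nu^s(2^j\circ x)$ has the same $L^1$ norm. Because $G_{-s}$ is complex-valued, I dominate $|G_{-s}|$ by a positive radial integrable kernel with the same singularity and decay. I then split $\nu^s$ into real/imaginary and positive/negative parts, so it suffices to treat families of positive operators with nonnegative kernels enjoying the same bounds. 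This reduction is needed for the maximal norms and for the Junge--Xu interpolation.

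\textbf{Step 2: the $L^\infty$ bound and integral regularity.} The $L^\infty$ bound is immediate: $\|A_j^sf\|_\infty\le\|\nu^s\|_1\|f\|_\infty$ uniformly in $j$.

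The key regularity input is an $L^1$ modulus of continuity. The Bessel kernel of positive order $\alpha=-\Re(s)$ satisfies $\int|G_{-s}(x-y)-G_{-s}(x)|\,dx\lesssim|y|^{\min(\alpha,1)}$, and this transfers to $\nu^s$ after convolving with the compactly supported measure $d\mu$. Together with the decay of $\nu^s$ away from a neighbourhood of the curve, I aim for a H\"ormander-type integral estimate adapted to the non-isotropic dilation. Writing $\rho$ for the homogeneous norm associated with $\delta\circ x$, the target is
\[
\sum_{j\in\Z}\int_{\rho(x)>C\rho(y)}|\nu^s_j(x-y)-\nu^s_j(x)|\,dx\lesssim 1 .
\]
For $2^j\rho(y)\le1$ the terms are bounded by $(2^j\rho(y))^{\varepsilon}$. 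For $2^j\rho(y)>1$ they are bounded by the tail mass of $\nu^s_j$ outside the region of size $2^{-j}$, which is rapidly decaying. Both pieces sum geometrically.

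\textbf{Step 3: noncommutative Calder\'on--Zygmund decomposition.} I use the filtration generated by non-isotropic dyadic rectangles of size $2^{-k}\times2^{-2k}$. Let $q_k$ be the Cuculescu projections at height $\lambda$ and write $f=g+b_d+b_{\off}$ as in Parcet's decomposition.

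The good part $g$ is handled by Chebyshev's inequality together with Proposition \ref{prop:A_j^s L2 bound}. For the diagonal bad part $b_d$, I remove the projection $\mathbf 1-\bigvee_k(q_k-q_{k+1})\otimes$ (dilated supports). Outside this set I use the mean-zero property of the martingale differences and the integral regularity of Step 2 to bound the $L^1(\A;\ell^\infty)$ norm by $\|f\|_1$.

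\textbf{Main obstacle.} The off-diagonal terms $p_i\,b\,p_k$ (with $i\ne k$) are where I expect the main difficulty. Projections do not commute with convolution, and the kernel lacks pointwise Lipschitz bounds, so the usual smooth-kernel arguments are unavailable. I would first try to use the $L^2$ orthogonality of the off-diagonal martingale-difference pieces, combined with localization to dilated rectangles. On the localized part I would apply the integral regularity estimate. On the remaining part I would use the Fourier decay $|\wh{d\mu}(\xi)|\lesssim(1+|\xi|)^{-1/2}$, together with a non-isotropic Littlewood--Paley decomposition, to obtain an $L^2$ estimate with geometric decay in the scale separation. Summing that decay would control these terms.

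\textbf{Step 4: interpolation.} Once the weak type $(1,1)$ estimate is in hand, I apply the Junge--Xu noncommutative Marcinkiewicz interpolation between weak $(1,1)$ and the strong $L^2$ bound of Proposition \ref{prop:A_j^s L2 bound}; this gives $1<p\le2$. Interpolating the $L^2$ bound with the $L^\infty$ bound gives $2\le p\le\infty$, completing the proof for all $1<p\le\infty$.
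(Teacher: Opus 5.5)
\textbf{Main gap: Step 3 swaps the roles of the two off-diagonal pieces of Parcet's decomposition.}

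You dispose of the whole good part $g$ by Chebyshev and an $L^2$ bound. That only works for $g_d$, which satisfies $\|g_d\|_1\lesssim\|f\|_1$ and $\|g_d\|_\infty\lesssim\lambda$. The off-diagonal good part $g_{\off}=\sum_{s\geq1}g_{(s)}$ is only controlled uniformly in the generation gap, $\sup_{s\geq1}\|g_{(s)}\|_2^2\lesssim\lambda\|f\|_1$ (Lemma~\ref{lem:g_off properties}). Chebyshev therefore gives a divergent sum over $s$.

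This is precisely the term carrying the noncommutative difficulty. The paper handles it by proving $L^2$ estimates for $\zeta\,T_j(g_{(s)})\,\zeta$ with a factor $2^{-\alpha s}$:
\begin{itemize}
\item it truncates $K_j=K_j\phi_{j-\varepsilon s}+K_j(1-\phi_{j-\varepsilon s})$;
\item it shows $\zeta(K_j\phi_{j-\varepsilon s})*g_{n+j,s}\zeta=0$ for $n<-\varepsilon s$;
\item it controls the far part by rapid decay and the non-isotropic Hardy--Littlewood maximal function;
\item it splits the near part by a non-isotropic Littlewood--Paley decomposition, using cancellation of $g_{n,s}$ plus Schur's lemma at low frequency and decay of $\widehat K$ at high frequency.
\end{itemize}

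Conversely, you single out the off-diagonal \emph{bad} terms as the obstacle and propose $L^2$ orthogonality and Littlewood--Paley for them. Those terms lie only in $L^1$, so those tools are unavailable. In the paper they are treated together with $b_d$ by the $L^1$ H\"ormander estimate. That estimate must carry decay in the generation gap, for three reasons: $b_{k,s}$ cancels on cubes of generation $k+s$; $\zeta$ kills it only near the generation-$k$ ancestor; and $\sum_k\|b_{k,s}\|_1\lesssim\|f\|_1$ holds only uniformly in $s$. Your target $\sum_j\int_{\rho(x)>C\rho(y)}|\nu^s_j(x-y)-\nu^s_j(x)|\,dx\lesssim1$ is therefore not enough. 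You need the region $\rho(x)\geq C2^s\rho(y)$ with a bound $2^{-\alpha s}$; your two-regime argument does give this once set up that way. As written, the proposal's essential estimate is missing and is aimed at the wrong term.

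\textbf{Secondary issues: the positive-part splitting in Step 1.}

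Splitting $\nu^s$ into the four nonnegative kernels $(\Re\nu^s)^\pm$, $(\Im\nu^s)^\pm$ is a legitimate alternative to the paper's route for complex $s$. The paper instead proves the case of real $s<0$, where $\nu^s\geq0$ because $G_s>0$. It then passes to complex $s$ via $A_j^s=A_j^{s'}*(G_{s''})_j$, duality with $L^{p'}(\A;\ell^1)$, the domination $|G_{s''}|\leq C\,G_{\Re s''}$ and the strong maximal function. That route also records the $e^{|t|}$ growth used in Section 5.

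However, your splitting destroys the product structure $\widehat{\nu^s}=\widehat{d\mu}\,F_s$. Neither Proposition~\ref{prop:A_j^s L2 bound} nor the decay of $\widehat{d\mu}$, both of which you invoke later, applies to the pieces. This is repairable: the pieces inherit the $L^1$ translation estimate, and choosing $y=\xi/(2|\xi|^2)$ gives
\[
|\widehat k(\xi)|\leq\tfrac12\|k(\cdot-y)-k\|_1\lesssim|\xi|^{-\varepsilon}.
\]
This suffices for the square function and for the high-frequency estimate.

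Likewise, in Step 4 you should interpolate the weak type $(1,1)$ bound with the $L^\infty$ bound for each positive piece. Theorem~\ref{thm:NC interpolation} requires positivity-preserving maps, and the $L^2$ bound you cite is for $A_j^s$, not for the pieces.
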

\par To prove Proposition \ref{prop:A_j^s Lp bound}, we need a noncommutative analogue of a weak type $(1,1)$ estimate for $\{A_j^s\}_{j\in\Z}$ for real values of $s$.
More precisely, for a family $\{x_j\}_{j\in I}\subset L^{1,\infty}(\M)$, define the $\Lambda^{1,\infty}(\M;\ell^\infty(I))$-norm by
\begin{equation*}
	\Norm{\{x_j\}_{j\in I}}_{\Lambda^{1,\infty}(\M;\ell^\infty(I))}=\sup_{\lambda>0}\lambda\inf_{e\in \mathcal{P}(\M)}\left\{\tau(e^\perp):\Norm{ex_je}_\infty<\lambda \text{ for all } j\in I\right\}.
\end{equation*}
This is the noncommutative analogue of the $L^{1,\infty}$-norm for maximal functions. For more details, see Appendix A.2.
\begin{prop}
	\label{prop:A_j^s weak L1 bound}
	For each $s\in\mathbb{C}$ with $\Re(s)<0$, we have
	\begin{equation*}
		\big\|\{A_j^sf\}_{j\in\Z}\big\|_{\Lambda^{1,\infty}(\A;\ell^\infty(\Z))}\lesssim_{s} \Norm{f}_{L^1(\A)}.
	\end{equation*}
	As a consequence, the operator $f\mapsto \{A_j^sf\}_{j\in\Z}$ extends to a bounded operator from $L^1(\A)$ to $\Lambda^{1,\infty}(\A;\ell^{\infty}(\Z))$.
\end{prop}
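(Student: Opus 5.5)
We would prove Proposition \ref{prop:A_j^s weak L1 bound} with a noncommutative Calder\'on--Zygmund decomposition adapted to the non-isotropic dilation $\delta\circ x$. First we record the kernel bounds. For $\Re(s)<0$ the multiplier $\wh{d\mu}(\xi)(1+|\xi|^2)^{s/2}$ decays like $(1+|\xi|)^{-1/2+\Re s}$. Write $\nu^s=d\mu*G_{-s}$, where $G_{-s}$ is the Bessel kernel (Appendix B). Since $\Re(-s)>0$, $G_{-s}$ is integrable and smooth off the origin. Hence $\nu^s$ is a finite (complex) measure absolutely continuous with respect to Lebesgue measure. Its density $K^s$ is integrable, has exponential decay at infinity, and is locally controlled by the singularity of $G_{-s}$ smeared along the parabola. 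The kernel $K^s_j(x)=2^{3j}K^s(2^j\circ x)$ of $A^s_j$ is then an $L^1$-normalized non-isotropic dilate.

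The key regularity input is a H\"ormander-type integral estimate in place of a pointwise Lipschitz bound. For a non-isotropic dyadic cube $Q$ of side $2^{-k}$ (dimensions $2^{-k}\times 2^{-2k}$) and $y,y'\in Q$, we aim to prove
\[
\sum_{j}\int_{\R^2\setminus 5Q}\big|K^s_j(x-y)-K^s_j(x-y')\big|\,dx\lesssim_s 1,
\]
with a summable gain $2^{-\epsilon|j-k|}$ in each piece. We would derive this by a non-isotropic Littlewood--Paley decomposition of $\wh{\nu^s}$. Fourier decay controls the high-frequency pieces through $L^2$ and Cauchy--Schwarz, and the smoothness of $\wh{d\mu}$ controls the low-frequency pieces. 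This yields a gain $2^{-\epsilon(k-j)}$ when $j<k$. When $j\ge k$, the gain $2^{-\epsilon(j-k)}$ comes from the mass of $K_j^s$ outside $5Q$.

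Next we run the decomposition. Take $f\in\A_{c,+}$ and $\lambda>0$, and form the Cuculescu projections $q_k$ relative to the non-isotropic dyadic martingale $f_k=\mathbb E_k f$. Set $p_k=q_{k-1}-q_k$ and $q=\bigwedge_k q_k$, so that $\lambda\,\nu(\mathbf 1-q)\le\|f\|_1$. We use Parcet's decomposition $f=g+b_d+b_{\off}$ with diagonal and off-diagonal good and bad parts. Here $b_d=\sum_k p_k(f-f_k)p_k$, and $b_{\off}$ is built from martingale differences $p_k\,df_{s}\,q_k$ and their adjoints. The diagonal and off-diagonal good parts satisfy $\|g\|_2^2\lesssim\lambda\|f\|_1$. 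For $g$ we therefore apply Proposition \ref{prop:A_j^s L2 bound} and Chebyshev in $L^2(\A;\ell^\infty)$; the off-diagonal good part needs the usual care that it lies in $L^2$ with the same bound. For the bad parts we cut away $\zeta=\mathbf 1-\bigvee_k\bigvee_{Q}\chi_{5Q}\,p_k$, with $Q$ ranging over cubes of side $2^{-k}$. Its trace is $\lesssim\lambda^{-1}\|f\|_1$ because non-isotropic dilation of cubes changes Lebesgue measure only by a bounded factor. It then suffices to bound $\sum_j\|\zeta A^s_j(b)\zeta\|_1\lesssim\|f\|_1$. For $b_d$ this follows from the mean-zero property of $p_k(f-f_k)p_k$ on each cube together with the H\"ormander estimate.

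The main obstacle is the off-diagonal bad part. There $\zeta$ does not annihilate the terms and the cancellation sits only in martingale differences $df_{k+m}$ localized below scale $k$. We would write $A^s_j=\sum_{l}A^s_j\Delta_l$ with non-isotropic Littlewood--Paley pieces. Each term $\zeta A^s_j(p_k\,df_{k+m}\,q_k)\zeta$ would be estimated by combining three ingredients: the localization of $\zeta$ away from $5Q$, the integral smoothness $2^{-\epsilon|j-k|}$, and Fourier decay, which gives an extra $2^{-\epsilon m}$ from the oscillation of $df_{k+m}$ at finer scale. Summation then uses $\sum_{k,m}\|p_k\,df_{k+m}\,q_k\|_1\lesssim\|f\|_1$, or, failing that, an $L^2$ square-function bound for the off-diagonal terms in the spirit of Parcet. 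Finally we combine the pieces via the quasi-triangle inequality in $\Lambda^{1,\infty}$. Extension to $L^1(\A)$ follows by density and positivity splitting.
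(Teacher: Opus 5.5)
Your proposal has a genuine gap in the good part, at exactly the step that carries the main noncommutative difficulty. You assert that the whole good part, including $g_{\off}$, satisfies $\|g\|_2^2\lesssim\lambda\|f\|_1$, and then dispose of $g$ by Proposition~\ref{prop:A_j^s L2 bound} and Chebyshev.

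Parcet's decomposition does not give this. Lemma~\ref{lem:g_off properties} provides only the slice-wise bound $\sup_{s\ge1}\|g_{(s)}\|_2^2\lesssim\lambda\|f\|_1$, where $g_{\off}=\sum_{s\ge1}g_{(s)}$. The pieces $g_{k,s}$ are mutually orthogonal, so $\|g_{\off}\|_2^2=\sum_{s\ge1}\|g_{(s)}\|_2^2$. The natural bound for each slice is $\lesssim\lambda\sum_k\varphi(p_kf)$, and this does not sum in $s$. Your argument therefore handles only $g_d$, where indeed $\|g_d\|_2^2\le\|g_d\|_1\|g_d\|_\infty\lesssim\lambda\|f\|_1$.

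For $g_{\off}$ one needs a gain $2^{-\alpha s}$ on each slice, coming from the operator after compression by $\zeta$. The paper obtains it in Lemmas~\ref{lem:K(phi) estimate} and~\ref{lem:K(1-phi) estimate}:
\begin{itemize}
\item $K_j$ is truncated at scale $2^{-j+\varepsilon s}$, and the tail is controlled by rapid decay and the non-isotropic maximal function.
\item The pieces $g_{n+j,s}$ with $n<-\varepsilon s$ are annihilated by $\zeta$.
\item The remaining pieces are handled by a non-isotropic Littlewood--Paley split: at low frequency, the cancellation of $g_{n,s}$ on $\Q_{n+s-1}$ and Schur's lemma; at high frequency, $|\wh K(\xi)|\lesssim\rho(\xi)^{-1/2}$.
\item The slices are then combined using projections with thresholds $\lambda/s^2$.
\end{itemize}

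Your last paragraph contains ingredients of this kind, but they are attached to the wrong terms, and the bookkeeping does not close. The martingale-difference terms $p_k\,df_{k+m}\,q_{k+m-1}+\text{h.c.}$ form the off-diagonal \emph{good} part, so you count them both in $g$ and in $b_{\off}$.

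Parcet's off-diagonal bad terms are $b_{k,s}=p_k(f-f_{k+s})p_{k+s}+\text{h.c.}$. Contrary to what you say, $\zeta$ does annihilate them for $x\in100\widehat{Q}^s$ (Lemma~\ref{claim:zeta b_ks zeta=0}). What they require is a H\"ormander bound, summed over all $j$, with decay $2^{-\alpha s}$ in the gap between the cancellation scale $2^{-k-s}$ and the localization scale $2^{-k}$. Your estimate takes $y,y'$ in one cube, integrates off $5Q$, and is bounded by a constant. The gain $2^{-\epsilon|j-k|}$ only makes the $j$-sum converge; it gives no decay in that gap, so the estimate cannot be summed over $s$.

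Finally, the double sum $\sum_{k,m}\|p_k\,df_{k+m}\,q_k\|_1\lesssim\|f\|_1$ that you invoke is not available. Routing $g_{\off}$ through the same $L^1$/H\"ormander argument as $b$ is plausible: it is also killed by $\zeta$ near its level-$k$ cube and has mean zero on cubes of $\Q_{k+s-1}$. But this route needs the decay in $s$ together with a slice-wise bound $\sup_s\sum_k\|g_{k,s}\|_1\lesssim\|f\|_1$. That bound has to be proved, for instance from $\|p_kX\|_1\le\|p_k\|_2\|p_kX\|_2$, $q_{k+s-1}f_{k+s}q_{k+s-1}\le 8\lambda q_{k+s-1}$ and $\varphi(1-q)\le\lambda^{-1}\|f\|_1$. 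Your proposal neither states nor proves it.
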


\par The weak type $(1,1)$ bound above, together with the trivial $(\infty,\infty)$ estimate, yields the $L^p$-boundedness (i.e. Proposition \ref{prop:A_j^s Lp bound}) by the noncommutative Marcinkiewicz interpolation theorem. Once we have proved Proposition \ref{prop:A_j^s L2 bound} and Proposition \ref{prop:A_j^s Lp bound}, noncommutative complex interpolation (see Section 5) yields Theorem \ref{thm:sup A_jf bound}.

\section{$L^2$-boundedness}
\par In this section, we prove Proposition \ref{prop:A_j^s L2 bound} by a standard square function argument.

\subsection{Noncommutative Hardy-Littlewood maximal functions}
\par The quasi-norm $\rho$ on $\R^2$ adapted to this problem is
\[ \rho(x)=\max\{|x_1|,|x_2|^{1/2}\}, \quad x=(x_1,x_2)\in\R^2. \]
Let $|x|_\infty:=\max\{|x_1|,|x_2|\}$. We have $|x|_\infty=1$ if and only if $\rho(x)=1$, and
\begin{equation}
	\label{eq:rho(x) and |x|_infty}
	\begin{split}
		|x|_\infty\leq \rho(x)\leq |x|_{\infty}^{1/2}, \quad &\text{ when } |x|_\infty\leq 1, \\
		|x|_{\infty}^{1/2}\leq \rho(x)\leq |x|_\infty, \quad &\text{ when } |x|_\infty \geq 1.
	\end{split}
\end{equation}
Note that $\rho$ satisfies the triangle inequality:
\begin{equation}
	\label{eq:triangle ineq for quasinorm}
	\rho(x+y)\leq \rho(x)+\rho(y).
\end{equation}
This follows from $|x_1+y_1|\leq |x_1|+|y_1|$ and $|x_2+y_2|^{1/2}\leq |x_2|^{1/2}+|y_2|^{1/2}$. The quasi-norm $\rho$ is homogeneous with respect to the non-isotropic dilations:
\[ \rho(\delta\circ x)=\delta\rho(x), \quad \delta>0, \ x\in\R^2. \]
We define the associated metric on $\R^2$ (also denoted by $\rho$) by
\[ \rho(x,y):=\rho(x-y)=\max\{|x_1-y_1|,|x_2-y_2|^{1/2}\}, \quad x,y\in\R^2. \]
The corresponding balls on $\R^2$ are given by
\[ B(x,r)=\{y\in\R^2: \rho(x,y)<r\}=\{y\in\R^2: |y_1-x_1|<r, |y_2-x_2|<r^2\}. \]
If $x=0$ we write $B(0,r)=B(r)$.
Let $\tilde{A}_{r}(f)$ be the non-isotropic Hardy-Littlewood averaging operator
\begin{equation}
	\label{eq:average nonisotropic}
	\tilde{A}_r(f)(x)=\frac1{|B(r)|} \int_{B(r)}|f(x-y)|dy, \quad r>0.
\end{equation}
\begin{theorem}
	\label{thm: H-L maximal nonisotropic}
	For all $\lambda>0$ and $f\in L^1(\A)$, there exists a projection $e\in \PA$ such that 
	\begin{equation*}
		\|e\tilde{A}_r(f)e\|\leq \lambda \text{ for all } r>0 \quad \text{ and } \quad \varphi(e^\perp)\lesssim\frac{1}{\lambda}\Norm{f}_{L^1(\A)}. 
	\end{equation*}
	In other words, the non-isotropic maximal operator $f\mapsto \{\tilde{A}_r(f)\}_{r>0}$ is $L^1(\A)\to \Lambda^{1,\infty}(\A;\ell^{\infty}(\R_+))$ bounded.
\end{theorem}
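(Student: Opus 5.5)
The plan is to reduce the non-isotropic Hardy--Littlewood maximal inequality to Cuculescu's weak type $(1,1)$ inequality for noncommutative martingales, adapting Mei's argument for the isotropic case. First, since $\tilde A_r(f)$ only involves $|f|$ and is positive, and $\tilde A_r(f)\le \tilde A_r(|f|)$ with $\Norm{|f|}_1=\Norm{f}_1$, we may assume $f\ge0$, and by density $f\in\A_{c,+}$. The dilation structure is dyadic-friendly. We introduce non-isotropic dyadic ``rectangles''
\[
Q=[2^{-k}m_1,2^{-k}(m_1+1))\times[4^{-k}m_2,4^{-k}(m_2+1)),\qquad k\in\Z,\ m\in\Z^2 .
\]
These form a nested filtration $\{\sigma_k\}_{k\in\Z}$ in which each generation-$k$ cell is the union of $8$ cells of generation $k+1$. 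Let $\mathsf E_k$ be the associated conditional expectations on $\A$; they are trace preserving, and $\{\mathsf E_k f\}$ is a noncommutative martingale in $\A=L^\infty(\R^2)\ot\M$.

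The key step is a pointwise (operator) domination of $\tilde A_r$ by finitely many dyadic averages. For this we use a translated family of such filtrations, the non-isotropic analogue of the ``one-third trick''. Taking shifts $\beta\in\{0,1/3,2/3\}^2$ adapted to each coordinate (with $\beta_2$ scaled by $4^{-k}$ rather than $2^{-k}$, which still works because the classical one-third trick applies coordinatewise to the $2$-adic and $4$-adic systems separately), we expect the following. For every $x$ and $r$ with $2^{-k-1}<r\le2^{-k}$, the ball $B(x,r)$ is contained in a dyadic cell $Q$ of generation $k-c$ from one of the shifted filtrations $\mathcal D^\beta$, for a fixed constant $c$. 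Moreover $|Q|\lesssim|B(r)|$, since $|B(r)|=4r^3$ and cells of generation $k$ have measure $8^{-k}$. Positivity of $f$ then gives the operator inequality
\[
\tilde A_r(f)(x)\le C\sum_{\beta}\mathsf E^\beta_{k-c}(f)(x).
\]
Because a finite sum is involved, we cannot pick $\beta$ depending on $(x,r)$ inside one operator bound. Instead we bound by the sum over all $\beta$, which is legitimate since each term is positive.

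Now apply Cuculescu's theorem to each martingale $\{\mathsf E^\beta_k f\}_k$ at level $\lambda/(C N)$, where $N$ is the number of shifts. This gives projections $e_\beta$ with $\|e_\beta\mathsf E_k^\beta(f)e_\beta\|\le\lambda/(CN)$ for all $k$ and $\varphi(e_\beta^\perp)\le CN\Norm{f}_1/\lambda$. Set $e=\bigwedge_\beta e_\beta$. Then $\varphi(e^\perp)\le\sum_\beta\varphi(e_\beta^\perp)\lesssim\Norm f_1/\lambda$. Compressing the domination inequality by $e$ (compression preserves order) gives $\|e\tilde A_r(f)e\|\le C\sum_\beta\|e\,\mathsf E^\beta_{k-c}(f)e\|\le\lambda$, since $e\le e_\beta$ implies $e\,\mathsf E^\beta e=e\,(e_\beta\mathsf E^\beta e_\beta)\,e$. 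This holds uniformly in $r>0$ and proves the theorem.

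The main obstacle is the covering/domination lemma. We must check carefully that the shifted non-isotropic grids do capture every $\rho$-ball at comparable scale, with constants uniform in $x$ and $r$. One subtlety is that the second coordinate uses $4$-adic intervals, so the one-third shift must be taken relative to the $4$-adic structure (shifts in $\{0,1/3\}$ also work $4$-adically, or one may use shifts $\{0,1/5,\dots\}$ if needed). A second is that containment of intervals of length $2r^2$ in a single shifted $4$-adic interval of length $\approx r^2$ is exactly the classical one-dimensional statement. Once this is verified, the rest is a formal consequence of Cuculescu's inequality.
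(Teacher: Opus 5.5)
Your proof is correct, but it takes a different route from the paper. The paper gives no argument of its own here. It observes that $(\R^2,\rho,dx)$ is a doubling metric measure space, with $|B(x,r)|=4r^3$ and doubling constant $8$, and applies the general noncommutative Hardy--Littlewood theorem of Hong, Liao and Wang \cite{HongLiaoWang2021} to $|f|$.

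You instead carry out Mei's argument by hand, with three ingredients:
\begin{itemize}
\item finitely many adjacent non-isotropic dyadic filtrations;
\item the operator domination $\tilde A_r(f)\le C\sum_\beta E^\beta_{k-3}(f)$, where summing over all shifts means no measurable choice of $\beta(x,r)$ is needed;
\item Cuculescu's inequality for each filtration, followed by taking $e=\bigwedge_\beta e_\beta$.
\end{itemize}

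The covering step you single out as the main obstacle does go through. Let $2^{-k-1}<r\le 2^{-k}$ and $j=k-3$. Then $2r<2^{-j}/3$ and $2r^2<4^{-j}/3$. Hence each coordinate projection of $B(x,r)$ contains at most one point of $\frac{2^{-j}}{3}\Z$, respectively $\frac{4^{-j}}{3}\Z$. It therefore avoids the endpoint set of at least two of the three shifted grids in each coordinate, and the resulting cell satisfies $|Q|=8^{-j}<1024\,|B(r)|$.

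When you write the grids down, keep one detail in mind. In the $2$-adic coordinate the shift must alternate, $2^{-k}(\Z+(-1)^k\beta_1)$, for the system to be nested; a fixed global translate fails at large scales. In the $4$-adic coordinate the constant shift $4^{-k}(\Z+\beta_2)$ is already nested, since $3\beta_2\in\Z$.

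The two routes buy different things.
\begin{itemize}
\item \textbf{The paper's citation} is one line and covers every doubling space at once. The paper uses this again for $(\R^N,\rho)$ in Section 6 and for Euclidean balls in Section 7.
\item \textbf{Your argument} is self-contained with explicit constants. It exploits the product structure of $\rho$-balls and matches the non-isotropic cubes $\mathcal{Q}_k$ of Section 4. It gives the strong $(p,p)$ bound immediately from Junge's Doob inequality. It also extends to $(\R^N,\rho)$: in each $2^{|\alpha|}$-adic coordinate, use a constant $\frac13$-shift if $|\alpha|$ is even and an alternating one if $|\alpha|$ is odd.
\end{itemize}

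One small point: your reduction to $\A_{c,+}$ by density is unnecessary, and for the quasi-norm $\Lambda^{1,\infty}$ it is not automatic. It is cleaner to apply Cuculescu directly to positive $f\in L^1(\A)$, handling the $\Z$-indexed filtration by the usual truncation-and-limit argument.
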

\begin{proof}
The space $(\R^2,\rho,dx)$ is a doubling metric measure space with Radon measure $dx$: its balls satisfy $|B(x,r)|=4r^3$, so they have positive finite measure and the doubling constant is $8$. Applying Hong, Liao and Wang \cite[Theorem 4.1]{HongLiaoWang2021} to the positive function $|f|$, and using $\Norm{|f|}_{L^1(\A)}=\Norm{f}_{L^1(\A)}$, gives the required common projection and weak type $(1,1)$ estimate.
\end{proof}
The same result also gives strong type $(p,p)$ boundedness of $\{\tilde A_r\}_{r>0}$ for every $1<p<\infty$.

\medskip

\par For later use, we also recall the $L^p$-boundedness of noncommutative strong maximal functions, which is a consequence of the $L^p$-boundedness of one-dimensional maximal functions in \cite[Chapter 3]{Mei2007}. 
For two nondegenerate intervals $I$, $J$ containing $0$, we define 
\begin{equation*}
	A_{I,J}(f)(x)=\frac1{|I\times J|}\int_{I\times J}|f(x-y)|dy. 
\end{equation*}
\begin{theorem}
	\label{thm:strong maximal boundedness}
	For $1<p<\infty$, we have 
	\begin{equation*}
		\Big\|\sup_{0\in I, J}A_{I\times J}(f)\Big\|_{L^p(\A)}\lesssim_p \Norm{f}_{L^p(\A)}. 
	\end{equation*}
	In other words, the strong maximal averaging operator $f\mapsto \{A_{I,J}(f)\}_{0\in I,J}$ is bounded from $L^p(\A)$ to $L^p(\A;\ell^\infty)$. 
\end{theorem}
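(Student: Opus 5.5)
The plan is to reduce the strong maximal function to an iteration of one-dimensional maximal functions, one in each coordinate direction, and then invoke Mei's one-dimensional result \cite[Chapter 3]{Mei2007} twice.

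\textbf{Step 1: reduce to centered products.} Since $f\mapsto|f|$ is an isometry on $L^p(\A)$ and the averages only involve $|f|$, we may take $f\geq0$. Given intervals $I,J$ containing $0$, we have $I\subset[-|I|,|I|]$ and likewise for $J$. Hence
\[
A_{I,J}(f)\leq 4\,A_{[-|I|,|I|]\times[-|J|,|J|]}(f)
\]
as operators, because the integrand is positive. By the order structure of $L^p(\A;\ell^\infty)$, it therefore suffices to bound the family $\{A_{s,t}f\}_{s,t>0}$, where $A_{s,t}$ denotes the average over $[-s,s]\times[-t,t]$. This family factors as $A_{s,t}=M^{(1)}_s M^{(2)}_t$. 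Here $M^{(i)}_s$ is the centered one-dimensional average in the $i$-th variable, and the two operators commute.

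\textbf{Step 2: the second variable.} I view $\A=L^\infty(\R_{x_1})\ot\big(L^\infty(\R_{x_2})\ot\M\big)$. Mei's one-dimensional noncommutative Hardy--Littlewood theorem, applied with the von Neumann algebra $L^\infty(\R_{x_1})\ot\M$ acting in the $x_2$ variable, gives a positive $F\in L^p(\A)$ with $M^{(2)}_t f\leq F$ for all $t>0$ and $\|F\|_p\lesssim_p\|f\|_p$. Since $M^{(2)}_t$ acts fibrewise in $x_1$, the identification $L^p(\A)=L^p(\R;L^p(L^\infty(\R)\ot\M))$ justifies this application.

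\textbf{Step 3: the first variable.} The operator $M^{(1)}_s$ is positive, so for every $s,t$ we get $A_{s,t}f=M^{(1)}_s(M^{(2)}_tf)\leq M^{(1)}_sF$. Applying Mei's theorem again, now in $x_1$ with coefficient algebra $L^\infty(\R_{x_2})\ot\M$, produces a positive $G$ with $M^{(1)}_sF\leq G$ for all $s$ and $\|G\|_p\lesssim\|F\|_p$. Then $A_{s,t}f\leq G$ for all $s,t$, so
\[
\big\|\sup_{s,t} A_{s,t} f\big\|_{L^p(\A)}\leq\|G\|_p\lesssim_p\|f\|_p.
\]
A general, non-positive $f$ is handled by the same positive domination, since the integrand is $|f|$.

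\textbf{Main obstacle.} The only delicate point is the iteration step. One needs positivity of $M^{(1)}_s$, so that the operator inequality $M^{(2)}_tf\leq F$ transfers to $M^{(1)}_sM^{(2)}_tf\leq M^{(1)}_sF$. One also needs the positive-majorant characterization of the $L^p(\ell^\infty)$ norm, valid for positive families and recalled in Section 1, which lets us avoid working with the full noncommutative supremum directly. Uncountability of the index set is handled as usual: take the supremum over countable dense sets of rationals and pass to the limit using the continuity of the averages in $s,t$.
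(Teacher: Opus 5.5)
Your proof is correct and follows the paper's route. The paper only remarks that the strong maximal bound is a consequence of Mei's one-dimensional Hardy--Littlewood inequality, and your iteration makes that remark precise: Mei's theorem in $x_2$ with coefficient algebra $L^\infty(\R)\ot\M$, then positivity of the first-variable averages, then Mei's theorem in $x_1$, combined with the positive-majorant description of the $L^p(\A;\ell^\infty)$ norm. Your reduction to centered rectangles and your handling of the uncountable index set are also fine; the latter is in fact automatic, since Mei's bound already covers all radii.
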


\subsection{$L^2$-boundedness of $\{A_j^s\}_{j\in\Z}$}
\par
\par Recall that $A_j(f)$ is defined in (\ref{eq:average on parabola eta}), with the cutoff $\eta$ specified there. Choose a real-valued $\psi \in C_c^\infty(\R^2)$ satisfying
\[ \int_{\R^2} \psi(x_1,x_2)dx_1dx_2=\int_\R \eta(t)dt. \]
This implies $\widehat{d\mu}(0)=\widehat{\nu^s}(0)=\widehat{\psi}(0)$. Then for $j\in\Z$ define
\begin{equation}
	\label{eq:B_j def}
	B_j(f)(x)=\int_{\R^2}f(x-2^{-j}y_1,x_2-2^{-2j}y_2)\psi(y_1,y_2)dy_1dy_2.
\end{equation}
Note that $B_j$ is a convolution operator, $B_j(f)=f*\psi_j$, where
\begin{equation}
	\label{eq:psi_j def}
	\psi_j(x_1,x_2)=2^{3j}\psi(2^jx_1,2^{2j}x_2).
\end{equation}
\par Consider the Bessel potential multiplier involved in the kernel of $A_j^s$:
\begin{equation*}
	F_s(\xi)=(1+|\xi|^2)^{\frac s2}, \quad s\in\mathbb{C}.
\end{equation*}
For use also in Section 6, we record the required kernel properties on $\R^n$; here the parabola case corresponds to $n=2$.
\begin{lemma}
	\label{lem:Bessel kernel properties}
	Let $G_s=\F^{-1}[F_s]$ on $\R^n$. For every real $s$, $G_s$ is a real radial tempered distribution. If $\Re(s)<0$, it is represented by a radial $L^1$-function, smooth away from the origin. For real $s<0$, this function is positive and radially decreasing, has integral $1$, and together with its first derivatives decays exponentially at infinity. Moreover, if $s=\sigma+it$ with $\sigma<0$, then
	\begin{equation}
		\label{eq:G_s domination}
		|G_s(x)|\leq \frac{\Gamma(-\sigma/2)}{|\Gamma(-s/2)|}G_\sigma(x), \qquad x\neq0.
	\end{equation}
\end{lemma}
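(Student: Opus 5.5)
We will use the classical subordination representation of the Bessel kernel. For $\Re(s)<0$, write $-s/2=a$ with $\Re a>0$. Starting from $\Gamma(a)(1+|\xi|^2)^{-a}=\int_0^\infty e^{-u(1+|\xi|^2)}u^{a-1}\,du$, take inverse Fourier transforms with our convention. The Gaussian $e^{-u|\xi|^2}$ has inverse transform $(\pi/u)^{n/2}e^{-\pi^2|x|^2/u}$, which gives
\[
 G_s(x)=\frac{1}{\Gamma(-s/2)}\int_0^\infty e^{-u}\Big(\frac{\pi}{u}\Big)^{n/2}e^{-\pi^2|x|^2/u}\,u^{-s/2-1}\,du .
\]
This identity holds first in the sense of tempered distributions: pair both sides with a Schwartz function and apply Fubini, which is justified by absolute convergence since $\Re a>0$. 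The integrand is also absolutely integrable in $(u,x)$, so the right-hand side is an $L^1$ function with
\[
 \|G_s\|_{L^1}\le \frac{\Gamma(-\sigma/2)}{|\Gamma(-s/2)|},\qquad \sigma=\Re s.
\]
Radiality is clear from the formula. Smoothness away from the origin follows by differentiating under the integral. For $|x|\ge\epsilon$, every $x$-derivative of $u^{-n/2}e^{-\pi^2|x|^2/u}$ is bounded by $C_\epsilon u^{-N}e^{-c\epsilon^2/u}$, which together with $e^{-u}$ is integrable near both $u=0$ and $u=\infty$.

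For real $s$, $F_s$ is real and radial, so $G_s$ is a real radial tempered distribution (Fourier transform of a real even function). For real $s<0$ the integrand is positive, so $G_s>0$. Each $e^{-\pi^2|x|^2/u}$ is radially decreasing, hence so is $G_s$. Its integral equals $\widehat{G_s}(0)=F_s(0)=1$, using $L^1$ and continuity of the Fourier transform.

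Exponential decay is the step requiring actual estimates. For $|x|\ge1$, split the integral at $u=|x|$. On $u\ge|x|$, the factor $e^{-u}\le e^{-|x|/2}e^{-u/2}$ gives a bound $Ce^{-|x|/2}$. On $u\le|x|$, use $e^{-\pi^2|x|^2/u}\le e^{-\pi^2|x|/2}e^{-\pi^2|x|^2/(2u)}$; the remaining integral is bounded by a polynomial in $|x|$. Hence $G_s(x)\lesssim e^{-c|x|}$.

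For the first derivatives, differentiating under the integral produces an extra factor $|x|/u$. This is absorbed in the same way, since $(|x|/u)e^{-\pi^2|x|^2/(2u)}$ remains controlled by a polynomial times the exponential. This also justifies the differentiation.

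Finally, for the domination (\ref{eq:G_s domination}) with $s=\sigma+it$, note that $|u^{-s/2-1}|=u^{-\sigma/2-1}$. Taking absolute values inside the subordination integral gives
\[
 |G_s(x)|\le \frac{1}{|\Gamma(-s/2)|}\int_0^\infty e^{-u}\Big(\frac{\pi}{u}\Big)^{n/2}e^{-\pi^2|x|^2/u}u^{-\sigma/2-1}\,du=\frac{\Gamma(-\sigma/2)}{|\Gamma(-s/2)|}G_\sigma(x).
\]
The main care points are the distributional justification of the subordination formula and the uniform splitting argument for decay. Both are routine once the representation is in hand.
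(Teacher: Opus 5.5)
Your proposal is correct and follows essentially the same route as the paper's Appendix B. Both use the Gamma-function subordination formula with the heat kernel $(\pi/u)^{n/2}e^{-\pi^2|x|^2/u}$, the $L^1$ bound $\Gamma(-\sigma/2)/|\Gamma(-s/2)|$ via Fubini, positivity, monotonicity and mass one read off from the representation, and the domination obtained by taking absolute values inside the integral. The only difference is cosmetic, in the decay step: you split the $u$-integral at $u=|x|$, whereas the paper halves the exponent and uses $u+\pi^2|x|^2/u\geq 2\pi|x|$ to get the bound $(1+|x|)e^{-\pi|x|}$ directly; both arguments work.
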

For a proof of lemma \ref{lem:Bessel kernel properties}, and morover, the integral representation and precise asymptotics of $G_s(x)$, see Appendix B.

\medskip

\par Now we return to the proof of Proposition \ref{prop:A_j^s L2 bound}, first for real $s<1/2$. By the definitions of $A_j^s$ and $B_j$, we have
\begin{equation*}
	A_j^sf(x)=f*\nu_j^s(x),  \quad \widehat{A_j^sf}(\xi)=\widehat{f}(\xi)\widehat{\nu^s}(2^{-j}\circ\xi),
\end{equation*}
\begin{equation*}
	B_jf(x)=f*\psi_j(x), \quad \widehat{B_jf}(\xi)=\widehat{f}(\xi)\widehat{\psi}(2^{-j}\circ\xi).
\end{equation*}
By the triangle inequality, we have
\begin{equation*}
	\Big\|{\sup_j A_j^s(f)}\Big\|_{L^2(\A)}\leq \Big\|{\sup_j B_j(f)}\Big\|_{L^2(\A)}+\Big\|{\sup_j (A_j^s(f)-B_j(f))}\Big\|_{L^2(\A)}.
\end{equation*}
Define
\begin{equation}
	\label{eq:S(f) def}
	S(f)(x)=\bigg(\sum_{j\in\Z}|A_j^sf(x)-B_jf(x)|^2\bigg)^{1/2}
\end{equation}
as the square function. Then
\begin{equation*}
	|A_j^s(f)-B_j(f)|^2\leq S(f)^2.
\end{equation*}
For real $s$, the multiplier $\widehat{\nu^s}-\widehat{\psi}$ is Hermitian symmetric: its value at $-\xi$ is the complex conjugate of its value at $\xi$, since $d\mu$ and $\psi$ are real and $F_s$ is real and even. The same holds after dilation, so $A_j^s-B_j$ has a real tempered distribution kernel and preserves self-adjointness. Thus, for $f\in\A_{c,+}$, we have
\begin{equation*}
	-S(f) \leq -|A_j^s(f)-B_j(f)| \leq A_j^s(f)-B_j(f) \leq |A_j^s(f)-B_j(f)| \leq S(f).
\end{equation*}
By the definition of the maximal norm,
\[ \Big\|{\sup_j \big(A_j^s(f)-B_j(f)\big)}\Big\|_{L^2(\A)}\leq\Norm{S(f)}_{L^2(\A)}, \]
and therefore
\begin{equation}
	\label{eq:sup A_j<sup B_j +S(f)}
	\Big\|{\sup_j A_j^s(f)}\Big\|_{L^2(\A)}\leq \Big\|{\sup_j B_j(f)}\Big\|_{L^2(\A)}+\Norm{S(f)}_{L^2(\A)}.
\end{equation}
\par Now by (\ref{eq:sup A_j<sup B_j +S(f)}), it suffices to prove
\begin{equation*}
	\Big\|{\sup_j B_j(f)}\Big\|_{L^2(\A)}\lesssim\Norm{f}_{L^2(\A)} \quad \text{and} \quad \Norm{S(f)}_{L^2(\A)}\lesssim \Norm{f}_{L^2(\A)}.
\end{equation*}
The first estimate follows from the strong type $(2,2)$ estimate recorded after Theorem \ref{thm: H-L maximal nonisotropic}. For the $L^2$-estimate of the square function $S(f)$, we use the operator-valued Plancherel identity (see Appendix A.1):
\begin{equation*}
	\begin{split}
		\Norm{S(f)}_{L^2(\A)}^2 &= \sum_{j\in\Z}\big\|{A_j^sf-B_jf}\big\|_{L^2(\A)}^2
		=\sum_{j\in\Z}\big\|{\widehat{A_j^sf}-\widehat{B_jf}}\big\|_{L^2(\A)}^2 \\
		&=\sum_{j\in\Z}\int_{\R^2}\big\|{\widehat{A_j^sf}(\xi)-\widehat{B_jf}(\xi)}\big\|_{L^2(\M)}^2 d\xi \\
		&=\sum_{j\in\Z}\int_{\R^2}\big\|{\hat{f}(\xi)}\big\|_{L^2(\M)}^2\big|{\widehat{\nu^s}(2^{-j}\circ\xi)-\widehat{\psi}(2^{-j}\circ\xi)}\big|^2d\xi \\
		&\lesssim\Norm{f}_{L^2(\A)}^2.
	\end{split}
\end{equation*}
The last step follows from the following estimate: for each real $s<1/2$ we have
\begin{equation}
	\label{eq:sum nu-psi estimate}
	\sum_{j\in\Z}\big|{\widehat{\nu^s}(2^{-j}\circ\xi)-\widehat{\psi}(2^{-j}\circ\xi)}\big|^2\leq \text{Const}, \quad \text{uniformly in } \xi.
\end{equation}
The constant above depends only on $s$. The proof of the estimate (\ref{eq:sum nu-psi estimate}) depends only on the decay of $|\widehat{\nu^s}(\xi)-\widehat{\psi}(\xi)|$ as $\rho(\xi)\to0$ and $\rho(\xi)\to\infty$. More precisely,
\begin{equation}
	\label{eq:decay of FT of nu^s-psi}
	\begin{cases}
		|\widehat{\nu^s}(\xi)-\widehat{\psi}(\xi)|\leq A_s\rho(\xi),  &\text{ if } \rho(\xi)\leq1; \\
		|\widehat{\nu^s}(\xi)-\widehat{\psi}(\xi)|\leq A_s\rho(\xi)^{-1/2+s},  &\text{ if } \rho(\xi)\geq1.
	\end{cases}
\end{equation}
The estimates (\ref{eq:decay of FT of nu^s-psi}) can be found in Stein \cite[Chapter XI, Section 2.2.1]{Stein1993}.
For completeness, we give the argument. By the definition of $\psi$ and $\nu^s$, we see that $\widehat{\nu^s}(0)=\widehat{\psi}(0)$. Note that $d\mu$ and $\psi$ have compact support and their Fourier transforms are both smooth. It follows that $\widehat{\nu^s}(\xi)-\widehat{\psi}(\xi)=\widehat{d\mu}(\xi)(1+|\xi|^2)^{\frac s2}-\widehat{\psi}(\xi)$ is also smooth. As a result, by (\ref{eq:rho(x) and |x|_infty})
\begin{equation*}
	|\widehat{\nu^s}(\xi)-\widehat{\psi}(\xi)|\leq A|\xi|\leq A'\rho(\xi), \quad \text{ for } \rho(\xi)\leq 1.
\end{equation*}
This gives the first part of (\ref{eq:decay of FT of nu^s-psi}). For the $\rho(\xi)\geq1$ part, we need the following result, which is stated for a general $k$-dimensional submanifold in $\R^n$.
\begin{lemma}
	\label{lem:decay of Fourier transform of measure}
	Suppose $S$ is a smooth $k$-dimensional manifold in $\R^n$ of finite type. Let the measure $d\mu$ be defined as above. Then there exists a constant $A>0$ such that
	\begin{equation*}
		|\widehat{d\mu}(\xi)|\leq A (1+|\xi|)^{-1/m},
	\end{equation*}
	where $m$ is the type of $S$ inside the support of $\eta$.
\end{lemma}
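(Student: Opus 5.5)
The proof is the classical van der Corput argument from \cite[Chapter VIII, Section 3]{Stein1993}. Write
\[
\widehat{d\mu}(\xi)=\int_{\R^k}e^{-2\pi i\,\gamma(t)\cdot\xi}\eta(t)\,dt=\int_{\R^k}e^{i\lambda\Phi_\omega(t)}\eta(t)\,dt,
\]
where $\xi=\lambda\omega/(2\pi)$ with $|\omega|=1$, $\lambda\approx|\xi|$, and $\Phi_\omega(t)=-\gamma(t)\cdot\omega$. For $|\xi|\leq1$ the bound is trivial, since $|\widehat{d\mu}|\leq\Norm{\eta}_{L^1}$. So it suffices to prove $|\widehat{d\mu}(\xi)|\lesssim\lambda^{-1/m}$ for $\lambda\geq1$, uniformly in $\omega\in S^{n-1}$.

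\textbf{Step 1: a uniform nondegeneracy on the support.} The finite-type hypothesis on $\supp\eta$ gives the following. For every $t_0\in\supp\eta$ and every unit $\omega$ there is $\alpha$ with $1\leq|\alpha|\leq m$ and $\partial_t^\alpha\Phi_\omega(t_0)\neq0$. The quantity
\[
\max_{1\leq|\alpha|\leq m}|\partial^\alpha_t(\gamma(t)\cdot\omega)|
\]
is continuous and positive on the compact set $\supp\eta\times S^{n-1}$, hence bounded below by some $c>0$. By continuity and compactness, we can then cover $\supp\eta\times S^{n-1}$ by finitely many products $U_i\times W_i$ with the following property: on each $U_i\times W_i$ there is a fixed $\alpha_i$ with $|\partial^{\alpha_i}\Phi_\omega(t)|\geq c/2$.

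Next I will replace the mixed derivative by a pure directional derivative. The symmetric tensor $\partial^{\alpha}$ with $|\alpha|=\ell$ is a linear combination of $(v\cdot\nabla)^\ell$ over finitely many unit directions $v$ (polarization). So, after shrinking and refining the cover, on each piece there is a unit vector $v_i$ and $\ell_i\leq m$ with $|(v_i\cdot\nabla)^{\ell_i}\Phi_\omega(t)|\geq c'>0$.

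\textbf{Step 2: van der Corput along lines.} Take a partition of unity $\eta=\sum_i\eta_i$ subordinate to the $U_i$. Fix $\omega\in W_i$ and rotate coordinates so that $v_i=e_1$. Then $\int e^{i\lambda\Phi_\omega}\eta_i\,dt$ becomes an iterated integral: first in $t_1$ along lines segments, then over $t'\in\R^{k-1}$. On each line, $|\partial_{t_1}^{\ell_i}\Phi_\omega|\geq c'$. Van der Corput's lemma with amplitude $\eta_i$ then gives
\[
\Big|\int e^{i\lambda\Phi_\omega(t_1,t')}\eta_i(t_1,t')\,dt_1\Big|\leq C_{\ell_i}\lambda^{-1/\ell_i}\Big(\Norm{\eta_i(\cdot,t')}_\infty+\Norm{\partial_{t_1}\eta_i(\cdot,t')}_{L^1}\Big).
\]
For $\ell_i=1$ van der Corput needs monotonicity of $\Phi'$. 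I avoid this by using the nonstationary-phase (integration by parts) version instead, or by simply counting $\ell=1$ as $\ell=2$ via the trivially weaker bound. Integrating in $t'$ over a compact set, and using $\lambda^{-1/\ell_i}\leq\lambda^{-1/m}$ for $\lambda\geq1$, I obtain $\lambda^{-1/m}$ for this piece. Summing the finitely many pieces $(i,\omega\in W_i)$ gives the lemma with a constant uniform in $\omega$.

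\textbf{Main obstacle.} The delicate part is Step 1: guaranteeing a \emph{single} direction and order on each piece, uniformly in $\omega$ near a given $\omega_0$ and $t$ near $t_0$. The polarization identity handles the passage from $\partial^\alpha$ to $(v\cdot\nabla)^\ell$. The point to check is that some pure derivative of order at most $m$ is nonzero when a mixed one of order at most $m$ is. This holds because the space of homogeneous polynomials of degree $\ell$ is spanned by powers $(v\cdot t)^\ell$. Continuity then propagates the lower bound to a neighbourhood, and compactness of $\supp\eta\times S^{n-1}$ makes the cover finite. Van der Corput's constant depends only on $\ell$, so the resulting $A$ depends only on $\gamma$, $\eta$ and $m$.
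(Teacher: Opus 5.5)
Your argument is essentially the one the paper relies on. The paper gives no proof of its own and cites \cite[Chapter VIII, Section 3.2]{Stein1993}, where the bound is obtained exactly as you propose: pass from $\partial^\alpha$ to a pure directional derivative $(v\cdot\nabla)^\ell$, using that degree-$\ell$ forms are spanned by $\ell$-th powers of linear forms; apply one-dimensional van der Corput along lines; and use compactness in $(t,\omega)$.

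Two small repairs are needed.

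\textbf{Partition of unity.} A single partition $\eta=\sum_i\eta_i$ subordinate to all the $U_i$ does not work as you wrote it. For a fixed $\omega$, the pieces $\eta_i$ with $\omega\notin W_i$ carry no nondegeneracy, and your bound says nothing about them. Use the tube-lemma arrangement instead:
\begin{enumerate}
\item cover $S^{n-1}$ by finitely many open sets $W_l$;
\item pair each $W_l$ with its own finite cover $\{U_{l,i}\}_i$ of $\supp\eta$ by balls, on which the lower bound holds on $U_{l,i}\times W_l$, together with its own partition $\eta=\sum_i\eta_{l,i}$;
\item for $\omega\in W_l$, use the $l$-th partition.
\end{enumerate}
Taking the $U_{l,i}$ to be balls also ensures that each line meets a piece in a single interval, which your van der Corput step needs.

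\textbf{The case $\ell=1$.} Only your integration-by-parts alternative is valid. \emph{Counting $\ell=1$ as $\ell=2$} does not work, because $|\partial_{t_1}\Phi_\omega|\geq c'$ gives no lower bound on $\partial_{t_1}^2\Phi_\omega$.
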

Lemma \ref{lem:decay of Fourier transform of measure} can be found in \cite[Chapter VIII, Section 3.2]{Stein1993}.
In the present setting, $S$ is given by the parabola $\gamma(t)=(t,t^2)$ in $\R^2$, which has type $2$. Lemma \ref{lem:decay of Fourier transform of measure} implies the decay
\begin{equation*}
	|\widehat{\nu^s}(\xi)|=|\widehat{d\mu}(\xi)|(1+|\xi|^2)^{\frac s2}\leq A|\xi|^{-\frac12}(1+|\xi|^2)^{\frac s2} \leq A'\rho(\xi)^{-\frac12+s}, \quad \text{ if } \rho(\xi)\geq1.
\end{equation*}
Since $\widehat{\psi}$ is a Schwartz function and rapidly decreasing at $\infty$, we obtain the second part of (\ref{eq:decay of FT of nu^s-psi}).

\medskip

\par We now treat complex parameters $s$ with $\Re(s)<1/2$.

\begin{proof}[Proof of Proposition \ref{prop:A_j^s L2 bound}]
Denote by $K^s$ the scalar distribution kernel $\nu^s-\psi$ that we discussed above. Let
\begin{equation*}
	K_j^s=\nu_j^s-\psi_j, \quad \text{ and then } \quad \widehat{K_j^s}(\xi)=\widehat{\nu^s}(2^{-j}\circ\xi)-\widehat{\psi}(2^{-j}\circ\xi).
\end{equation*}
We have already proved the case of real $s$. 
For complex $s=\sigma+i\rho$, with $\sigma<1/2$, the estimates used in the real case remain valid after replacing $s$ by $\sigma=\Re(s)$ on the right-hand side:
\begin{equation*}
	|\widehat{K^s}(\xi)|\leq A_s\rho(\xi), \quad \text{ and } \quad  |\widehat{K^s}(\xi)|\leq A_s\rho(\xi)^{-1/2+\sigma},
\end{equation*}
for $\rho(\xi)\leq1$ and $\rho(\xi)\geq1$, respectively. Therefore, as in the real case, we have
\begin{equation*}
	\sum_{j\in\Z}|\widehat{K^s}(2^{-j}\circ\xi)|^2\leq C(s), \quad \text{ uniformly in } \xi.
\end{equation*}
Thus the square function estimate remains valid for complex $s$:
\begin{equation*}
	\sum_{j\in\Z}\big\|A_j^sf-B_jf\big\|_{L^2(\A)}^2= \int_{\R^2}\sum_{j\in\Z}|\widehat{K^s}(2^{-j}\circ\xi)|^2\|\hat{f}(\xi)\|_{L^2(\M)}^2d\xi\leq C(s)\Norm{f}_{L^2(\A)}^2.
\end{equation*}
Split $K_j^s$ into its real and imaginary parts:
\begin{equation*}
	K_j^{s}=K_j^{s,(1)}+iK_j^{s,(2)},
\end{equation*}
where $K_j^{s,(1)}$ and $K_j^{s,(2)}$ are real scalar distributions. Define
\begin{equation*}
	T_j^{s,(1)}(f)=f*K_j^{s,(1)}, \quad T_j^{s,(2)}(f)=f*K_j^{s,(2)}.
\end{equation*}
Then $T_j^{s,(1)}$ and $T_j^{s,(2)}$ preserve self-adjointness. The corresponding Fourier multipliers are
\begin{equation*}
	\widehat{K_j^{s,(1)}}(\xi)=\frac12\bigg[\widehat{K_j^{s}}(\xi)+\overline{\widehat{K_j^{s}}(-\xi)}\bigg], \quad \widehat{K_j^{s,(2)}}(\xi)=\frac1{2i}\bigg[\widehat{K_j^{s}(\xi)}-\overline{\widehat{K_j^{s}}(-\xi)}\bigg].
\end{equation*}
Since
\begin{equation*}
	\Big|\widehat{K_j^{s,(1)}}(\xi)\Big|+\Big|\widehat{K_j^{s,(2)}}(\xi)\Big|\lesssim |\widehat{K_j^{s}}(\xi)|+|\widehat{K_j^s}(-\xi)|,
\end{equation*}
and since $\rho(-\xi)=\rho(\xi)$, the same square-sum estimate holds:
\begin{equation*}
	\sum_{j\in\Z}\Big|\widehat{K_j^{s,(1)}}(\xi)\Big|^2\lesssim_s 1, \quad \sum_{j\in\Z}\Big|\widehat{K_j^{s,(2)}}(\xi)\Big|^2\lesssim_s 1.
\end{equation*}
Therefore the vector-valued Plancherel identity gives
\begin{equation*}
	\bigg\| \Big(\sum_{j\in\Z}\big|T_j^{s,(1)}(f)\big|^2\Big) \bigg\|_{L^2(\A)}\lesssim_s \Norm{f}_{L^2(\A)},
\end{equation*}
and similarly
\begin{equation*}
	\bigg\| \Big(\sum_{j\in\Z}\big|T_j^{s,(2)}(f)\big|^2\Big) \bigg\|_{L^2(\A)}\lesssim_s \Norm{f}_{L^2(\A)}.
\end{equation*}
\par Now assume $f\in\A_{c,+}$. Then $T_j^{s,(1)}(f)$ is also self-adjoint. We have
\begin{equation*}
	-\big|T_j^{s,(1)}(f)\big|\leq T_j^{s,(1)}(f) \leq \big|T_j^{s,(1)}(f)\big|,
\end{equation*}
and moreover
\begin{equation*}
	\big|T_j^{s,(1)}(f)\big|\leq \Big(\sum_{k\in\Z}\big|T_k^{s,(1)}(f)\big|^2\Big)^{\frac12}.
\end{equation*}
This yields
\begin{equation*}
	\Norm{\{T_j^{s,(1)}(f)\}}_{L^2(\A;\ell^\infty)}\leq \bigg\| \Big(\sum_{j\in\Z}\big|T_j^{s,(1)}(f)\big|^2\Big) \bigg\|_{L^2(\A)}\lesssim_s \Norm{f}_{L^2(\A)}.
\end{equation*}
A similar argument gives
\begin{equation*}
	\Norm{\{T_j^{s,(2)}(f)\}}_{L^2(\A;\ell^\infty)} \lesssim_s \Norm{f}_{L^2(\A)}.
\end{equation*}
Recall that $A_j^s(f)-B_j(f)=f*K_j^s=T_j^{s,(1)}(f)+T_j^{s,(2)}(f)$, hence
\begin{equation*}
	\begin{split}
		\Big\|\sup_{j\in\Z} \left(A_j^s(f)-B_j(f)\right)\Big\|_{L^2(\A)}
		&= \Norm{\{A_j^s(f)-B_j(f)\}}_{L^2(\A;\ell^\infty)} \\
		&\leq \Norm{\{T_j^{s,(1)}(f)\}}_{L^2(\A;\ell^\infty)}+ \Norm{\{T_j^{s,(2)}(f)\}}_{L^2(\A;\ell^\infty)} \\
		&\lesssim_s \Norm{f}_{L^2(\A)}.
	\end{split}
\end{equation*}
For general $f$, write $f$ as a linear combination of four positive elements. Applying the estimate to each summand gives the same bound, up to an inessential constant.
\end{proof}

\begin{remark}
	\label{rem:growth of C(s)}
	We have shown that for every $s\in\mathbb{C}$ with $\Re(s)<1/2$, there exists a constant $C(s)>0$ depending only on $s$ such that
	\begin{equation}
		\label{eq:C(s) inequality}
		\Big\|{\sup_{j\in\Z} A_j^s(f)}\Big\|_{L^2(\A)}\leq C(s) \Norm{f}_{L^2(\A)}.
	\end{equation}
	If $s=\sigma+it$ is a complex number, we consider the growth of $t\mapsto C(\sigma+it)$ as $|t|\to\infty$.
	In fact, the key is to estimate
	\begin{equation*}
		\sum_{j\in\Z}|\widehat{K^s}(2^{-j}\circ\xi)|^2.
	\end{equation*}
	We write (recall that $\widehat{d\mu}(0)=\widehat{\psi}(0)$)
	\begin{equation*}
		\begin{split}
			\widehat{K^s}(\xi)&=\widehat{\nu^s}(\xi)-\widehat{\psi}(\xi)=\widehat{d\mu}(\xi)(1+|\xi|^2)^{\frac s2}-\widehat{\psi}(\xi) \\
			&=\big(\widehat{d\mu}(\xi)-\widehat{d\mu}(0)\big)+\big(\widehat{\psi}(0)-\widehat{\psi}(\xi)\big)+\widehat{d\mu}(\xi)\big((1+|\xi|^2)^{\frac s2}-1\big).
		\end{split}
	\end{equation*}
	For small $\rho(\xi)$, the first two terms in the second line are $O(\rho(\xi))$, and the third term can be bounded by
	\begin{equation*}
		\big|{(1+|\xi|^2)^{\frac s2}-1}\big|\lesssim |s||\xi|^2\lesssim_\sigma (1+|t|)\rho(\xi)^2 \lesssim_\sigma (1+|t|)\rho(\xi).
	\end{equation*}
	Thus
	\begin{equation*}
		|\widehat{K^{\sigma+it}}(\xi)|\lesssim_\sigma (1+|t|)\rho(\xi), \quad \rho(\xi)\leq1.
	\end{equation*}
	For large $\rho(\xi)$, the imaginary part does not affect the estimate, because 
	\begin{equation*}
		\norm{\widehat{\nu^s}(\xi)}=\big|{\widehat{d\mu}(\xi)(1+|\xi|^2)^{\frac s2}}\big|\lesssim |\xi|^{-\frac12+\sigma}. 
	\end{equation*}
	Since $\rho(\xi)\lesssim|\xi|$ for $\rho(\xi)\geq1$, and $\sigma<1/2$, we have
    \begin{equation*}
		|\widehat{K^{\sigma+it}}(\xi)|\lesssim \rho(\xi)^{-\frac12+\sigma},\quad \rho(\xi)\geq1.
	\end{equation*}
	This estimate has no growth in $t$.
	Thus the constant in (\ref{eq:C(s) inequality}) satisfies the estimate
	\begin{equation*}
		C(\sigma+it)\lesssim_\sigma 1+|t|.
	\end{equation*}
\end{remark}

\medskip

\section{Weak $(1,1)$ boundedness of $\{A_j^s\}_{j\in\Z}$}
\par In this section, we prove Proposition \ref{prop:A_j^s weak L1 bound}, which gives the weak type $(1,1)$ estimate for the maximal operator associated with the family $\{A_j^s\}_{j\in\Z}$. We first introduce the noncommutative Calder\'on-Zygmund decomposition.
\subsection{Noncommutative Calder\'on-Zygmund decomposition}
\par We first introduce notation for the \emph{non-isotropic} dyadic cubes on $\R^2$. For $k\in\Z$, let $\Q_k$ be the set of non-isotropic dyadic cubes of the form
\begin{equation*}
	\Q_k:=\left\{ [a2^{-k},(a+1)2^{-k}]\times[b2^{-2k},(b+1)2^{-2k}] : a,b\in\Z \right\}.
\end{equation*}
Let $\Q:=\bigcup_{k\in\Z}\Q_k$. For any rectangle $I:=[x_1,x_2]\times[y_1,y_2]$ in $\R^2$, we call $I$ a non-isotropic cube with respect to the metric $\rho$ if $|x_2-x_1|=|y_2-y_1|^{1/2}$. In this case, we define
\begin{equation*}
	\ell(I):=\rho\left((x_1,y_1),(x_2,y_2)\right)=|x_2-x_1|=|y_2-y_1|^{1/2}.
\end{equation*}
to be the non-isotropic ``side length" of $I$. Then for any $Q\in \Q_k$, $Q$ is a (non-isotropic) cube with side length $\ell(Q)=2^{-k}$. Let $|Q|$ be the (Lebesgue) volume of $Q$. For any $Q\in \Q$, we denote by $c_Q$ its center, and let $sQ$ be the (non-isotropic) cube with the same center as $Q$ such that $\ell(sQ)=s\ell(Q)$ \footnote{Strictly speaking, since the dilation is non-isotropic, the notation should be $s\circ Q$ instead of $sQ$. We use the latter notation for simplicity.}.
For $y\in\R^2$ and $k\in\Z$, there is essentially a unique $Q(y)\in\Q_k$ such that $y\in Q(y)$; we denote by $\bar{y}^k$ the center of $Q(y)$. If $f:\R^2\to\M$ is integrable on $Q$, we define its average by $f_Q:=|Q|^{-1}\int_Qf(x)dx$.
\par For $k\in\Z$, let $\sigma_k$ be the $k$-th non-isotropic dyadic $\sigma$-algebra, i.e. the $\sigma$-algebra generated by the non-isotropic dyadic cubes in $\Q_k$. Let $E_k$ be the conditional expectation associated with the non-isotropic dyadic filtration $\sigma_k$ on $\R^2$. For a locally integrable function $f$, set $f_k=E_k(f)$ for all $k\in\Z$. Then each $f_k$ is constant on every $Q\in\Q_k$. Therefore, by the definition of $E_k$,
\begin{equation*}
	f_k(x)=E_kf(x)=\sum_{Q\in\Q_k}f_Q\chi_Q(x).
\end{equation*}

\par Let $\A_k:=\{E_k(f):f\in\A\}$. Then $\{\A_k\}_{k\in\Z}$ is an increasing sequence of von Neumann subalgebras of $\A$ such that $\bigcup_{k\in\Z}\A_k$ is weak*-dense in $\A$, hence $\{\A_k\}_{k\in\Z}$ is a filtration. For $f\in L^p(\A)$, the sequence $\{f_k\}$ is bounded in $L^p(\A)$, and is therefore an $L^p$-martingale with respect to this filtration.
For simplicity, we denote by $df_k$ the martingale difference $f_k-f_{k-1}$.

\par Now we fix $f\in\A_{c,+}$. Then the sequence $\{f_k\}_{k\in\Z}$ is a positive dyadic martingale in $L^1(\A)$. Applying Cuculescu's construction (see \cite{Cuculescu1971,Parcet2009}) to this non-isotropic dyadic martingale, we obtain the following result.

\begin{proposition}
	Fix $f\in \A_{c,+}$ and $\lambda>0$. There exists a decreasing sequence of projections $\{q_k\}_{k\in\Z}\subset\PA$ satisfying the following properties:
	\begin{enumerate}
		\item $q_k$ commutes with $q_{k-1}f_kq_{k-1}$ for $\forall\ k\in\Z$.
		\item $q_k$ belongs to $\A_k$ for all $k\in\Z$ and $q_kf_kq_k\leq\lambda q_k$.
		\item Set $q=\bigwedge_{k\in\Z}q_k$. We have the inequality
		\[ \varphi(q^{\perp})=\varphi(1_\A-q)\leq \frac1{\lambda}\Norm{f}_{L^1(\A)}. \]
		\item The sequence of projections $q_k$ is given by
		\begin{equation*}
			q_k=
			\begin{cases}
				1_{\A},  & \text{if } k<m, \\
				\chi_{[0,\lambda]}(f_k), & \text{if } k=m, \\
				q_{k-1}\chi_{[0,\lambda]}(q_{k-1}f_kq_{k-1}), & \text{if } k>m,
			\end{cases}
		\end{equation*}
		for some integer $m$.
	\end{enumerate}
\end{proposition}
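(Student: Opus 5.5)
We will prove the proposition by Cuculescu's construction, defining the projections $q_k$ by the recursion in (4). The only new feature compared with the standard isotropic dyadic case is that the filtration is generated by the non-isotropic cubes $\Q_k$. These are still nested: each cube in $\Q_k$ is the union of $2\cdot 4=8$ cubes of $\Q_{k+1}$. So $\{\A_k\}$ is a genuine filtration, and Cuculescu's argument \cite{Cuculescu1971,Parcet2009} applies verbatim once we know the martingale starts below level $\lambda$.

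\emph{Choice of $m$.} Since $f\in\A_{c,+}$ has compact spatial support and values in $\mathcal S_\M^+$, we have $f\in L^1(\A)$. For $Q\in\Q_k$,
\[
\|f_Q\|_\infty\le |Q|^{-1}\int_Q\|f(x)\|_\infty\,dx\le 2^{3k}\|f\|_{L^1(\R^2;L^\infty(\M))}.
\]
Moreover, for very negative $k$ the support of $f$ meets at most four cubes of $\Q_k$. Hence there is an integer $m$ with $\|f_k\|_\infty\le\lambda$ for all $k<m$. For such $k$ we set $q_k=1_\A$, which gives (2) trivially. We then put $q_m=\chi_{[0,\lambda]}(f_m)$, and for $k>m$,
\[
q_k=q_{k-1}\chi_{[0,\lambda]}(q_{k-1}f_kq_{k-1}).
\]

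\emph{Properties (1) and (2).} Since $f_k$ and $q_{k-1}$ lie in $\A_k$, functional calculus keeps $q_k$ in $\A_k$. The operators $q_{k-1}$ and $q_{k-1}f_kq_{k-1}$ commute, so the spectral projection commutes with $q_{k-1}$. It follows that $q_k$ is a projection, $q_k\le q_{k-1}$, and $q_k$ commutes with $q_{k-1}f_kq_{k-1}$. Finally, $q_kf_kq_k=q_k(q_{k-1}f_kq_{k-1})q_k\le\lambda q_k$ by the spectral theorem.

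\emph{Property (3), the main step.} Write $q_{k-1}-q_k=q_{k-1}\chi_{(\lambda,\infty)}(q_{k-1}f_kq_{k-1})$, so that
\[
(q_{k-1}-q_k)f_k(q_{k-1}-q_k)\ge\lambda(q_{k-1}-q_k),\qquad
\lambda\varphi(q_{k-1}-q_k)\le\varphi\big((q_{k-1}-q_k)f_k\big).
\]
Since $q_{k-1}-q_k\in\A_k$ and $E_k$ is trace preserving, $\varphi((q_{k-1}-q_k)f_k)=\varphi((q_{k-1}-q_k)f)$. Summing over $m\le k\le N$ and telescoping gives
\[
\lambda\varphi(1-q_N)\le\varphi\big((1-q_N)f\big)\le\varphi(f)=\|f\|_{L^1(\A)},
\]
where the last inequality uses positivity of $f$. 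Since $q_N\downarrow q$ strongly, normality of $\varphi$ yields $\varphi(q^\perp)=\lim_N\varphi(1-q_N)\le\|f\|_1/\lambda$.

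The only step needing care is the choice of $m$, which relies on the $2^{3k}$ volume scaling of the non-isotropic cubes. Everything else is formal.
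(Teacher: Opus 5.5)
Your argument is correct and is exactly the standard Cuculescu construction that the paper invokes by citing \cite{Cuculescu1971,Parcet2009} without giving details. The one point specific to this $\Z$-indexed non-isotropic filtration is the existence of $m$ with $\Norm{f_k}_\infty\leq\lambda$ for all $k<m$, and you handle it correctly via $|Q|=2^{-3k}$; your remark that the support meets at most four cubes is unnecessary, since your bound is already uniform over $Q\in\Q_k$.
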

\par Since $q_k$ is a projection in $\A_k$, we can view $q_k$ as a $\PM$-valued function on $\R^2$. In fact, $q_k(x)$ is constant on each cube $Q\in\Q_k$, so we can write
\[ q_k(x)=\sum_{Q\in\Q_k}\xi_Q\chi_Q(x), \]
where $\xi_Q$ is a projection in $\M$ that satisfies the following conditions:
\begin{enumerate}
	\item The projections $\xi_Q$ satisfy
	\begin{equation*}
		\xi_Q=
		\begin{cases}
			1_\M, &\text{if } k<m, \\
			\chi_{[0,\lambda]}(f_Q), & \text{if } k=m, \\
			\xi_{\widehat{Q}}\chi_{[0,\lambda]}(\xi_{\widehat{Q}}f_Q\xi_{\widehat{Q}}), & \text{if } k>m,
		\end{cases}
	\end{equation*}
	where $\widehat{Q}$ is the dyadic parent of $Q$.
	\item $\xi_Q\in\PM$ and $\xi_Q\leq\xi_{\widehat{Q}}$.
	\item $\xi_Q$ commutes with $\xi_{\widehat{Q}}f_Q\xi_{\widehat{Q}}$.
	\item $\xi_Qf_Q\xi_Q\leq\lambda\xi_Q$.
\end{enumerate}
\par Now define the projections $p_k=q_{k-1}-q_k$, $k\in\Z$. Then the $p_k$ are projections in $\A$. By the above explicit expressions, we see that
\[ p_k=\sum_{Q\in\Q_k}(\xi_{\widehat{Q}}-\xi_{Q})\chi_Q(x):=\sum_{Q\in\Q_k}\pi_Q\chi_Q(x) \]
where $\pi_Q=\xi_{\widehat{Q}}-\xi_Q$. The $p_k$ are pairwise disjoint and $\sum_{k\in\Z}p_k=1_\A-q$.
\par For convenience, we write $\widehat{\Z}=\Z\cup\{\infty\}$ and $p_\infty=q$. Then we have $\sum_{k\in\widehat{\Z}}p_k=1_\A$. Recall that $f_k=E_k(f)$ for $k\in\Z$. For $k=\infty$, define $f_{\infty}=f$.

\par We decompose $f$ into its good and bad parts as follows:
\begin{equation*}
	f=g+b, \quad \text{ where } g=\sum_{i,j\in\widehat{\Z}}p_if_{i\vee j}p_j, \quad b=\sum_{i,j\in\widehat{\Z}}p_i(f-f_{i\vee j})p_j
\end{equation*}
where $i\vee j=\max\{i,j\}$. If $i$ or $j$ is infinity, $i\vee j$ denotes $\infty$.
\par We further decompose $g$ into the diagonal terms and the off-diagonal terms:
\begin{equation*}
	g=g_d+g_\off,\quad \text{ where } \quad  g_d=\sum_{k\in\widehat{\Z}}p_kf_kp_k=qfq+\sum_{k\in\Z}p_kf_kp_k,
\end{equation*}
\begin{equation*}
	g_{\off}=\sum_{i\neq j\in\widehat{\Z}}p_if_{i\vee j}p_j=\sum_{i\neq j\in\Z}p_if_{i\vee j}p_j+qf(1_\A-q)+(1_\A-q)fq.
\end{equation*}
The bad part $b$ can also be decomposed into diagonal and off-diagonal terms. However, we treat these two parts together.

\par As in the previous section, we temporarily assume $s\in\R$. For $s<0$, Lemma \ref{lem:Bessel kernel properties} shows that $G_s=\F^{-1}[(1+|\xi|^2)^{s/2}]$ is a positive $L^1$-function. Therefore $f\in\A_{c,+}$ implies $A_j^sf\geq0$.
\par In the rest of this section, we fix $s<0$. For simplicity, we write
\begin{equation*}
	T_j:=A_j^s-B_j, \quad \text{ with kernel } K_j(x):=\nu_j^s(x)-\psi_j(x).
\end{equation*}
(Recall that the operator $B_j$ and its kernel $\psi_j$ are defined in (\ref{eq:B_j def}) and (\ref{eq:psi_j def}), respectively.) Set
\[ K(x):=K_0(x), \quad \nu^s(x):=\nu_0^s(x) \text{ and } \psi(x)=\psi_0(x). \]
\par The family $B_j(f)$ is controlled by the operator-valued Hardy-Littlewood maximal function and therefore satisfies a weak type $(1,1)$ inequality. It suffices to prove Proposition \ref{prop:A_j^s weak L1 bound} with $A_j^s$ replaced by $T_j=A_j^s-B_j$, that is, we need to show that the operator $f\mapsto\{T_j(f)\}_{j\in\Z}$ is bounded from $L^1(\A)$ to $\Lambda^{1,\infty}(\A,\ell^{\infty})$.
\par We list some properties of the kernel $K(x)$ that will be used below.
\begin{prop}
	\label{prop:K(x) kernel preoperties}
	Suppose $s<0$ is fixed. The function $K(x)$ satisfies
	\begin{enumerate}
		\item $K(x)$ is a locally integrable function on $\R^2$ that is smooth for sufficiently large $x$, say $\rho(x)\geq 10$.
		\item There exists $\varepsilon>0$ such that for all $y$ with $\rho(y)\leq 1$,
		\begin{equation*}
			\int_{\R^2}\norm{K(x-y)-K(x)} dx\lesssim \rho(y)^{\varepsilon}.
		\end{equation*}
		\item For arbitrary $M>0$, we have
		\begin{equation*}
			\max\{|K(x)|,\rho(\nabla K(x))\}\lesssim_M \rho(x)^{-M}, \quad \text{ for } \rho(x) \text{ sufficient large. }
		\end{equation*}
		\item Its Fourier transform $\widehat{K}(\xi)$ is an $L^\infty$ function satisfying
		\begin{equation*}
			|\widehat{K}(\xi)|\lesssim
			\begin{cases}
				\rho(\xi), &\text{ for } |\xi| \text{ near } 0, \\
				\rho(\xi)^{-\delta}, &\text{ for } |\xi| \text{ near } \infty,
			\end{cases}
		\end{equation*}
		where $\delta=1/2$.
	\end{enumerate}
\end{prop}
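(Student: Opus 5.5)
The plan is to write $K=\nu^s-\psi$ with $\nu^s=d\mu*G_s$. Here $G_s$ is the Bessel kernel of Lemma \ref{lem:Bessel kernel properties}, which for real $s<0$ is a positive, radially decreasing $L^1$ function with exponentially decaying first derivatives. Each of the four properties will be read off from this representation.

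\textbf{Property (1).} Since $d\mu$ is a finite measure supported in the compact set $\gamma([-2,2])$, the convolution $d\mu*G_s$ lies in $L^1$, and $\psi\in C_c^\infty$, so $K$ is locally integrable. If $\rho(x)\geq10$, then $x$ is at positive Euclidean distance from $\supp d\mu\cup\supp\psi$. Because $G_s$ is smooth away from the origin, differentiation under the integral shows that $\nu^s$ is smooth there, and $\psi$ vanishes there.

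\textbf{Property (3).} For $\rho(x)$ large we have $K=\nu^s$ and $|x-z|\geq|x|/2$ for $z\in\supp d\mu$. Hence
\[ |K(x)|+|\nabla K(x)|\lesssim \sup_{|w|\geq|x|/2}(|G_s(w)|+|\nabla G_s(w)|)\lesssim e^{-c|x|}. \]
By (\ref{eq:rho(x) and |x|_infty}), $\rho(x)\leq|x|_\infty$ when $|x|_\infty\geq1$, so exponential decay in $|x|$ dominates $\rho(x)^{-M}$ for every $M$. The same comparison, using $\rho(v)\lesssim|v|$ for $|v|\gtrsim1$ and $\rho(v)\leq|v|^{1/2}$ otherwise, handles $\rho(\nabla K(x))$.

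\textbf{Property (4).} This is exactly (\ref{eq:decay of FT of nu^s-psi}) with $\delta=1/2-s\geq1/2$. The weaker exponent $1/2$ suffices, since $\rho(\xi)^{-1/2+s}\leq\rho(\xi)^{-1/2}$ for $\rho(\xi)\geq1$ and $s<0$. Boundedness of $\widehat K$ is immediate from $K\in L^1$.

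\textbf{Property (2), the main obstacle.} Since $\psi$ is Schwartz, $\int|\psi(x-y)-\psi(x)|\,dx\lesssim|y|\lesssim\rho(y)$ for $\rho(y)\leq1$. It therefore suffices to show $\|\nu^s(\cdot-y)-\nu^s\|_{L^1}\lesssim\rho(y)^\varepsilon$. Young's inequality gives
\[ \|\nu^s(\cdot-y)-\nu^s\|_{L^1}\leq\|d\mu\|\,\|G_s(\cdot-y)-G_s\|_{L^1}, \]
which reduces the claim to the $L^1$-modulus of continuity of the Bessel kernel. I would bound that by splitting at $|x|\sim2|y|$. On $|x|\leq2|y|$, use local integrability together with the singularity bound $G_s(x)\lesssim|x|^{s-n}$ (or $\log$ when $-s=n$, or bounded when $-s>n$) from Appendix B; this gives a contribution $\lesssim|y|^{\min(-s,1)}$ up to logarithms. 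On $|x|>2|y|$, use the mean value theorem with $|\nabla G_s(x)|\lesssim|x|^{s-n-1}$ near the origin and exponential decay at infinity; this gives $\lesssim|y|\int_{|x|>2|y|}|\nabla G_s|\lesssim|y|^{\min(-s,1)}$, again up to logarithms. Finally, $|y|\leq|y|_\infty\cdot\sqrt2\leq\sqrt2\,\rho(y)$ for $\rho(y)\leq1$ by (\ref{eq:rho(x) and |x|_infty}), so we may take any $0<\varepsilon<\min(-s,1)$. The delicate point is checking the precise near-origin asymptotics of $G_s$ and $\nabla G_s$ from Appendix B. If those are awkward, an alternative is to bound $\|G_s(\cdot-y)-G_s\|_{L^1}$ by interpolating between the trivial bound $2\|G_s\|_1$ and a Fourier-side estimate. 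I prefer the direct splitting.
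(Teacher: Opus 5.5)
Your proposal is correct. For (1), (3) and (4) it coincides with the paper's argument. You write $K=G_s*d\mu-\psi$, read off integrability and smoothness off $\supp\mu$, transfer the exponential decay of $G_s$ and $\nabla G_s$ to $\rho(x)^{-M}$ via (\ref{eq:rho(x) and |x|_infty}), and quote (\ref{eq:decay of FT of nu^s-psi}). One small point: $\psi$ is only assumed to be $C_c^\infty$ with the right integral, so it need not vanish on $\{\rho\geq10\}$. Smoothness of $\psi$ is all that (1) needs, and in (3) you are beyond its compact support anyway.

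The genuine difference is in (2). After the same Young's-inequality reduction to $\|G_s(\cdot-y)-G_s\|_{L^1}$, the paper uses no pointwise information about $G_s$ near the origin. It inserts the heat-kernel representation (\ref{eq:G_s expression}) and uses the elementary bound $\|K_t(\cdot-y)-K_t\|_{L^1}\lesssim\min\{1,|y|t^{-1/2}\}\leq|y|^{\varepsilon}t^{-\varepsilon/2}$ for the Gaussian $K_t$. It then integrates in $t$; the $t$-integral converges precisely when $\varepsilon<-s$. So (\ref{eq:Bessel translation estimate}) holds for every $0<\varepsilon<\min\{1,-s\}$, for every $s<0$ at once, with no case analysis and no logarithmic losses.

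Your splitting at $|x|\sim2|y|$ reaches the same range of $\varepsilon$, but it needs more input:
\begin{itemize}
\item a near-origin bound for $\nabla G_s$, which Appendix B does not record (you can get it by differentiating (\ref{eq:G_s expression}) and using $|\nabla K_t(x)|=2\pi^2|x|t^{-1}K_t(x)$);
\item a monotone majorant of $|\nabla G_s|$ to run the mean value theorem on $\{|x|>2|y|\}$;
\item separate treatment according to how $-s$ compares with $n$ and with $1$.
\end{itemize}
What your route buys is explicit pointwise control of the kernel; the paper's buys a uniform, case-free proof.

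Finally, fix a sign slip. With the paper's convention $\widehat{G_s}=(1+|\xi|^2)^{s/2}$, $s<0$, the singularities for $-n<s<0$ are $G_s(x)\lesssim|x|^{-s-n}$ and $|\nabla G_s(x)|\lesssim|x|^{-s-n-1}$. You wrote $|x|^{s-n}$ and $|x|^{s-n-1}$; the former is not even locally integrable in $\R^n$. Your subsequent case split (logarithm at $-s=n$, boundedness for $-s>n$) and your final exponent $\min\{-s,1\}$ are consistent with the correct powers, so this is only a typo.
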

\begin{proof}
	Recall that $K=G_s*d\mu-\psi$. By Appendix B, the function $G_s\in L^1(\R^2)$ and is smooth away from $0$. Since $d\mu$ is finite and compactly supported, $G_s*d\mu$ is integrable and smooth off $\supp\mu$. This proves (1).
	For (2), choose $0<\varepsilon<\min\{1,-s\}$. The translation estimate (\ref{eq:Bessel translation estimate}) and the smoothness of $\psi$ give, when $\rho(y)\leq1$,
	\[
	 \Norm{K(\cdot-y)-K}_{L^1(\R^2)}\lesssim_s |y|^\varepsilon+|y|\lesssim_s\rho(y)^\varepsilon.
	\]
	By (\ref{eq:Bessel kernel decay}) and the compact supports of $\mu$ and $\psi$, for every $L>0$,
	\[
	 |K(x)|+|\nabla K(x)|\lesssim_{s,L}(1+|x|)^{-L}, \qquad |x|\text{ sufficiently large}.
	\]
	Taking $L=2M$ and using (\ref{eq:rho(x) and |x|_infty}) proves (3). Finally, (4) follows from (\ref{eq:decay of FT of nu^s-psi}), since $s<0$.
	One should note that the implicit constants in (2), (3) and (4) depend only on the negative number $s$.
\end{proof}

\par In the rest of this section, we prove Proposition \ref{prop:A_j^s weak L1 bound}, the noncommutative maximal weak type $(1,1)$ estimate for the operator $f\mapsto \{A_j^s(f)\}_{j\in\Z}$.
First note that it suffices to consider the case $s$ is a real number such that $s<0$. For general complex $s$, we may split the kernels into real parts and imaginary parts and treat them separately, as in the previous section.

\par More explicitly, our aim is the following:
\begin{quotation}
	{\itshape
	Fix $s<0$. We will prove that for each $\lambda>0$, $f\in\A_{c,+}$, there exists a projection $e\in\mathcal{P}(\A)$ such that
	\begin{enumerate}
		\item $\Norm{eT_j(f)e}_\infty\leq\lambda$ for all $j\in\Z$,
		\item $\varphi(e^{\perp})\lesssim\frac1{\lambda}\Norm{f}_{L^1(\A)}$.
	\end{enumerate}
	}
\end{quotation}

To achieve this goal, we decompose $f=g+b$. If we can find projections $e_g$ and $e_b$ that satisfy
\begin{equation}
	\label{eq:good}
	\|e_gT_j(g)e_g\|_{\infty}\leq\frac{\lambda}2 \ \text{ for all } j, \quad \text{ and } \ \varphi(e_g^{\perp})\lesssim\frac1{\lambda}\Norm{f}_{L^1(\A)},
\end{equation}
and
\begin{equation}
	\label{eq:bad}
	\|e_bT_j(b)e_b\|_\infty\leq\frac{\lambda}2 \ \text{ for all } j, \quad \text{ and } \ \varphi(e_b^{\perp})\lesssim\frac1{\lambda}\Norm{f}_{L^1(\A)}.
\end{equation}
then the projection $e=e_g\wedge e_b$ satisfies $ee_g=ee_b=e$, hence
\begin{equation*}
	\begin{split}
		eT_j(f)e \leq \Norm{eT_j(g)e}+\Norm{eT_j(b)e} \leq \Norm{e_gT_j(g)e_g}+\Norm{e_bT_j(b)e_b} \leq \frac{\lambda}2+\frac{\lambda}2=\lambda,
	\end{split}
\end{equation*}
and
\begin{equation*}
	\varphi(e^\perp) \leq \varphi(e_g^\perp)+\varphi(e_b^\perp)\lesssim\frac1{\lambda}\Norm{f}_{L^1(\A)}.
\end{equation*}
In the following two subsections, we will prove (\ref{eq:good}) and (\ref{eq:bad}), respectively.
\par To proceed further, we need the following lemma:
\begin{lemma}
	\label{lem:zeta projection}
	There exists a projection $\zeta\in \PA$ that satisfies the following conditions:
	\begin{enumerate}
		\item $\varphi(\zeta^{\perp})\lesssim\frac1{\lambda}\Norm{f}_{L^1(\A)}$,
		\item If $Q_0\in\Q$ and $x\in100Q_0$, then $\zeta(x)\leq 1_{\M}-\xi_{\widehat{Q_0}}+\xi_{Q_0}$ which further implies $\zeta(x)\leq\xi_{Q_0}$.
	\end{enumerate}
\end{lemma}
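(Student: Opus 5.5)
The plan is to follow Parcet's construction in \cite{Parcet2009} and adapt it to the non-isotropic dyadic cubes $\Q$. We define
\[
\zeta=\Big(\bigvee_{Q\in\Q}\pi_Q\chi_{100Q}\Big)^{\perp}
=\bigwedge_{Q\in\Q}\big(1_\A-\pi_Q\chi_{100Q}\big),
\]
where $\pi_Q=\xi_{\widehat Q}-\xi_Q$ as above. Each $\pi_Q\chi_{100Q}$ is a projection in $\A$: it is the constant projection $\pi_Q$ on the non-isotropic cube $100Q$ and zero elsewhere. Hence $\zeta$ is a well-defined projection in $\PA$.

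For property (1), we use the subadditivity of the trace on suprema of projections, $\varphi(\bigvee_\alpha e_\alpha)\le\sum_\alpha\varphi(e_\alpha)$. This gives
\[
\varphi(\zeta^\perp)\le\sum_{Q\in\Q}|100Q|\,\tau(\pi_Q)=100^3\sum_{Q\in\Q}|Q|\,\tau(\pi_Q)=100^3\sum_{k\in\Z}\varphi(p_k)=100^3\,\varphi(1_\A-q).
\]
Here we used three facts:
\begin{itemize}
\item the volume scales as $|sQ|=s^3|Q|$ under the non-isotropic dilation;
\item $p_k=\sum_{Q\in\Q_k}\pi_Q\chi_Q$;
\item the $p_k$ are pairwise orthogonal with $\sum_k p_k=1_\A-q$.
\end{itemize}
Property (3) of the Cuculescu proposition gives $\varphi(1_\A-q)\le\|f\|_{L^1(\A)}/\lambda$, which proves (1). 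The only point to check is that $\sum_{k}p_k$ converges strongly with trace equal to the sum of the traces. This is immediate from orthogonality and the normality of $\varphi$.

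For property (2), fix $Q_0\in\Q$ and $x\in100Q_0$. By construction $\zeta\le 1_\A-\pi_{Q_0}\chi_{100Q_0}$ in $\A$. Evaluating pointwise, which is legitimate because these are $\PM$-valued functions and the order is pointwise a.e., gives
\[
\zeta(x)\le 1_\M-\pi_{Q_0}=1_\M-\xi_{\widehat{Q_0}}+\xi_{Q_0}.
\]
To upgrade this to $\zeta(x)\le\xi_{Q_0}$, we use the chain. Let $Q_0\subset Q_1\subset\cdots$ be the dyadic ancestors of $Q_0$, so $Q_{i+1}=\widehat{Q_i}$. Since $100Q_i\supset100Q_0\ni x$, the same argument yields $\zeta(x)\le 1_\M-\pi_{Q_i}$ for every $i\ge0$. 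The projections $\pi_{Q_i}=\xi_{Q_{i+1}}-\xi_{Q_i}$ are mutually orthogonal, because the $\xi_{Q_i}$ are decreasing in $Q$. Therefore
\[
\zeta(x)\le\bigwedge_{i\ge0}(1_\M-\pi_{Q_i})=1_\M-\sum_{i\ge0}\pi_{Q_i}=1_\M-\big(\lim_{i\to\infty}\xi_{Q_i}-\xi_{Q_0}\big).
\]
Once the generation of $Q_i$ falls below the index $m$ in the Cuculescu construction, $\xi_{Q_i}=1_\M$, so the telescoping sum equals $1_\M-\xi_{Q_0}$. This gives $\zeta(x)\le\xi_{Q_0}$.

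The main obstacle I expect is the nesting claim $100Q_i\supset100Q_0$ for non-isotropic dyadic ancestors. Because the dilation is non-isotropic, a parent cube does not obviously contain the dilated child. We do have $c_{Q_0}\in Q_i$, and the $\rho$-distance from the center of $Q_i$ to that of $Q_0$ is at most a constant times $\ell(Q_i)$. Combined with $\ell(Q_i)=2^i\ell(Q_0)$ and the triangle inequality (\ref{eq:triangle ineq for quasinorm}), this should give $100Q_0\subset100Q_i$ for $i\ge1$. I would verify this coordinatewise: in the first coordinate the side lengths double at each generation, and in the second they quadruple. If the constant turns out to be slightly off, the fallback is to define $\zeta$ using enlargements $C\,Q$ with a larger dilation constant. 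In that case $\zeta$ dominates all cubes $100Q'$ with $Q'\supseteq Q_0$, and the trace bound changes only by the factor $C^3$.

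Finally, $\zeta(x)$ is defined only almost everywhere, so we must handle measure-zero boundaries. Since $\Q$ is countable, this discards only a null set.
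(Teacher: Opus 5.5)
Your proposal is correct and is exactly the Parcet construction $\zeta=\big(\bigvee_{Q\in\Q}\pi_Q\chi_{100Q}\big)^{\perp}$ that the paper invokes without writing out. It has the same two ingredients: the trace bound $\varphi(\zeta^\perp)\le 100^3\varphi(1_\A-q)$ from $|100Q|=100^3|Q|$, and the telescoping of the mutually orthogonal $\pi_{Q_i}$ over the ancestors of $Q_0$, which gives $\zeta(x)\le\xi_{Q_0}$. The nesting you were worried about holds without any fallback: for $Q\in\Q_k$, the set $100Q$ reaches at most $(50+\tfrac12)2^{-k}$ from $c_{\widehat Q}$ in the first coordinate and at most $(5000+\tfrac32)2^{-2k}$ in the second, while $100\widehat{Q}$ has half-widths $100\cdot2^{-k}$ and $20000\cdot2^{-2k}$; hence $100Q\subset100\widehat{Q}$, and by induction $100Q_0\subset100Q_i$ for every ancestor $Q_i$.
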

The proof is a direct adaptation of the argument in \cite{Parcet2009} (see also \cite{Lai2024}). The constant $100$ in Lemma \ref{lem:zeta projection} (2) is immaterial and may be adjusted.

\subsection{Estimate for the bad part}
\par 
We will prove the following:
\begin{lemma}
	\label{lem:e_b' projection}
	There exists a projection $e_b'\in\PA$ such that
	\begin{equation*}
		\|e_b'\zeta T_j(b)\zeta e_b'\|_\infty \leq\frac{\lambda}2 \ \text{ for all } j, \quad  \text{ and }  \varphi(1_\A-e_b')\lesssim\frac1{\lambda}\Norm{f}_{L^1(\A)}.
	\end{equation*}
\end{lemma}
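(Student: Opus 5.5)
The bad part is estimated by splitting $b=\sum_{i,j}p_i(f-f_{i\vee j})p_j$ according to the scale $k=i\vee j$. Write $b=\sum_{k\in\Z}b_k$, with
\[
b_k=p_k(f-f_k)q_{k-1}+q_k(f-f_k)p_k,
\]
a rearrangement of the off-diagonal and diagonal pieces. The terms with $k=\infty$ vanish, since $f_\infty=f$. Each $b_k$ is supported, cube by cube, on $Q\in\Q_k$ and has vanishing integral over each such $Q$: $p_k$ and $q_{k-1},q_k$ are constant on cubes of $\Q_k$, and $\int_Q(f-f_Q)=0$. Moreover $\sum_k\|b_k\|_{L^1(\A)}\lesssim\|f\|_{L^1(\A)}$. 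This follows from $\sum_k\|p_kf p_k\|_1\le\|f\|_1$ together with the standard Cuculescu bounds $\|p_kf_kp_k\|_1\le\|p_kf p_k\|_1$ and $\|q_kf_kq_k\|_\infty\le\lambda$, exactly as in \cite{Parcet2009}. For the off-diagonal pieces one uses the factorization $p_k(f-f_k)q_{k-1}=(p_k f^{1/2})(f^{1/2}q_{k-1})-\cdots$ and Cauchy--Schwarz, or simply treats them in $L^1$ with the square-function trick of \cite{Lai2024}.

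Next, $\zeta T_j(b)\zeta$ is split into a local and a far part. For a cube $Q\in\Q_k$ and $x\in100Q$, Lemma~\ref{lem:zeta projection} gives $\zeta(x)\le\xi_Q$. Since $\xi_Q\pi_Q=0$ and $\xi_Q\le\xi_{\widehat Q}$, the pieces $\zeta(x)\,b_k\chi_Q$ carrying a factor $p_k$ on the left, and the symmetric ones on the right, are annihilated. Concretely, $\zeta(x)\big(T_j(b_k\chi_Q)\big)(x)\zeta(x)=0$ whenever $x\in100Q$ and the kernel contribution comes only from $Q$. Thus only $x\notin100Q$ matters, and there
\[
\zeta T_j(b)\zeta=\zeta\Big(\sum_k\sum_{Q\in\Q_k}\chi_{(100Q)^c}\,T_j(b_k\chi_Q)\Big)\zeta .
\]
We then take $e_b'$ to be the projection on which the positive majorant
\[
F=\sum_j\sum_k\sum_{Q}\chi_{(100Q)^c}\,\big|T_j(b_k\chi_Q)\big|
\]
stays below $\lambda/2$; more precisely, we bound the maximal family in $L^1(\A;\ell^\infty)$-type norm via Chebyshev. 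Since $\|e\,x_j\,e\|\le\lambda/2$ follows from $\pm x_j\le F$ with $\|eFe\|\le\lambda/2$, it suffices to prove
\[
\sum_k\sum_{Q\in\Q_k}\sum_{j\in\Z}\int_{\R^2\setminus100Q}\big\|T_j(b_k\chi_Q)(x)\big\|_{L^1(\M)}dx\lesssim\sum_k\|b_k\|_{L^1(\A)} .
\]
Taking $e_b'=\chi_{[0,\lambda/2]}(F)$, with real and imaginary parts handled separately, gives $\varphi(1-e_b')\le 2\lambda^{-1}\|F\|_1$.

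The main obstacle is this Hörmander-type estimate summed over all $j$. Using the cancellation of $b_k\chi_Q$,
\[
T_j(b_k\chi_Q)(x)=\int_Q\big(K_j(x-y)-K_j(x-c_Q)\big)b_k(y)\,dy .
\]
With $\ell(Q)=2^{-k}$ and the non-isotropic scaling $K_j(x)=2^{3j}K(2^j\circ x)$, the term at scale $j$ is controlled by
\[
\sup_{\rho(y')\le\ell(Q)}\int_{\rho(x)\ge 50\ell(Q)}|K_j(x-y')-K_j(x)|\,dx .
\]
For $j\le k$ (so $\rho(2^j\circ y')\le 2^{j-k}\le1$), Proposition~\ref{prop:K(x) kernel preoperties}(2) yields the bound $2^{(j-k)\varepsilon}$, which sums geometrically. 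For $j>k$, no smoothness is needed: on $\rho(x)\ge50\cdot2^{-k}$ we have $\rho(2^j\circ x)\gtrsim 2^{j-k}$, so the decay in Proposition~\ref{prop:K(x) kernel preoperties}(3), combined with local integrability for the finitely many $j$ with $2^{j-k}<10$, bounds $\int_{\rho(z)\gtrsim2^{j-k}}|K(z)|\,dz$ by $2^{-(j-k)M}$. This also sums. The delicate point is the intermediate range, where $\rho(2^j\circ x)$ is between $10$ and moderate size and $K$ is merely $L^1$ near $\supp\mu$. I would first try to absorb these boundedly many $j$ with the $L^1$ translation bound (2), applied to the difference directly, which is still $\lesssim1$ per $j$. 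If the off-diagonal bad terms resist the $L^1$ control, I would instead apply the Fourier decay (4) with an $L^2$ almost-orthogonality argument.
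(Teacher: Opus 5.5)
\textbf{Verdict: there is a genuine gap.} Everything in your proposal rests on $\sum_k\Norm{b_k}_{L^1(\A)}\lesssim\Norm{f}_{L^1(\A)}$. That bound is false, so a H\"ormander estimate with an $O(1)$ constant at each scale cannot prove the lemma.

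\emph{The regrouping.} As written, your $b_k$ is not a regrouping of $b$. Since $q_{k-1}=\sum_{j\ge k}p_j$ and $p_if_iq_i=0$, you get
\[
\sum_kb_k=\sum_{i,j}p_i(f-f_{i\wedge j})p_j=f-g_d=b+g_{\off}.
\]
Grouping $b$ itself by $k=i\vee j$ gives $b_k=p_k(f-f_k)(1-q_k)+(1-q_{k-1})(f-f_k)p_k=\sum_{s\ge0}b_{k-s,s}$. The problem below affects both versions.

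\emph{Why the $L^1$ bound fails.} Lemma \ref{lem:b_{k,s} properties} controls $\sum_k\Norm{b_{k,s}}_1$ only for each fixed gap $s$. The Cauchy--Schwarz factorization you suggest yields only
\[
\sum_k\tau(p_kfp_k)^{1/2}\Big(\sum_{j<k}\tau(p_jfp_j)\Big)^{1/2},
\]
which is not $O(\Norm{f}_1)$, and this loss really occurs. Take $\M=M_L$ with orthonormal basis $u_1,\dots,u_L$. Let $\mathrm{act}_j$ be the union of four of the eight children, in a fixed relative position, of every generation-$(j-1)$ cube inside $Q_0=[0,1]^2$. Set
\[
f=\chi_{Q_0}\Big(\tfrac{\lambda L}{2}\,|v_x\rangle\langle v_x|+\tfrac{3\lambda}{4}\sum_{j=1}^{L}\chi_{\mathrm{act}_j}|u_j\rangle\langle u_j|\Big),\qquad v_x=L^{-1/2}\sum_{j=1}^{L}\epsilon_j(x)u_j,
\]
where the $\epsilon_j$ are Rademacher functions of $x_1$ at scales finer than $2^{-L}$. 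Then:
\begin{enumerate}
\item $E_k(|v_x\rangle\langle v_x|)=L^{-1}I$ on $Q_0$ for $k\le L$, so every $f_k$ with $k\le L$ is diagonal in the basis $(u_j)$.
\item On a generation-$k$ cube the entries of $f_k$ are $\tfrac54\lambda$ in direction $u_k$ on active cubes, $\tfrac78\lambda$ in directions $u_j$ with $j>k$, and $\tfrac12\lambda$ otherwise.
\item Hence Cuculescu's construction gives exactly $p_k=\chi_{\mathrm{act}_k}|u_k\rangle\langle u_k|$ for $1\le k\le L$.
\end{enumerate}
Consequently $\Norm{f}_1=\tfrac78\lambda L$, whereas
\[
\Norm{b_k}_1\ge\Norm{p_k(f-f_k)(1-q_{k-1})}_1=\frac{\lambda}{2}\int_{\mathrm{act}_k}\big(\#\{j<k:x\in\mathrm{act}_j\}\big)^{1/2}dx\gtrsim\lambda\sqrt{k}.
\]
So $\sum_k\Norm{b_k}_1\gtrsim\sqrt{L}\,\Norm{f}_1$. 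For your literal $b_k$, the block $p_kfq_k$ gives the same growth.

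\emph{What the paper does instead.} It never sums over the gap before gaining decay in it.
\begin{itemize}
\item It keeps $b=\sum_{s\ge0}\sum_kb_{k,s}$, for which the $L^1$ bound is uniform in $s$.
\item It annihilates $b_{k,s}$ using the \emph{coarse} factor $p_k$. Since $\zeta(x)\le\xi_{\widehat Q^s}$ on $100\widehat Q^s$, one has $\zeta(x)b_{k,s}(y)\zeta(x)=0$ for $y\in Q\in\Q_{k+s}$ and $x\in100\widehat Q^s$ (Lemma \ref{claim:zeta b_ks zeta=0}). Using the fine factor, as you do, gives vanishing only on $100Q$.
\item The H\"ormander integral therefore runs over $\rho(x-c_Q)\ge10\cdot2^{-k}$, while $\rho(y-c_Q)\lesssim2^{-k-s}$.
\item Claim \ref{claim:sum int |K_j(x-y)-K_j(x)| estimate} turns this separation into the factor $2^{-\alpha s}$. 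It uses the $L^1$-H\"older modulus (2) of Proposition \ref{prop:K(x) kernel preoperties} when $2^j\rho(y)<1$, and the gradient decay (3) when $2^j\rho(y)\ge1$.
\end{itemize}
This decay makes $\sum_s2^{-\alpha s}\sum_k\Norm{b_{k,s}}_1$ converge. After that, Chebyshev's inequality with $e_b'=\bigwedge_j\chi_{[0,\lambda/2]}(|\zeta T_j(b)\zeta|)$ finishes the proof, as in your last step.
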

Lemma \ref{lem:e_b' projection} implies (\ref{eq:bad}). Indeed, assuming Lemma \ref{lem:e_b' projection}, set $e_b=e_b'\wedge\zeta$. Then $e_b$ satisfies
\begin{equation*}
	\|e_bT_j(b)e_b\|_\infty\leq \|e_b'\zeta T_j(b)\zeta e_b'\|_\infty \leq\frac{\lambda}2
\end{equation*}
and
\begin{equation*}
	\varphi(1_\A-e_b)\leq \varphi(1_\A-e_b')+\varphi(1_\A-\zeta) \lesssim \frac1{\lambda}\Norm{f}_{L^1(\A)}.
\end{equation*}
Therefore the projection $e_b$ satisfies (\ref{eq:bad}).
\par To prove Lemma \ref{lem:e_b' projection}, we decompose the bad part $b$ as follows
\begin{equation*}
	\begin{split}
		b&\phantom{:}=\sum_{k\in\Z}p_k(f-f_k)p_k+\sum_{s\geq1}\sum_{k\in\Z} p_k(f-f_{k+s})p_{k+s}+p_{k+s}(f-f_{k+s})p_k \\
		&:=\sum_{s\geq0}\sum_{k\in\Z}b_{k,s},
	\end{split}
\end{equation*}
where
\begin{equation}
	\label{eq:b_{k,s} def}
	b_{k,0}=p_k(f-f_k)p_k, \quad b_{k,s}=p_k(f-f_{k+s})p_{k+s}+p_{k+s}(f-f_{k+s})p_k \ \ (s\geq 1).
\end{equation}
The functions $b_{k,s}$ satisfy the following properties, which will be used later.
\begin{lemma}
	\label{lem:b_{k,s} properties}
	Let $b_{k,s}$ be defined as in (\ref{eq:b_{k,s} def}). For each $s\geq 0$, we have the following properties:
	\begin{enumerate}
		\item The $L^1$-estimate $\sum_{k\in\Z}\Norm{b_{k,s}}_{L^1(\A)}\lesssim\Norm{f}_{L^1(\A)}$ holds.
		\item For all $k\in\Z$ and $Q\in\Q_{k+s}$, the cancellation property $\int_Q b_{k,s}(x)dx=0$ holds.
	\end{enumerate}
\end{lemma}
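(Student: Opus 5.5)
We treat the two properties separately. For the cancellation (2) we use the measurability of the Cuculescu projections. Since $p_k=q_{k-1}-q_k$ with $q_{k-1}\in\A_{k-1}\subset\A_{k+s}$ and $q_k\in\A_k\subset\A_{k+s}$, both $p_k$ and $p_{k+s}$ belong to $\A_{k+s}$ for $s\geq0$. In particular they are constant on every $Q\in\Q_{k+s}$; on $Q$ we have $p_k(x)=\pi_{Q'}$ and $p_{k+s}(x)=\pi_{Q}$, where $Q'$ is the ancestor of $Q$ in $\Q_k$. Likewise $f_{k+s}$ is constant on $Q$ and equals $f_Q$ there.

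Hence, for $Q\in\Q_{k+s}$ and $s\geq1$,
\[
\int_Q p_k(f-f_{k+s})p_{k+s}\,dx=\pi_{Q'}\Big(\int_Q f\,dx-|Q|f_Q\Big)\pi_Q=0,
\]
and the same computation applies to the second summand of $b_{k,s}$ and to $b_{k,0}$. This is just the bimodule property $E_{k+s}(a(f-f_{k+s})c)=a\,E_{k+s}(f-f_{k+s})\,c=0$ for $a,c\in\A_{k+s}$, and it gives (2).

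For the $L^1$ estimate (1), we first use the triangle inequality to reduce to bounding $\sum_k\|p_kfp_{k+s}\|_1$ and $\sum_k\|p_kf_{k+s}p_{k+s}\|_1$, together with the analogous terms with indices swapped. For $s=0$ this is routine: $\|p_kfp_k\|_1=\varphi(p_kfp_k)=\varphi(fp_k)$ and $\|p_kf_kp_k\|_1=\varphi(f_kp_k)=\varphi(fp_k)$, using $p_k\in\A_k$ and the trace-preserving property of $E_k$. Summing over $k$ gives at most $2\varphi(f)=2\|f\|_1$, since $f\geq0$ and the $p_k$ are disjoint.

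For $s\geq1$ we use the Cauchy--Schwarz inequality for $f\geq0$:
\[
\|p_kfp_{k+s}\|_1\leq\|f^{1/2}p_k\|_2\,\|f^{1/2}p_{k+s}\|_2=\varphi(fp_k)^{1/2}\varphi(fp_{k+s})^{1/2},
\]
and similarly $\|p_kf_{k+s}p_{k+s}\|_1\leq\varphi(f_{k+s}p_k)^{1/2}\varphi(f_{k+s}p_{k+s})^{1/2}=\varphi(fp_k)^{1/2}\varphi(fp_{k+s})^{1/2}$, again because $p_k,p_{k+s}\in\A_{k+s}$. By AM--GM and disjointness, $\sum_k\varphi(fp_k)^{1/2}\varphi(fp_{k+s})^{1/2}\leq\|f\|_1$ for each fixed $s$. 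So (1) holds with a constant independent of $s$, which is all the statement asks since $s$ is fixed.

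I expect the main obstacle to be that later the sum over $s$ must be controlled, and this crude bound gives no decay in $s$. If decay were needed, one would insert the Cuculescu bound $q_{k+s-1}f_{k+s-1}q_{k+s-1}\leq\lambda q_{k+s-1}$, as in Parcet's argument. For the lemma as stated, however, the $s$-uniform bound above suffices. The only delicate point is checking that all the projections lie in $\A_{k+s}$, so that they can be pulled through $E_{k+s}$.
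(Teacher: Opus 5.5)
Your proof is correct. The cancellation follows, as you say, from $p_k,p_{k+s}\in\A_{k+s}$ together with the bimodule property of $E_{k+s}$. For the $L^1$ part, the Cauchy--Schwarz bounds $\|p_kfp_{k+s}\|_1,\ \|p_kf_{k+s}p_{k+s}\|_1\leq\varphi(fp_k)^{1/2}\varphi(fp_{k+s})^{1/2}$, together with trace preservation of $E_{k+s}$ and disjointness of the $p_k$, give $\sum_k\|b_{k,s}\|_1\leq4\|f\|_1$ uniformly in $s$. That uniform bound is exactly what the later sum against $2^{-\alpha s}$ needs.

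The paper does not prove this lemma itself; it only cites Parcet's Appendix B. Your argument is the standard self-contained proof one would give for that reference.
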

For a proof of this lemma, see \cite[Appendix B]{Parcet2009}.

\begin{lemma}
	\label{lem:zeta T_jb zeta L^1 norm estimate}
	The following holds:
	\[ \sum_{j\in\Z}\Norm{\zeta T_j(b)\zeta}_{L^1(\A)}\lesssim\Norm{f}_{L^1(\A)}. \]
\end{lemma}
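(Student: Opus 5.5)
We will prove the lemma by writing $b=\sum_{s\geq0}\sum_{k\in\Z}b_{k,s}$, so that
\[
\sum_{j\in\Z}\Norm{\zeta T_j(b)\zeta}_{L^1(\A)}\le\sum_{s\ge0}\sum_{k\in\Z}\sum_{j\in\Z}\Norm{\zeta T_j(b_{k,s})\zeta}_{L^1(\A)}.
\]
For fixed $s,k$ we write $b_{k,s}=\sum_{Q\in\Q_{k+s}}b_{k,s}\chi_Q$. For $Q\in\Q_{k+s}$ the piece $b_{k,s}\chi_Q$ carries the projections $p_k,p_{k+s}$, which on $Q$ are $\pi_{Q'}$ for the ancestor $Q'\in\Q_k$ of $Q$ and $\pi_Q$. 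By Lemma \ref{lem:zeta projection}(2), $\zeta(x)\le \xi_{Q'}$ for $x\in100Q'$, while $\pi_{Q'}=\xi_{\widehat{Q'}}-\xi_{Q'}$ is orthogonal to $\xi_{Q'}$. We will use this to show that the left or right factor of $b_{k,s}\chi_Q$ is killed by $\zeta$ on $100Q'$, so that $\zeta T_j(b_{k,s}\chi_Q)\zeta$ vanishes on $100Q'$. This is the usual mechanism: one factor in each term is $p_k$, and for $b_{k,0}$ both are. Hence only the region $x\notin 100Q'$ contributes, where $\rho(x-c_Q)\gtrsim 2^{-k}$.

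Off this region we bound by splitting in $j$ relative to $k+s$ and $k$.

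\emph{Case $j\le k+s$ (large scales relative to $Q$).} Since $K_j(x)=2^{3j}K(2^j\circ x)$, we use the cancellation $\int_Q b_{k,s}=0$ from Lemma \ref{lem:b_{k,s} properties}(2) to write
\[
T_j(b_{k,s}\chi_Q)(x)=\int_Q\bigl(K_j(x-y)-K_j(x-c_Q)\bigr)b_{k,s}(y)\,dy .
\]
Then Proposition \ref{prop:K(x) kernel preoperties}(2), after rescaling, gives
\[
\int\norm{K_j(x-y)-K_j(x-c_Q)}\,dx\lesssim (2^{j}\rho(y-c_Q))^{\varepsilon}\lesssim 2^{(j-k-s)\varepsilon}.
\]
Summing over $j\le k+s$ yields a geometric series bounded by a constant, and we get $\lesssim\Norm{b_{k,s}\chi_Q}_{L^1}$.

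\emph{Case $j>k+s$ (small scales).} Here we use the restriction $\rho(x-y)\gtrsim2^{-k}$ together with the decay in Proposition \ref{prop:K(x) kernel preoperties}(3), which holds for $\rho(x)$ large. The tail is
\[
\int_{\rho(x)\gtrsim 2^{j-k}}|K(x)|\,dx\lesssim 2^{-(j-k)M'},
\]
provided $2^{j-k}$ is large, and this is guaranteed when $j-k$ exceeds a fixed constant. Summing over $j>k+s$ gives $\lesssim 2^{-sM'}$.

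\emph{Intermediate and bounded $j$.} The finitely many $j$ with $j-k$ bounded would be handled by the full $L^1$ bound $\Norm{K}_{L^1}<\infty$, but this produces a factor that does not decay in $s$. \textbf{We expect this to be the main obstacle.} The first remedy to try is to note that for $j\le k+s$ the smoothness (cancellation) estimate already covers these $j$ and gives the decay factor $2^{(j-k-s)\varepsilon}\le 2^{-s\varepsilon}\cdot C$ when $j-k=O(1)$. We would therefore organize the two cases as $j\le k+C$ (using smoothness, which gives $\sum_{j\le k+C}2^{(j-k-s)\varepsilon}\lesssim2^{-s\varepsilon}$) and $j>k+C$ (using tail decay). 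For $j>k+C$ the tail bound needs no cancellation and decays like $2^{-(j-k)M'}$. If $j\le k+s$ it is also $\le 2^{-(j-k)M'}$, so summing over $j>k+C$ gives only $O(1)$, not a power of $2^{-s}$.

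To recover the decay in $s$, we would combine the two bounds by taking the minimum for $C<j-k\le s$: the smoothness bound $2^{(j-k-s)\varepsilon}$ and the tail bound $2^{-(j-k)M'}$. The product $\min\{\cdot,\cdot\}\le 2^{-s\varepsilon M'/(\varepsilon+M')}$ uniformly, and summing over $j$ gives a geometric decay in $s$ with at most a factor of $s$. Thus
\[
\sum_j\Norm{\zeta T_j(b_{k,s})\zeta}_{L^1}\lesssim (1+s)2^{-s\varepsilon'}\Norm{b_{k,s}}_{L^1}.
\]
Summing first over $k$ using Lemma \ref{lem:b_{k,s} properties}(1), and then over $s\ge0$, completes the proof.
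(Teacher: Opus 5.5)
Your argument is correct, but the version to write up is your final three-range organization ($j-k\le C$, $C<j-k\le s$, $j-k>s$). The first split at $j=k+s$ is a false start: as you noticed, the smoothness bound summed over $j\le k+s$ gives no decay in $s$.

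The setup is the same as the paper's:
\begin{itemize}
\item cancellation of $b_{k,s}$ on cubes of $\Q_{k+s}$;
\item vanishing of $\zeta(x)b_{k,s}(y)\zeta(x)$ for $x\in 100\widehat{Q}^s$, because the factor $p_k=\pi_{\widehat{Q}^s}$ is killed by $\zeta(x)$;
\item the separation $\rho(x-y)\geq 49\cdot 2^{-k}$ off that set.
\end{itemize}
Your range $j-k\le C$ is the paper's term (I) in its normalized claim (the case $2^j\rho(y)<1$), and you treat it exactly as the paper does, via Proposition \ref{prop:K(x) kernel preoperties}(2).

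The genuine difference is in the remaining scales, the paper's term (II). There the paper combines cancellation and the excluded region in a single step. It uses the mean value theorem together with the decay of $\rho(\nabla K)$ from Proposition \ref{prop:K(x) kernel preoperties}(3), obtaining per scale a bound $2^{-s}(2^j\rho(y))^{2-(M-3)}$. This decays in $s$ and in $j$ simultaneously and gives the clean exponent $\alpha=\min\{\varepsilon,1\}$.

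You instead use two separate cruder bounds and interpolate them with $\min\{a,b\}\le a^{\theta}b^{1-\theta}$:
\begin{itemize}
\item the $L^1$-modulus bound $2^{(j-k-s)\varepsilon}$, which uses cancellation but ignores the excluded region;
\item the size tail $2^{-(j-k)M'}$, which uses the region but no cancellation.
\end{itemize}
This costs a smaller exponent $\varepsilon M'/(\varepsilon+M')$ and a factor $1+s$, both harmless after summing in $s$.

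In return, your route needs only the size decay of $K$, not of its gradient. It also avoids the mean-value comparison between Euclidean and non-isotropic increments ($|h_j|\le\sqrt2\,2^{-s}R_j^2$ here, and $|2^{-t}\circ z|\le\sqrt N\,2^{-t}\rho(z)^d$ in Section 6.2). So it would carry over to the family $\{\K_j\}$ of Proposition \ref{prop:M_j^s kernel properties} without the $\rho(\nabla\K_j)$ part of (3).
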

\begin{proof}[Proof of Lemma \ref{lem:e_b' projection} assuming Lemma \ref{lem:zeta T_jb zeta L^1 norm estimate}]
	Assume Lemma \ref{lem:zeta T_jb zeta L^1 norm estimate} and set
	\[ e_{b,j}':=1_{[0,\lambda/2]}(|\zeta T_j(b)\zeta|), \quad  e_b':=\bigwedge_{j\in\Z}e_{b,j}'. \]
	Since $\zeta T_j(b)\zeta$ is self-adjoint, we have
	\begin{equation*}
		\norm{e_{b,j}'\zeta T_j(b)\zeta e_{b,j}'}\leq e_{b,j}'|\zeta T_j(b)\zeta| e_{b,j}'\leq\frac{\lambda}2.
	\end{equation*}
	Moreover, Chebyshev's inequality implies
	\begin{equation*}
		\varphi(1_\A-e_{b,j}')\leq \frac2{\lambda}\Norm{\zeta T_j(b)\zeta}_{L^1(\A)}
	\end{equation*}
	These properties imply
	\begin{equation*}
		\begin{split}
			\Norm{e_b'\zeta T_j(b)\zeta e_b'}_\infty \leq \Norm{e_{b,j}'T_j(b)e_{b,j}'}_\infty \leq \frac{\lambda}2 \quad  \text{ for all } j\in\Z,
		\end{split}
	\end{equation*}
	and
	\begin{equation*}
		\varphi(1_\A-e_b')\leq \sum_{j\in\Z}\varphi(1_\A-e_{b,j}')\lesssim\frac1{\lambda}\sum_{j\in\Z}\Norm{\zeta T_j(b)\zeta}_{L^1(\A)} \leq \frac1{\lambda}\Norm{f}_{L^1(\A)},
	\end{equation*}
	where the last step is guaranteed by Lemma \ref{lem:zeta T_jb zeta L^1 norm estimate}.
\end{proof}
\par In what follows, we prove Lemma \ref{lem:zeta T_jb zeta L^1 norm estimate}.
\par Since we have
\begin{equation*}
	T_j(b)=\sum_{s\geq0}\sum_{k\in\Z} T_j(b_{k,s}),
\end{equation*}
by the triangle inequality,
\[ \sum_{j\in\Z}\Norm{\zeta T_j(b)\zeta}_{L^1(\A)}\leq \sum_{s\geq0}\sum_{j\in\Z}\sum_{k\in\Z}\Norm{\zeta T_j(b_{k,s})\zeta}_{L^1(\A)} \]
and it suffices to estimate the right-hand side. Note that, by the cancellation property of $b_{k,s}$, we have
\begin{equation*}
	\begin{split}
		\zet(x)T_j(b_{k,s})(x)\zet(x)&=\int_{\R^2}\left(K_j(x-y)-K_j(x-\bar{y}^{k+s})\right)\zet(x)b_{k,s}(y)\zet(x) dy \\
		&=\sum_{Q\in\Q_{k+s}}\int_Q\left(K_j(x-y)-K_j(x-c_Q)\right)\zet(x)b_{k,s}(y)\zet(x) dy.
	\end{split}
\end{equation*}
Let $\widehat{Q}^s$ be the ancestor of $Q$ $s$ generations above it. We have the following result.
\begin{lemma}
	\label{claim:zeta b_ks zeta=0}
	Let $Q\in\Q_{k+s}$. For each $y\in Q$ and $x\in100\widehat{Q}^s$, we have
	\begin{equation*}
		\zet(x)b_{k,s}(y)\zet(x)=0.
	\end{equation*}
\end{lemma}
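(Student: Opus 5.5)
We expand $b_{k,s}$ using its definition (\ref{eq:b_{k,s} def}) and show that each term is killed by $\zeta(x)$ on both sides. For $y\in Q\in\Q_{k+s}$, the only nonzero pieces of $b_{k,s}(y)$ are
\[
p_k(y)\,(f-f_{k+s})(y)\,p_{k+s}(y)\quad\text{and}\quad p_{k+s}(y)\,(f-f_{k+s})(y)\,p_k(y)
\]
(for $s=0$ there is the single term $p_k(f-f_k)p_k$). Here $p_{k+s}(y)=\pi_Q=\xi_{\widehat Q}-\xi_Q$, since $p_{k+s}$ is constant on cubes of $\Q_{k+s}$. Similarly $p_k(y)=\pi_{\widehat Q^s}=\xi_{\widehat Q^{s+1}}-\xi_{\widehat Q^s}$, because $\widehat Q^s\in\Q_k$ is the cube of generation $k$ containing $y$. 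So it suffices to show that for $x\in100\widehat Q^s$ we have
\[
\zeta(x)\,\pi_{\widehat Q^s}=0 .
\]
Then $\zeta(x)p_k(y)=0$ and, taking adjoints, $p_k(y)\zeta(x)=0$. Every term of $b_{k,s}(y)$ has a factor $p_k(y)$ at one end, so the whole expression vanishes after multiplying by $\zeta(x)$ on both sides.

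To get this vanishing, apply Lemma \ref{lem:zeta projection}(2) with $Q_0=\widehat Q^s$. For $x\in100\widehat Q^s$ it gives
\[
\zeta(x)\leq 1_\M-\xi_{\widehat{Q_0}}+\xi_{Q_0}=1_\M-\pi_{Q_0}.
\]
Since $\pi_{Q_0}=\xi_{\widehat{Q_0}}-\xi_{Q_0}$ is a projection (recall $\xi_{Q_0}\leq\xi_{\widehat{Q_0}}$), $1_\M-\pi_{Q_0}$ is a projection as well. If $\zeta(x)\leq 1-\pi$ for projections $\zeta(x)$ and $\pi$, then $\zeta(x)\pi=0$. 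Hence $\zeta(x)\pi_{\widehat Q^s}=0$, which closes the argument.

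The main point needing care is the bookkeeping of indices. One has to confirm that $p_k$ evaluated at $y\in Q$ really is $\pi$ of the level-$k$ ancestor $\widehat Q^s$, so that the dilate $100\widehat Q^s$ in the claim matches the cube to which Lemma \ref{lem:zeta projection} is applied. One also has to check that in both cross terms the factor $p_k$ sits on an outer side, so that $\zeta(x)$ can annihilate it directly without needing to commute past $f-f_{k+s}$. Both hold by the form of (\ref{eq:b_{k,s} def}), so the claim follows for every $s\geq0$. Note that we deliberately use the $p_k$ factor rather than $p_{k+s}$: $x$ is only known to lie in $100\widehat Q^s$, not in $100Q$, so Lemma \ref{lem:zeta projection} can only be applied to the larger cube.
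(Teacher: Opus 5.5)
Your proof is correct and takes essentially the paper's approach. Both arguments apply Lemma \ref{lem:zeta projection}(2) at the level-$k$ ancestor $\widehat Q^s$ to show that $\zeta(x)$ annihilates $p_k(y)=\pi_{\widehat Q^s}$, which sits on an outer side of every term of $b_{k,s}$. The only cosmetic difference is that you use $\zeta(x)\le 1_\M-\pi_{\widehat Q^s}$ directly, whereas the paper uses its consequence $\zeta(x)\le\xi_{\widehat Q^s}$ together with $\xi_{\widehat Q^s}\pi_{\widehat Q^s}=0$.
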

\begin{proof}By the definition of $b_{k,s}$,
	\[ b_{k,s}=p_kfp_{k+s}+p_{k+s}fp_k-p_kf_{k+s}p_{k+s}-p_{k+s}f_{k+s}p_k. \]
	We need only consider the four terms separately. For example, consider the first term
	\[ \zet(x)(p_{k}fp_{k+s})(y)\zet(x)=\zet(x)p_{\widehat{Q}^s}f(y)p_Q\zet(x), \]
	where $Q\in\Q_{k+s}$ is the one that contains $y$. By Lemma \ref{lem:zeta projection}, if $x\in 100\widehat{Q}^s$, then $\zet(x)\leq q_{\widehat{Q}^s}$ and therefore
	\[ \zet(x)p_{\widehat{Q}^s}f(y)p_Q\zet(x)=\zet(x)q_{\widehat{Q}^s}p_{\widehat{Q}^s}f(y)p_Q\zet(x)=0, \]
	since $q_{\widehat{Q}^s}p_{\widehat{Q}^s}=q_{\widehat{Q}^s}(q_{\widehat{Q}^{s+1}}-q_{\widehat{Q}^s})=0$. The other three terms are handled similarly.
\end{proof}
By Lemma \ref{claim:zeta b_ks zeta=0}, we obtain
\begin{multline*}
	\zet(x)T_j(b_{k,s})(x)\zet(x)= \\
	\sum_{Q\in\Q_{k+s}}\chi_{(100\widehat{Q}^s)^c}(x) \int_Q\left(K_j(x-y)-K_j(x-c_Q)\right)\zet(x)b_{k,s}(y)\zet(x) dy.
\end{multline*}
Therefore
\begin{equation*}
	\begin{split}
		\phantom{\leq}& \sum_{s\geq0}\sum_{j\in\Z}\sum_{k\in\Z}\Norm{\zeta T_j(b_{k,s})\zeta}_{L^1(\A)} \\
		\leq& \sum_{s\geq0}\sum_{j\in\Z}\sum_{k\in\Z} \\
		    &  \quad \int_{\R^2}\bigg\|{\sum_{Q\in\Q_{k+s}}\chi_{(100\widehat{Q}^s)^c}(x)\int_Q\left(K_j(x-y)-K_j(x-c_Q)\right)\zet(x)b_{k,s}(y)\zet(x) dy}\bigg\|_{L^1(\M)}dx \\
		\leq& \sum_{s\geq0}\sum_{j\in\Z}\sum_{k\in\Z}\int_{\R^2}\sum_{Q\in Q_{k+s}}\chi_{(100\widehat{Q}^s)^c}(x)\int_Q\norm{K_j(x-y)-K_j(x-c_Q)}\Norm{b_{k,s}(y)}_{L^1(\M)}dydx \\
		\leq& \sum_{s\geq0}\sum_{j\in\Z}\sum_{k\in\Z}\sum_{Q\in Q_{k+s}}\int_{x:\rho(x,c_Q)\geq 10\cdot 2^{-k}}\int_Q\norm{K_j(x-y)-K_j(x-c_Q)}\Norm{b_{k,s}(y)}_{L^1(\M)}dydx,
	\end{split}
\end{equation*}
where the last step is obtained by interchanging the integral $\int_{\R^2}$ and the sum $\sum_{Q\in Q_{k+s}}$, and using the fact that $x\in (100\widehat{Q}^s)^c$ implies $\rho(x,c_Q)\geq 10\cdot 2^{-k}$. Indeed, the definition of the dilated cube gives $\rho(x,c_{\widehat{Q}^s})\geq50\cdot2^{-k}$ for $x\in(100\widehat{Q}^s)^c$, while $\rho(w,c_{\widehat{Q}^s})\leq2^{-k}$ for $w\in\widehat{Q}^s$. Thus the triangle inequality (\ref{eq:triangle ineq for quasinorm}) gives
\[ \rho(x,w)\geq \rho(x,c_{\widehat{Q}^s})-\rho(w,c_{\widehat{Q}^s})\geq49\cdot2^{-k}>10\cdot2^{-k}. \]
Since the point $c_Q$ belongs to $\widehat{Q}^s$, we deduce that $\rho(x,c_Q)\geq 10\cdot 2^{-k}$.
Next we note that
\begin{equation*}
	\begin{split}
		&\sum_{Q\in Q_{k+s}}\int_{x:\rho(x,c_Q)\geq 10\cdot 2^{-k}}\int_Q\norm{K_j(x-y)-K_j(x-c_Q)}\Norm{b_{k,s}(y)}_{L^1(\M)}dydx \\
		=& \sum_{Q\in Q_{k+s}}\int_Q\int_{x:\rho(x,c_Q)\geq 10\cdot 2^{-k}}\norm{K_j(x-y)-K_j(x-c_Q)}dx\ \Norm{b_{k,s}(y)}_{L^1(\M)}dy \\
		=& \int_{\R^2}\int_{x:\rho(x,\bar{y}^{k+s})\geq 10\cdot 2^{-k}}\norm{K_j(x-y)-K_j(x-\bar{y}^{k+s})}dx\ \Norm{b_{k,s}(y)}_{L^1(\M)}dy.
	\end{split}
\end{equation*}
We deduce that
\begin{multline}
	\label{eq:zeta b_(s,k) zeta estimate}
	\sum_{s\geq0}\sum_{j\in\Z}\sum_{k\in\Z}\Norm{\zeta T_j(b_{k,s})\zeta}_{L^1(\A)} \leq \\
	\sum_{s\geq0}\sum_{k\in\Z}\int_{\R^2}\Big(\sum_{j\in\Z}\int_{x:\rho(x,\bar{y}^{k+s})\geq 10\cdot 2^{-k}}\big|{K_j(x-y)-K_j(x-\bar{y}^{k+s})}\big|dx\Big) \ \Norm{b_{k,s}(y)}_{L^1(\M)}dy.
\end{multline}
We now estimate the sum over $j$ in parentheses on the last line. A change of variables gives
\begin{equation*}
	\begin{split}
		&\sum_{j\in\Z}\int_{x:\rho(x,\bar{y}^{k+s})\geq 10\cdot 2^{-k}}\big|{K_j(x-y)-K_j(x-\bar{y}^{k+s})}\big|dx \\
		\leq& \sum_{j\in\Z}\int_{x:\rho(x,\bar{y}^{k+s})\geq 10\cdot 2^{s}\rho(y,\bar{y}^{k+s})}\norm{K_j\left((x-\bar{y}^{k+s})-(y-\bar{y}^{k+s})\right)-K_j(x-\bar{y}^{k+s})}dx \\
		=& \sum_{j\in\Z}\int_{\rho(x)\geq 10\cdot 2^s\rho(y)}\norm{K_j(x-y)-K_j(x)}dx \\
		=&\sum_{j\in\Z}\int_{\rho(x)\geq 10\cdot \rho(y)}\norm{K_j(2^s\circ x-y)-K_j(2^s\circ x)}d(2^s\circ x) \\
		=&\sum_{j\in\Z}\int_{\rho(x)\geq 10\rho(y)}|K_{j+s}(x-2^{-s}\circ y)-K_{j+s}(x)|dx \\
		=&\sum_{j\in\Z}\int_{\rho(x)\geq 10\rho(y)}|K_j(x-2^{-s}\circ y)-K_j(x)|dx.
	\end{split}
\end{equation*}
\begin{claim}
	\label{claim:sum int |K_j(x-y)-K_j(x)| estimate}
	There exists a constant $\alpha>0$ such that
	\begin{equation*}
		\sup_{y\in\R^2}\sum_{j\in\Z}\int_{\rho(x)\geq 10\rho(y)}|K_j(x-2^{-s}\circ y)-K_j(x)|dx\lesssim 2^{-\alpha s}
	\end{equation*}
	holds for all $s\geq0$.
\end{claim}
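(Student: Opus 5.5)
The plan is to reduce the claim, by non-isotropic scaling, to a single sum over $j$ involving only the fixed kernel $K=K_0$. That sum is then split into a range where the translation estimate, Proposition \ref{prop:K(x) kernel preoperties}(2), applies and a range where the decay estimate, Proposition \ref{prop:K(x) kernel preoperties}(3), applies.

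\emph{Step 1: rescaling.} Since $K_j(x)=2^{3j}K(2^j\circ x)$ and $d(2^{-j}\circ x)=2^{-3j}dx$, the change of variables $x\mapsto 2^{-j}\circ x$ gives
\[
\int_{\rho(x)\geq 10\rho(y)}|K_j(x-2^{-s}\circ y)-K_j(x)|\,dx=\int_{\rho(x)\geq 10\cdot 2^{j}\rho(y)}|K(x-u_j)-K(x)|\,dx,
\]
where $u_j:=2^{j-s}\circ y$, so that $\rho(u_j)=2^{j-s}\rho(y)$.

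Choose $l\in\Z$ with $2^l\leq\rho(y)<2^{l+1}$ and reindex $j\mapsto j-l$. The sum over $j\in\Z$ is unchanged, and the two relevant scales become comparable to dyadic ones:
\[
\rho(u_j)\approx 2^{j-s}, \qquad \text{region of integration } \{\rho(x)\geq 10\cdot 2^{j}\}.
\]
Hence it suffices to show
\[
\sum_{j\in\Z} I_j\lesssim 2^{-\alpha s},\qquad I_j:=\int_{\rho(x)\geq 10\cdot 2^j}|K(x-u_j)-K(x)|\,dx,\quad \rho(u_j)\leq 2^{j-s+1},
\]
uniformly over all admissible choices of $u_j$.

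\emph{Step 2: small $j$.} For $j\leq s/2$ we have $\rho(u_j)\lesssim 1$, so Proposition \ref{prop:K(x) kernel preoperties}(2) applies. We simply enlarge the domain of integration to all of $\R^2$, and obtain $I_j\lesssim 2^{(j-s)\varepsilon}$. Summing the geometric series over $j\leq s/2$ gives a total of $\lesssim 2^{-\varepsilon s/2}$. If $\rho(u_j)$ slightly exceeds $1$ because of the factor $2$, either the constant in (2) can be adjusted or the few borderline terms can be absorbed into Step 3.

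\emph{Step 3: large $j$.} For $j>s/2$ we do not use cancellation. We bound
\[
I_j\leq \int_{\rho(x)\geq 10\cdot 2^j}|K(x-u_j)|\,dx+\int_{\rho(x)\geq 10\cdot2^j}|K(x)|\,dx .
\]
On this region the quasi-triangle inequality (\ref{eq:triangle ineq for quasinorm}) gives
\[
\rho(x-u_j)\geq \rho(x)-\rho(u_j)\geq \rho(x)-2^{j+1}\geq \tfrac12\rho(x),
\]
since $\rho(u_j)\leq 2^{j-s+1}\leq 2^{j+1}$. Proposition \ref{prop:K(x) kernel preoperties}(3) with $M=5$, together with $|\{\rho(x)\approx R\}|\approx R^3$, then yields
\[
I_j\lesssim\int_{\rho(x)\geq 10\cdot 2^j}\rho(x)^{-5}\,dx\lesssim 2^{-2j}.
\]
Summing over $j>s/2$ gives a total of $\lesssim 2^{-s}$. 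Altogether the claim follows with $\alpha=\min\{\varepsilon/2,1\}$.

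\emph{Main obstacle.} The delicate point is that the decay in Proposition \ref{prop:K(x) kernel preoperties}(3) only holds for $\rho(x)\geq R_0$ with some fixed $R_0$. For small $s$, and for the finitely many $j$ with $10\cdot 2^j<R_0$, Step 3 is therefore not directly available. For these finitely many $j$ I would use the crude bound $I_j\leq 2\|K\|_{L^1}$, which is finite since $K\in L^1$ by Proposition \ref{prop:K(x) kernel preoperties}(1) and its proof. Because there are only $O(1)$ such indices, this contributes $O(1)$, which is harmless when $s$ is bounded, where $2^{-\alpha s}\approx1$. For large $s$ these indices all satisfy $j\leq s/2$ and are already covered by Step 2. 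I also need to check that the reindexing in Step 1 keeps all implicit constants independent of $l$; it does, because it is an exact dilation symmetry of the family $\{K_j\}_{j\in\Z}$.
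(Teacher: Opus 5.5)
Your proof is correct, but it handles the large-scale range differently from the paper. The paper splits at $2^j\rho(y)=1$. For $2^j\rho(y)<1$ it uses the translation estimate, Proposition \ref{prop:K(x) kernel preoperties}(2), on all of $\R^2$, as your Step 2 does, and gets $2^{-\varepsilon s}$. For $2^j\rho(y)\geq1$ it keeps the cancellation: it applies the mean value theorem with the rapid decay of $\nabla K$ on $\{\rho\geq 9\}$. The factor $2^{-s}$ then comes from the smallness of the translation $h_j=2^{j-s}\circ y$, which requires the anisotropic Euclidean bound $|h_j|\leq\sqrt2\,2^{-s}R_j^2$ and a choice $M>5$ in the resulting dyadic sum.

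You instead move the split to $2^j\rho(y)\approx 2^{s/2}$, so the translation estimate alone still yields $2^{-\varepsilon s/2}$. On the tail you discard cancellation entirely, because $\int_{\rho(x)\geq 10\cdot 2^j}|K|\lesssim 2^{-2j}$ already sums to $2^{-s}$ over $j>s/2$.

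Your route uses only the $L^1$ modulus of continuity and the size decay of $K$. It needs no derivative bounds and no comparison between the Euclidean and $\rho$-sizes of the translation. That makes it slightly more robust: for the family $\{\mathcal{K}_j\}$ of Proposition \ref{prop:M_j^s kernel properties} you would need only the size part of property (3), not the gradient part. The cost is a worse exponent, $\alpha=\min\{\varepsilon/2,1\}$ instead of $\min\{\varepsilon,1\}$. This does not matter, since afterwards only $\sum_{s\geq0}2^{-\alpha s}<\infty$ is used.

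Your treatment of the remaining details is fine: the reindexing by $l$ is an exact dilation symmetry, the borderline indices are finitely many and occur only for bounded $s$, and the case $y=0$, which you did not mention, is trivial.
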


\par Once Claim \ref{claim:sum int |K_j(x-y)-K_j(x)| estimate} has been proved, using (\ref{eq:zeta b_(s,k) zeta estimate}), together with the $L^1$-estimate of $b_{k,s}$ (see Lemma \ref{lem:b_{k,s} properties}), we obtain
\begin{equation*}
	\sum_{s\geq0}\sum_{j\in\Z}\sum_{k\in\Z}\Norm{\zeta T_j(b_{k,s})\zeta}_{L^1(\A)}\lesssim \sum_{s\geq0}\sum_{k\in\Z}2^{-\alpha s}\Norm{b_{k,s}}_{L^1(\A)}\lesssim\Norm{f}_{L^1(\A)}.
\end{equation*}
This completes the proof of Lemma \ref{lem:zeta T_jb zeta L^1 norm estimate}, and hence that of Lemma \ref{lem:e_b' projection}.

\begin{proof}[Proof of Claim \ref{claim:sum int |K_j(x-y)-K_j(x)| estimate}]
	We decompose the sum over $j\in\Z$ into two parts
	\begin{equation*}
		\begin{split}
			\sum_{j\in\Z}\int_{\rho(x)\geq 10\rho(y)}|K_j(x-2^{-s}\circ y)-K_j(x)|dx&=\sum_{j:2^j\rho(y)<1}\cdots+\sum_{j:2^j\rho(y)\geq1}\cdots \\
			:&=(I)+(II).
		\end{split}
	\end{equation*}
	For the first part, we have
	\begin{equation*}
		\begin{split}
			(I)&\leq \sum_{j:2^j\rho(y)<1} \int_{\R^2}|K_j(x-2^{-s}\circ y)-K_j(x)|dx \\
			&=\sum_{j:2^j\rho(y)<1} \int_{\R^2}2^{3j}|K(2^j\circ x-2^{j-s}\circ y)-K(2^j\circ x)|dx \\
			&=\sum_{j:2^j\rho(y)<1} \int_{\R^2}|K(x-2^{j-s}\circ y)-K(x)|dx \\
			&\lesssim\sum_{j:2^j\rho(y)<1} \left(2^{j-s}\rho(y)\right)^\varepsilon=2^{-\varepsilon s}\sum_{2^j<1/\rho(y)}\left(2^j\rho(y)\right)^{\varepsilon} \\
			&\lesssim 2^{-\varepsilon s},
		\end{split}
	\end{equation*}
	where the constant $\varepsilon>0$ above comes from Proposition \ref{prop:K(x) kernel preoperties}. For the second part, we have
	\begin{equation}
		\label{eq:(II) estimate}
		\begin{split}
			(II)&=\sum_{j:2^j\rho(y)\geq1}\int_{\rho(x)\geq 10\rho(y)}2^{3j}|K(2^j\circ x-2^{j-s}\circ y)-K(2^j\circ x)|dx \\
			&=\sum_{j:2^j\rho(y)\geq1}\int_{2^{-j}\rho(x)\geq 10\rho(y)}|K(x-2^{j-s}\circ y)-K(x)|dx \\
			&\lesssim 2^{-s}\sum_{j:2^j\rho(y)\geq1}(2^j\rho(y))^2\int_{\rho(x)\geq 10\cdot2^{j}\rho(y)} \sup_{\rho(x')\geq \frac9{10}\rho(x)} \rho(\nabla K(x'))dx,
		\end{split}
	\end{equation}
	To justify the last line of (\ref{eq:(II) estimate}), put $R_j=2^j\rho(y)\geq1$ and $h_j=2^{j-s}\circ y$. For $\rho(x)\geq10R_j$ and $0\leq\theta\leq1$, the triangle inequality (\ref{eq:triangle ineq for quasinorm}) gives
	\[
	 \rho(x-\theta h_j)\geq\rho(x)-\rho(h_j)\geq\frac9{10}\rho(x)\geq9R_j\geq9.
	\]
	Since $\supp\mu\subset\{\rho\leq2\}$ and $\psi$ is smooth, $\nabla K$ is bounded on $\{\rho\geq9\}$, and hence $|\nabla K|\lesssim\rho(\nabla K)$ there. Also, by (\ref{eq:rho(x) and |x|_infty}),
	\[
	 |h_j|\leq\sqrt{2}\max\{2^{-s}R_j,2^{-2s}R_j^2\}\leq\sqrt{2}\,2^{-s}R_j^2.
	\]
	This bound is valid even when $\rho(h_j)<1$. The mean value theorem now gives
	\[
	 |K(x-h_j)-K(x)|\lesssim2^{-s}R_j^2\sup_{\rho(x')\geq\frac9{10}\rho(x)}\rho(\nabla K(x')),
	\]
	which proves the last line of (\ref{eq:(II) estimate}).
	\par By Proposition \ref{prop:K(x) kernel preoperties} (3) and smoothness on $\{\rho\geq9\}$, the supremum above is bounded by a constant multiple of $(10/9)^M\rho(x)^{-M}$. Thus we obtain
	\begin{equation}
		\label{eq:(II) estimate 2}
		\begin{split}
			(II)&\lesssim 2^{-s}\sum_{j:2^j\rho(y)\geq1} (2^j\rho(y))^2\int_{\rho(x)\geq 10\cdot2^{j}\rho(y)}\frac1{\rho(x)^M}dx \\
			&\lesssim 2^{-s}\sum_{j:2^j\rho(y)\geq1}(2^j\rho(y))^2\left(10\cdot2^{j}\rho(y)\right)^{-(M-3)} \\
			&\lesssim 2^{-s},
		\end{split}
	\end{equation}
	where we choose $M>5$. In the second line of (\ref{eq:(II) estimate 2}), we use $|B(r)|=4r^3$ to obtain
	\begin{equation*}
		\int_{\rho(x)\geq R}\frac1{\rho(x)^M}dx=12\int_R^\infty r^{2-M}dr=\frac{12}{M-3}R^{3-M}, \qquad R>0.
	\end{equation*}
	These bounds are uniform in $y\in\R^2$ (the case $y=0$ is immediate), which proves the claim with $\alpha=\min\{\varepsilon,1\}>0$.
\end{proof}

\subsection{Estimate for the good part}
\par In this subsection, we prove (\ref{eq:good}), the estimate for the good part. We handle the diagonal and off-diagonal parts separately.
\par The diagonal part is treated as in the classical case. We use the properties of $g_d$, which can be found in Parcet \cite{Parcet2009}:
\begin{equation}
	\label{eq:g_d properties}
	\Norm{g_d}_{L^1(\A)}\lesssim\Norm{f}_{L^1(\A)}, \quad \Norm{g_d}_{L^{\infty}(\A)}\lesssim \lambda.
\end{equation}
\begin{lemma}
	\label{lem:e_d estimate}
	There exists a projection $e_d\in\PA$ such that
	\begin{equation*}
		\|e_dT_j(g_d)e_d\|_\infty \leq \frac{\lambda}2 \ \text{ for all } j , \quad \varphi(1-e_d)\lesssim\frac1{\lambda}\Norm{f}_{L^1(\A)}.
	\end{equation*}
\end{lemma}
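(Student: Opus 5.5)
The plan is the classical diagonal argument: an $L^2$ maximal bound for $T_j$ combined with Chebyshev's inequality in the noncommutative maximal space. By (\ref{eq:g_d properties}), $g_d$ is a positive element (each $p_kf_kp_k\ge0$ and $qfq\ge0$) with
\[
\Norm{g_d}_{L^2(\A)}^2\le \Norm{g_d}_{L^\infty(\A)}\Norm{g_d}_{L^1(\A)}\lesssim \lambda\Norm{f}_{L^1(\A)}.
\]
Since $s<0<1/2$, the square function estimate from Section 3 applies. For real $s$ the operators $T_j=A_j^s-B_j$ preserve self-adjointness, and
\[
\Big\|\Big(\sum_j|T_j(g_d)|^2\Big)^{1/2}\Big\|_{L^2(\A)}\lesssim_s\Norm{g_d}_{L^2(\A)},
\]
by the operator-valued Plancherel identity and (\ref{eq:sum nu-psi estimate}).

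The key step is to avoid merely invoking the $\Lambda^{1,\infty}$ quasi-norm of the maximal family. Instead I construct the projection directly from the square function. Put
\[
S:=\Big(\sum_j|T_j(g_d)|^2\Big)^{1/2}\in L^2_+(\A),\qquad e_d:=\chi_{[0,\lambda/2]}(S).
\]
Then $e_d$ commutes with $S$, and $e_dSe_d\le\frac{\lambda}{2}e_d$. For each $j$, since $T_j(g_d)$ is self-adjoint and $|T_j(g_d)|^2\le S^2$,
\[
\Norm{e_dT_j(g_d)e_d}_\infty^2=\Norm{e_dT_j(g_d)e_d}_\infty\Norm{e_dT_j(g_d)e_d}_\infty\le \Norm{e_d|T_j(g_d)|^2e_d}_\infty\le\Norm{e_dS^2e_d}_\infty\le \frac{\lambda^2}{4}.
\]
Here the middle inequality uses $\Norm{e\,a\,e}^2\le\Norm{e\,a^*a\,e}$, which follows from $\Norm{eae}^2=\Norm{e a^*e a e}\le\Norm{ea^*ae}$ because $e\le 1$. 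This gives the first requirement $\Norm{e_dT_j(g_d)e_d}_\infty\le\lambda/2$ for all $j$.

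Chebyshev's inequality then gives
\[
\varphi(1-e_d)\le \frac{4}{\lambda^2}\Norm{S}_{L^2(\A)}^2\lesssim\frac{1}{\lambda^2}\Norm{g_d}_{L^2(\A)}^2\lesssim\frac1\lambda\Norm{f}_{L^1(\A)},
\]
which is the second requirement.

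The only delicate point is that $g_d$ need not lie in $\A_{c,+}$, so the $L^2$ machinery must be applied to a general $L^2$ element. This is harmless, since the square function identity holds on all of $L^2(\A)$ by Plancherel. Self-adjointness of $T_j(g_d)$ follows because $g_d$ is self-adjoint and, for real $s$, the kernel $K_j$ is real. If instead one wanted to quote the maximal $L^2$ bound of Proposition \ref{prop:A_j^s L2 bound}, extracting a single projection from the $\ell^\infty$ norm would need an extra step; the direct square function route avoids this.
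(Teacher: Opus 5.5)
Your proposal is correct and follows essentially the paper's proof: the same projection $e_d=\chi_{[0,\lambda/2]}(S(g_d))$ built from the square function, then Chebyshev together with the $L^2$ square-function bound and $\Norm{g_d}_2^2\le\Norm{g_d}_1\Norm{g_d}_\infty\lesssim\lambda\Norm{f}_1$. The only cosmetic difference is in bounding $\Norm{e_dT_j(g_d)e_d}_\infty$: you use $\Norm{eae}^2\le\Norm{ea^*ae}$ with $|T_j(g_d)|^2\le S^2$, whereas the paper uses $e_dT_j(g_d)e_d\le e_d|T_j(g_d)|e_d\le e_dS(g_d)e_d$ (operator monotonicity of the square root). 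Your variant does not even require $T_j(g_d)$ to be self-adjoint.
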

\begin{proof}
	Let $e_d=1_{[0,\frac{\lambda}2]}(|S(g_d)|)$, where $S(g_d)=\big(\sum_{j\in\Z}|T_j(g_d)(x)|^2\big)^{1/2}$ is the square function defined in (\ref{eq:S(f) def}). Since $T_j(g_d)$ is self-adjoint, $e_dT_j(g_d)e_d\leq e_d|T_j(g_d)|e_d\leq e_dS(g_d)e_d$, and therefore
	\begin{equation*}
		\|e_dT_j(g_d)e_d\|_\infty\leq \|e_dS(g_d)e_d\|_\infty\leq \frac{\lambda}2.
	\end{equation*}
	Moreover, by Chebyshev's inequality and the $L^2$-boundedness of the square function, we have
	\begin{equation*}
		\varphi(1-e_d)\leq\frac4{\lambda^2}\Norm{S(g_d)}_{L^2(\A)}^2\lesssim\frac1{\lambda^2}\Norm{g_d}_{L^2(\A)}^2\leq \frac1{\lambda^2}\Norm{g_d}_{L^1(\A)}\Norm{g_d}_{L^\infty(\A)}\lesssim\frac1{\lambda}\Norm{f}_{L^1(\A)},
	\end{equation*}
	where we have used the estimates (\ref{eq:g_d properties}) for $g_d$.
\end{proof}
\par We next consider the off-diagonal part $g_\off$.
\begin{lemma}
	\label{lem:e_off estimate}
	There exists a projection $e_{\off}\in\PA$ such that
	\begin{equation*}
		\|e_\off \zeta T_j(g_\off)\zeta e_\off\|_\infty \leq \frac{\lambda}2 \ (\forall j), \quad \varphi(1-e_\off)\lesssim\frac1{\lambda}\Norm{f}_{L^1(\A)}.
	\end{equation*}
	where the projection $\zeta$ is defined in Lemma \ref{lem:zeta projection}.
\end{lemma}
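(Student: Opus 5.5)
The plan is to follow Parcet's treatment of the off-diagonal good part, adapted to the non-isotropic dyadic structure and to the integral (Hörmander-type) regularity of $K$ from Proposition \ref{prop:K(x) kernel preoperties}. As for the bad part, we want an $L^2$ (or $L^1$) estimate for a suitable square or summed quantity built from $\zeta T_j(g_\off)\zeta$, followed by Chebyshev. Concretely, we write
\[
g_\off=\sum_{s\geq1}\sum_{k\in\Z}\big(p_kf_{k+s}p_{k+s}+p_{k+s}f_{k+s}p_k\big)+qf(1-q)+(1-q)fq.
\]
The last two terms are killed by $\zeta$ on both sides, since $\zeta\leq q$. Indeed, letting $Q_0$ range over all levels in Lemma \ref{lem:zeta projection} gives $\zeta(x)\leq\xi_{Q}$ for every cube containing $x$, so $\zeta\leq q$. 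It then suffices to handle $g_{k,s}:=p_kf_{k+s}p_{k+s}+p_{k+s}f_{k+s}p_k$.

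We split $g_\off=\sum_{s\geq1}\sum_k g_{k,s}$, and each $g_{k,s}$ further into martingale differences. Following Parcet, we use the identity
\[
p_kf_{k+s}p_{k+s}=\sum_{r=1}^{s}p_k\,df_{k+r}\,p_{k+s}+p_kf_kp_{k+s}.
\]
The second term vanishes because $p_k$ commutes with $f_k$ in the Cuculescu construction, so $p_kf_kp_{k+s}=f_kp_kp_{k+s}=0$; if this commutation fails, we would instead write $f_{k+s}-f_{k}$ and treat $p_kf_kp_{k+s}$ via $q_{k+s-1}$ orthogonality. This yields a sum $g_\off=\sum_{r\geq1}\sum_k \Delta_{k,r}$, where $\Delta_{k,r}$ involves $df_{k+r}$ sandwiched between $p_k$ and $\sum_{s\geq r}p_{k+s}=q_{k+r-1}-q$, together with the adjoint terms. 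Each $\Delta_{k,r}$ has mean zero on cubes of $\Q_{k+r-1}$. Parcet's estimates give $\sum_k\|\Delta_{k,r}\|_2^2\lesssim\lambda\|f\|_1$ uniformly in $r$, using $q_{k+r-1}f_{k+r}q_{k+r-1}\lesssim\lambda$ via the doubling of the non-isotropic cubes (ratio $8$).

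For fixed $j$ we then split according to the scale. We take $T_j$ acting on $\Delta_{k,r}$ with $j\leq k+r$, where the kernel is coarser than the cancellation scale, and with $j>k+r$. We plan to bound
\[
\Big\|\Big(\sum_j\big|\zeta T_j(g_\off)\zeta\big|^2\Big)^{1/2}\Big\|_2^2\lesssim\lambda\|f\|_1 .
\]
We then take $e_\off=1_{[0,\lambda/2]}$ of this square function and argue as in Lemma \ref{lem:e_d estimate}. To get the square function bound we use almost orthogonality in the index $j-(k+r)$.
\begin{itemize}
\item When $j\ll k+r$, we use the cancellation of $\Delta_{k,r}$ on $\Q_{k+r-1}$ cubes and the $L^1$-translation regularity (2) of $K$. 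This gives a factor $2^{-\varepsilon(k+r-j)}$ in $L^2\to L^2$ norm, via Young/Schur applied to $K_j(x-y)-K_j(x-c_Q)$.
\item When $j\gg k+r$, we use the Fourier decay (4), $|\widehat K(\xi)|\lesssim\rho(\xi)^{-1/2}$, together with the Littlewood–Paley localisation of $df_{k+r}$ at non-isotropic frequency $\sim2^{k+r}$. This gives a factor $2^{-(j-k-r)/2}$.
\end{itemize}
Summing the geometric decay via Cotlar/Schur, together with the $\ell^2$ bound on the $\Delta_{k,r}$ and a further $2^{-\delta r}$ gain needed to sum over $r$, closes the estimate.

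The main obstacle is the sandwiching projections. The $\Delta_{k,r}$ are not functions of pure frequency, because $p_k$ and $q_{k+r-1}$ are $\A_k$- and $\A_{k+r-1}$-measurable step functions. So the frequency localisation used in the $j\gg k+r$ regime is destroyed. The first thing we would try is to use $\zeta$ and Lemma \ref{lem:zeta projection} to localise: for $x\in100\widehat{Q}$ the sandwiched term vanishes after conjugating by $\zeta$, as in Lemma \ref{claim:zeta b_ks zeta=0}. This leaves only the far-off part, $\rho(x-y)\gtrsim2^{-k}$, where property (3), the rapid decay of $K$ and $\nabla K$, gives the $2^{-\delta r}$ gain directly. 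The near part would then be handled purely by the $L^2$ Plancherel bound (\ref{eq:sum nu-psi estimate}) applied to $p_k\,df_{k+r}$, which has $\|\cdot\|_2$ controlled by $\|df_{k+r}\|_2$.
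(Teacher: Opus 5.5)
The decisive gap is the decay in your $r$ (the paper's $s$). Parcet's bound $\sum_k\|\Delta_{k,r}\|_2^2\lesssim\lambda\|f\|_1$ (Lemma \ref{lem:g_off properties}) is only uniform in $r$. Almost orthogonality in $j-(k+r)$ therefore gives, for each $r$, a square-function bound with no decay, and the sum over $r\geq1$ cannot be performed. Everything rests on the ``further $2^{-\delta r}$ gain'', which you never actually produce:
\begin{itemize}
\item your cancellation factor $2^{-\varepsilon(k+r-j)}$ is $O(1)$ for $j$ near $k+r$;
\item the gain from the $\zeta$-localisation to $\rho(x-y)\gtrsim2^{-k}$ is $2^{-(M-3)(j-k)}$, a gain in $j-k$ that is $\lesssim 2^{-cr}$ only when $j-k\gtrsim r$;
\item the ``near part via Plancherel'' gives no gain at all. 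Moreover $\|p_k\,df_{k+r}\|_2$ is not controlled by $\lambda\|f\|_1$, since Parcet's $L^2$ estimate needs the factor $q_{k+r-1}$.
\end{itemize}
So on $k<j\le k+r$ nothing in the proposal is summable in $r$. You would have to take the minimum of the two gains there, which requires a cancellation estimate for the $\zeta$-localised, non-convolution kernel.

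The paper obtains the decay by truncating $K_j$ at the $s$-dependent radius $2^{-j+\varepsilon s}$. For the local piece, Claim \ref{claim:zeta (K phi)*g zeta=0} gives $\zeta\,[(K_j\phi_{j-\varepsilon s})*g_{n+j,s}]\,\zeta=0$ whenever $n<-\varepsilon s$. Every surviving term then has cancellation about $(1-\varepsilon)s$ generations finer than the kernel scale, which gives $2^{-\alpha_3(n+s)}$ and hence Lemma \ref{lem:K(phi) estimate}. The tail $K_j(1-\phi_{j-\varepsilon s})$ has $L^1$-mass $\lesssim 2^{-\varepsilon s(M-3)}$, but its pieces are not square-summable in $j$ by size alone. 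It is therefore dominated by a single maximal majorant $a_s$ (Lemma \ref{lem:K(1-phi) estimate}) rather than a square function, and the projections are cut at the levels $\lambda/s^2$.

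Two further steps fail as written.

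First, $\zeta$ does not kill $qf(1-q)+(1-q)fq$. The inequality $\zeta(x)\le q(x)$ says nothing about $\zeta(x)(1-q(y))$ for $y\neq x$. Lemma \ref{lem:zeta projection} only gives $\zeta(x)p_k(y)=0$ for $x\in 100Q$ with $y\in Q\in\Q_k$, and $q\,f\,p_k\neq0$ in general in the noncommutative setting. Dropping these terms also destroys the cancellation you rely on. With $q_{k+r-1}-q$ in place of $q_{k+r-1}$, your $\Delta_{k,r}$ need not have mean zero on $\Q_{k+r-1}$-cubes, because $q\notin\A_{k+r-1}$. Parcet's representation $g_{k,s}=p_k\,df_{k+s}\,q_{k+s-1}+q_{k+s-1}\,df_{k+s}\,p_k$ keeps these terms inside $q_{k+s-1}$, which is what makes $g_{k,s}$ a martingale difference. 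Also, $p_kf_kp_{k+s}=0$ holds because $p_k$ commutes with $q_{k-1}f_kq_{k-1}$ and $p_kp_{k+s}=0$, not because $p_k$ commutes with $f_k$.

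Second, Schur's test for $K_j(x-y)-K_j(x-c_Q)$ fails in the $\sup_x\int dy$ direction. The weak-type bound is needed for Bessel exponents $s<0$ close to $0$. For $-1\le s<0$ the kernel $G_s*d\mu$ is unbounded near the curve, so $\sup_x\sum_Q|Q|\,|K_j(x-c_Q)|=\infty$. Property (2) of Proposition \ref{prop:K(x) kernel preoperties} only controls the $\sup_y\int dx$ direction. Subtracting the $Q$-average of $K_j(x-\cdot)$ instead of $K_j(x-c_Q)$ would repair Schur, at the cost of halving the exponent. The paper avoids the issue by putting the cancellation on the smooth kernels $\check\beta_k$ (Lemma \ref{lem:beta_k*g_(n,s)}) and using the decay of $\widehat K$ at infinity only on the complementary high frequencies.

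In the regime $j\gg k+r$ your $\zeta$-localisation does work, since there $j-k>r$. So the appeal to the $\rho^{-1/2}$ decay, and to a frequency localisation of $df_{k+r}$ that does not exist, can simply be dropped. At frequencies $\sim 2^{k+r}\ll 2^j$ it would anyway be $\widehat K(0)=0$, not the decay at infinity, that matters.
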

Lemma \ref{lem:e_d estimate} and Lemma \ref{lem:e_off estimate} imply (\ref{eq:good}), once we take $e_g=e_d\wedge e_\off\wedge\zeta$. In the rest of this subsection, we prove Lemma \ref{lem:e_off estimate}.
\par We first introduce another representation of the off-diagonal part $g_\off$ and the associated estimates, which were proved in \cite{Parcet2009} (see also \cite{Lai2024}).
\begin{lemma}
	\label{lem:g_off properties}
	Recall that $df_k:=f_k-f_{k-1}$ is the martingale difference. Then $g_\off$ can be written as
	\begin{equation*}
		\begin{split}
			g_\off\phantom{:}&=\sum_{s\geq1}\sum_{k\in\Z}p_kdf_{k+s}q_{k+s-1}+q_{k+s-1}df_{k+s}p_k \\
			:&=\sum_{s\geq1}\sum_{k\in\Z}g_{k,s}:=\sum_{s\geq1}g_{(s)}.
		\end{split}
	\end{equation*}
	The martingale difference sequence of $g_{(s)}$ satisfies $d\left(g_{(s)}\right)_{k+s}=g_{k,s}$ and $\supp^*g_{k,s}\leq p_k\leq 1_\A-q_k$. Here $\supp^*$ is the weak support, which is defined by $\supp^* a=1_\A=q^*$, with $q^*$ the greatest projection satisfying $q^*aq^*=0$. Moreover, we have the estimates
	\begin{equation*}
		\sup_{s\geq1} \Norm{g_{(s)}}_{L^2(\A)}^2=\sup_{s\geq1}\sum_{k\in\Z}\Norm{g_{k,s}}_{L^2(\A)}^2\lesssim\lambda\Norm{f}_{L^1(\A)}.
	\end{equation*}
\end{lemma}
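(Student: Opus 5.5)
The plan is to follow Parcet's argument, checking only that the non-isotropic filtration changes nothing essential. The one place the geometry enters is the doubling bound $f_k\le 8f_{k-1}$: a cube in $\Q_{k-1}$ has exactly $8$ children in $\Q_k$, each of volume $1/8$ of the parent. I work with $f\in\A_{c,+}$ throughout, and all rearrangements of sums are justified in $L^2(\A)$.

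\emph{Step 1 (representation).} Fix $i<j$ in $\widehat{\Z}$ and expand $f_j=f_i+\sum_{l=i+1}^{j}df_l$, where the sum is read as the $L^2$-limit $f-f_i$ when $j=\infty$. This gives
\[ p_if_jp_j=p_if_ip_j+\sum_{i<l\le j}p_i\,df_l\,p_j. \]
Summing the first term over $j>i$ gives $p_if_iq_i$, because $\sum_{j>i}p_j=q_i$ (with $p_\infty=q$). This term vanishes. Indeed, $q_i\le q_{i-1}$ and $q_i$ commutes with $q_{i-1}f_iq_{i-1}$, so
\[ q_{i-1}f_iq_i=q_{i-1}f_iq_{i-1}q_i=q_iq_{i-1}f_iq_{i-1}q_i=q_if_iq_i, \]
and hence $p_if_iq_i=(q_{i-1}-q_i)f_iq_i=0$. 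In the double sum, interchange the order and use $\sum_{j\ge l}p_j=q_{l-1}$. This yields $\sum_{l>i}p_i\,df_l\,q_{l-1}$. Setting $i=k$ and $l=k+s$ with $s\ge1$, and adding the adjoint terms coming from $i>j$, gives the stated formula for $g_\off$.

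\emph{Step 2 (martingale structure and support).} Both $p_k$ and $q_{k+s-1}$ lie in $\A_{k+s-1}$. Therefore $E_{k+s-1}(g_{k,s})=p_kE_{k+s-1}(df_{k+s})q_{k+s-1}+\cdots=0$ and $E_{k+s}(g_{k,s})=g_{k,s}$. Since the index $k+s$ determines $k$ once $s$ is fixed, this shows $d(g_{(s)})_{k+s}=g_{k,s}$. Each summand of $g_{k,s}$ has $p_k$ as its left or right factor, so $p_k^\perp g_{k,s}p_k^\perp=0$. This gives $\supp^*g_{k,s}\le p_k$, and $p_k\le q_{k-1}-q_k\le 1_\A-q_k$.

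\emph{Step 3 ($L^2$ bound).} By orthogonality of martingale differences, $\|g_{(s)}\|_2^2=\sum_k\|g_{k,s}\|_2^2\le 4\sum_k\|p_k\,df_{k+s}\,q_{k+s-1}\|_2^2$. Write $df_{k+s}=f_{k+s}-f_{k+s-1}$ and treat each piece with $h\in\{f_{k+s},f_{k+s-1}\}$, which is positive. Factor $p_khq=(p_kh^{1/2})(h^{1/2}q)$ with $q=q_{k+s-1}$. This gives
\[ \|p_khq\|_2^2\le\|qhq\|_\infty\,\tau(p_khp_k). \]
Cuculescu's property gives $q_{k+s-1}f_{k+s-1}q_{k+s-1}\le\lambda$, and then $f_{k+s}\le 8f_{k+s-1}$ gives $q_{k+s-1}f_{k+s}q_{k+s-1}\le 8\lambda$. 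Since $p_k\in\A_k\subset\A_{k+s-1}$, we have $\tau(p_khp_k)=\tau(p_kf)$. Summing over $k$ gives $\sum_k\tau(p_kf)\le\|f\|_{L^1(\A)}$, which yields the bound $\lesssim\lambda\|f\|_{L^1(\A)}$ uniformly in $s$.

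The main obstacle is Step 1. The delicate points are the algebraic cancellation $p_if_iq_i=0$ and the justification of the rearrangement of the infinite double sums, including the terms involving $q$ and $f_\infty=f$. I would handle convergence by working in $L^2$, where it is legitimate because $f\in\A_{c,+}\subset L^2(\A)$ and the $p_j$ are pairwise orthogonal.
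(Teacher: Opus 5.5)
Your proposal is correct and is essentially Parcet's argument, which the paper cites instead of reproving. It uses the same three ingredients: the telescoping rewrite based on $p_if_iq_i=0$; the martingale-difference and support observations coming from $p_k,q_{k+s-1}\in\A_{k+s-1}$; and the $L^2$ bound $\|p_khq\|_2^2\le\|qhq\|_\infty\,\varphi(p_khp_k)$ combined with Cuculescu's bound and the doubling estimate $f_k\le 8f_{k-1}$, which is indeed the only place the non-isotropic filtration enters.
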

\par Fix $0<\varepsilon<1$ and $0<\gamma<1$.
\par Let $\phi(x)$ be a smooth bump function on $\R^2$ supported in $\{\rho(x)<1\}$ that satisfies $\phi(x)\equiv1$ for $\rho(x)<1/2$. Then let $\phi_{s}(x)=\phi(2^s\circ x)$.
For each fixed $s\geq1$, decompose the kernel $K_j$ into two parts
\begin{equation*}
	K_j(x)=K_j(x)\phi_{j-\varepsilon s}(x)+K_j(x)\left(1-\phi_{j-\varepsilon s}(x)\right).
\end{equation*}
We prove the following two lemmas, from which Lemma \ref{lem:e_off estimate} follows.
\begin{lemma}
	\label{lem:K(phi) estimate}
	There exists a constant $\alpha_1>0$ such that
	\begin{equation*}
		\sum_{j\in\Z}\Norm{\zeta(K_j\phi_{j-\varepsilon s})*g_{(s)}\zeta}_{L^2(\A)}^2\lesssim 2^{-\alpha_1s}\lambda\Norm{f}_{L^1(\A)}.
	\end{equation*}
\end{lemma}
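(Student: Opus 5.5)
The plan is to use the projection $\zeta$ to discard all but the martingale differences of $g_{(s)}$ that are much finer than the scale of the truncated kernel. For those, the mean-zero structure of $g_{k,s}$ together with the integral regularity of $K$ (Proposition \ref{prop:K(x) kernel preoperties}(2)) gives a gain $2^{-\alpha_1 s}$. The sum over $k$ is then controlled in $L^2$ by Lemma \ref{lem:g_off properties}.

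\emph{Step 1 (localization by $\zeta$).} Write $L_j:=K_j\phi_{j-\varepsilon s}$. Up to a fixed constant, $L_j$ is supported in $\{\rho(x)<2^{\varepsilon s-j}\}$. Expand $g_{(s)}=\sum_k g_{k,s}$; each term of $g_{k,s}$ carries a factor $p_k$ on the left or on the right. For fixed $x$ and $y$ with $\rho(x-y)<2^{\varepsilon s-j}$, suppose $2^{-k}\geq C 2^{\varepsilon s-j}$ for a suitable absolute $C$. Let $Q_0\in\Q_k$ contain $y$. Then $x\in 100Q_0$, so Lemma \ref{lem:zeta projection} gives $\zeta(x)\leq\xi_{Q_0}$. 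Hence $\zeta(x)p_k(y)=0=p_k(y)\zeta(x)$, since $\xi_{Q_0}\pi_{Q_0}=0$. The argument is the same as in Lemma \ref{claim:zeta b_ks zeta=0}. Consequently
\[
\zeta\,(L_j*g_{(s)})\,\zeta=\zeta\Big(\sum_{k\geq j-\varepsilon s-C'}L_j*g_{k,s}\Big)\zeta,
\]
and since $\|\zeta a\zeta\|_{L^2(\A)}\leq\|a\|_{L^2(\A)}$ we may drop $\zeta$ from here on.

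\emph{Step 2 (cancellation against kernel regularity).} By Lemma \ref{lem:g_off properties}, $g_{k,s}$ is the $(k+s)$-th martingale difference of $g_{(s)}$. It therefore has vanishing averages on each cube of $\Q_{k+s-1}$, and
\[
L_j*g_{k,s}(x)=\sum_{Q\in\Q_{k+s-1}}\int_Q\big(L_j(x-y)-L_j(x-c_Q)\big)g_{k,s}(y)\,dy.
\]
By Minkowski's integral inequality (operator-valued Young), this gives
\[
\|L_j*g_{k,s}\|_{L^2(\A)}\leq \sup_{\rho(h)\lesssim 2^{-k-s}}\|L_j(\cdot-h)-L_j\|_{L^1}\,\|g_{k,s}\|_{L^2(\A)}.
\]
Rescaling by $2^{j}\circ$ reduces the supremum to $\|L_0(\cdot-h')-L_0\|_{L^1}$ with $\rho(h')\lesssim 2^{j-k-s}$, where $L_0=K\phi_{-\varepsilon s}$. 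Split
\[
L_0(x-h')-L_0(x)=\big(K(x-h')-K(x)\big)\phi_{-\varepsilon s}(x-h')+K(x)\big(\phi_{-\varepsilon s}(x-h')-\phi_{-\varepsilon s}(x)\big).
\]
The first term is bounded by $\rho(h')^{\varepsilon_0}$ via Proposition \ref{prop:K(x) kernel preoperties}(2), when $\rho(h')\leq1$. The second term is at most $\|K\|_{L^1}\sup|\phi_{-\varepsilon s}(\cdot-h')-\phi_{-\varepsilon s}|\lesssim\rho(h')$, because $\phi_{-\varepsilon s}$ is a dilate of $\phi$ at scale $2^{\varepsilon s}\geq1$ and $|h'|\lesssim\rho(h')$ for $\rho(h')\le1$. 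Since Step 1 forces $k+s-j\geq(1-\varepsilon)s-C'$, we have $\rho(h')\leq1$ for large $s$, and altogether
\[
\|L_j*g_{k,s}\|_{L^2(\A)}\lesssim 2^{-\varepsilon_0(k+s-j)}\|g_{k,s}\|_{L^2(\A)}.
\]

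\emph{Step 3 (summation).} Set $m=k-j\geq -\varepsilon s-C'$. By Cauchy--Schwarz with geometric weights,
\[
\Big\|\sum_{k\geq j-\varepsilon s-C'}L_j*g_{k,s}\Big\|_{L^2(\A)}^2\lesssim \Big(\sum_{m\geq-\varepsilon s-C'}2^{-\varepsilon_0(m+s)}\Big)\sum_{m}2^{-\varepsilon_0(m+s)}\|g_{j+m,s}\|_{L^2(\A)}^2 .
\]
Summing over $j$ and using $\sum_k\|g_{k,s}\|_{L^2(\A)}^2\lesssim\lambda\|f\|_{L^1(\A)}$ from Lemma \ref{lem:g_off properties}, the total is bounded by
\[
\Big(\sum_{m\geq-\varepsilon s-C'}2^{-\varepsilon_0(m+s)}\Big)^2\lambda\|f\|_{L^1(\A)}\lesssim 2^{-2\varepsilon_0(1-\varepsilon)s}\lambda\|f\|_{L^1(\A)} .
\]
This gives the lemma with $\alpha_1=2\varepsilon_0(1-\varepsilon)$. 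Small $s$ are absorbed by the trivial bound from Proposition \ref{prop:A_j^s L2 bound}-type $L^2$ estimates.

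The step I expect to be the main obstacle is Step 1. One has to check that the non-isotropic geometry (the dilates $100Q_0$ and the support of $\phi_{j-\varepsilon s}$ measured in $\rho$) really makes $\zeta(x)$ annihilate $p_k(y)$ throughout the relevant range of $k$. This also requires handling the boundary regime $k\approx j-\varepsilon s$, where the constants $100$ and $C'$ must be matched. A secondary point is justifying the operator-valued Young inequality for the non-commuting factors in Step 2. Here I would follow Parcet's argument, noting that convolution with a scalar $L^1$ kernel is a contraction on $L^2(\A)=L^2(\R^2;L^2(\M))$.
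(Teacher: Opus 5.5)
Your proposal is correct in outline, but after the localization it takes a different route from the paper.

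\textbf{Comparison with the paper.} Your Step 1 is the paper's localization claim: $\zeta$ annihilates every $g_{k,s}$ with $k-j\le-\varepsilon s-O(1)$. From there the two arguments diverge.
\begin{itemize}
\item The paper fixes the shift $n=k-j$ and inserts a non-isotropic Littlewood--Paley decomposition. It treats the low-frequency pieces by the cancellation of $g_{n+j,s}$ against the smooth functions $\check\beta_k$, via Schur's lemma (Lemma \ref{lem:beta_k*g_(n,s)}). It treats the high-frequency pieces by the Fourier decay in Proposition \ref{prop:K(x) kernel preoperties}(4), together with the concentration of $\widehat{\phi_{j-\varepsilon s}}$ near the origin.
\item You pair the martingale-difference cancellation directly with the $L^1$ modulus of continuity (2) of $K$ and with the smoothness of the cutoff. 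Property (2) is the H\"ormander-type condition already used for the bad part.
\end{itemize}
Your route removes the frequency splitting, and property (4), from this lemma. The summation in your Step 3 is correct and gives $\alpha_1=2\varepsilon_0(1-\varepsilon)$. The paper's argument, modelled on rough-kernel methods, relies on Fourier decay rather than integral regularity of $K$. Yours is shorter because (2) is available here.

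\textbf{Step 2 needs a different justification.} The inequality you state does not follow by Minkowski/Young from the centred formula $\sum_Q\int_Q(L_j(x-y)-L_j(x-c_Q))g_{k,s}(y)\,dy$, where $Q(y)$ denotes the cube of $\Q_{k+s-1}$ containing $y$.
\begin{itemize}
\item Young does not apply, because the kernel $L_j(x-y)-L_j(x-c_{Q(y)})$ is not a function of $x-y$.
\item Schur's test fails, because $\sup_x\int|L_j(x-c_{Q(y)})|\,dy$ is a Riemann sum of $|L_j|$. This is not controlled by $\Norm{L_j}_{L^1}$, since $K$ is unbounded near the curve when $s$ is close to $0$. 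The paper can use centres in Lemma \ref{lem:beta_k*g_(n,s)} only because $\check\beta_k$ is smooth.
\end{itemize}
Noncommutativity is not the issue here, since the kernel is scalar.

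The fix is to average the subtracted point over the cube. Put $r=2^{-(k+s-1)}$, $R_r=[-r,r]\times[-r^2,r^2]$ and $g^h=g_{k,s}\chi_{\{y:\,y-h\in Q(y)\}}$. The vanishing means of $g_{k,s}$ on $\Q_{k+s-1}$ give
\[
L_j*g_{k,s}=\frac1{r^3}\int_{R_r}\big(L_j-L_j(\cdot+h)\big)*g^h\,dh .
\]
This is a superposition of genuine convolutions. Minkowski and Young then yield your inequality with constant $4$, with the supremum taken over $\rho(h)\le r$.

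\textbf{The remark on small $s$ is unnecessary.} Steps 2--3 already hold for every $s\ge1$: when $1<\rho(h')\lesssim 2^{C'}$, simply use $\Norm{L_0(\cdot-h')-L_0}_{L^1}\le2\Norm{K}_{L^1}$. Moreover, the fallback you propose is not available. No square-function bound holds for the truncated kernels $K_j\phi_{j-\varepsilon s}$, because $\int K\phi_{-\varepsilon s}$ need not vanish. So the $\zeta$-localization is needed for every $s$.
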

\begin{lemma}
	\label{lem:K(1-phi) estimate}
	There exists a constant $\alpha_2>0$ such that
	\begin{equation}
		\label{eq:K(1-phi) estimate}
		\Big\|{\sup_{j\in\Z}\left[K_j(1-\phi_{j-\varepsilon s})\right]*g_{(s)}}\Big\|_{L^2(\A)}^2\lesssim 2^{-\alpha_2s}\lambda\Norm{f}_{L^1(\A)}.
	\end{equation}
\end{lemma}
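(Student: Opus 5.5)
The plan is to dominate the maximal norm by a square function and then run an almost-orthogonality argument between the dilation index $j$ and the martingale index of $g_{(s)}$.

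\emph{Reduction to a square function.} Write $\widetilde K_j:=K_j(1-\phi_{j-\varepsilon s})$. This is a real kernel, so its convolution preserves self-adjointness. As in the proof of Proposition \ref{prop:A_j^s L2 bound}, I will use $\pm \widetilde K_j*g_{(s)}\le \big(\sum_i|\widetilde K_i*g_{(s)}|^2\big)^{1/2}$. This bounds the left side of (\ref{eq:K(1-phi) estimate}) by $\sum_j\|\widetilde K_j*g_{(s)}\|_{L^2(\A)}^2$. Expand $g_{(s)}=\sum_k g_{k,s}$ and reindex by $k+s=j+l$. By Minkowski and Cauchy--Schwarz with weights,
\[
\sum_j\Big\|\sum_l \widetilde K_j*g_{j+l-s,s}\Big\|_2^2\le \Big(\sum_l c_l\Big)\sup_l\, c_l^{-1}\sum_j\|\widetilde K_j*g_{j+l-s,s}\|_2^2 .
\]
It therefore suffices to prove the single-block estimate
\[
\|\widetilde K_j*g_{k,s}\|_{L^2(\A)}\lesssim \min\{2^{-c\varepsilon s},\,2^{-|l|c}\}\,\|g_{k,s}\|_{L^2(\A)},\qquad l=k+s-j,
\]
for some $c>0$. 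Taking $c_l=\min\{2^{-c\varepsilon s/2},2^{-c|l|/2}\}$, whose sum is $\lesssim s\,2^{-c\varepsilon s/4}$, and applying Lemma \ref{lem:g_off properties} ($\sum_k\|g_{k,s}\|_2^2\lesssim\lambda\|f\|_1$) then gives (\ref{eq:K(1-phi) estimate}) with some $\alpha_2>0$.

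\emph{Uniform smallness.} Rescaling by $2^{-j}\circ$, $\|\widetilde K_j\|_{L^1}=\|K(1-\phi_{-\varepsilon s})\|_{L^1}\le\int_{\rho(x)\ge 2^{\varepsilon s}/2}|K|$. Proposition \ref{prop:K(x) kernel preoperties}(3), together with $|\{\rho\approx r\}|\approx r^3$, bounds this by $2^{-c\varepsilon s}$ once $s$ is large. For bounded $s$, local integrability of $K$ gives an $O(1)$ bound. Young's inequality for operator-valued functions then yields the first entry of the minimum.

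\emph{Case $l\ge0$: cube scale finer than kernel scale.} Since $E_{k+s-1}g_{k,s}=0$, each $g_{k,s}$ has mean zero on every $Q\in\Q_{k+s-1}$. Subtracting $\widetilde K_j(x-c_Q)$ cube by cube gives
\[
\|\widetilde K_j*g_{k,s}\|_2\le \sup_{\rho(h)\lesssim 2^{-(k+s)}}\|\widetilde K_j(\cdot-h)-\widetilde K_j\|_{L^1}\,\|g_{k,s}\|_2,
\]
using Minkowski over $y$ inside each cube (I still need to check the $L^2$ version of this averaging step). After rescaling, the translation is by $h'=2^{j}\circ h$ with $\rho(h')\lesssim 2^{-l}$. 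I would estimate the two pieces of $\widetilde K=K(1-\phi_{-\varepsilon s})$ separately. The piece $K(\cdot-h')-K$ is controlled by Proposition \ref{prop:K(x) kernel preoperties}(2), giving $2^{-\varepsilon' l}$. The cutoff piece $K\,[\phi_{-\varepsilon s}(\cdot-h')-\phi_{-\varepsilon s}]$ lives where $\rho\approx 2^{\varepsilon s}$, has $|\nabla\phi_{-\varepsilon s}|\lesssim1$ there, and so contributes $|h'|\,2^{-M\varepsilon s}$. Here $|h'|\lesssim\rho(h')$ because $\rho(h')\le 1$. This yields decay $2^{-c l}$.

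\emph{Case $l<0$: cube scale coarser than kernel scale.} This is the main obstacle. The martingale cancellation of $g_{k,s}$ is useless here, and $g_{k,s}$ is only piecewise constant, so translations of it are not small in $L^2$. I would first pass to the Fourier side using operator-valued Plancherel, as in Section 3. Writing $g_{k,s}=d(g_{(s)})_{k+s}$, I will prove a dyadic-martingale versus Littlewood--Paley estimate of the form
\[
\|\widetilde K_j*E_m h\|_2\lesssim 2^{-c(m-j)}\dots
\]
and, more precisely, the transposed bound $\|\widetilde K_j*(E_m-E_{m-1})h\|_2\lesssim 2^{c(m-j)}\|h\|_2$ for $m<j$. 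This should follow from $|\widehat{\widetilde K}(\xi)|\lesssim\rho(\xi)^{c}$ near $0$, which I derive from $\widehat K(0)=0$ together with the smallness of $\int K\phi_{-\varepsilon s}-\int K=-\int\widetilde K$, plus the standard fact that $E_m-E_{m-1}$ output has most of its $L^2$ mass at non-isotropic frequencies $\rho(\xi)\gtrsim 2^{m}$ only up to boundary effects, which are controlled by a $TT^*$ computation $\|(E_m-E_{m-1})\widetilde K_j^*\widetilde K_j(E_m-E_{m-1})\|$. If this route is too lossy, the fallback is to accept only the bound $2^{-c\varepsilon s}$ for $-C s\le l<0$ (costing a factor $s$), and to use the low-frequency decay of $\widehat K$ only for $l<-Cs$.
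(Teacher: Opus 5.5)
\textbf{The first reduction already fails.} The student's plan bounds the maximal norm by the square function $\sum_{j\in\Z}\Norm{\widetilde K_j*g_{(s)}}_{L^2(\A)}^2$, and that quantity is infinite.

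By rescaling, $\widetilde K_j(x)=2^{3j}\widetilde K(2^j\circ x)$ with $\widetilde K=K(1-\phi_{-\varepsilon s})\in L^1(\R^2)$. This $\widetilde K$ does \emph{not} have mean zero. For $s$ large, $\psi$ vanishes on $\supp(1-\phi_{-\varepsilon s})\subset\{\rho\geq 2^{\varepsilon s-1}\}$, so there $K=\nu^s=G_s*d\mu>0$. Hence $\widetilde K\geq0$ (for a bump with $0\leq\phi\leq1$) and $\int\widetilde K>0$: small, but not zero.

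Consequently $\{\widetilde K_j\}_{j\geq0}$ is an approximate identity of mass $\int\widetilde K$. So $\widetilde K_j*g_{(s)}\to(\int\widetilde K)\,g_{(s)}$ in $L^2(\A)$ as $j\to+\infty$, and the square function you want to bound is $+\infty$ whenever $g_{(s)}\neq0$.

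The same fact breaks your other steps:
\begin{itemize}
\item \emph{The block estimate for $l<0$ is false.} For fixed $k,s$ and $j\to\infty$, $\Norm{\widetilde K_j*g_{k,s}}_{L^2(\A)}\to(\int\widetilde K)\Norm{g_{k,s}}_{L^2(\A)}$, which does not decay in $|l|$.
\item \emph{The low-frequency bound fails.} You claim $|\widehat{\widetilde K}(\xi)|\lesssim\rho(\xi)^c$ near $0$, but $\widehat{\widetilde K}(0)=\int\widetilde K\neq0$. A small mean is not a vanishing mean.
\item \emph{The fallback does not help.} Each of the infinitely many $l<-Cs$ contributes about $(\int\widetilde K)^2\sum_k\Norm{g_{k,s}}_{L^2(\A)}^2$.
\end{itemize}

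\textbf{What is missing.} No cancellation or orthogonality is needed here. $\widetilde K_j$ is the far tail of the kernel, so the maximal function can be bounded directly by positivity.
\begin{itemize}
\item \emph{Kernel bound.} For $s$ large, on $\supp(1-\phi_{j-\varepsilon s})$ one has $|K_j(x)|\lesssim_M 2^{-j(M-3)}\rho(x)^{-M}$ by Proposition \ref{prop:K(x) kernel preoperties}(3).
\item \emph{Order domination.} For self-adjoint $h$ and real scalar $c$, $c\,h\leq|c|\,|h|$, so $\pm\widetilde K_j*h\leq|\widetilde K_j|*|h|$. Decomposing $\{\rho(x-y)\geq 2^{-j+\varepsilon s-1}\}$ dyadically then gives, uniformly in $j$,
\[
\pm\widetilde K_j*h\lesssim_M\sum_{k\geq\lfloor\varepsilon s\rfloor}2^{-k(M-3)}\,\tilde A_{2^{-j+k}}(|h|).
\]
\item \emph{One majorant for all $j$.} The strong type $(2,2)$ bound for the non-isotropic maximal operator (recorded after Theorem \ref{thm: H-L maximal nonisotropic}) gives a single $a\geq0$ with $\tilde A_r(|g_{(s)}|)\leq a$ for all $r>0$ and $\Norm{a}_{L^2(\A)}\lesssim\Norm{g_{(s)}}_{L^2(\A)}$. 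Hence $C2^{-\varepsilon s(M-3)}a$ majorizes $\pm\widetilde K_j*g_{(s)}$ for every $j$ simultaneously.
\item \emph{Conclusion.} Lemma \ref{lem:g_off properties} then gives the statement with $\alpha_2=2\varepsilon(M-3)$.
\end{itemize}

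Your observation that $\Norm{\widetilde K}_{L^1}\lesssim 2^{-c\varepsilon s}$ is the right input. It has to enter through this maximal domination, valid for all $j$ at once, rather than through per-$j$ Young bounds, which cannot be summed over $j\in\Z$.
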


\medskip

\begin{proof}[Proof of Lemma \ref{lem:e_off estimate} assuming Lemma \ref{lem:K(phi) estimate} and \ref{lem:K(1-phi) estimate}]
	For each $s\geq1$ and $j\in\Z$, let
	\begin{equation*}
		e_{s,j}:=1_{[0,\frac{\lambda}{s^2}]}\left(\norm{\zeta(K_j\phi_{j-\varepsilon s})*g_{(s)}\zeta}\right)
	\end{equation*}
	By Lemma \ref{lem:K(1-phi) estimate}, for each $s\geq1$ we can find a positive element $a_s\in\A$ such that
	\begin{equation*}
		-a_s\leq \left[K_j(1-\phi_{j-\varepsilon s})\right]*g_{(s)} \leq a_s\ (\forall j), \quad \Norm{a_s}_{L^2(\A)}^2\lesssim 2^{-\alpha_2s}\lambda\Norm{f}_{L^1(\A)}.
	\end{equation*}
	For each $s\geq1$, we set
	\begin{equation*}
		e_s'=1_{[0,\frac{\lambda}{s^2}]}(\zeta a_s\zeta) \quad \text{ and } \quad e_{\off}:=\bigwedge_{s\geq1}\bigwedge_{j\in\Z}e_{s,j}\wedge e_s'.
	\end{equation*}
	The projection $e_\off$ satisfies the conclusion of Lemma \ref{lem:e_off estimate}. Indeed, by Lemma \ref{lem:K(phi) estimate}, we have
	\begin{equation*}
		\begin{split}
			\varphi(1-e_\off)&\leq \sum_{s\geq1}\bigg(\sum_{j\in\Z}\varphi(1-e_{s,j})+\varphi(1-e_s')\bigg) \\
			&\leq\sum_{s\geq1}\frac{s^4}{\lambda^2}\bigg[\sum_{j\in\Z}\Norm{\zeta(K_j\phi_{j-\varepsilon s})*g_{(s)}\zeta}_{L^2(\A)}^2+\Norm{a_s}_{L^2(\A)}^2\bigg] \\
			&\lesssim \sum_{s\geq1}s^42^{-\alpha's}\frac1{\lambda}\Norm{f}_{L^1(\A)} \lesssim\frac1{\lambda}\Norm{f}_{L^1(\A)},
		\end{split}
	\end{equation*}
	where $\alpha'=\min\{\alpha_1,\alpha_2\}>0$. On the other hand,
	\begin{equation*}
		T_j(g_\off)=\sum_{s\geq1}(K_j\phi_{j-\varepsilon s})*g_{(s)}+\sum_{s\geq1}[K_j(1-\phi_{j-\varepsilon s})]*g_{(s)},
	\end{equation*}
	there exists a family of unitary operators $\{u_s\}\cup\{v_s\}_{s\geq1}$, such that
	\begin{equation*}
		\begin{split}
			& \ \Norm{e_\off\zet T_j(g_\off)\zet e_\off}_\infty \\
			\leq & \sum_{s\geq1}\|e_\off\zeta (K_j\phi_{j-\varepsilon s})*g_{(s)}\zeta e_{\off}\|_\infty + \|e_\off\zeta [K_j(1-\phi_{j-\varepsilon s})]*g_{(s)}\zeta e_\off\|_\infty \\
			\leq & \sum_{s\geq1}\|e_{s,j}\zeta (K_j\phi_{j-\varepsilon s})*g_{(s)}\zeta e_{s,j}\|_\infty + \|e_{s}'\zeta [K_j(1-\phi_{j-\varepsilon s})]*g_{(s)}\zeta e_{s}'\|_\infty \\
			\leq & \sum_{s\geq1}\|e_{s,j}|\zeta (K_j\phi_{j-\varepsilon s})*g_{(s)}\zeta| e_{s,j}\|_\infty + \|e_{s}'\zeta a_s\zeta e_{s}'\|_\infty \\
			\lesssim & \sum_{s\geq1}\frac{\lambda}{s^2} \lesssim \frac{\lambda}{2}.
		\end{split}
	\end{equation*}
\end{proof}
\par It remains to prove Lemma \ref{lem:K(1-phi) estimate} and Lemma \ref{lem:K(phi) estimate}.

\bigskip

\subsubsection{Proof of Lemma \ref{lem:K(1-phi) estimate}}
\par In this subsection we prove Lemma \ref{lem:K(1-phi) estimate}.
	\par It is harmless to assume that $s$ is larger than a sufficiently large fixed constant, so that $2^{\varepsilon s-1}$ lies beyond the support of $\psi$ and the cutoff localizes the kernel to the region where the rapid decay estimate of Proposition \ref{prop:K(x) kernel preoperties} applies; the finitely many remaining values of $s$ can be treated separately and absorbed into the constant.
	In fact, we have 
	\begin{equation*}
		\begin{split}
			|K_j(x)(1-\phi_{j-\varepsilon s}(x))| &= |\nu_j^s(x)(1-\phi_{j-\varepsilon s}(x))| \lesssim 2^{3j}\nu^s(2^j\circ x)\chi_{\{\rho(x)\geq 2^{-j+\varepsilon s-1}\}}(x) \\
			&\lesssim 2^{-j(M-3)}\rho(x)^{-M}\chi_{\{\rho(x)\geq 2^{-j+\varepsilon s-1}\}}(x),
		\end{split}
	\end{equation*}
	where $M>0$ can be chosen sufficiently large. The factor $1/2$ in the radius comes from the region where $\phi\equiv1$. Therefore, for all self-adjoint elements $h$ in $\A$, we have
	\begin{equation*}
		\begin{split}
			{[K_j(1-\phi_{j-\varepsilon s})]*h(x)} &\leq \big[|K_j(1-\phi_{j-\varepsilon s})|*|h|\big](x) \\
			&\lesssim 2^{-j(M-3)}\int_{\rho(x-y)\geq 2^{-j+\varepsilon s-1}}\frac1{\rho(x-y)^M}|h(y)|dy \\
			&\leq 2^{-j(M-3)}\sum_{k\geq \lfloor\varepsilon s\rfloor}{\int_{2^{-j+k-1}\leq\rho(x-y)<2^{-j+k}}\frac1{\rho(x-y)^M}|h(y)|dy} \\
			&\lesssim 2^{-j(M-3)}\sum_{k\geq \lfloor\varepsilon s\rfloor}{\int_{2^{-j+k-1}\leq\rho(x-y)<2^{-j+k}}\frac1{(2^{-j+k})^M}|h(y)|dy} \\
			&= 2^{-j(M-3)}\sum_{k\geq \lfloor\varepsilon s\rfloor}\frac1{(2^{-j+k})^M}{\int_{2^{-j+k-1}\leq\rho(x-y)<2^{-j+k}}|h(y)|dy} \\
			&\lesssim 2^{-j(M-3)}\sum_{k\geq \lfloor\varepsilon s\rfloor}\frac1{(2^{-j+k})^{M-3}}{\tilde{A}_{r(j,k)}(|h|)}(x),
		\end{split}
	\end{equation*}
	where $r(j,k)=2^{-j+k}$ and $\tilde{A}_r(h)$ is the non-isotropic averaging operator defined in (\ref{eq:average nonisotropic}).
	A similar calculation gives, for some fixed $C_M>0$,
	\begin{equation*}
		\begin{split}
			-C_M2^{-j(M-3)}\sum_{k\geq \lfloor\varepsilon s\rfloor}\frac1{(2^{-j+k})^{M-3}}{\tilde{A}_{r(j,k)}(|h|)}(x)
			&\leq -\big[|K_j(1-\phi_{j-\varepsilon s})|*|h|\big](x) \\
			&\leq [K_j(1-\phi_{j-\varepsilon s})]*h(x).
		\end{split}
	\end{equation*}
	Note that each function $g_{(s)}\in L^2(\A)$ is self-adjoint.
	By the strong type $(2,2)$ estimate recorded after Theorem \ref{thm: H-L maximal nonisotropic}, there exists a positive element $a\in L^2(\A)$ such that
	\begin{equation}
		\label{eq:A_r(g_(s)) bounded}
		0\leq \tilde{A}_r(|g_{(s)}|) \leq a \ \ \text{ for } \forall r>0, \quad \Norm{a}_{L^2(\A)}\lesssim \big\||{g_{(s)}}|\big\|_{L^2(\A)} = \big\|{g_{(s)}}\big\|_{L^2(\A)}.
	\end{equation}
	Since $\sum_{k\geq\lfloor\varepsilon s\rfloor}2^{-k(M-3)}\lesssim_M2^{-\varepsilon s(M-3)}$ for $M>3$, let
	\begin{equation*}
		a'= C'_M2^{-\varepsilon s(M-3)}a,
	\end{equation*}
	with $C'_M$ sufficiently large. Then for all $j\in \Z$, we have
	\begin{equation*}
		\begin{split}
			{[K_j(1-\phi_{j-\varepsilon s})]*g_{(s)}} &\leq C_M2^{-j(M-3)}\sum_{k\geq \lfloor\varepsilon s\rfloor}\frac1{(2^{-j+k})^{M-3}}{\tilde{A}_{r(j,k)}(|g_{(s)}|)} \\
			&\leq C_M2^{-j(M-3)}\sum_{k\geq \lfloor\varepsilon s\rfloor}\frac1{(2^{-j+k})^{M-3}}a \leq a'.
		\end{split}
	\end{equation*}
	A similar argument gives
	\begin{equation*}
		-a'\leq -C_M2^{-j(M-3)}\sum_{k\geq \lfloor\varepsilon s\rfloor}\frac1{(2^{-j+k})^{M-3}}a \leq{[K_j(1-\phi_{j-\varepsilon s})]*g_{(s)}}.
	\end{equation*}
	Moreover, by (\ref{eq:A_r(g_(s)) bounded}) we have
	\begin{equation*}
		\Norm{a'}_{L^2(\A)}^2\lesssim 2^{-2\varepsilon s(M-3)}\Norm{g_{(s)}}_{L^2(\A)}^2\lesssim 2^{-\alpha_2 s}\lambda\Norm{f}_{L^1(\A)}
	\end{equation*}
	with $\alpha_2=2\varepsilon (M-3)>0$. This is exactly (\ref{eq:K(1-phi) estimate}).

\bigskip

\subsubsection{Proof of Lemma \ref{lem:K(phi) estimate}}
\par In this subsection, we prove Lemma \ref{lem:K(phi) estimate}.
	\par First we note that, by the support of $\zeta$ and $\phi_{j-\varepsilon s}$, we have
	\begin{equation}
		\label{eq:n>-epsilon s}
		\zeta(K_j\phi_{j-\varepsilon s})*g_{(s)}\zeta=\zeta(K_j\phi_{j-\varepsilon s})*\bigg(\sum_{n+s\geq s(1-\varepsilon)}g_{n+j,s}\bigg)\zeta,
	\end{equation}
	which is a consequence of the following result.
	\begin{claim}
		\label{claim:zeta (K phi)*g zeta=0}
		For $n<-\varepsilon s$, we have
		\begin{equation*}
			\zeta(K_j\phi_{j-\varepsilon s})*g_{n+j,s}\zeta=0.
		\end{equation*}
	\end{claim}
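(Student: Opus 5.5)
The plan is to show the integrand vanishes pointwise. Write
\[
\zeta(x)\big[(K_j\phi_{j-\varepsilon s})*g_{n+j,s}\big](x)\zeta(x)=\int_{\R^2}K_j(x-y)\phi_{j-\varepsilon s}(x-y)\,\zeta(x)g_{n+j,s}(y)\zeta(x)\,dy .
\]
It then suffices to prove that $\zeta(x)g_{n+j,s}(y)\zeta(x)=0$ whenever $\phi_{j-\varepsilon s}(x-y)\neq0$, i.e.\ whenever $\rho(x-y)<2^{-j+\varepsilon s}$. Since $\zeta(x)$ is a constant operator in the $y$-integral, this reduces everything to a support statement, in the spirit of Lemma \ref{claim:zeta b_ks zeta=0}.

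By Lemma \ref{lem:g_off properties},
\[
g_{n+j,s}=p_{n+j}\,df_{n+j+s}\,q_{n+j+s-1}+q_{n+j+s-1}\,df_{n+j+s}\,p_{n+j}.
\]
Fix $y$ and let $Q_0\in\Q_{n+j}$ be the cube containing $y$. Then $p_{n+j}(y)=\pi_{Q_0}=\xi_{\widehat{Q_0}}-\xi_{Q_0}$. By Lemma \ref{lem:zeta projection}(2), if $x\in100Q_0$ then
\[
\zeta(x)\leq 1_\M-\xi_{\widehat{Q_0}}+\xi_{Q_0},
\]
which is orthogonal to $\pi_{Q_0}$ because $\xi_{Q_0}\leq\xi_{\widehat{Q_0}}$. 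Hence $\zeta(x)\pi_{Q_0}=0=\pi_{Q_0}\zeta(x)$. Both terms of $g_{n+j,s}(y)$ have $p_{n+j}(y)$ as an outer factor on one side, so $\zeta(x)g_{n+j,s}(y)\zeta(x)=0$.

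The remaining step is geometric: check that $\rho(x-y)<2^{-j+\varepsilon s}$ and $y\in Q_0$ force $x\in100Q_0$ when $n<-\varepsilon s$. Here $\ell(Q_0)=2^{-n-j}$, and $n<-\varepsilon s$ gives $2^{-j+\varepsilon s}<2^{-j-n}=\ell(Q_0)$. Using the triangle inequality (\ref{eq:triangle ineq for quasinorm}),
\[
\rho(x-c_{Q_0})\leq\rho(x-y)+\rho(y-c_{Q_0})<\ell(Q_0)+\ell(Q_0)=2\ell(Q_0).
\]
Here $\rho(y-c_{Q_0})\leq\ell(Q_0)/2$, since the second coordinate offset is at most $\ell(Q_0)^2/2$. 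By definition of the non-isotropic dilate, $100Q_0$ contains all points within $\rho$-distance $50\ell(Q_0)$ of its center, so $x\in100Q_0$.

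No real obstacle is expected. The only care needed is the bookkeeping of the cube generation. The martingale difference $df_{n+j+s}$ lives at level $n+j+s$, but the projection that kills it is $p_{n+j}$ at level $n+j$. The cutoff radius must be compared with the coarser side length $2^{-n-j}$, and this is exactly where the condition $n<-\varepsilon s$ enters. Consequently only the indices $n\geq-\varepsilon s$, i.e.\ $n+s\geq s(1-\varepsilon)$, survive, which is (\ref{eq:n>-epsilon s}).
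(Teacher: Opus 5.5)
Your argument is correct, and it is a slightly more direct variant of the paper's proof.

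The paper does not work at the level of $p_{n+j}$ itself. It proceeds in three steps:
\begin{enumerate}
\item It uses $q_{n+j}g_{n+j,s}q_{n+j}=0$ together with the monotonicity $q_{j-\lfloor\varepsilon s\rfloor}\leq q_{n+j}$ to get $q_{j-\lfloor\varepsilon s\rfloor}g_{n+j,s}q_{j-\lfloor\varepsilon s\rfloor}=0$.
\item It splits the $y$-integral over cubes $Q_0\in\Q_{j-\lfloor\varepsilon s\rfloor}$, the level matched to the cutoff radius $2^{-j+\varepsilon s}$. For $x\in100Q_0$ it uses the second form $\zeta(x)\leq\xi_{Q_0}=q_{j-\lfloor\varepsilon s\rfloor}(y)$ of Lemma \ref{lem:zeta projection}(2).
\item For $x\notin100Q_0$ it shows by cube separation that $\phi_{j-\varepsilon s}(x-y)=0$.
\end{enumerate}

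You instead take the cube $Q_0\in\Q_{n+j}$ containing $y$ and use the first form of Lemma \ref{lem:zeta projection}(2) as the orthogonality $\zeta(x)\pi_{Q_0}=0$. You then observe that $2^{-j+\varepsilon s}<\ell(Q_0)$ already places every $x$ in the support of the cutoff inside $100Q_0$. This removes the case split, the floor-function bookkeeping, and the appeal to monotonicity of the $q_k$.

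In exchange, the paper's version isolates a single vanishing property at the fixed level $j-\lfloor\varepsilon s\rfloor$ that handles all $n<-\varepsilon s$ at once. Both arguments rest on the same mechanism: Lemma \ref{lem:zeta projection}(2) combined with the support of $\phi_{j-\varepsilon s}$.

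One small slip: for $y\in Q_0$ you only have $\rho(y-c_{Q_0})\leq\ell(Q_0)/\sqrt2$, not $\ell(Q_0)/2$, since $|y_2-c_{Q_0,2}|^{1/2}\leq(\ell(Q_0)^2/2)^{1/2}$. This is harmless, because your estimate only uses $\rho(y-c_{Q_0})\leq\ell(Q_0)$.
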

\begin{proof}
	Fix $s\geq1$ and $n<-\varepsilon s$. Lemma \ref{lem:g_off properties} says $\supp^*g_{n+j,s}\leq p_{n+j}\leq 1-q_{n+j}$, which implies
	\begin{equation*}
		q_{n+j}g_{n+j,s}q_{n+j}=0.
	\end{equation*}
	Since $n+j<j-\varepsilon s \leq j-\lfloor\varepsilon s\rfloor$ and the projections $\{q_k\}$ are decreasing, we have
	\begin{equation*}
		q_{j-\lfloor\varepsilon s\rfloor}g_{n+j,s}q_{j-\lfloor\varepsilon s\rfloor}=0.
	\end{equation*}
	Note that
	\begin{equation*}
		\begin{split}
			&\phantom{==}\zeta(x)(K_j\phi_{j-\varepsilon s})*g_{n+j,s}(x)\zeta(x) \\
			&=\int_{\R^2}K_j(x-y)\phi_{j-\varepsilon s}(x-y)\zeta(x)g_{n+j,s}(y)\zeta(x)dy \\
			&=\sum_{Q\in\Q_{j-\lfloor\varepsilon s\rfloor}} \int_Q K_j(x-y)\phi_{j-\varepsilon s}(x-y)\zeta(x)g_{n+j,s}(y)\zeta(x)dy.
		\end{split}
	\end{equation*}
	Fix an arbitrary $Q_0\in\Q_{j-\lfloor\varepsilon s\rfloor}$. We show that
	\begin{equation}
		\label{eq:int zeta g_(n+j,s) zeta=0}
		\int_{Q_0} K_j(x-y)\phi_{j-\varepsilon s}(x-y)\zeta(x)g_{n+j,s}(y)\zeta(x)dy=0,\quad \forall \ x\in\R^2.
	\end{equation}
	For $x\in 100 Q_0$ and $y\in Q_0$, by Lemma \ref{lem:zeta projection}, we have $\zeta(x)\leq\xi_{Q_0}=q_{j-\lfloor\varepsilon s\rfloor}(y)$. Therefore
	\begin{equation*}
		\zeta(x)g_{n+j,s}(y)\zeta(x)= \zeta(x)q_{j-\lfloor\varepsilon s\rfloor}(y)g_{n+j,s}(y)q_{j-\lfloor\varepsilon s\rfloor}(y)\zeta(x)=0
	\end{equation*}
    and (\ref{eq:int zeta g_(n+j,s) zeta=0}) holds for $x\in 100Q_0$.
	For $x\notin 100 Q_0$, the same cube separation estimate used above gives $\rho(x-y)\geq49\cdot2^{-j+\lfloor\varepsilon s\rfloor}>\frac{49}{2}\cdot2^{-j+\varepsilon s}>10\cdot2^{-j+\varepsilon s}$, and therefore $\phi_{j-\varepsilon s}(x-y)=0$. Thus (\ref{eq:int zeta g_(n+j,s) zeta=0}) holds also for $x\notin 100Q_0$. We have proved (\ref{eq:int zeta g_(n+j,s) zeta=0}), hence Claim \ref{claim:zeta (K phi)*g zeta=0}.
\end{proof}

	\medskip

	\par To prove Lemma \ref{lem:K(phi) estimate}, it suffices to prove that, whenever $n+s\geq s(1-\varepsilon)$, one has
	\begin{equation}
		\label{eq:sum_j g_{n+j,s} estimate}
		\sum_{j\in\Z}\Norm{(K_j\phi_{j-\varepsilon s})*g_{n+j,s}}_{L^2(\A)}^2\lesssim 2^{-\alpha_3(n+s)}\lambda\Norm{f}_{L^1(\A)}
	\end{equation}
	for some absolute constant $\alpha_3>0$. Indeed, by (\ref{eq:n>-epsilon s}) and (\ref{eq:sum_j g_{n+j,s} estimate}) we have
	\begin{equation*}
		\begin{split}
			(\text{LHS of Lemma \ref{lem:K(phi) estimate}})^{1/2}
			&\leq \Bigg[\sum_{j\in\Z}\bigg(\sum_{n+s\geq s(1-\varepsilon)}\Norm{(K_j\phi_{j-\varepsilon s})*g_{n+j,s}}_{L^2(\A)}\bigg)^2\Bigg]^{\frac12} \\
			&\leq \sum_{n+s\geq s(1-\varepsilon)}\Bigg[\sum_{j\in\Z}\Norm{(K_j\phi_{j-\varepsilon s})*g_{n+j,s}}_{L^2(\A)}^2\Bigg]^{\frac12} \\
			&\lesssim \sum_{n+s\geq s(1-\varepsilon)}\left(2^{-\alpha_3(n+s)}\lambda\Norm{f}_{L^1(\A)}\right)^{\frac12}.
		\end{split}
	\end{equation*}
	This implies Lemma \ref{lem:K(phi) estimate} with $\alpha_1=\alpha_3(1-\varepsilon)$.

	\medskip

	\par It remains to prove (\ref{eq:sum_j g_{n+j,s} estimate}).
	\par To this end, we introduce the non-isotropic Littlewood-Paley decomposition. Let $\psi$ be a $C^{\infty}$ function on $\R^2$ such that $\psi(\xi)=1$ for $\rho(\xi)\leq 1$, $\psi(\xi)=0$ for $\rho(\xi)\geq 2$ and $0\leq \psi(\xi)\leq1$ for all $\xi\in\R^2$. Define $\beta_k(\xi):=\psi(2^k\circ\xi)-\psi(2^{k+1}\circ\xi)$. Then $\beta_k$ is a positive $C^\infty$ function that supported in $\{\xi\in\R^2:2^{-k-1}\leq\rho(\xi)\leq 2^{-k+1}\}$. It is easy to see that $\sum_{k\in\Z}\beta_k(\xi)=1$ for $\xi\neq0$.
	Note that the support of $\beta_k$ is essencially disjoint and by using Plancherel's theorem twice, we get
	\begin{equation*}
		\begin{split}
			\text{LHS of (\ref{eq:sum_j g_{n+j,s} estimate})}
			\leq & \sum_{j\in\Z}\sum_{k:k+j>-\gamma(s+n)}\Norm{(K_j\phi_{j-\varepsilon s})*\check{\beta}_k*g_{n+j,s}}_{L^2(\A)}^2 \\
			&+ \sum_{j\in\Z}\sum_{k:k+j\leq -\gamma(s+n)}\Norm{(K_j\phi_{j-\varepsilon s})*\check{\beta}_k*g_{n+j,s}}_{L^2(\A)}^2 \\
			=&:(\text{I}_{n,s})+(\text{II}_{n,s}).
		\end{split}
	\end{equation*}
	We show that both (I$_{n,s}$) and (II$_{n,s}$) satisfy (\ref{eq:sum_j g_{n+j,s} estimate}).
	\medskip
	
	$\bullet$ {\bfseries Estimate for (I$_{n,s}$).}
	\par To estimate the first part, corresponding to the case in which $k$ is relatively large and $\beta_k$ is supported on small frequencies, we need the following lemma.
	\begin{lemma}
		\label{lem:beta_k*g_(n,s)}
		For $k,n,s$ satisfying $k+n+s\geq0$,
		\begin{equation*}
			\Norm{\check{\beta_k}*g_{n,s}}_{L^2(\A)}\lesssim 2^{-n-s-k} \Norm{g_{n,s}}_{L^2(\A)}.
		\end{equation*}
	\end{lemma}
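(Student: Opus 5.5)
The plan is to exploit two structural facts: the cancellation of $g_{n,s}$ at scale $2^{-(n+s-1)}$, and the smoothness of $\check\beta_k$ at the coarser scale $2^{k}$. First I check the cancellation. By Lemma \ref{lem:g_off properties}, $g_{n,s}=p_n\,df_{n+s}\,q_{n+s-1}+q_{n+s-1}\,df_{n+s}\,p_n$. Here $p_n\in\A_n\subset\A_{n+s-1}$ and $q_{n+s-1}\in\A_{n+s-1}$, so both projections are constant on every $Q\in\Q_{n+s-1}$. Meanwhile $df_{n+s}=f_{n+s}-f_{n+s-1}$ has integral zero over each such $Q$. Pulling the constant projections out of the integral gives
\[
\int_Q g_{n,s}(y)\,dy=0\qquad\text{for every }Q\in\Q_{n+s-1}.
\]
This is the same kind of cancellation as in Lemma \ref{lem:b_{k,s} properties} (2).

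Next I write the convolution using this cancellation. Since $\beta_k(\xi)=\beta_0(2^k\circ\xi)$, we have $\check\beta_k(x)=2^{-3k}\check\beta_0(2^{-k}\circ x)$ with $\check\beta_0$ Schwartz, so $\check\beta_k$ lives at non-isotropic scale $2^k$. For $Q\in\Q_{n+s-1}$ with center $c_Q$,
\[
\check\beta_k*g_{n,s}(x)=\sum_{Q\in\Q_{n+s-1}}\int_Q\big(\check\beta_k(x-y)-\check\beta_k(x-c_Q)\big)g_{n,s}(y)\,dy .
\]
I then estimate the kernel difference by the mean value theorem, coordinate by coordinate. For $y\in Q$, $|y_1-c_{Q,1}|\le 2^{-(n+s-1)}$ and $|y_2-c_{Q,2}|\le 2^{-2(n+s-1)}$. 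The derivatives satisfy $|\partial_1\check\beta_k|\lesssim 2^{-k}\cdot 2^{-3k}\Phi(2^{-k}\circ\cdot)$ and $|\partial_2\check\beta_k|\lesssim 2^{-2k}\cdot 2^{-3k}\Phi(2^{-k}\circ\cdot)$, where $\Phi(z)=(1+\rho(z))^{-N}$. This yields
\[
|\check\beta_k(x-y)-\check\beta_k(x-c_Q)|\lesssim \big(2^{-(n+s+k)}+2^{-2(n+s+k)}\big)\,2^{-3k}\sup_{\theta\in[0,1]}\Phi\big(2^{-k}\circ(x-y+\theta(y-c_Q))\big).
\]
The hypothesis $k+n+s\ge0$ enters here twice. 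It makes the quadratic term the smaller one. It also gives $\rho(y-c_Q)\lesssim 2^{-(n+s)}\le 2^{k}$, so by the quasi-triangle inequality (\ref{eq:triangle ineq for quasinorm}) the supremum is dominated by $C\Phi(2^{-k}\circ(x-y))$. Altogether $|\cdots|\lesssim 2^{-(n+s+k)}\Phi_k(x-y)$, where $\Phi_k=2^{-3k}\Phi(2^{-k}\circ\cdot)$ has $L^1$ norm $O(1)$.

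To conclude, I take $L^2(\M)$ norms inside the integral (the triangle inequality in $L^2(\M)$ suffices, since the scalar kernel multiplies the operator $g_{n,s}(y)$). This gives the pointwise bound
\[
\|\check\beta_k*g_{n,s}(x)\|_{L^2(\M)}\lesssim 2^{-(n+s+k)}\big(\Phi_k*\|g_{n,s}(\cdot)\|_{L^2(\M)}\big)(x).
\]
Young's inequality on $L^2(\R^2)$ then gives the lemma, because $L^2(\A)=L^2(\R^2;L^2(\M))$. The main point needing care is the mean value step in the anisotropic geometry. One must check that the second-coordinate displacement $2^{-2(n+s)}$, weighted by the derivative size $2^{-2k}$, contributes only $2^{-2(n+s+k)}\le2^{-(n+s+k)}$. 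One must also check that the uniform Schwartz-tail domination survives the shift by $\theta(y-c_Q)$. Both hold precisely because $k+n+s\ge0$.
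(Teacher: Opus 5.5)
Your proof is correct and follows the paper's argument: you use the same cancellation of $g_{n,s}$ on cubes of $\Q_{n+s-1}$, the same subtraction of $\check\beta_k(x-c_Q)$, and the same coordinatewise mean value estimate, where $k+n+s\ge0$ absorbs the $2^{-2(n+s+k)}$ term. The only difference is the last step: the paper applies Schur's lemma to the kernel $L_k(x,y)$ by bounding its two marginal integrals, while you dominate that kernel pointwise by the convolution kernel $2^{-(n+s+k)}\Phi_k(x-y)$ and then use Young's inequality, which makes explicit the shift domination (via $\rho(y-c_Q)\lesssim 2^{k}$) that the paper's $dy$-marginal bound uses only implicitly.
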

	Indeed, Lemma \ref{lem:beta_k*g_(n,s)} is trivial for $k+n+s<0$, since Young's inequality 
	\[ \Norm{\check{\beta_k}*g_{n,s}}_{L^2}\leq \Norm{\check{\beta_0}}_{L^1}\Norm{g_{n,s}}_{L^2}\lesssim\Norm{g_{n,s}}_{L^2} \]
	holds for all $k,n,s$. 
	In \cite[Section 5D]{Lai2024}, this lemma was proved for the isotropic Littlewood-Paley functions $\beta_k$ by using Schur's lemma and the cancellation property of $g_{n,s}$, whereas $\beta_k$ is non-isotropic in our setting. 
	Although this non-isotropic version can be proved in a similar way, for completeness we give a proof of this lemma.
	\par We first state Schur's lemma. 
	\begin{lemma}[Schur]
		\label{lem:Schur}
		Suppose $T$ is an operator on $L^2(\A)$ defined by 
		\begin{equation*}
			T(f)(x)=\int_{\R^d}K(x,y)f(y)dy, 
		\end{equation*}
		where the kernel $K(x,y)$ satisfies 
		\begin{equation*}
			c_1:=\sup_{x\in\R^d}\int_{\R^d}|K(x,y)|dy<\infty \quad \text{ and } \quad c_2:=\sup_{y\in\R^d}\int_{\R^d}|K(x,y)|dx<\infty. 
		\end{equation*}
		Then $T$ is bounded on $L^2(\A)$ with operator norm at most $\sqrt{c_1c_2}$. 
	\end{lemma}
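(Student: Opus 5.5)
The statement to prove is Schur's lemma on $L^2(\A)$; this is the classical Schur test, with $\|\cdot\|_{L^2(\M)}$ playing the role of the absolute value. We may assume the kernel $K$ is scalar-valued, since that is the only case used. Under the identification $L^2(\A)=L^2(\R^d;L^2(\M))$ (Section 1), we have $\Norm{Tf}_{L^2(\A)}^2=\int_{\R^d}\Norm{Tf(x)}_{L^2(\M)}^2dx$. So it suffices to bound the Hilbert-space-valued integral $Tf(x)=\int K(x,y)f(y)\,dy$ pointwise in $x$ and then integrate.

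First, for a.e. $x$, Minkowski's integral inequality in the Hilbert space $L^2(\M)$ gives
\begin{equation*}
\Norm{Tf(x)}_{L^2(\M)}\leq\int_{\R^d}|K(x,y)|\,\Norm{f(y)}_{L^2(\M)}\,dy .
\end{equation*}
For $f$ in the dense class this is a Bochner integral, so the inequality is justified there. Next, write $|K(x,y)|=|K(x,y)|^{1/2}\cdot|K(x,y)|^{1/2}$ and apply Cauchy--Schwarz in $y$:
\begin{equation*}
\Norm{Tf(x)}_{L^2(\M)}^2\leq\Big(\int|K(x,y)|dy\Big)\Big(\int|K(x,y)|\Norm{f(y)}_{L^2(\M)}^2dy\Big)\leq c_1\int|K(x,y)|\Norm{f(y)}_{L^2(\M)}^2dy .
\end{equation*}
Integrating in $x$ and using Tonelli (everything is nonnegative) gives
\begin{equation*}
\Norm{Tf}_{L^2(\A)}^2\leq c_1\int\Big(\int|K(x,y)|dx\Big)\Norm{f(y)}_{L^2(\M)}^2dy\leq c_1c_2\Norm{f}_{L^2(\A)}^2 .
\end{equation*}
This bound holds on a dense subspace, so it extends by density to all of $L^2(\A)$ with operator norm at most $\sqrt{c_1c_2}$.

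The only point needing care is the first step: measurability of $x\mapsto Tf(x)$ as an $L^2(\M)$-valued function, and the validity of the vector-valued Minkowski inequality. Both are standard for strongly measurable Bochner-integrable integrands with values in a separable-range Hilbert space. Apart from this, the argument is the scalar proof applied verbatim to the scalar function $y\mapsto\Norm{f(y)}_{L^2(\M)}$.
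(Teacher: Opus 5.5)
Your argument is correct. Via $L^2(\A)\cong L^2(\R^d;L^2(\M))$ and Minkowski's inequality for the Bochner integral, you reduce to the scalar Schur test for $y\mapsto\Norm{f(y)}_{L^2(\M)}$. This is the standard proof; the paper does not write one out and only refers to \cite{Parcet2009}.

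A minor point: the same Cauchy--Schwarz/Tonelli computation shows $\int|K(x,y)|\Norm{f(y)}_{L^2(\M)}\,dy<\infty$ for a.e.\ $x$ and every $f\in L^2(\A)$. So the integral formula makes sense on all of $L^2(\A)$, and the density step is not needed.
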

	A proof of Schur's lemma can be found, for example, in \cite{Parcet2009}. Next we give a proof of Lemma \ref{lem:beta_k*g_(n,s)}. 
	\begin{proof}[Proof of Lemma \ref{lem:beta_k*g_(n,s)}]
		Recalling the definition of $g_{n,s}$, we have the cancellation property: for all $s\geq1$ and $Q\in\Q_{n+s-1}$, we have $\int_Qg_{n,s}(y)dy=0$. This implies
		\begin{equation*}
			\begin{split}
				\check{\beta_k}*g_{n,s}(x)&=\phantom{:}\int_{\R^2}\sum_{Q\in\Q_{n+s-1}}\left(\check{\beta_k}(x-y)-\check{\beta_k}(x-c_Q)\right)\chi_Q(y)g_{n,s}(y)dy \\
				&=:\int_{R^2} L_k(x,y)g_{n,s}(y)dy,
			\end{split}
		\end{equation*}
		where the kernel is
		\begin{equation*}
			L_k(x,y):=\sum_{Q\in\Q_{n+s-1}}\left(\check{\beta_k}(x-y)-\check{\beta_k}(x-c_Q)\right)\chi_Q(y).
		\end{equation*}
		We apply Schur's lemma to estimate $\Norm{\check{\beta_k}*g_{n,s}}_{L^2}$. For a proof of Schur's lemma, see Parcet \cite{Parcet2009}. In the following we write $\beta(\xi):=\beta_0(\xi)$. Recall that
		\begin{equation*}
			\beta_k(\xi)=\beta(2^k\circ\xi), \quad \check{\beta_k}(x)=2^{-3k}\check{\beta}(2^{-k}\circ x),
		\end{equation*}
		\begin{equation*}
			\nabla(\check{\beta_k})(x)=2^{-3k}\left(2^{-k}\partial_1\check{\beta}(2^{-k}\circ x),2^{-2k}\partial_2\check{\beta}(2^{-k}\circ x)\right).
		\end{equation*}
		Then by the chain rule, we have
		\begin{equation*}
			\begin{split}
				&\norm{\check{\beta_k}(x-y)-\check{\beta_k}(x-c_Q)} \\
				=&\norm{\int_0^1(c_Q-y)\cdot(\nabla\check{\beta_k})(x-ty-(1-t)c_Q)dt} \\
				\leq& 2^{-k}\int_0^1\norm{y_1-c_{Q,1}}2^{-3k}\norm{\partial_1\check{\beta}(2^{-k}\circ(x-ty-(1-t)c_Q))}dt \\
				&\qquad +2^{-2k}\int_0^1\norm{y_2-c_{Q,2}}2^{-3k}\norm{\partial_2\check{\beta}(2^{-k}\circ(x-ty-(1-t)c_Q))}dt \\
				\leq& 2^{-(n+s+k)}\int_0^12^{-3k}\norm{\partial_1\check{\beta}(2^{-k}\circ(x-ty-(1-t)c_Q))}dt \\
				&\qquad +2^{-2(n+s+k)}\int_0^12^{-3k}\norm{\partial_2\check{\beta}(2^{-k}\circ(x-ty-(1-t)c_Q))}dt.
			\end{split}
		\end{equation*}
		Therefore
		\begin{equation*}
			\begin{split}
				&\int_{\R^2}|L_k(x,y)|dx \\
				\leq& 2^{-(n+s+k)}\int_0^1 2^{-3k}\int_{\R^2}\norm{\partial_1\check{\beta}(2^{-k}\circ(x-ty-(1-t)\bar{y}^{n+s-1}))}dx\ dt \\
				&\qquad +2^{-2(n+s+k)}\int_0^1 2^{-3k}\int_{\R^2}\norm{\partial_2\check{\beta}(2^{-k}\circ(x-ty-(1-t)\bar{y}^{n+s-1}))}dx\ dt \\
				\leq& 2^{-(n+s+k)}\Norm{\partial_1\check{\beta}}_{L^1(\R^2)}+2^{-2(n+s+k)}\Norm{\partial_2\check{\beta}}_{L^1(\R^2)} \\
				\lesssim& 2^{-(n+s+k)}+2^{-2(n+s+k)} \lesssim 2^{-(n+s+k)},
			\end{split}
		\end{equation*}
		where the last step is due to the assumption $n+s+k\geq0$.
		The integral over $y$ can be controlled by
		\begin{equation*}
			\begin{split}
				&\int_{\R^2}|L_k(x,y)|dy \\
				\leq& 2^{-(n+s+k)} \sum_{Q\in\Q_{n+s-1}}\int_0^1\int_Q 2^{-3k} \norm{\partial_1\check{\beta}(2^{-k}\circ(x-ty-(1-t)c_Q))} dy\ dt \\
				&\qquad +2^{-2(n+s+k)} \sum_{Q\in\Q_{n+s-1}}\int_0^1\int_Q 2^{-3k} \norm{\partial_2\check{\beta}(2^{-k}\circ(x-ty-(1-t)c_Q))} dy\ dt.
			\end{split}
		\end{equation*}
		Since $\max\{|\partial_1\check{\beta}(y)|,|\partial_2\check{\beta}(y)|\}\leq |\nabla\check{\beta}(y)|$, and
		\begin{equation*}
			\begin{split}
				&\phantom{=}\sum_{Q\in\Q_{n+s-1}}\int_Q 2^{-3k}\norm{\nabla\check{\beta}(2^{-k}\circ(x-ty-(1-t)c_Q))}dy \\
				&=\sum_{Q\in \Q_{n+s+k-1}}\int_Q\norm{\nabla\check{\beta}(2^{-k}\circ x-ty-(1-t)c_Q)}dy \\
				&\lesssim \int_{\R^2}\norm{\nabla\check{\beta}(y)}dy \lesssim 1.
			\end{split}
		\end{equation*}
		The preceding estimate is uniform in $x,t,k$, so
		\begin{equation*}
			\int_{\R^2}|L_k(x,y)|dy\lesssim 2^{-(n+s+k)}+2^{-2(n+s+k)}\lesssim 2^{-(n+s+k)}.
		\end{equation*}
		Finally, by Schur's lemma (Lemma \ref{lem:Schur}), we obtain Lemma \ref{lem:beta_k*g_(n,s)}.
	\end{proof}
	\par By Lemma \ref{lem:beta_k*g_(n,s)} and Lemma \ref{lem:g_off properties}, we have
	\begin{equation*}
		\begin{split}
			(I)
			&\leq \sum_{j\in\Z}\sum_{k:k+j>-\gamma(s+n)}\Norm{\check{\beta_k}*g_{n+j,s}}_{L^2(\A)}^2 \\
			&\lesssim \sum_{j\in\Z}\sum_{k:k+j>-\gamma(s+n)} 2^{-2(j+n+s+k)}\Norm{g_{n+j,s}}_{L^2(\A)}^2 \\
			&\lesssim 2^{-2(1-\gamma)(n+s)}\lambda\Norm{f}_{L^1(\A)}.
		\end{split}
	\end{equation*}
	This gives the desired result for (I$_{n,s}$).
	\medskip
	
	$\bullet$ {\bfseries Estimate of (II$_{n,s}$).}
	\par Now consider (II$_{n,s}$), which is the case corresponding to $k$ near $-\infty$. We use Plancherel's identity to obtain
	\begin{equation*}
		\begin{split}
			(\text{II}_{n,s})=\sum_{j\in\Z}\sum_{k:k+j\leq -\gamma(s+n)} \int_{\R^2}\norm{\widehat{K}_j*\widehat{\phi_{j-\varepsilon s}}(\xi)}^2\norm{\beta_k(\xi)}^2\norm{\widehat{g_{n+j,s}}(\xi)}^2 d\xi ,
		\end{split}
	\end{equation*}
	where
	\begin{equation*}
		\begin{split}
			\widehat{K}_j*\widehat{\phi_{j-\varepsilon s}}(\xi)&=\int_{\R^2}\widehat{K_j}(\xi-y)\widehat{\phi_{j-\varepsilon s}}(y)dy \\
			&=\int_{\R^2}\widehat{K}(2^{-j}\circ \xi-2^{-j}\circ y)\widehat{\phi}(2^{\varepsilon s-j}\circ y)2^{3(\varepsilon s-j)}dy \\
			&=\int_{\R^2}\widehat{K}(2^{-j}\circ \xi- y)\widehat{\phi}(2^{\varepsilon s}\circ y)2^{3\varepsilon s}dy.
		\end{split}
	\end{equation*}
	By the weighted Cauchy-Schwarz inequality, 
	\begin{equation*}
		\begin{split}
			\norm{\widehat{K}_j*\widehat{\phi_{j-\varepsilon s}}(\xi)}^2
			&=\norm{\int_{\R^2}\widehat{K}(2^{-j}\circ \xi- y)\widehat{\phi}(2^{\varepsilon s}\circ y)2^{3\varepsilon s}dy}^2 \\
			&\leq \|\widehat{\phi}\|_{1} \int_{\R^2}\norm{\widehat{K}(2^{-j}\circ \xi- y)}^2|\widehat{\phi}(2^{\varepsilon s}\circ y)|2^{3\varepsilon s}dy.
		\end{split}
	\end{equation*}
	Therefore, (II$_{n,s}$) can be split into two parts:
	\begin{equation*}
		\begin{split}
			(\text{II}_{n,s})\leq & \sum_{j\in\Z}\sum_{k+j\leq -\gamma(s+n)} \\
			&\quad \int_{\R^2}\int_{\rho(y)\leq \frac1{10} 2^{(s+n)\gamma}}\norm{\widehat{K}(2^{-j}\circ \xi- y)}^2|\widehat{\phi}(2^{\varepsilon s}\circ y)|2^{3\varepsilon s}dy \  \norm{\beta_k(\xi)}^2\norm{\widehat{g_{n+j,s}}(\xi)}^2 d\xi  \\
			&+ \sum_{j\in\Z}\sum_{k+j\leq -\gamma(s+n)} \\
			&\quad \int_{\R^2}\int_{\rho(y)> \frac1{10} 2^{(s+n)\gamma}}\norm{\widehat{K}(2^{-j}\circ \xi- y)}^2|\widehat{\phi}(2^{\varepsilon s}\circ y)|2^{3\varepsilon s}dy \  \norm{\beta_k(\xi)}^2\norm{\widehat{g_{n+j,s}}(\xi)}^2 d\xi  \\
			=&: (\text{II}_{n,s,1}) + (\text{II}_{n,s,2}).
		\end{split}
	\end{equation*}
	On $\supp\beta_k$, we have $2^{-j-k-1}\leq\rho(2^{-j}\circ\xi)\leq2^{-j-k+1}$. If $k+j\leq-\gamma(s+n)$ and $\rho(y)\leq\frac1{10}2^{(s+n)\gamma}$, then $\rho(y)\leq\frac15\rho(2^{-j}\circ\xi)$. The triangle inequality therefore gives
	\[
	 \frac45\rho(2^{-j}\circ\xi)\leq\rho(2^{-j}\circ\xi-y)\leq\frac65\rho(2^{-j}\circ\xi).
	\]
	In particular, $\rho(2^{-j}\circ\xi-y)\geq\frac25\,2^{-j-k}$, so $\rho(2^{-j}\circ\xi-y)^{-2\delta}\leq(5/2)^{2\delta}(2^{-j-k})^{-2\delta}$. Therefore, by Proposition \ref{prop:K(x) kernel preoperties} and Plancherel's identity we have
	\begin{equation*}
		\begin{split}
			(\text{II}_{n,s,1})&\lesssim \sum_{j\in\Z}\sum_{k+j\leq -\gamma(s+n)} \\
			&\qquad \int_{\R^2}\int_{\rho(y)\leq \frac1{10} 2^{(s+n)\gamma}}\rho(2^{-j}\circ\xi-y)^{-2\delta}|\widehat{\phi}(2^{\varepsilon s}\circ y)|2^{3\varepsilon s}dy \  \norm{\beta_k(\xi)}^2\norm{\widehat{g_{n+j,s}}(\xi)}^2 d\xi \\
			&\lesssim \sum_{j\in\Z}\sum_{k:k+j\leq -\gamma(s+n)} (2^{-j-k})^{-2\delta}\Norm{g_{n+j,s}}_{L^2(\A)}^2 \\
			&\lesssim 2^{-2(s+n)\gamma\delta}\lambda\Norm{f}_{L^1(\A)}, 
		\end{split}
	\end{equation*}
	where in the last line we use Lemma \ref{lem:g_off properties}. 
	Next consider the case $\rho(y)>\frac1{10} 2^{(s+n)\gamma}$. We use the estimates
	\begin{equation*}
		|\widehat{\phi}(2^{\varepsilon s}\circ y)|\lesssim\frac1{(2^{\varepsilon s}\rho(y))^N}, \quad \norm{\widehat{K}(2^{-j}\circ\xi-y)}\lesssim 1,
	\end{equation*}
	where $N$ can be chosen sufficiently large, to obtain
	\begin{equation*}
		\begin{split}
			(\text{II}_{n,s,2})&\lesssim \sum_{j\in\Z}\sum_{k+j\leq -\gamma(s+n)} \int_{\rho(y)\geq\frac1{10} 2^{(s+n)\gamma}}\frac{2^{3s\varepsilon}}{(2^{\varepsilon s}\rho(y))^N}\norm{\beta_k(\xi)}^2\norm{\widehat{g_{n+j,s}}(\xi)}^2d\xi \\
			&\lesssim 2^{-\varepsilon s(N-3)}\int_{\rho(y)\geq \frac1{10}2^{(s+n)\gamma}}\frac1{\rho(y)^N}dy \  \sum_{j\in\Z}\Norm{g_{n+j,s}}_{L^2(\A)}^2 \\
			&\lesssim 2^{-(s+n)\gamma(N-3)}\lambda\Norm{f}_{L^1(\A)}.
		\end{split}
	\end{equation*}
	Combining the estimates of (A) and (B), we obtain the desired estimate of (II$_{n,s}$). Once we obtain (\ref{eq:sum_j g_{n+j,s} estimate}), the proof is complete.

\section{$L^p$-boundedness}
\par In the previous section, we proved Proposition \ref{prop:A_j^s weak L1 bound}, i.e., the operator $f\mapsto\{A_j^s(f)\}_{j\in\Z}$ is bounded from $L^1(\A)$ to $\Lambda^{1,\infty}(\A,\ell^{\infty})$, at least when $s$ is a negative real number.  
Moreover, $f\mapsto\{A_j^s(f)\}_{j\in\Z}$ is trivially bounded from $L^{\infty}(\A)$ to $L^{\infty}(\A,\ell^\infty)$.
To deduce the $L^p(\A)\to L^p(\A;\ell^\infty)$ boundedness, we quote a noncommutative version of the Marcinkiewicz interpolation theorem.
\begin{theorem}[{\cite[Theorem 4.2.2]{PisierXu2003}}]
	\label{thm:NC interpolation}
	Let $1\leq p_0<p_1\leq\infty$. Let $\{S_j\}_{j\in\Z}$ be a sequence of subadditive maps from $L^{p_0}_+(\A)+L^{p_1}_+(\A)$ into $L^0_+(\A)$.
	If
	\begin{equation*}
		\begin{split}
			\Norm{\{S_j(f)\}_j}_{\Lambda^{p_0,\infty}(\A;\ell^\infty)}&\leq C_0\Norm{f}_{L^{p_0}(\A)}, \quad \forall f\in L^{p_0}_+(\A), \\
		    \Norm{\{S_j(f)\}_j}_{L^{p_1}(\A;\ell^\infty)}&\leq C_1\Norm{f}_{L^{p_1}(\A)}, \quad \forall f\in L^{p_1}_+(\A),
		\end{split}
	\end{equation*}
	then for any $p_0<p<p_1$,
	\begin{equation*}
		\Norm{\{S_j(f)\}_j}_{L^{p}(\A;\ell^\infty)}\leq C_p\Norm{f}_{L^{p}(\A)}
	\end{equation*}
	holds for all $f\in L^{p}_+(\A)$. The constant $C_p$ is controlled by
	\begin{equation*}
		C_p\lesssim C_0^{1-\theta}C_1^\theta \left(\frac1{p_0}-\frac1p\right)^{-2}\left(\frac1p-\frac1{p_1}\right)^{-1},
	\end{equation*}
	where $\theta$ is determined by $1/p=(1-\theta)/p_0+\theta/p_1$.
\end{theorem}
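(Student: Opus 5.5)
The statement is quoted from \cite[Theorem 4.2.2]{PisierXu2003}, and for the purposes of this paper it may be used as a black box. If I had to prove it, I would follow Junge and Xu's approach to noncommutative maximal interpolation. The idea is to exploit the duality between $L^{p}(\A;\ell^\infty)$ and $L^{p'}(\A;\ell^1)$ on positive families, combined with a dyadic layer decomposition of $f$ adapted to both endpoint hypotheses.

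\textbf{Reduction by duality.} For a positive family $\{x_j\}$ I will use
\[
\Norm{\{x_j\}}_{L^p(\A;\ell^\infty)}=\sup\Big\{\sum_j\varphi(x_jy_j):\ y_j\geq0,\ \Big\|\sum_j y_j\Big\|_{L^{p'}(\A)}\leq1\Big\}.
\]
This reduces the goal to bounding $\sum_j\varphi(S_j(f)y_j)$ for a fixed positive dual family $\{y_j\}$ with $y=\sum_j y_j$.

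\textbf{Layer decomposition.} Fix $f\in L^p_+(\A)$ and split it through its own spectral projections,
\[
f=\sum_{k\in\Z}f_k,\qquad f_k=f\chi_{(2^k,2^{k+1}]}(f).
\]
Each $f_k$ is positive because the spectral projections commute with $f$. For each level $\lambda=2^k$, set
\[
f^{(k)}_0=\sum_{i\geq k}f_i,\qquad f^{(k)}_1=\sum_{i<k}f_i .
\]
Then $f^{(k)}_0$ is handled by the weak-type hypothesis in $L^{p_0}$, and $f^{(k)}_1$, which lies in $L^{p_1}$ with small norm, by the strong hypothesis. Subadditivity gives $S_j(f)\leq S_j(f^{(k)}_0)+S_j(f^{(k)}_1)$.

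The weak $(p_0,p_0)$ bound yields projections $e_k$ with
\[
e_kS_j(f^{(k)}_0)e_k\leq 2^k\ \ \text{for all } j,\qquad \varphi(e_k^\perp)\leq\big(C_0\Norm{f^{(k)}_0}_{p_0}/2^k\big)^{p_0}.
\]
The strong $(p_1,p_1)$ bound yields positive $a_k$ with
\[
S_j(f^{(k)}_1)\leq a_k,\qquad \Norm{a_k}_{p_1}\leq C_1\Norm{f^{(k)}_1}_{p_1}.
\]
Replacing $e_k$ by $\bigwedge_{i\geq k}e_i$ makes the projections monotone at the cost of a geometric sum in $\varphi(e_k^\perp)$.

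\textbf{Main obstacle: assembling one majorant.} The hardest step is turning these levelwise controls into a single element $F\in L^p_+(\A)$ with $S_j(f)\leq F$ for all $j$. The difficulty is that $e_kS_je_k\leq2^k$ does not bound $S_j$ itself, because the off-diagonal pieces $e_kS_je_k^\perp$ are uncontrolled.

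My first attempt is to avoid constructing $F$ directly and instead estimate the duality pairing. I would cut the dual element along the telescoping projections $\pi_k=e_{k+1}-e_k$ (after the monotone modification) and write $y_j=\sum_{k,l}\pi_ky_j\pi_l$. On the compressed diagonal pieces $\pi_k y_j \pi_k$, the pairing with the weak-type part is at most $2^k\varphi(\pi_k y_j\pi_k)$, summable by Hölder against $\sum_k 2^{kp}\varphi(\pi_k)$. The strong part contributes $\varphi(a_k y)$. Off-diagonal pairs $(k,l)$ are handled by the Cauchy--Schwarz inequality for positive operators,
\[
|\varphi(\pi_kx\pi_ly)|\leq\varphi(\pi_kx\pi_ky)^{1/2}\varphi(\pi_lx\pi_ly)^{1/2},
\]
applied with the roles arranged so that one factor sees level $\min(k,l)$. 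This double summation over $(k,l)$ is what should produce the factor $(1/p_0-1/p)^{-2}$.

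\textbf{Final summation.} Summing the geometric series using
\[
\sum_k2^{kp}\varphi(e_k^\perp)\lesssim\Big(\tfrac1{p_0}-\tfrac1p\Big)^{-1}C_0^{p_0}\Norm{f}_p^p
\]
and the analogous $L^{p_1}$ sum for the $a_k$, with the standard Marcinkiewicz bookkeeping, yields the stated constant $C_0^{1-\theta}C_1^\theta$. If the off-diagonal control fails, the fallback is to prove that positive families in $L^p(\A;\ell^\infty)$ form a real interpolation space between $\Lambda^{p_0,\infty}(\A;\ell^\infty)$ and $L^{p_1}(\A;\ell^\infty)$, as in Junge--Xu. That reduces the claim to a $K$-functional estimate, which the layer decomposition above supplies.
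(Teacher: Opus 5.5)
The paper gives no proof of this statement. It quotes the result and uses it as a black box, so on that point you agree with the paper. The sketch you add, however, does not close, and it breaks exactly at the step you flag. For $k\neq l$ the pairing $\varphi(\pi_kS_j(f)\pi_ly_j)$ is not monotone (not even real) in the middle operator. So you cannot replace $S_j(f)$ there by $S_j(f_0^{(m)})+S_j(f_1^{(m)})$ with $m$ adapted to the pair, and your levelwise data $\{e_k,a_k\}$ enter only through diagonal compressions.

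Cauchy--Schwarz then bounds the pair by $\beta_k^{1/2}\beta_l^{1/2}$, where $\beta_k=\varphi(\pi_kS_j(f)\pi_ky_j)$. The $\pi_l$-factor sees level $l$, not $\min(k,l)$. Indeed, writing $q_k$ for your monotonized projections, for $l>k$ one has $\pi_l\perp q_{k+1}$, so no level-$k$ bound reaches that factor, and interchanging the roles does not help. There is therefore no decay in $|k-l|$, and $\sum_{k,l}\beta_k^{1/2}\beta_l^{1/2}=(\sum_k\beta_k^{1/2})^2$ exceeds $\sum_k\beta_k$ by up to the number of active levels.

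Weighting cannot repair this. The domination $x\le\sum_kw_k\pi_kx\pi_k$ needs $\sum_kw_k^{-1}\le1$. If the $L^p$ mass of $f$ is spread evenly over $N$ dyadic layers, then in the worst case allowed by your bounds on $\varphi(\pi_k)$, even the best admissible weights give $\Norm{\sum_kw_k2^k\pi_k}_p\approx N\Norm{f}_p$. Separately, the strong part as written, $\sum_k\varphi(a_ky)$, does not sum; $a_k$ must first be localized, for example by merging $\chi_{[0,2^k]}(a_k)$ into $e_k$. Your fallback is circular, because identifying that real interpolation space with $L^p(\A;\ell^\infty)$ on positive families is essentially the theorem itself.

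One route that closes is to drop the top/bottom splitting of $f$ at each level and apply both hypotheses to each dyadic spectral piece $f_k=f\chi_{(2^k,2^{k+1}]}(f)$ separately. Then every weight is measured relative to the height of its piece. For one piece, monotonize its weak-type projections at levels $2^m$, $m\le k$, into $q^{(k)}_m$. The differences $\pi^{(k)}_m=q^{(k)}_m-q^{(k)}_{m-1}$ then satisfy $\varphi(\pi^{(k)}_m)\lesssim C_0^{p_0}2^{(k-m)p_0}\varphi(\chi_{(2^k,2^{k+1}]}(f))$. Iterating $a\le(1+\varepsilon)eae+(1+\varepsilon^{-1})e^\perp ae^\perp$ downward from $q=q^{(k)}_k$ gives a majorant independent of $j$:
\[
S_j(f_k)\le(1+\varepsilon^{-1})\,q^\perp b_kq^\perp+(1+\varepsilon)(1+\varepsilon^{-1})\sum_{m\le k}(1+\varepsilon)^{k-m}2^m\pi^{(k)}_m ,
\]
where $b_k$ is the strong-type majorant for $f_k$.

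The growth $(1+\varepsilon)^{k-m}$ is beaten by the decay $2^{-(k-m)(1-p_0/p)}$ once $1+\varepsilon<2^{1-p_0/p}$. Taking $\varepsilon$ comparable to $1-p_0/p$ is what produces $(1/p_0-1/p)^{-2}$. The pieces are recombined through $S_j(f)\le\sum_kS_j(f_k)$; this countable subadditivity is automatic for the positive linear maps used in the paper. Disjointness is replaced by the estimate $\Norm{\sum_ka_k}_p^p\le2^p\sum_k2^{kp}\varphi(s(a_k))$ for $0\le a_k\le2^k$, where $s(a_k)$ is the support projection. This follows from $\varphi\big(\chi_{(2^m,\infty)}(\sum_ka_k)\big)\le\sum_{k\ge m}\varphi(s(a_k))$, which is a min--max argument. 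Apply it after grouping the terms by $k-m$. Layering the $b_k$ spectrally in the same way yields the factor $(1/p-1/p_1)^{-1}$.
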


\begin{proof}[Proof of Proposition \ref{prop:A_j^s Lp bound} for real $s$]
	We first consider real $s<0$. Under this assumption, for each $j$, $A_j^s$ is a linear (hence subadditive) map and preserves positive elements (since the kernels $\nu_j^s$ are positive, see Lemma \ref{lem:Bessel kernel properties}).
	Since $\{A_j^s\}$ is bounded from $L^1(\A)$ to $\Lambda^{1,\infty}(\A;\ell^\infty(\Z))$, and also bounded from $L^\infty(\A)$ to $L^\infty(\A;\ell^\infty(\Z))$, by theorem \ref{thm:NC interpolation}, for $1<p<\infty$, we have that $f\mapsto\{A_j^s(f)\}_{j\in\Z}$ is bounded from $L^p(\A)\to L^p(\A,\ell^{\infty})$.
\end{proof}

For complex $s$ with $\Re(s)<0$, the kernel $\nu_j^s$ may be complex-valued and $A_j^s$ need not preserve positivity. Thus Theorem \ref{thm:NC interpolation} does not apply directly.
To treat complex $s$ with negative real part, we need the following two lemmas. 
\begin{lemma}
	\label{lem:s'+s''}
	For two complex numbers $s'$ and $s''$, let $s=s'+s''$. Then, initially for Schwartz functions, 
	\begin{equation*}
		A_j^{s}(f)=A_j^{s'}(f)*(G_{s''})_j,
	\end{equation*}
	where $(G_{s''})_j(x)=2^{3j}G_{s''}(2^j\circ x)$. 
\end{lemma}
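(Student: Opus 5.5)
The identity is a statement about Fourier multipliers, so the plan is to verify it on the Fourier side for a Schwartz function $f$ (valued in $L^2(\M)$, or in $\mathcal S_\M$). By definition,
\[
\widehat{A_j^{s}f}(\xi)=\hat f(\xi)\,\widehat{\nu^{s}}(2^{-j}\circ\xi)=\hat f(\xi)\,\widehat{d\mu}(2^{-j}\circ\xi)\,\bigl(1+|2^{-j}\circ\xi|^2\bigr)^{s/2}.
\]
Since $s=s'+s''$, the Bessel factor splits multiplicatively:
\[
\bigl(1+|\eta|^2\bigr)^{s/2}=\bigl(1+|\eta|^2\bigr)^{s'/2}\bigl(1+|\eta|^2\bigr)^{s''/2},
\]
where $\eta=2^{-j}\circ\xi$ and complex powers of the positive number $1+|\eta|^2$ are taken with the principal branch. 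Hence $\widehat{\nu^s}=\widehat{\nu^{s'}}\,F_{s''}$. After composing with the dilation $\xi\mapsto 2^{-j}\circ\xi$, this gives
\[
\widehat{A_j^sf}(\xi)=\widehat{A_j^{s'}f}(\xi)\,F_{s''}(2^{-j}\circ\xi).
\]

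The next step identifies $F_{s''}(2^{-j}\circ\cdot)$ as the Fourier transform of $(G_{s''})_j$. The non-isotropic dilation $x\mapsto 2^j\circ x$ has Jacobian $2^{3j}$. Therefore, for any tempered distribution $G$, the distribution $G_j=2^{3j}G(2^j\circ\cdot)$ has Fourier transform $\widehat G(2^{-j}\circ\xi)$. When $G$ is integrable, this follows from the change of variables $x\mapsto 2^{-j}\circ x$ in the Fourier integral. For a general tempered $G$ (for instance $\Re s''\ge0$), I would define $G_j$ by duality and run the same computation against test functions. Applying this to $G_{s''}=\F^{-1}[F_{s''}]$ gives $\widehat{(G_{s''})_j}=F_{s''}(2^{-j}\circ\cdot)$.

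To conclude, I need that convolution of a tempered distribution with a suitable function corresponds to multiplication of Fourier transforms. Here $A_j^{s'}f$ is convolution of the Schwartz function $f$ with the compactly supported distribution $d\mu_j$ followed by the Bessel multiplier. Since $F_{s'}$ is smooth with polynomially bounded derivatives, $\widehat{A_j^{s'}f}=\hat f\,\widehat{\nu^{s'}}(2^{-j}\circ\cdot)$ is again a Schwartz function. So $A_j^{s'}f$ is Schwartz, and its convolution with the tempered distribution $(G_{s''})_j$ is well defined. Its Fourier transform is the product $\widehat{A_j^{s'}f}\cdot F_{s''}(2^{-j}\circ\cdot)$, because $F_{s''}$ is also a smooth multiplier of moderate growth. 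Fourier inversion then gives $A_j^s f=A_j^{s'}f*(G_{s''})_j$. The vector-valued setting adds nothing beyond applying the scalar computation to $\tau$-pairings with elements of $L^2(\M)$, or working coordinatewise.

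The only point needing any care is the one just described: $(G_{s''})_j$ may be a genuine distribution rather than a function when $\Re s''\ge0$. This is exactly why I would restrict to Schwartz $f$ and check that $A_j^{s'}f$ stays in the Schwartz class. When $\Re s''<0$, Lemma \ref{lem:Bessel kernel properties} makes $(G_{s''})_j$ an $L^1$ function, and the identity is then a classical convolution of functions.
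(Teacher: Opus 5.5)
Your proposal is correct and follows the same route as the paper, which simply observes that the identity follows on the Fourier side from $F_s=F_{s'}F_{s''}$. Your additional bookkeeping justifies that one-line remark: the Jacobian $2^{3j}$ gives $\widehat{(G_{s''})_j}=F_{s''}(2^{-j}\circ\cdot)$, and $A_j^{s'}f$ remains Schwartz, so its convolution with the tempered distribution $(G_{s''})_j$ is well defined.
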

The identity follows on the Fourier side from $F_s=F_{s'}F_{s''}$. In the applications below, $\Re(s'')<0$, so Lemma \ref{lem:Bessel kernel properties} gives $G_{s''}$ is an $L^1$ function, and convolution with $(G_{s''})_j$ is bounded on $L^p$.

\begin{lemma}
	\label{lem:sup_j (G_s)_j*h_j Lp-bounded}
	For $1<p<\infty$ and $s=\sigma+it\in\mathbb{C}$ with $\sigma<0$, there exists a constant $C(p,s)>0$ such that 
	\begin{equation*}
		\Big\|\sup_{j\in\Z}\big((G_s)_j*h_j\big)\Big\|_{L^p(\A)}\leq C(p,s)\Big\|\sup_{j\in\Z} h_j\Big\|_{L^p(\A)}, \quad \forall \ \{h_j\}_{j\in\Z}\in L^p(\A;\ell^\infty(\Z)). 
	\end{equation*}
	The constant $C(p,s)$ satisfies 
	\begin{equation*}
		C(p,s) \lesssim_p \frac{\Gamma(-\sigma/2)}{|\Gamma(-s/2)|}. 
	\end{equation*}
\end{lemma}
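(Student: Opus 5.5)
The plan is to reduce the complex kernel $G_s$ to the positive kernel $G_\sigma$ via the pointwise domination (\ref{eq:G_s domination}) of Lemma \ref{lem:Bessel kernel properties}, and then use that convolution with a positive $L^1$ kernel of integral one is a positive, unital average, which can be pulled through the maximal norm.

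\emph{Step 1: reduction to positive families.} Since $G_s$ is complex-valued, first split $G_s=\Re G_s+i\Im G_s$. Then split $\Re G_s=(\Re G_s)_+-(\Re G_s)_-$, and similarly for the imaginary part. Each of the four resulting kernels $k$ is nonnegative and satisfies
\[
0\le k(x)\le |G_s(x)|\le c_s\,G_\sigma(x),\qquad c_s:=\frac{\Gamma(-\sigma/2)}{|\Gamma(-s/2)|},
\]
by (\ref{eq:G_s domination}). It therefore suffices to prove the bound with $(G_s)_j$ replaced by $k_j(x)=2^{3j}k(2^j\circ x)$ for such a $k$. Next, by definition of $L^p(\A;\ell^\infty)$ (Appendix A), for $\{h_j\}$ in that space and any $\epsilon>0$ one can factor $h_j=a\,y_j\,b$ with $\|a\|_{2p}\|b\|_{2p}\sup_j\|y_j\|_\infty\le(1+\epsilon)\|\{h_j\}\|$. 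I prefer to handle only positive families $h_j$, using the standard fact that a general element of $L^p(\A;\ell^\infty)$ is a combination of four positive families with comparable norms (valid for $1<p<\infty$, Junge/Junge--Xu). So assume $0\le h_j\le H$ with $\|H\|_p\le 2\|\sup_j h_j\|_p$.

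\emph{Step 2: positivity and domination.} For a positive kernel $k$ and positive $h_j$, positivity of convolution gives
\[
0\le k_j*h_j\le k_j*H\le c_s\,(G_\sigma)_j*H.
\]
This bound holds pointwise for every $j$, although the majorant $(G_\sigma)_j*H$ still depends on $j$. To remove that dependence, write $(G_\sigma)_j*H(x)=\int G_\sigma(y)\,H(x-2^{-j}\circ y)\,dy$. Since $G_\sigma$ is radial, decreasing, and has integral $1$, it is an average of normalized indicators of Euclidean balls. By (\ref{eq:rho(x) and |x|_infty}), each such ball is contained in a rectangle $I\times J$ containing $0$ whose measure is comparable to that of the ball. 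After dilating by $2^{-j}\circ$, the ball becomes a rectangle centred at $0$. Hence
\[
(G_\sigma)_j*H\le C\sup_{0\in I,J}A_{I,J}(H)
\]
in the sense of $L^p(\A;\ell^\infty)$, with the supremum over a family indexed by $j$ and the layer radius. Theorem \ref{thm:strong maximal boundedness} then yields a single $F\in L^p_+(\A)$ with $\|F\|_p\lesssim_p\|H\|_p$ and $(G_\sigma)_j*H\le CF$ for all $j$. Consequently $\|\sup_j k_j*h_j\|_p\lesssim_p c_s\|H\|_p$.

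\emph{Main obstacle.} The delicate points are two. The first is passing the layer-cake (radial-decreasing) average through the maximal norm: an integral of elements each dominated by $F$ is still dominated by $F$, since the integral is a positive average. The second is the reduction from general $\{h_j\}$ to positive families. If that decomposition is unavailable in the form needed, I would instead use the factorization $h_j=ay_jb$ and the estimate $|k_j*(ay_jb)|$, via the operator Cauchy--Schwarz inequality $\bigl|\int k\,a u b\bigr|\le\ldots$, to reduce to the bilinear majorants $k_j*|a^*|^2$ and $k_j*|b|^2$, each treated as above at $L^p$. This gives a constant depending only on $p$ times $c_s$, as claimed.
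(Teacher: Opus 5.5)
Your proposal is correct and follows essentially the same route as the paper. You dominate the real and imaginary parts of $G_s$ by $\frac{\Gamma(-\sigma/2)}{|\Gamma(-s/2)|}G_\sigma$ via (\ref{eq:G_s domination}) and replace $\{h_j\}$ by a common positive majorant. You then write $(G_\sigma)_j$ as a layer-cake average over the dilated balls $2^{-j}\circ B(r)$, which are ellipses rather than rectangles but lie in centred rectangles of $\tfrac4\pi$ times their area, and Theorem \ref{thm:strong maximal boundedness} then gives a single majorant uniform in $j$ and $r$. Your reduction to positive families and positive kernel parts is a cosmetic variant of the paper's use of self-adjoint $h_j$ with $-\tilde h\le h_j\le\tilde h$ and $|G_s^{(\ell)}|\le C_sG_\sigma$, and your fallback via the factorization $h_j=ay_jb$ is not needed.
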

\begin{proof}
	It suffices to consider self-adjoint $h_j$, by decomposing a general sequence into its real and imaginary parts. By Lemma \ref{lem:Bessel kernel properties}, $G_s$ is radial, and $G_\sigma$ is positive, radially decreasing and has integral $1$. Write
	\[
	 G_s=G_s^{(1)}+iG_s^{(2)}, \qquad G_\sigma(x)=H_\sigma(|x|).
	\]
	It suffices to prove
	\begin{equation}
		\label{eq:G_s,j^1 maixmal Lp ineq}
		\Big\|\sup_j\big((G_s^{(1)})_j*h_j\big)\Big\|_{L^p(\A)}\leq C_1(p,s)\Big\|\sup_jh_j\Big\|_{L^p(\A)}
	\end{equation}
	and
	\begin{equation}
		\label{eq:G_s,j^2 maixmal Lp ineq}
		\Big\|\sup_j\big((G_s^{(2)})_j*h_j\big)\Big\|_{L^p(\A)}\leq C_2(p,s)\Big\|\sup_jh_j\Big\|_{L^p(\A)}.
	\end{equation}
	Choose $\tilde h\geq0$ in $L^p(\A)$ such that
	\begin{equation}
		\label{eq:h_j<tilde h}
		-\tilde h\leq h_j\leq\tilde h \quad\text{for all }j, \qquad
		\big\|{\tilde h}\big\|_{L^p(\A)}\leq2\Big\|\sup_jh_j\Big\|_{L^p(\A)}.
	\end{equation}
	Set $C_s=\Gamma(-\sigma/2)/|\Gamma(-s/2)|$. By (\ref{eq:G_s domination}), both $|G_s^{(1)}|$ and $|G_s^{(2)}|$ are bounded by $C_sG_\sigma$. Hence, for $\ell=1,2$,
	\[
	 -C_s(G_\sigma)_j*\tilde h\leq (G_s^{(\ell)})_j*h_j\leq C_s(G_\sigma)_j*\tilde h.
	\]
	To bound the positive convolutions, let
	\[
	 H_{\sigma,j}(r)=2^{3j}H_\sigma(2^{3j/2}r), \qquad
	 E_j(r)=\{y:2^{-j}y_1^2+2^jy_2^2\leq r^2\}.
	\]
	The ellipse $E_j(r)$ is contained in the rectangle
	$[-2^{j/2}r,2^{j/2}r]\times[-2^{-j/2}r,2^{-j/2}r]$, whose area is $4r^2$.
	By Theorem \ref{thm:strong maximal boundedness}, there is a positive $a\in L^p(\A)$ such that
	\begin{equation}
		\label{eq:int h_j <ar^2}
		\int_{E_j(r)}\tilde h(x-y)dy\leq a(x)r^2 \quad\text{for all }j,r, \qquad
		\Norm{a}_{L^p(\A)}\lesssim_p\big\|{\tilde h}\big\|_{L^p(\A)}.
	\end{equation}
	Since $H_{\sigma,j}$ is decreasing and tends to zero at infinity, its Lebesgue--Stieltjes measure $d(-H_{\sigma,j})$ is positive. Tonelli's theorem gives
	\[
	 \big[(G_\sigma)_j*\tilde h\big](x)
	 =\int_0^\infty\left(\int_{E_j(r)}\tilde h(x-y)dy\right)d(-H_{\sigma,j}(r))
	 \leq a(x)\int_0^\infty r^2d(-H_{\sigma,j}(r)).
	\]
	The mass normalization in Lemma \ref{lem:Bessel kernel properties} yields
	\[
	 \int_0^\infty r^2d(-H_{\sigma,j}(r))
	 =2\int_0^\infty rH_{\sigma,j}(r)dr
	 =\frac1\pi\Norm{G_\sigma}_{L^1(\R^2)}=\frac1\pi.
	\]
	Consequently,
	\begin{equation*}
		(G_s^{(1)})_j*h_j\leq\frac{C_s}\pi a,
	\end{equation*}
	and
	\begin{equation*}
		-\frac{C_s}\pi a\leq(G_s^{(1)})_j*h_j.
	\end{equation*}
	The same bounds hold for $(G_s^{(2)})_j*h_j$, with the same positive majorant $G_\sigma$. Combining these inequalities with (\ref{eq:h_j<tilde h}) and (\ref{eq:int h_j <ar^2}) proves (\ref{eq:G_s,j^1 maixmal Lp ineq}) and (\ref{eq:G_s,j^2 maixmal Lp ineq}), with
	\[
	 C_1(p,s)+C_2(p,s)\lesssim_p C_s=\frac{\Gamma(-\sigma/2)}{|\Gamma(-s/2)|}.
	\]
\end{proof}

\begin{proof}[Proof of Proposition \ref{prop:A_j^s Lp bound} for complex $s$]
\par For complex $s$ with $\Re(s)<0$, we prove that $f\mapsto\{A_j^s(f)\}_{j\in\Z}$ is bounded from $L^p(\A)\to L^p(\A,\ell^{\infty})$. 
Suppose $s=\sigma+i\rho$, where $\sigma<0$ and $\rho\in\R$. We choose $s'\in\R$ that satisfies $\sigma<s'<0$ and let $s''=s-s'$. Then $\Re(s'')<0$, so Lemma \ref{lem:Bessel kernel properties} gives $G_{s''}\in L^1$ and $G_{s''}(-x)=G_{s''}(x)$, as used in the duality calculation below. Lemma \ref{lem:s'+s''} implies
\begin{equation*}
	A_j^s(f)=A_j^{s'}(f)*(G_{s''})_j, \quad \forall\ j\in\Z.
\end{equation*}
Let $p'$ be the conjugate exponent of $p$. Choose a positive sequence $\{g_j\}$ in the unit ball of the space $L^{p'}(\A;\ell^1)$. By duality, we have
\begin{equation}
	\label{eq:sum_j phi(A_j^sf.g)}
	\begin{split}
	    \Big|\sum_j\varphi(A_j^sf\cdot g_j)\Big|&=\Big|\sum_j\varphi\big(A_j^{s'}(f)*(G_{s''})_j\cdot g_j\big)\Big| \\
		&=\Big|\sum_j\varphi\big(A_j^{s'}(f)\cdot(G_{s''})_j*g_j\big)\Big| \\
		&\leq \big\|{\{A_j^{s'}(f)\}_j}\big\|_{L^p(\A;\ell^\infty)}\big\|\{(G_{s''})_j*g_j\}_j\big\|_{L^{p'}(\A;\ell^1)}. 
	\end{split}
\end{equation}
Since we have proved Proposition \ref{prop:A_j^s Lp bound} for real $s$, we have 
\begin{equation}
	\label{eq:A_j^s'(f) L^p(l^infty) norm}
	\big\|{\{A_j^{s'}(f)\}_j}\big\|_{L^p(\A;\ell^\infty)} \leq A(p,s') \Norm{f}_{L^p(\A)},
\end{equation}
where $A(p,s')>0$ is a constant depending only on $s'$ and $p$. Moreover, by duality and Lemma \ref{lem:sup_j (G_s)_j*h_j Lp-bounded} we have 
\begin{equation}
	\label{eq:G_s''_j*g_j L^p'(l^1) norm}
	\begin{split}
		\big\|\{(G_{s''})_j*g_j\}_j\big\|_{L^{p'}(\A;\ell^1)}&=\sup_{\|\{h_j\}_j\|_{L^p(\A;\ell^\infty)}\leq1}\Big|\varphi\Big(\sum_j\big((G_{s''})_j*g_j\big)\cdot h_j\Big)\Big| \\
		&=\sup_{\|\{h_j\}_j\|_{L^p(\A;\ell^\infty)}\leq1}\Big|\varphi\Big(\sum_jg_j\cdot \big((G_{s''})_j*h_j\big)\Big)\Big| \\
		&\leq \big\|\{g_j\}_j\big\|_{L^{p'}(\A;\ell^1)}\big\|\sup_j (G_{s''})_j*h_j\big\|_{L^p(\A)} \\
		&\leq C(p,s''),
	\end{split}
\end{equation}
where the constant $C(p,s'')$ in (\ref{eq:G_s''_j*g_j L^p'(l^1) norm}) comes from Lemma \ref{lem:sup_j (G_s)_j*h_j Lp-bounded}. Combining (\ref{eq:sum_j phi(A_j^sf.g)}), (\ref{eq:A_j^s'(f) L^p(l^infty) norm}) and (\ref{eq:G_s''_j*g_j L^p'(l^1) norm}) gives 
\begin{equation}
	\label{eq:sum_j phi(A_j^s(f)g) estimate}
	\bigg|\sum_j\varphi(A_j^sf\cdot g_j)\bigg|\leq A(p,s')C(p,s'')\Norm{f}_{L^p(\A)}.
\end{equation}
Taking supremum of $\{g_j\}$ over the unit ball in $L^p(\A;\ell^1)$, we obtain Proposition \ref{prop:A_j^s Lp bound} for general complex $s$.
\end{proof}

\begin{remark}
	\label{rem:growth of C_p(s)}
	Let $C_p(s)$ be the best constant in the inequality
	\begin{equation*}
		\Big\|{\sup_jA_j^sf}\Big\|_{L^p(\A)}\lesssim C_p(s) \Norm{f}_{L^p(\A)}.
	\end{equation*}
	Suppose $s=\sigma+it$. For fixed $1<p<\infty$ and $\sigma<0$, we estimate the growth of $t\mapsto C_p(\sigma+it)$.
	For convenience, we may let $s'=\sigma/2$ and $s''=\sigma/2+it$. 
	Note that $s'<0$ and $\Re(s'')<0$. By (\ref{eq:sum_j phi(A_j^s(f)g) estimate}), we deduce that 
	\begin{equation*}
		C_p(\sigma+it)\lesssim A(p,\sigma/2)C(p,\sigma/2+it)\lesssim_{p,\sigma} \frac{1}{\big|\Gamma(-\frac{\sigma}{4}-i\frac t2)\big|}. 
	\end{equation*}
	Then we use the asymptotic formula for the gamma function
	\begin{equation*}
		\frac{1}{|\Gamma(\sigma+it)|}\sim \frac{1}{\sqrt{2\pi}}|t|^{\frac12-\sigma}e^{\frac{\pi}{2}|t|}, \quad \text{ as } |t|\to\infty. 
	\end{equation*}
	We deduce that 
	\begin{equation*}
		C_p(\sigma+it)\lesssim_{p,\sigma} |t|^{\frac12+\frac{\sigma}{4}}e^{\frac{\pi}{4}|t|} \lesssim_{\sigma} e^{|t|}, \quad \text{ as } |t|\to\infty.
	\end{equation*}
\end{remark}

\medskip

\par We now use complex interpolation to prove Theorem \ref{thm:sup A_jf bound}.
\begin{proof}[Proof of Theorem \ref{thm:sup A_jf bound}]
	The case $p>2$ follows by interpolating Proposition \ref{prop:A_j^s L2 bound} with the trivial $L^{\infty}$ bound. It suffices to consider the case $1<p<2$.
	\par Let $1<p<2$. Let $f\in\A_{c,+}$ with $\Norm{f}_{L^p(\A)}\leq 1$, and let $g=\{g_j\}$ be a sequence in $L^{p'}(\A)$ such that $\Norm{g}_{L^{p'}(\A,\ell^1)}\leq 1$. Choose $p_1$, $s_0$, $s_1$ such that
	\begin{equation*}
		1<p_1<p, \quad s_0<\frac12, \quad  s_1<0,
	\end{equation*}
	and
	\begin{equation*}
		\begin{cases}
			0&=(1-t)s_0+ts_1, \\
			\frac1p&=(1-t)\frac12+t\cdot\frac1{p_1},
		\end{cases}
	\end{equation*}
	for some $0<t<1$. Define a complex function
	\begin{equation*}
		F(z)=f^{(\frac{1-z}2+\frac z{p_1})p}.
	\end{equation*}
	Then $F(t)=f$. Next, choose functions $h=\{h_j\}$, where
	\begin{equation*}
		h_j: \{z\in\mathbb{C}:0\leq\Re(z)\leq1\} \to L^{p'}(\A),
	\end{equation*}
	such that $h$ is continuous on the closed strip and analytic in its interior, and satisfies
	\begin{equation*}
		h(t)=\{h_j(t)\}=\{g_j\}=g,
	\end{equation*}
	\begin{equation}
		\label{eq:||h(ia)|| bounded}
		\sup_{a\in\R}\Norm{h(ia)}_{L^2(\A;\ell^1)}<\infty,
	\end{equation}
	\begin{equation}
		\label{eq:||h(1+ia)|| bounded}
		\sup_{a\in\R}\Norm{h(1+ia)}_{L^{p_1'}(\A;\ell^1)}<\infty.
	\end{equation}
	The existence of such functions $\{h_j\}$ follows from the complex interpolation identities for vector-valued $L^p$ spaces, i.e. for each $2<p'<p_1'$ we have
	\begin{equation*}
		L^{p'}(\A;\ell^1)=\big(L^2(\A;\ell^1),L^{p_1'}(\A;\ell^1)\big)_t,
	\end{equation*}
	see, for example, Junge and Xu \cite{JungeXu2007}.
	For $\delta>0$, we define
	\begin{equation*}
		\Phi_\delta(z)=e^{\delta(z^2-t^2)}\sum_{j\in\Z}\varphi\big(A_j^{(1-z)s_0+zs_1}(F(z))h_j(z)\big).
	\end{equation*}
	Then $\Phi_\delta(z)$ is an analytic function in the open strip $0<\Re(z)<1$ and continuous on its closure. Note that $\Phi_\delta(t)=\sum_{j\in\Z}\varphi\left(A_j(f)g_j\right)$.  
	By (\ref{eq:||h(ia)|| bounded}), (\ref{eq:||h(1+ia)|| bounded}), Remarks \ref{rem:growth of C(s)} and \ref{rem:growth of C_p(s)}, the growth of the two functions (on $\R$)
	\begin{equation*}
		y\mapsto \Big\|{\sup_j A_j^{s_0+iy}(f)}\Big\|_{L^2(\A)}, \quad y\mapsto \Big\|{\sup_j A_j^{s_1+iy}(f)}\Big\|_{L^p(\A)}
	\end{equation*}
	is at most $e^{|y|}$ as $|y|\to\infty$, while the factor $e^{\delta((iy)^2-t^2)}$ decays like $e^{-\delta y^2}$ as $|y|\to\infty$. Hence
	\begin{equation*}
		\norm{\Phi_\delta(i\R)}\leq C, \quad \norm{\Phi_\delta(1+i\R)}\leq C,
	\end{equation*}
	for some constant $C>0$. Since the function $\Phi_\delta(z)$ tends to zero in the strip as $|\Im(z)|\to\infty$, by the maximum modulus principle, we have $|\Phi_\delta(t)|\leq C$. Therefore
	\begin{equation*}
		\bigg|{\sum_{j\in\Z}\varphi\left(A_j(f)g_j\right)}\bigg|\leq C.
	\end{equation*}
	 Finally, taking the supremum over $g$, we obtain
	 \begin{equation*}
	 	\Big\|{\sup_{j\in\Z}A_j(f)}\Big\|_{L^p(\A)}\leq C.
	 \end{equation*}
	 By homogeneity and the density of $\A_{c,+}$ in $L^p_+(\A)$, and then by the usual decomposition into real and imaginary parts, this estimate extends to general $f$. Thus
	 \begin{equation*}
	 	\Big\|{\sup_{j\in\Z}A_j(f)}\Big\|_{L^p(\A)}\lesssim\Norm{f}_{L^p(\A)}
	 \end{equation*}
	 holds for general $f$.
\end{proof}

\section{Generalization to submanifolds}
\par In this section, we generalize the results stated in Section 2 to submanifolds of finite type.
\subsection{The polynomial case}
\par We first consider polynomial parametrizations, as described in the introduction:
\[ \gamma(t)=P(t)=(P_1(t),\dots,P_n(t)) \]
where the $P_j(t)$ are real-valued polynomials of $t\in\R^k$. We define $M^P_r(f)$ by
\[ M^P_r(f)(x)=\frac{1}{|B_k(r)|} \int_{|t|<r} f(x-P(t))dt. \]
In this case, we let $\A=L^{\infty}(\R^n)\ot\M$. We prove Theorem \ref{thm:polynomial case}.
Our goal is to show that, for each $1<p\leq\infty$, we have
\begin{equation}
	\label{eq:sup_r>0 M_r^P Lp bounded}
	\Big\|{\sup_{r>0} M_r^P(f)}\Big\|_{L^p(\A)}\lesssim_p \Norm{f}_{L^p(\A)}.
\end{equation}
\par Following \cite[Chapter XI]{Stein1993}, we separate the monomials appearing in $P_1,\dots,P_n$ by lifting the problem to a higher-dimensional space.
\par Let $d$ denote the maximum degree of $P_j(t)$, $j=1,\dots,n$. We consider the collection of all monomials $t^\alpha$ with $1\leq|\alpha|\leq d$, where $t^\alpha=t_1^{\alpha_1}\cdots t_k^{\alpha_k}$. The monomial $t^{\alpha}$ has degree $|\alpha|=\alpha_1+\cdots+\alpha_n$. Let $N$ denote the number of these monomials and work in the larger space $\R^N$, whose coordinates are labeled by the multi-indices $\alpha$ with $1\leq|\alpha|\leq d$, i.e. $\R^N=\{(x_\alpha)\}_{1\leq|\alpha|\leq d}$.
\par Consider the monomial map
\begin{equation*}
	\mathfrak{p} : \R^k\to\R^N, \quad \mfp(t)=(t^\alpha)_{1\leq|\alpha|\leq d}.
\end{equation*}
For a function $f$ on $\R^N$, define the averaging operator by 
\begin{equation*}
	M^\mfp_r(f)(x)=\frac{1}{|B_k(r)|}{\int_{|t|<r} f(x-\mfp(t))dt}.
\end{equation*}
For each positive integer $k$, write 
\begin{equation*}
	\A^k:=L^\infty(\R^k)\ot\M. 
\end{equation*}
In the monomial case, we work with the von Neumann algebra $\A^N=L^\infty(\R^N)\ot\M$. 

\begin{prop}
	\label{prop:sup_r M_r^mfp Lp bounded}
	For each $1<p\leq\infty$, we have
	\begin{equation}
		\label{eq:M_r^p Lp bounded}
		\Big\|{\sup_{r>0}M_r^\mfp(f)}\Big\|_{L^p(\A^N)}\lesssim_p \Norm{f}_{L^p(\A^N)}. 
	\end{equation}
\end{prop}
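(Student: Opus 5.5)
The plan is to repeat the parabola argument of Sections 2--5 for the monomial curve $\mfp$ in $\R^N$, using the natural non-isotropic dilation
\[
\delta\circ x=(\delta^{|\alpha|}x_\alpha)_{1\leq|\alpha|\leq d},\qquad x\in\R^N,
\]
so that $\mfp(\delta t)=\delta\circ\mfp(t)$. The associated homogeneous quasi-norm $\rho(x)=\max_\alpha|x_\alpha|^{1/|\alpha|}$ satisfies the triangle inequality, exactly as in (\ref{eq:triangle ineq for quasinorm}). Its balls satisfy $|B(r)|=c\,r^{Q}$ with homogeneous dimension $Q=\sum_\alpha|\alpha|$, so $(\R^N,\rho,dx)$ is a doubling space. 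We also use non-isotropic dyadic cubes $\prod_\alpha[a_\alpha2^{-k|\alpha|},(a_\alpha+1)2^{-k|\alpha|}]$.

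\textbf{Step 1: reduction to dyadic smooth averages.} We dominate $M_r^\mfp$ by smooth dyadic averages
\[
A_jf=f*d\mu_j,\qquad \int g\,d\mu_j=\int_{\R^k}g(\mfp(2^{-j}t))\eta(t)\,dt,
\]
with $\eta\geq0$ radial, smooth, and equal to $1$ on the unit ball. Then $\widehat{d\mu_j}(\xi)=\widehat{d\mu}(2^{-j}\circ\xi)$. The key point is that $\mfp$ is of finite type $d$ at \emph{every} point: the derivatives $\partial^\beta\mfp(t_0)$ with $|\beta|\leq d$ span $\R^N$, because $\partial^\beta\mfp(t_0)$ has coordinate $\beta!$ in the slot $\alpha=\beta$ and is triangular in the lexicographic order. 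Lemma \ref{lem:decay of Fourier transform of measure} therefore gives $|\widehat{d\mu}(\xi)|\lesssim(1+|\xi|)^{-1/d}$.

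\textbf{Step 2: the analytic family.} Define $\nu^s$ by $\widehat{\nu^s}=\widehat{d\mu}\,F_s$, where $F_s$ is the Bessel multiplier on $\R^N$, and set $A_j^s$ accordingly. We then prove the three analogues of the parabola results.
\begin{enumerate}
\item[(a)] For $\Re s<1/d$: an $L^2$ maximal bound by the square-function argument of Proposition \ref{prop:A_j^s L2 bound}. We compare with $B_j=f*\psi_j$, which is controlled via the noncommutative Hardy--Littlewood theorem on the doubling space (\cite[Theorem 4.1]{HongLiaoWang2021}, as in Theorem \ref{thm: H-L maximal nonisotropic}). The multiplier estimates $|\widehat{K^s}(\xi)|\lesssim\min\{\rho(\xi),\rho(\xi)^{-1/d+\Re s}\}$ follow from $|\xi|\gtrsim\rho(\xi)$ for $\rho\geq1$ and $|\xi|\lesssim\rho(\xi)$ for $\rho\leq1$. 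Growth in $\Im s$ is at most linear, as in Remark \ref{rem:growth of C(s)}.
\item[(b)] For real $s<0$: a weak type $(1,1)$ bound, following Section 4 verbatim with $3$ replaced by $Q$ and exponents $2^{-k},2^{-2k}$ replaced by $2^{-k|\alpha|}$. The inputs are Proposition \ref{prop:K(x) kernel preoperties} on $\R^N$, namely $L^1$-H\"older continuity of $K$ from (\ref{eq:Bessel translation estimate}), rapid decay away from $\supp\mu$, and the Fourier bounds with $\delta=1/d$, together with Cuculescu/Parcet decomposition on the new dyadic filtration, Claim \ref{claim:sum int |K_j(x-y)-K_j(x)| estimate}, and Lemmas \ref{lem:K(phi) estimate}--\ref{lem:K(1-phi) estimate}.
\item[(c)] For $\Re s<0$: the extension to complex $s$ via Lemmas \ref{lem:s'+s''} and \ref{lem:sup_j (G_s)_j*h_j Lp-bounded}. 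For the latter, the ellipsoids $\{y:\sum_\alpha2^{-2j\theta_\alpha}y_\alpha^2\leq r^2\}$ are contained in boxes of volume $\approx r^N$, so the strong maximal function on $\R^N$ (iterating Mei's one-dimensional bounds) gives the majorant.
\end{enumerate}
Complex interpolation exactly as in the proof of Theorem \ref{thm:sup A_jf bound} then gives (\ref{eq:M_r^p Lp bounded}) for $1<p<2$. The case $p>2$ follows from (a) and the trivial $L^\infty$ bound.

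\textbf{Main obstacle.} The delicate part is the non-isotropic bookkeeping in Step 2(b). Two places need care.
\begin{itemize}
\item In estimate (II) of Claim \ref{claim:sum int |K_j(x-y)-K_j(x)| estimate}, the mean value step now gives $|h_j|\lesssim2^{-s}R_j^{d}$ rather than $2^{-s}R_j^2$. This is harmless once $M>Q+d$.
\item In Lemma \ref{lem:beta_k*g_(n,s)}, the coordinate of order $|\alpha|$ produces the factor $2^{-|\alpha|(n+s+k)}$, and all of these are $\leq2^{-(n+s+k)}$ when $n+s+k\geq0$.
\end{itemize}
Finally, the dilation scaling of the Bessel factor must be checked. $F_s$ is isotropic, so $\widehat{\nu^s_j}$ is defined by dilating $\widehat{\nu^s}$ as a whole. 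The kernel $K$ is then fixed and only rescaled, exactly as in the parabola case, so no new issue arises. The constants depend only on $d,k,N$ and $p$.
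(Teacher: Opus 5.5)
Your proposal is correct and follows the paper's proof. The paper likewise transplants the parabola scheme of Sections 2--5 to $\R^N$ with the dilation $\delta\circ x=(\delta^{|\alpha|}x_\alpha)$, and runs the same chain: the $\rho(\xi)^{-1/d}$ Fourier decay from the finite type of $\mfp$, the $L^2$ square-function bound for $\Re s<1/d$, the non-isotropic Cuculescu--Parcet weak type $(1,1)$ bound for real $s<0$, the extension to $\Re s<0$ via the factorization $A_j^s=A_j^{s'}*(G_{s''})_j$ and the Bessel-kernel maximal lemma, and complex interpolation. Your two bookkeeping remarks are also right: the factor $R_j^d$ forcing $M>Q+d$ is the adjustment the paper notes in Section 6.2, and the factors $2^{-|\alpha|(n+s+k)}\leq 2^{-(n+s+k)}$ handle the Schur estimate.
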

\par To prove (\ref{eq:M_r^p Lp bounded}), as in the parabola case, choose a smooth nonnegative function $\eta$ on $\R^k$ with $\eta(t)=1$ for $|t|\leq1$ and $\eta(t)=0$ for $|t|\geq2$.
Define the averaging operator
\begin{equation}
	\label{eq:A_j^mfp def}
	A_j^\mfp(f)= 2^{kj}\int_{\R^N}f(x-\mfp(t))\eta(2^jt)dt.
\end{equation}
It suffices to prove that, for $f\in\A^N_{c,+}$,
\begin{equation}
	\label{eq:A_j^p Lp bounded}
	\Big\|{\sup_{j\in\Z}A_j^{\mfp}(f)}\Big\|_{L^p(\A^N)}\lesssim \Norm{f}_{L^p(\A^N)}.
\end{equation}
\par The proof of (\ref{eq:A_j^p Lp bounded}) follows the same scheme as in the parabola case. Indeed, we define the corresponding non-isotropic dilation on $\R^N$ by
\begin{equation}
	\label{eq:dilation in R^N}
	x\mapsto \delta\circ x=(\delta^{|\alpha|}x_\alpha)
\end{equation}
for $x=(x_\alpha)\in \R^N$.
With this notation, we have
\begin{equation*}
	\begin{split}
		A_j^\mfp f(x)&=\int_{\R^k} f(x-2^{-j}\circ \mfp(t))\eta(t)dt \\
		&=\int_{\R^k}f[(x_\alpha-2^{-j|\alpha|}t^{\alpha})_{\alpha}]\eta(t)dt.
	\end{split}
\end{equation*}
\par We also have the corresponding quasi-norm on $\R^N$
\begin{equation*}
	\rho(x)=\max_{1\leq|\alpha|\leq d}\{|x_\alpha|^{1/|\alpha|}\}
\end{equation*}
and the associated metric $\rho(x,y)=\rho(x-y)$, since
\begin{equation*}
	\rho(x+y)\leq \rho(x)+\rho(y),
\end{equation*}
by the subadditivity of $u\mapsto u^{1/|\alpha|}$ on $[0,\infty)$ for $|\alpha|\geq1$. Note that $\rho(\delta\circ x)=\delta\rho(x)$. The corresponding family of balls is
\begin{equation*}
	B^\rho(x,r):=\{y\in\R^N:\rho(x,y)<r\}, \quad B^\rho(r):=B^\rho(0,r).
\end{equation*}

\par Let $\tilde{A}_r(f)$ be the non-isotropic Hardy-Littlewood averaging operator
\begin{equation*}
	\tilde{A}_r(f)(x)=\frac1{|B^\rho(r)|}\int_{B^\rho(r)\subset\R^N}f(x-y)dy, \qquad r>0.
\end{equation*}
For any cube $I_1\times\cdots\times I_N$ in $\R^N$ centered at $0$, define 
\begin{equation*}
	A_{I_1,\cdots,I_N}(f)(x)=\frac{1}{|I_1\times\cdots\times I_N|}\int_{I_1\times\cdots\times I_N}f(x-y)dy. 
\end{equation*}
As in Theorem \ref{thm: H-L maximal nonisotropic} and Theorem \ref{thm:strong maximal boundedness}, we have
\begin{theorem}
	The non-isotropic maximal operator $f\mapsto\{\tilde{A}_r(f)\}_{r>0}$ is bounded from $L^1(\A^N)$ to $\Lambda^{1,\infty}(\A^N,\ell^\infty(\R_+))$. 
	The strong maximal operator $f\mapsto \{A_{I_1,\cdots,I_N}(f)\}_{I_1,\cdots,I_N}$ is bounded from $L^p(\A^N)$ to $L^p(\A^N;\ell^\infty(\R_+^N))$ for $1<p<\infty$.
\end{theorem}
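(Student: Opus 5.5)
The plan is to follow the proofs of Theorem \ref{thm: H-L maximal nonisotropic} and Theorem \ref{thm:strong maximal boundedness} verbatim, replacing $\R^2$ by $\R^N$ with the dilation (\ref{eq:dilation in R^N}) and the quasi-norm $\rho(x)=\max_\alpha|x_\alpha|^{1/|\alpha|}$.

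\emph{The weak type $(1,1)$ statement.} We have already noted that $\rho$ satisfies the triangle inequality, so $\rho(x,y)=\rho(x-y)$ is a genuine metric on $\R^N$. The ball $B^\rho(x,r)$ is the product box $\prod_\alpha\{|y_\alpha-x_\alpha|<r^{|\alpha|}\}$. Its Lebesgue measure is therefore $2^N r^{Q}$ with homogeneous dimension $Q=\sum_{1\le|\alpha|\le d}|\alpha|$. This measure is positive and finite, independent of the center, and satisfies $|B^\rho(x,2r)|=2^Q|B^\rho(x,r)|$. Hence $(\R^N,\rho,dx)$ is a doubling metric measure space with a Radon measure. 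Balls are open in the Euclidean topology, and the $\rho$-topology coincides with the Euclidean one, since $\rho(x)\to0$ if and only if $|x|\to0$. Applying Hong, Liao and Wang \cite[Theorem 4.1]{HongLiaoWang2021} to $|f|$, exactly as in the proof of Theorem \ref{thm: H-L maximal nonisotropic}, gives a projection $e$ with $\|e\tilde A_r(f)e\|\le\lambda$ for all $r>0$ and $\varphi(e^\perp)\lesssim\lambda^{-1}\|f\|_{L^1(\A^N)}$. The same theorem (or interpolation with the trivial $L^\infty$ bound) also yields the strong $(p,p)$ bound for $1<p<\infty$.

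\emph{The strong maximal statement.} I would iterate one-dimensional maximal functions. For a coordinate direction $e_\alpha$, let $M_\alpha$ denote the family of one-dimensional averages over intervals $I\ni0$ in the variable $x_\alpha$, acting on $\A^N=L^\infty(\R)\ot\big(L^\infty(\R^{N-1})\ot\M\big)$. Mei's one-dimensional result \cite[Chapter 3]{Mei2007} is valid for an arbitrary semifinite von Neumann algebra. Applied with coefficient algebra $L^\infty(\R^{N-1})\ot\M$, it gives $L^p\to L^p(\ell^\infty)$ boundedness of $\{A_I^{(\alpha)}\}_{I}$. The product average factors as $A_{I_1,\dots,I_N}=A^{(1)}_{I_1}\circ\cdots\circ A^{(N)}_{I_N}$, and each factor is positive.

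The main obstacle is composing maximal bounds in the noncommutative setting, where there is no pointwise supremum. Two tools handle it. The first is that a positive operator $T$ respects order: if $0\le h_i\le a$ for all $i$, then $Th_i\le Ta$. The second is a vector-valued extension of the one-dimensional bound, applied to families. I would proceed by induction on the number of directions. Suppose the family $\{A^{(2)}_{I_2}\cdots A^{(N)}_{I_N}f\}$ is dominated by a positive $a$ with $\|a\|_p\lesssim\|f\|_p$. By positivity, $A^{(1)}_{I_1}(\cdots)\le A^{(1)}_{I_1}a$ for all indices. Mei's theorem applied to $a$ then gives a majorant $b$ with $\|b\|_p\lesssim\|a\|_p$. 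This handles $f\ge0$; a general $f$ is decomposed into four positive parts. Equivalently, one can cite the argument behind Theorem \ref{thm:strong maximal boundedness} with $2$ replaced by $N$. The resulting constant is $C_p^N$, which depends only on $p$ and $N$, hence only on $p,d,k$.
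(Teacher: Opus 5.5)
Your proposal is correct and follows the paper's route. The weak-type part is the same appeal to Hong--Liao--Wang, using $|B^\rho(x,r)|=2^N r^{\Delta}$; since $\tilde A_r$ on $\R^N$ has no absolute value, apply it to positive $f$ and then use the four-part decomposition, as the paper does. The strong maximal part is the iteration of Mei's one-dimensional bound that the paper leaves implicit, and your inner-to-outer induction using the positivity of each one-dimensional average fills that step in correctly.
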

The weak type $(1,1)$ assertion for the non-isotropic family follows from \cite[Theorem 4.1]{HongLiaoWang2021}, first for positive $f$ and then by decomposition into four positive parts, since $(\R^N,\rho,dx)$ has Radon measure and $|B^\rho(x,r)|=2^N r^\Delta$, hence doubling constant $2^\Delta$, where $\Delta$ is the homogeneous dimension defined below.

\par As in Section 2, we now take $d\mu$ to be the measure on $\R^N$ defined by
\begin{equation*}
	\int_{\R^N}f d\mu=\int_{\R^k}f(\mfp(t))\eta(t)dt
\end{equation*}
and similarly define $d\mu_j$ by
\begin{equation*}
	\int_{\R^N}fd\mu_j=\int_{\R^k}f(2^{-j}\circ\mfp(t))\eta(t)dt=\int_{\R^k}f(\mfp(2^{-j}t))\eta(t)dt.
\end{equation*}
Then we have
\begin{equation*}
	A_j^\mfp f=f*d\mu_j,\quad \widehat{d\mu_j}(\xi)=\widehat{d\mu}(2^{-j}\circ \xi).
\end{equation*}
For each complex $s$, we denote by $\nu^s$ the distribution on $\R^N$ with Fourier transform
\begin{equation*}
	\widehat{\nu^s}(\xi)=\widehat{d\mu}(\xi)(1+|\xi|^2)^{s/2}.
\end{equation*}
and then let
\begin{equation*}
	A_j^{\mfp,s}(f)=f*\nu^s_j, \quad (\widehat{\nu^s_j})(\xi)=\widehat{\nu^s}(2^{-j}\circ\xi).
\end{equation*}
Note that the metric $\rho$, the measure $d\mu_j$ and the distribution $\nu^s_j$ all depend on the monomial map $t\mapsto\mfp(t)$. However, for simplicity, we omit the subscript $\mfp$ when no confusion is possible.

\medskip

$\bullet$ The $L^2$-boundedness of $\{A_j^{\mfp,s}\}$, for $\Re(s)<1/d$.
\par Fix a smooth nonnegative function $\psi\in C_0^{\infty}(\R^N)$ such that
\begin{equation*}
	\int_{\R^N}\psi(x)dx=\int_{\R^k}\eta(t)dt,
\end{equation*}
and set $\psi_j(x)=2^{\Delta j}\psi(2^j\circ x)$, where $\Delta$ denotes the ``homogeneous dimension" of $\R^N$, i.e.
\begin{equation*}
	\Delta=\sum_{1\leq |\alpha|\leq d} |\alpha|.
\end{equation*}
Define $B_j(f)=f*\psi_j$ and the square function
\begin{equation*}
	S(f)(x)=\bigg(\sum_{j\in\Z}|A_j^{\mfp,s}f(x)-B_j(f)(x)|^2\bigg)^{1/2}.
\end{equation*}
As in (\ref{eq:sup A_j<sup B_j +S(f)}) in Section 3, we have
\begin{equation*}
	\Big\|{\sup_{j\in\Z} A_j^{\mfp,s}(f)}\Big\|_{L^2(\A^N)}\leq \Big\|{\sup_{j\in\Z} B_j(f)}\Big\|_{L^2(\A^N)}+\Norm{S(f)}_{L^2(\A^N)}.
\end{equation*}
Therefore, it suffices to estimate the $L^2$-norm of $S(f)$:
\begin{equation*}
	\Norm{S(f)}_{L^2(\A^N)} \lesssim \Norm{f}_{L^2(\A^N)}.
\end{equation*}
As in Section 3, this follows from the kernel estimate:
for each real $s<1/d$, we have
\begin{equation*}
	\sum_{j\in\Z}\big|{\widehat{\nu^s}(2^{-j}\circ\xi)-\widehat{\psi}(2^{-j}\circ\xi)}\big|^2\leq \text{Const}, \quad \text{uniformly in } \xi.
\end{equation*}
This is a consequence of the kernel estimate
\begin{equation*}
	\begin{cases}
		|\widehat{\nu^s}(\xi)-\widehat{\psi}(\xi)|\leq A_s \rho(\xi), & \text{ if } \rho(\xi) \leq 1; \\
		|\widehat{\nu^s}(\xi)-\widehat{\psi}(\xi)|\leq A_s \rho(\xi)^{-1/d+s}, &\text{ if } \rho(\xi) \geq 1.
	\end{cases}
\end{equation*}
Indeed, as mentioned in \cite{Stein1993}, one can verify that the submanifold $t\mapsto\mfp(t)$ is of finite type at each point, and is of type at most $d$. 
Thus by Lemma \ref{lem:decay of Fourier transform of measure}, we find that $|\widehat{d\mu}(\xi)|$ has decay $\rho(\xi)^{-1/d}$ for large $\rho(\xi)$. This implies the estimates above.
This completes the proof for $s\in\R$ and $s<1/d$.
\par For general complex $s$ with $\Re(s)<1/d$, it suffices to split the kernel into real and imaginary parts and then estimate the two parts separately. 
As in Remark \ref{rem:growth of C(s)}, the constant $C(s)$ in
\begin{equation*}
	\Big\|{\sup_j A_j^{\mfp,s}(f)}\Big\|_{L^2(\A^N)} \leq C(s)\Norm{f}_{L^2(A^N)},
\end{equation*}
satisfies the estimate 
\begin{equation*}
	C(\sigma+it)\lesssim_{\sigma} 1+|t|, \quad \text{ as } |t|\to\infty.
\end{equation*}

\medskip

$\bullet$ The $L^p$ boundedness of $\{A_j^{\mfp,s}\}$ for $\Re(s)<0$ and $1<p<\infty$.
\par We first prove the weak type $(1,1)$ estimate for $\{A_j^{\mfp,s}\}$ for $s\in\R$ and $s<0$.
\par To this end, we introduce the basic notation for the non-isotropic Calder\'on-Zygmund decomposition, adapted to the submanifold $t\mapsto \mfp(t)$ given by monomials. The construction follows Section 4.1.
For $k\in\Z$, let $\Q_k$ be the set of non-isotropic dyadic cubes in $\R^N$ of the form
\begin{equation*}
	\Q_k:=\bigg\{\prod_{1\leq|\alpha|\leq d}[a_\alpha2^{-|\alpha|k},(a_\alpha+1)2^{-|\alpha|k}]:\text{ each }a_\alpha\in\Z\bigg\}, \quad k\in\Z.
\end{equation*}
Let $\Q=\bigcup_{k\in\Z}\Q_k$. For any rectangle $I=:\prod_{1\leq|\alpha|\leq d}[x_\alpha,y_\alpha]$ in $\R^N$, we call $I$ a non-isotropic cube with respect to the metric $\rho$ if $|y_\alpha-x_\alpha|^{1/|\alpha|}$ is the same for all $\alpha$ with $1\leq|\alpha|\leq d$.
In this case, the non-isotropic side length of $I$ is
\begin{equation*}
	\ell(I)=\rho((y_\alpha)_{\alpha}-(x_\alpha)_{\alpha})=|y_\alpha-x_\alpha|^{1/|\alpha|} \quad \text{ for any } 1\leq\alpha\leq d.
\end{equation*}
The objects introduced in Section 4.1---the $\sigma$-algebras $\sigma_k$, von Neumann algebras $\A_k$, conditional expectations $E_k$, martingale differences $df_k$, Cuculescu projections $q_k$, and the noncommutative Calder\'on-Zygmund decomposition---are defined in the same way.
For convenience, we retain the same notation. By Appendix \ref{app:Bessel kernels} in dimension $n=N$, Proposition \ref{prop:K(x) kernel preoperties} also holds for $K(x):=\nu^s(x)-\psi(x)$, the kernel of $A_0^{\mfp,s}-B_0$, with $\delta=1/d$ in (4). The subtraction of $\psi$ ensures $\widehat K(0)=0$.
Therefore, the estimates for the bad and good parts in Section 4 remain valid. Thus, we can prove the weak type $(1,1)$ estimate for $\{A_j^{\mfp,s}\}$ in a similar way.
\begin{prop}
	Suppose $s<0$. For any $\lambda>0$ and $f\in\A^N_{c,+}$ there exists a projection $e\in\A^N$ such that 
	\begin{equation*}
		eA_j^{\mfp,s}(f)e\leq\lambda \text{ for all } j\in\Z, \quad \vphi(1-e)\lesssim \frac1{\lambda}\Norm{f}_{L^1(\A^N)}.
	\end{equation*}
	In other words, the map $f\mapsto\{A_j^{\mfp,s}(f)\}$ is bounded from $L^1(\A^N)$ to $\Lambda^{1,\infty}(\A^N;\ell^\infty(\Z))$.
\end{prop}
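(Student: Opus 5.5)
We will follow the scheme of Section 4 verbatim, with the parabola data replaced by the monomial data on $\R^N$: the dilation $\delta\circ x=(\delta^{|\alpha|}x_\alpha)$, the quasi-norm $\rho$, the homogeneous dimension $\Delta$, and the dyadic cubes $\Q_k$. First, write $A_j^{\mfp,s}=B_j+T_j$ with $T_j=f*K_j$ and $K_j(x)=2^{\Delta j}K(2^j\circ x)$. The family $\{B_j(f)\}$ is dominated by $\{\tilde A_r(f)\}_{r>0}$, since $\psi$ is bounded and supported in a fixed $\rho$-ball. By the weak type $(1,1)$ bound for the non-isotropic maximal operator on $\A^N$ recorded above, it therefore suffices to produce, for each $\lambda>0$ and $f\in\A^N_{c,+}$, a projection $e$ with $\|eT_j(f)e\|_\infty\le\lambda$ for all $j$ and $\varphi(1-e)\lesssim\lambda^{-1}\|f\|_1$. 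Since $s<0$ is real, the kernel $\nu^s$ is positive by Lemma \ref{lem:Bessel kernel properties}, and $T_j$ preserves self-adjointness. We then run Cuculescu's construction on the non-isotropic dyadic martingale, form Parcet's decomposition $f=g_d+g_\off+b$, and take the projection $\zeta$ from Lemma \ref{lem:zeta projection}. Lemmas \ref{lem:b_{k,s} properties} and \ref{lem:g_off properties} are purely martingale statements, so they transfer unchanged. In the cube separation argument the only geometric input is the triangle inequality for $\rho$, which holds here.

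For the bad part we reduce, exactly as before, to the analogue of Claim \ref{claim:sum int |K_j(x-y)-K_j(x)| estimate}. Split the sum into $2^j\rho(y)<1$ and $2^j\rho(y)\ge1$. The first range uses property (2) of Proposition \ref{prop:K(x) kernel preoperties}: the $L^1$-translation estimate $\|K(\cdot-y)-K\|_1\lesssim\rho(y)^\varepsilon$. This follows from the Bessel translation estimate together with $|y|\lesssim\rho(y)$ for $\rho(y)\le1$, which is the analogue of (\ref{eq:rho(x) and |x|_infty}) with exponents between $1$ and $d$. The second range uses the mean value theorem and the rapid decay of $\nabla K$ from property (3). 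The displacement $h_j=2^{j-s}\circ y$ now satisfies $|h_j|\lesssim 2^{-s}R_j^{d}$, so the factor $R_j^2$ becomes $R_j^d$. The tail integral gives $\int_{\rho\ge R}\rho^{-M}\,dx\approx R^{\Delta-M}$. Choosing $M>\Delta+d$ again yields $\lesssim 2^{-s}$, and the total bound is $2^{-\alpha s}$ with $\alpha=\min\{\varepsilon,1\}$.

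For the good part, the diagonal term is handled by the $L^2$ square function bound just proved for $\Re s<1/d$, combined with $\|g_d\|_2^2\lesssim\lambda\|f\|_1$. For $g_\off$ we split $K_j=K_j\phi_{j-\varepsilon s}+K_j(1-\phi_{j-\varepsilon s})$ with $\phi$ supported in $\{\rho<1\}$. The far part is dominated by $\sum_{k\ge\lfloor\varepsilon s\rfloor}2^{-(k)(M-\Delta)}\tilde A_{2^{-j+k}}(|g_{(s)}|)$, and the strong $(2,2)$ bound for $\tilde A_r$ gives $2^{-2\varepsilon s(M-\Delta)}\lambda\|f\|_1$. For the near part, the support argument of Claim \ref{claim:zeta (K phi)*g zeta=0} restricts the sum to $n\ge-\varepsilon s$. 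We then use the non-isotropic Littlewood--Paley pieces $\beta_k$, and estimate the high-$k$ range by the analogue of Lemma \ref{lem:beta_k*g_(n,s)}. The Schur bound there now produces $\sum_{|\alpha|}2^{-|\alpha|(n+s+k)}\lesssim2^{-(n+s+k)}$, using that $\nabla\check\beta$ lies in $L^1$ and that the dilated cubes tile. The low-$k$ range is handled by Plancherel and the decay $|\widehat K(\xi)|\lesssim\rho(\xi)^{-1/d}$ from property (4) with $\delta=1/d$, giving $2^{-2(s+n)\gamma/d}$. We also need the Schwartz decay of $\widehat\phi$ and the volume factor $2^{\Delta\varepsilon s}$ in place of $2^{3\varepsilon s}$, which is absorbed by taking $N>\Delta$.

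The main obstacle is verifying Proposition \ref{prop:K(x) kernel preoperties} for this $K$ in $\R^N$. Properties (1)--(3) follow from the Bessel kernel facts in Appendix \ref{app:Bessel kernels} in dimension $N$, together with the compact support of $\mu$ and $\psi$. Property (4) requires the finite type of $\mfp$ with type at most $d$, via Lemma \ref{lem:decay of Fourier transform of measure}, and the comparison $|\xi|\gtrsim\rho(\xi)$ for $\rho(\xi)\ge1$. Once these are in place, every remaining step is bookkeeping of exponents in $\Delta$ and $d$.
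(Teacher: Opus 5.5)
Your proposal is correct and is essentially the paper's own argument. The paper proves this proposition by observing that Proposition~\ref{prop:K(x) kernel preoperties} holds for $K=\nu^s-\psi$ on $\R^N$ (via Appendix~\ref{app:Bessel kernels}, with $\delta=1/d$) and then rerunning Section~4, and your exponent bookkeeping ($|h_j|\lesssim 2^{-s}R_j^d$ with $M>\Delta+d$, the factors $2^{-k(M-\Delta)}$ and $2^{\Delta\varepsilon s}$) matches the modifications the paper indicates in Section~6.2; the one detail you leave implicit, as the paper also largely does, is that the pointwise domination of the far part requires $s$ above a fixed constant, and the finitely many remaining $s$ are handled by the positive majorant $(\nu_j^s+\psi_j)*|g_{(s)}|$ together with the $L^2$ maximal bound.
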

\par The noncommutative Marcinkiewicz interpolation theorem shows that, for $1<p<\infty$, the estimate 
\begin{equation}
	\label{eq:sup_j A_j^(p,s) Lp bounded}
	\Big\|{\sup_j A_j^{\mfp,s}f}\Big\|_{L^p(\A^N)}\leq C_p(s)\Norm{f}_{L^p(\A^N)}
\end{equation}
holds for real $s<0$. The argument in Section 5 then shows that (\ref{eq:sup_j A_j^(p,s) Lp bounded}) holds for general complex $s$ with $\Re(s)<0$. 
(Recall that Lemma \ref{lem:s'+s''} still holds. The properties in Lemma \ref{lem:Bessel kernel properties} hold in dimension $n=N$, and the same argument using the strong maximal function over rectangles in $\R^N$ extends Lemma \ref{lem:sup_j (G_s)_j*h_j Lp-bounded} to this setting.)
As in Remark \ref{rem:growth of C_p(s)}, the constant satisfies 
\begin{equation*}
	C_p(\sigma+it)\lesssim_{\sigma} e^{|t|}, \quad  \text{ as } |t|\to\infty. 
\end{equation*}

\medskip

\par Finally, as in Section 5, complex interpolation yields
\begin{prop}
	\label{prop:sup A_j^mfp Lp bounded}
	For each $1<p\leq\infty$, we have
	\begin{equation*}
		\Big\|{\sup_{j\in\Z}A_j^{\mfp}(f)}\Big\|_{L^p(\A^N)}\lesssim_p \Norm{f}_{L^p(\A^N)}.
	\end{equation*}
\end{prop}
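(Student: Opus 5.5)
The plan is to repeat the complex interpolation argument from the proof of Theorem \ref{thm:sup A_jf bound}, now in $\R^N$ with the dilation (\ref{eq:dilation in R^N}) and the analytic family $\{A_j^{\mfp,s}\}$. Since $A_j^{\mfp,0}=A_j^\mfp$ and the kernel $d\mu_j$ is a positive finite measure, the case $p=\infty$ is trivial. For $2<p<\infty$, interpolate the $L^2$ bound at $s=0$, which lies in the admissible range $\Re(s)<1/d$, with the trivial $L^\infty$ bound; Theorem \ref{thm:NC interpolation} applies directly because $A_j^\mfp$ is positivity preserving. The real case is therefore $1<p<2$.

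For $1<p<2$, first choose $1<p_1<p$, $0<t<1$ with $\frac1p=\frac{1-t}2+\frac t{p_1}$. Then choose $s_1<0$ and $s_0<1/d$ with $(1-t)s_0+ts_1=0$. This is possible because we may first pick $s_0\in(0,1/d)$ and set $s_1=-(1-t)s_0/t<0$. Fix $f\in\A^N_{c,+}$ with $\Norm{f}_{L^p}\le1$ and a sequence $g=\{g_j\}$ in the unit ball of $L^{p'}(\A^N;\ell^1)$.

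Next, set $F(z)=f^{(\frac{1-z}2+\frac z{p_1})p}$. Take an analytic family $h(z)$ with $h(t)=g$ and bounded boundary values in $L^2(\A^N;\ell^1)$ on $\Re z=0$ and in $L^{p_1'}(\A^N;\ell^1)$ on $\Re z=1$; it exists by the Junge--Xu interpolation identity. Then form
\[
\Phi_\delta(z)=e^{\delta(z^2-t^2)}\sum_j\varphi\big(A_j^{\mfp,(1-z)s_0+zs_1}(F(z))h_j(z)\big).
\]
On $\Re z=0$, use the $L^2$ bound for $\Re s=s_0<1/d$ with constant $\lesssim 1+|y|$. On $\Re z=1$, use the $L^{p_1}$ bound (\ref{eq:sup_j A_j^(p,s) Lp bounded}) for $\Re s=s_1<0$ with constant $\lesssim e^{|y|}$. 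The Gaussian factor $e^{-\delta y^2}$ absorbs these growths, so the three lines (maximum modulus) argument gives $|\Phi_\delta(t)|\le C$ uniformly in $\delta$. Letting $\delta\to0$ and taking the supremum over $g$ yields $\Norm{\sup_jA_j^\mfp f}_{L^p}\lesssim1$, using the duality $L^p(\ell^\infty)=(L^{p'}(\ell^1))^*$ on positive families. Homogeneity, density of $\A^N_{c,+}$, and splitting a general $f$ into four positive parts finish the argument.

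The main obstacle is to justify that $\Phi_\delta$ is analytic and bounded on the strip, since the growth estimates are only on the boundary lines. For the interior, I would first work with $f$ a finite linear combination of spectral projections, so that $F(z)$ is a finite analytic sum. I would also use the qualitative boundedness of $A_j^{\mfp,s}$ on $L^2$ uniformly on compact $s$-sets, restricted to finitely many $j$, and then pass to the limit using monotone convergence in $j$. Everything else is a verbatim transfer of Section 5, since Lemmas \ref{lem:s'+s''} and \ref{lem:sup_j (G_s)_j*h_j Lp-bounded} and the growth Remarks have already been extended to $\R^N$ above.
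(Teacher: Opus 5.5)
Your proposal is correct and is essentially the paper's proof. The paper also obtains this proposition by rerunning the Section~5 complex interpolation between the $L^2$ bounds for $\Re(s)<1/d$ (constant $\lesssim 1+|y|$) and the $L^{p_1}$ bounds for $\Re(s)<0$ (constant $\lesssim e^{c|y|}$), and it handles $p>2$ by interpolating with the trivial $L^\infty$ bound.

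One small correction: your boundary bound on $\Phi_\delta$ is not uniform in $\delta$ (on $\Re z=1$ it is of size $e^{c^2/(4\delta)}$), so you should not let $\delta\to0$. Instead, fix $\delta=1$; this suffices because $\Phi_\delta(t)=\sum_j\varphi(A_j^{\mfp}(f)g_j)$ for every $\delta$.
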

This proves Proposition \ref{prop:sup_r M_r^mfp Lp bounded}, i.e. the monomial case.

\medskip

\par To prove Theorem \ref{thm:polynomial case}, we use descent to transfer these bounds for operators on $\R^N$ to the corresponding operators on $\R^n$.
\par We begin by considering a fixed linear mapping $L:\R^N\to\R^n$. For this fixed $L$, suppose $T$ is a translation-invariant operator acting on functions on $\R^N$. This naturally induces an operator $T^L$, acting on functions on $\R^n$. 
Although the following lemma holds for all translation-invariant operators, for simplicity, we limit ourselves to the situation
\begin{equation}
	\label{eq: T(f) def}
	T(f)(y)=f*d\mu(y)=\int_{\R^N}f(y-z)d\mu(z), \quad y\in\R^N,
\end{equation}
where $d\mu$ is a finite measure on $\R^N$ with compact support, and $f:\R^N\to\M$ is an $\M$-valued function. The operator $T^L$ is then defined by
\begin{equation*}
	T^L(f)(x)=\int_{\R^N} f(x-L(z))d\mu(z), \quad x\in\R^n.
\end{equation*}
Note that the operator $T^L$ is also a convolution operator with a measure, i.e. $T^L(f)=f*d\mu^L$, where $d\mu^L$ is the measure on $\R^n$ defined by
\begin{equation*}
	\int_{\R^n} \vphi(x) d\mu^L(x):=\int_{\R^N} \vphi(L(z)) d\mu(z).
\end{equation*}
\par Now let $\{T_j\}_{j\in\Z}$ be a family of operators acting on functions on $\R^N$, given by
\begin{equation}
	\label{eq:T_j(f) def}
	T_j(f)=f*d\mu_j,
\end{equation}
where each measure $d\mu_j$ is a finite measure on $\R^N$ with compact support. The corresponding induced operator is $T_j^L(f)=f*d\mu_j^L$, where each $d\mu_j^L$ is a measure on $\R^n$ defined as above.
\par Recall that we use the notation 
\begin{equation*}
	\A^n=L^\infty(\R^n)\ot\M \quad \text{ and } \quad \A^N=L^\infty(\R^N)\ot\M.
\end{equation*}
\begin{lemma}
	\label{lem:T bound implies T^L bound}
	Let $1\leq p\leq\infty$. Suppose $L:\R^N\to\R^n$ is a fixed linear mapping as above, and let $T$ and $T_j$ be as in (\ref{eq: T(f) def}) and (\ref{eq:T_j(f) def}). Then
	\begin{enumerate}
		\item[(a)] If $T$ is bounded on $L^p(\A^N)$, then $T^L$ is bounded on $L^p(\A^n)$.
		\item[(b)] Assume additionally that the measures $d\mu_j$ are nonnegative. If the operator $f\mapsto\{T_j(f)\}$ is bounded from $L^p(\A^N)\to L^p(\A^N,\ell^\infty(\Z))$, then the operator $f\mapsto\{T_j^L(f)\}$ is bounded from $L^p(\A^n)\to L^p(\A^n,\ell^\infty(\Z))$.
	\end{enumerate}
\end{lemma}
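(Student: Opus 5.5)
The plan is the classical descent (transference) argument of \cite[Chapter XI]{Stein1993}, carried out with operator-valued functions. Reduce first to $L$ surjective: otherwise compose $L$ with a linear isomorphism onto its image and split off the remaining coordinates of $\R^n$ as a parameter. An $x$-independent fibration of this kind changes neither the $L^p(\A^n)$ norms nor the $L^p(\A^n;\ell^\infty)$ norms, since those norms decompose along the Fubini structure. After a linear change of variables in $\R^N$ that preserves translation invariance (so only constants change), we may take $\R^N=\R^n\times\R^{N-n}$ with $L(z',z'')=z'$.

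Given $f$ on $\R^n$, set $F_R(z',z'')=f(z')\,\phi(z''/R)$, where $\phi\ge0$ is a fixed smooth bump with $\phi=1$ on the unit ball. Since $d\mu$ has compact support, say in $\{|z''|\le C\}$, we have
\[
T(F_R)(z',z'')=T^L(f)(z')\,\phi(z''/R)+E_R(z',z''),
\]
where the error $E_R$ lives in the shell $R\le|z''|\le R+C$. Multiplying by the scalar $\phi$ commutes with everything, and $\phi$ is $1$ on the ball of radius $R-C$. Hence
\[
\Norm{T^L f}_{L^p(\A^n)}^p\,|B_{R-C}|\le\Norm{T F_R}_{L^p(\A^N)}^p\le\Norm{T}^p\,\Norm{f}_{L^p(\A^n)}^p\,\Norm{\phi(\cdot/R)}_{L^p}^p .
\]
Divide by $R^{N-n}$ and let $R\to\infty$; the ratio $|B_{R-C}|/\Norm{\phi(\cdot/R)}_p^p$ tends to a constant that can be made arbitrarily close to $1$. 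This proves (a) for $p<\infty$, and $p=\infty$ is immediate. A cleaner variant takes $\phi=\chi_{B_R}$ and restricts the conclusion to $|z''|<R-C$.

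For (b), work with $f\in\A^n_{c,+}$. Then $F_R\ge0$, and positivity of $d\mu_j$ gives $T_j(F_R)\ge0$. Take a majorant $a\in L^p(\A^N)$ with $T_j(F_R)\le a$ for all $j$ and $\Norm{a}_p\le 2C\Norm{F_R}_p$. On the slab $|z''|<R-C$ we have $T_j(F_R)(z',z'')=T_j^L(f)(z')$ for every $j$, simultaneously in $j$ because all $\mu_j$ are supported in $\{|z''|\le C_j\}$. The main obstacle appears here: the supports of $\mu_j$ need not be uniformly bounded in $j$. To handle it, I would first treat a finite set $|j|\le J$, taking $C=\max_{|j|\le J}C_j$, and then pass to $J\to\infty$ using monotone convergence of maximal norms for positive families.

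With $J$ fixed, average the majorant over the fibre: set
\[
b(z')=\frac1{|B_{R-C}|}\int_{|z''|<R-C}a(z',z'')\,dz''.
\]
Averaging against a probability measure preserves operator inequalities, so $T_j^L(f)\le b$ for all $|j|\le J$. By convexity of the $L^p(\M)$ norm (Jensen/Minkowski), $\Norm{b}_{L^p(\A^n)}^p\le|B_{R-C}|^{-1}\Norm{a}_{L^p(\A^N)}^p$. Together with $\Norm{F_R}_p^p=\Norm{f}_p^p\,|B_R|$, this gives $\Norm{\{T_j^Lf\}_{|j|\le J}}_{L^p(\ell^\infty)}\lesssim\bigl(|B_R|/|B_{R-C}|\bigr)^{1/p}\Norm{f}_p$. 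Letting $R\to\infty$ and then $J\to\infty$ gives the bound with constant independent of $J$. Finally, general $f$ is handled by density and decomposition into four positive parts, as elsewhere in the paper.
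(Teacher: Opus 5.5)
Your proof is correct, but it takes a different route from the paper's. The paper never reduces to a coordinate projection. For finite $J\subset\Z$ it averages $\|\{\tau_uT_j^Lf\}_{j\in J}\|_p^p$ over translations $u$ in a large ball of $\R^N$, and writes $\tau_uT_j^Lf(x)=T_j(F_x)(u)$ with $F_x(z)=f(x+L(z))\chi_{B_N(R+M)}(z)$. It then exchanges the roles of $x$ and $u$ using its interchange lemma for $L^p(\ell^\infty)$-norms, so that the hypothesis is applied to each $F_x$ separately. This handles every linear $L$ at once. However, the interchange implicitly glues majorants chosen separately for each $x$ into one jointly measurable majorant.

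You instead extend $f$ constantly along $\ker L$, apply the hypothesis once to the single function $F_R$, and obtain a majorant for $\{T_j^Lf\}_{|j|\le J}$ by averaging the one global majorant $a$ over the fibre $\{|z''|<R-C\}$. Positivity survives the averaging and Jensen controls the norm. So in the surjective case, which is all that Theorem \ref{thm:k-submanifold case} needs, no selection or interchange of majorants occurs.

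The cost is the reduction to surjective $L$, which cannot be skipped. Theorem \ref{thm:polynomial case} has no finite-type hypothesis, so $\operatorname{ran}L$ may be a proper subspace. There you need $L^p(\A^n;\ell^\infty)$-norms to decompose over the complementary directions $W$. The nontrivial (gluing) half of that decomposition is exactly the kind of identity the paper's interchange lemma asserts, so at that point both proofs rest on the same ingredient. For positive finite families and $1<p<\infty$, this half can also be obtained by duality with positive elements of $L^{p'}(\ell^1)$, followed by H\"older in $w\in W$, without any measurable selection.

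Your approach also proves (a) explicitly, with $\|T^L\|\le\|T\|$ in the limit $R\to\infty$, whereas the paper only cites Stein. Note that boundedness of $T^L$ by itself is automatic, since $\mu$ is a finite measure. The norm comparison is therefore the real content, as is the $L$-independence of the constant in (b), which Theorem \ref{thm:polynomial case} requires.

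One small slip: $E_R$ is not confined to $R\le|z''|\le R+C$; it can already be nonzero for $R-C\le|z''|\le R$. Since you only use the slab $|z''|<R-C$, nothing is affected.
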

\begin{proof}[Proof of Lemma \ref{lem:T bound implies T^L bound}]
	Part (a) follows as in the commutative setting (see \cite[Chapter XI, Section 2.4]{Stein1993}), so we prove only part (b). The case $p=\infty$ is immediate; we therefore consider $p<\infty$. Without loss of generality, we also assume that $f$ takes values in $\mathcal{S}_\M^+$.
	\par Fix a finite subset $J\subset\Z$. By assumption, each $d\mu_j$ is a positive finite measure with compact support. We assume the measures $\{d\mu_j\}_{j\in J}$ are supported in a common compact set $\{z:|z|\leq M\}$.
	For $u\in\R^N$, let $\tau_u$ denote the translation operator acting on functions on $\R^n$ by
	\begin{equation*}
		(\tau_uf)(x)=f(x+L(u)).
	\end{equation*}
	Then for large $R>0$,
	\begin{equation}
		\label{eq:T_j^L average on |u|<R}
		\Big\|{\sup_{j\in J} T_j^Lf}\Big\|_{L^p(\A^n)}^p=\frac{1}{v_NR^N}\int_{|u|\leq R}\Big\|{\sup_{j\in J}\tau_u T_j^Lf}\Big\|_{L^p(\A^n)}^p du,
	\end{equation}
	where $v_N$ is the volume of the unit ball in $\R^N$. Note that, for $|u|\leq R$,
	\begin{equation*}
		\begin{split}
			\tau_uT_j^Lf(x)&=\int_{\R^N} f(x+L(u-z))d\mu_j(z) \\
			&=\int_{\R^N} f(x+L(u-z))\chi_{B_N(R+M)}(z-u)d\mu_j(z) \\
			&=\int_{\R^N} F_x(u-z) d\mu_j(z) \\
			:&=T_j(F_x)(u),
		\end{split}
	\end{equation*}
	where $B_N(R+M)$ is the ball of radius $R+M$ in $\R^N$, and
	\begin{equation*}
		F_x(z)=f(x+L(z))\chi_{B_N(R+M)}(z)
	\end{equation*}
	is an $\mathcal{S}_\M^+$-valued function on $\R^N$, for each $x\in\R^n$. Next we claim that
	\begin{equation}
		\label{eq:interange integral for supremum func}
		\int_{|u|\leq R}\Big\|{\sup_j (T_jF_{(\cdot)})(u)}\Big\|_{L^p(\A^n)}^p du = \int_{\R^n} \Big\|{\sup_j T_jF_x(\cdot)}\Big\|_{L^p(L^\infty(B^N(R))\ot\M)}^p dx.
	\end{equation}
	Here the notation $T_jF_{(\cdot)}(u)$ is understood as an $\mathcal{S}_\M^+$-valued function on $\R^n$, $x\mapsto T_jF_x(u)$, for each fixed $u$. The notation $T_jF_x(\cdot)$ is understood similarly.
	We prove this identity below. Assuming it for the moment and combining it with (\ref{eq:T_j^L average on |u|<R}), we have
	\begin{equation*}
		\begin{split}
			\Big\|{\sup_{j\in J} T_j^Lf}\Big\|_{L^p(\A^n)}^p
			&=\frac{1}{v_NR^N} \int_{|u|\leq R}\Big\|{\sup_{j\in J} (T_jF_{(\cdot)})(u)}\Big\|_{L^p(\A^n)}^p du \\
			&=\frac{1}{v_NR^N} \int_{\R^n} \Big\|{\sup_{j\in J} T_jF_x(\cdot)}\Big\|_{L^p(L^\infty(B^N(R))\ot\M)}^p dx \\
			&\leq \frac{1}{v_NR^N} \int_{\R^n} \Big\|{\sup_{j\in J} T_jF_x}\Big\|_{L^p(\A^N)}^p dx \\
			&\lesssim_p \frac{1}{v_NR^N} \int_{\R^n} \Norm{F_x}_{L^p(\A^N)}^p dx \\
			&=\frac{1}{v_NR^N} \int_{\R^n}\int_{|u|\leq R+M}\Norm{f(x+L(u))}_{L^p(\M)}^p dudx \\
			&=\Big(\frac{R+M}{R}\Big)^N \Norm{f}_{L^p(\A^n)}^p,
		\end{split}
	\end{equation*}
	where the fourth line is due to the assumption that the maximal operator $f\mapsto\{T_jf\}$ is $L^p(\A^N)\to L^p(\A^N,\ell^\infty)$ bounded, and in the last line we use Fubini's theorem.
	By letting $R\to\infty$, we obtain
	\begin{equation*}
		\Big\|{\sup_{j\in J}T_j^Lf}\Big\|_{L^p(\A^n)} \lesssim \Norm{f}_{L^p(\A^n)}.
	\end{equation*}
	Since the finite set $J\subset\Z$ is arbitrary, we deduce the desired result.
\end{proof}
Now we turn to the proof of (\ref{eq:interange integral for supremum func}). For clarity, we consider a more general setting.
Let $X$ and $Y$ be two $\sigma$-finite measure spaces. For any countable index set $J$, let
\begin{equation*}
	\vphi_j: X\times Y\to \mathcal{S}_\M^+, \quad j\in J
\end{equation*}
be a family of self-adjoint integrable operator-valued functions. The following lemma holds.
\begin{lemma}
	For $X$, $Y$, $\{\varphi_j\}_{j\in J}$ as above, we have
	\begin{equation}
		\label{eq:interange integral for supremum func2}
		\int_X \Big\|{\sup_j \vphi_j(x,\cdot)}\Big\|_{L^p(L^\infty(Y)\ot\M)}^p dx =
		\int_Y \Big\|{\sup_j \vphi_j(\cdot,y)}\Big\|_{L^p(L^\infty(X)\ot\M)}^p dy.
	\end{equation}
\end{lemma}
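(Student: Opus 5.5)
The plan is to reduce both sides of (\ref{eq:interange integral for supremum func2}) to a single quantity on the tensor product $L^\infty(X\times Y)\ot\M\cong L^\infty(X)\ot\big(L^\infty(Y)\ot\M\big)\cong L^\infty(Y)\ot\big(L^\infty(X)\ot\M\big)$. We will show that each side equals $\big\|\sup_j\vphi_j\big\|_{L^p(L^\infty(X\times Y)\ot\M)}^p$. By symmetry in $X$ and $Y$, it suffices to prove
\[
\int_X \Big\|\sup_j \vphi_j(x,\cdot)\Big\|_{L^p(L^\infty(Y)\ot\M)}^p dx=\Big\|\sup_j\vphi_j\Big\|_{L^p(L^\infty(X\times Y)\ot\M)}^p .
\]
We first reduce to finite $J$. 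The maximal norm of a positive family is the increasing limit of its norms over finite subfamilies, and monotone convergence on the left-hand side handles the passage to the limit. Also, since each $\vphi_j\geq 0$, the quantity $\|\sup_j a_j\|_p$ is the infimum of $\|a\|_p$ over positive majorants $a\geq a_j$.

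\emph{Inequality ``$\geq$''.} Fix $\epsilon>0$. For a.e.\ $x$, choose a positive majorant $a_x\in L^p(L^\infty(Y)\ot\M)$ with $a_x\geq\vphi_j(x,\cdot)$ for all $j$ and $\|a_x\|_p^p\leq\|\sup_j\vphi_j(x,\cdot)\|_p^p+\epsilon w(x)$, where $w>0$ is a fixed integrable weight on $X$ (available by $\sigma$-finiteness). The main obstacle is choosing $x\mapsto a_x$ measurably, since a majorant chosen pointwise in $x$ need not glue into an element of the bigger algebra. To avoid selection, I would use the explicit description of the $\ell^\infty$-norm for a finite positive family. For $1\leq p<\infty$, $\|\sup_{j\in J}a_j\|_p$ equals the supremum of $\sum_j\tau(a_jb_j)$ over positive $b_j$ with $\|\sum_j b_j\|_{p'}\leq1$, and dually the $\ell^1$-norm is given by a Junge factorization. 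Both the infimum and the supremum can be realized by near-optimizers built from functional calculus of the finitely many $\vphi_j(x,\cdot)$; for example, one can take the majorant from the Junge--Xu description $a_j=c\,u_j\,c$ with $u_j$ contractions, and produce $c$ by a measurable minimization over a countable dense set. Alternatively, one can discretize $X$: approximate $\vphi_j$ in $L^p$ by functions that are simple in $x$, with finitely many values on finitely many measurable sets $X_m$. On each $X_m$ a single majorant works, and we then glue $a=\sum_m\chi_{X_m}\otimes a_m$, which is a legitimate element of the tensor algebra with $\|a\|_p^p=\sum_m|X_m|\,\|a_m\|_p^p$.

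\emph{Inequality ``$\leq$''.} This direction is easier. Given a positive majorant $a$ on $X\times Y$ with $a\geq\vphi_j$, its slices satisfy $a(x,\cdot)\geq\vphi_j(x,\cdot)$ for a.e.\ $x$, because positivity of an element of $L^\infty(X)\ot\mathcal N$ is equivalent to a.e.\ positivity of its slices. Fubini then gives $\int_X\|a(x,\cdot)\|_p^p\,dx=\|a\|_p^p$. Taking the infimum over $a$ yields the inequality.

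To make the discretization step in ``$\geq$'' rigorous, I will pass to the limit using continuity of the finite maximal norm. This norm is Lipschitz in each entry with respect to $\|\cdot\|_p$, since $\|\sup_j a_j\|_p\leq\|\sup_j b_j\|_p+\sum_j\|a_j-b_j\|_p$. Combining this with dominated convergence in $x$ closes the argument. Applying the identity once with $X$ as the outer variable and once with $Y$ as the outer variable proves (\ref{eq:interange integral for supremum func2}).
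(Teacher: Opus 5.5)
Your proposal is correct and follows the paper's route: the paper's proof simply asserts that both sides equal $\inf\{\int_{X\times Y}\|\psi(x,y)\|_{L^p(\M)}^p\,dx\,dy : \vphi_j\le\psi \text{ for all } j\}$, i.e.\ the maximal norm on $L^\infty(X\times Y)\ot\M$, and your slicing-plus-Fubini argument gives one of the two inequalities exactly as intended. For the gluing direction, which the paper leaves as ``one can check'', your second option is a valid way to fill it in: take simple-in-$x$ approximants, preferably averages of $\vphi_j$ over the cells so they stay positive, glue them via $\sum_m\chi_{X_m}\otimes a_m$, and pass to the limit with the Lipschitz bound and Minkowski in $L^p(X)$; the vaguer measurable-selection option can therefore be dropped.
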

\begin{proof}
	The assertion is immediate when each $\vphi_j$ is scalar-valued. In the noncommutative case, we use the definition of the vector-valued $L^p$-norm.
	\par Indeed, by definition, one can check that both sides of (\ref{eq:interange integral for supremum func2}) are equal to
	\begin{equation*}
		\inf\left\{\int_{X\times Y}\Norm{\psi(x,y)}_{L^p(\M)}^pdxdy: |\vphi_j(x,y)|\leq\psi(x,y) \text{ in } \M \text{ for each } j \text{ and } a.e.\ x, y\right\}.
	\end{equation*}
\end{proof}
To obtain (\ref{eq:interange integral for supremum func}), it suffices to apply the lemma to $X=\R^n$ and $Y=B^N(R)$.

\medskip

\par Lemma \ref{lem:T bound implies T^L bound} allows us to obtain the polynomial case from the (universal) monomial case.
Indeed, we let $T_j=A_j^\mfp$, defined in (\ref{eq:A_j^mfp def}). Then
\begin{equation*}
	T_j(f)=A_j^\mfp(f)=f*d\mu_j.
\end{equation*}
We define the linear operator $L$ as follows. Assume that the polynomial $P_j$ vanishes at the origin (this causes no loss of generality, since the constant terms only produce a translation). We write
\begin{equation*}
	P_j(t)=\sum_{1\leq|\alpha|\leq d}a_{j\alpha}t^\alpha, \quad j=1,\dots,n.
\end{equation*}
Use the coefficients $a_{j\alpha}$ to define $L:\R^N\to\R^n$,
\begin{equation*}
	L(x)_j=\sum_{1\leq|\alpha|\leq d} a_{j\alpha}x_\alpha, \quad j=1,\dots,n.
\end{equation*}
where $x=(x_\alpha)\in\R^N$ and $L(x)=(L(x)_j)\in\R^n$. Then $P(t)=L(\mfp(t))$. By definition, the measure $d\mu_j^L$ is
\begin{equation*}
	\int_{\R^n}f(x) d\mu_j^L(x)=\int_{\R^N}f(Lz)d\mu_j(z)=\int_{\R^k}f(L(\mfp(2^{-j}t)))\eta(t)dt,
\end{equation*}
and the operator $T^L_j$ is exactly
\begin{equation*}
	T_j^Lf(x)=f*d\mu_j^L(x)=\int_{\R^k}f(x-L(\mfp(2^{-j}t)))\eta(t)dt=2^{kj}\int_{\R^k}f(x-P(t))\eta(2^jt)dt.
\end{equation*}
By Proposition \ref{prop:sup A_j^mfp Lp bounded} and Lemma \ref{lem:T bound implies T^L bound}, for $1<p\leq\infty$ we have
\begin{equation*}
	\Big\|{\sup_{j\in\Z}T_j^L(f)}\Big\|_{L^p(\A^n)} \lesssim_p \Norm{f}_{L^p(\A^n)}.
\end{equation*}
Finally, since the two maximal norms $\big\|{\sup_j T_j^L(f)}\big\|_p$ and $\big\|{\sup_r M_r^P(f)}\big\|_p$ are comparable, we obtain (\ref{eq:sup_r>0 M_r^P Lp bounded}). This completes the proof of Theorem \ref{thm:polynomial case}.

\subsection{A generalized result}
\par We have already proved the boundedness of maximal averages over submanifolds given by polynomials.
To pass from polynomial maps to general submanifolds, we extend the results of the previous subsection to the following setting.
\par Let $\{dm^j\}$ be a sequence of positive measures on $\R^N$ satisfying
\begin{enumerate}
	\item $\int_{\R^N}dm^j=c$ for all $j$;
	\item $\{dm^j\}$ has a common compact support;
	\item $\exists$ $\delta>0$ and $A>0$, such that $|\widehat{dm^j}(\xi)|\leq A|\xi|^{-\delta}$ uniformly in $j$.
\end{enumerate}
\par Using the non-isotropic dilations $x\mapsto\delta\circ x$ on $\R^N$ given in (\ref{eq:dilation in R^N}), define the dilated measure $d\mu_j$ by
\begin{equation*}
	\int_{\R^N}f(x)d\mu_j(x)=\int_{R^N}f(2^{-j}\circ x)dm^j(x),
\end{equation*}
which means that $\widehat{d\mu_j}(\xi)=\widehat{dm^j}(2^{-j}\circ \xi)$. Now let
\begin{equation*}
	A_j(f)=f*d\mu_j
\end{equation*}
Then we have the following $L^p$-boundedness:
\begin{prop}
	\label{prop:generalised sup_j A_jf Lp estimate}
	Let $1<p\leq\infty$. Under the above assumptions (1)-(3) on the measures $\{dm^j\}$, we have
	\begin{equation*}
		\Big\|{\sup_{j\in\Z}A_jf}\Big\|_{L^p(\A^N)}\lesssim_p \Norm{f}_{L^p(\A^N)}.
	\end{equation*}
\end{prop}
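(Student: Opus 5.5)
We follow the scheme of Sections 2--5 verbatim. The only change is that the fixed dilates $\widehat{d\mu}(2^{-j}\circ\xi)$ are replaced by the $j$-dependent multipliers $\widehat{dm^j}(2^{-j}\circ\xi)$. Every estimate used there depends only on bounds that hold uniformly in $j$ by (1)--(3).

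For complex $s$ we define $\nu^{s,j}$ by $\widehat{\nu^{s,j}}(\xi)=\widehat{dm^j}(\xi)(1+|\xi|^2)^{s/2}$, and set $A_j^s f=f*\nu^{s,j}_j$ with $\widehat{\nu^{s,j}_j}(\xi)=\widehat{\nu^{s,j}}(2^{-j}\circ\xi)$. We fix $\psi\in C_c^\infty(\R^N)$ with $\int\psi=c$ and put $B_j f=f*\psi_j$. Since the $dm^j$ are positive with mass $c$ and a common compact support, we have $|\widehat{dm^j}(\xi)-c|\lesssim|\xi|$ uniformly in $j$. We may assume $\delta<1$. Then (3), combined with $\rho(\xi)\lesssim|\xi|$ for $\rho(\xi)\ge1$, gives the analogue of (\ref{eq:decay of FT of nu^s-psi}):
\[
|\widehat{\nu^{s,j}}(\xi)-\widehat\psi(\xi)|\lesssim \min\{\rho(\xi),\rho(\xi)^{-\delta+\Re s}\}\quad\text{uniformly in } j .
\]
Hence $\sup_\xi\sum_j|\widehat{\nu^{s,j}}(2^{-j}\circ\xi)-\widehat\psi(2^{-j}\circ\xi)|^2<\infty$ for $\Re s<\delta$. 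The square function argument of Section 3 then gives the $L^2$ maximal bound, with constant $\lesssim 1+|\Im s|$. Here $\sup_j B_j$ is handled by the non-isotropic Hardy--Littlewood theorem on $\R^N$.

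For real $s<0$ we prove the weak type $(1,1)$ bound as in Section 4, with the non-isotropic dyadic cubes of $\R^N$. The kernel is now $K_j(x)=2^{\Delta j}K^{(j)}(2^j\circ x)$ with $K^{(j)}=G_s*dm^j-\psi$. All the properties in Proposition \ref{prop:K(x) kernel preoperties} hold uniformly in $j$. Property (1) is clear. Property (2) holds because $\|K^{(j)}(\cdot-y)-K^{(j)}\|_{L^1}\le c\|G_s(\cdot-y)-G_s\|_{L^1}+\|\psi(\cdot-y)-\psi\|_{L^1}$. Property (3) holds because the supports are common and $G_s$ and $\nabla G_s$ decay exponentially. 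Property (4) is the display above.

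These are the only inputs to the bad-part estimate (Claim \ref{claim:sum int |K_j(x-y)-K_j(x)| estimate}) and to the good-part estimates. In the former, the rescaling $K_j\to K_{j+s}$ now shifts the index of $m^j$. This is harmless, because only $j$-uniform bounds on $K^{(j)}$ are used after the change of variables: each summand is bounded by $\sup_i$ of the corresponding integral for $K^{(i)}$. In Lemma \ref{lem:K(1-phi) estimate} and in (II$_{n,s}$) the same remark applies, since only $|\widehat{K^{(j)}}|\lesssim\min\{\rho,\rho^{-\delta}\}$ and pointwise tail decay were used. The diagonal part uses the $L^2$ square function bound just proved. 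Marcinkiewicz interpolation (Theorem \ref{thm:NC interpolation}) then gives $L^p$ bounds for $1<p<\infty$ and real $s<0$. Positivity is preserved, since $G_s\ge0$ and $dm^j\ge0$.

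Complex $s$ with $\Re s<0$ are treated by Lemma \ref{lem:s'+s''}, which factors $A^{s}_j=A^{s'}_j(\cdot)*(G_{s''})_j$ for each $j$. This is unaffected by the $j$-dependence of $dm^j$. Lemma \ref{lem:sup_j (G_s)_j*h_j Lp-bounded} extends to $\R^N$ via the strong maximal function over rectangles, and the duality argument of Section 5 yields growth $\lesssim e^{|\Im s|}$. Finally we take $0=(1-t)s_0+ts_1$ with $s_0<\delta$ and $s_1<0$, and run the complex interpolation with the analytic family $\Phi_\delta(z)$ exactly as in the proof of Theorem \ref{thm:sup A_jf bound}. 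This gives $1<p<2$. The range $p\ge2$ follows from the $L^2$ bound at $s=0$ and the trivial $L^\infty$ bound, by interpolation.

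The main point requiring care is the uniformity in $j$ within the Calderón--Zygmund part. The kernel is no longer a single function dilated, and the scaling steps in the bad-part claim mix the indices. I would check each use of Proposition \ref{prop:K(x) kernel preoperties} and confirm that it only involves $\sup_j$ of the relevant bound for $K^{(j)}$. If some step genuinely needs the exact dilation structure, I would instead state and prove Claim \ref{claim:sum int |K_j(x-y)-K_j(x)| estimate} directly for a family $\{K^{(j)}\}$ with uniform bounds.
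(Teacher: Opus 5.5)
Your proposal is correct and follows essentially the same route as the paper: the auxiliary family $G_s*dm^j$, the square-function $L^2$ bound for $\Re s<\delta$, the weak $(1,1)$ estimate via Parcet's decomposition using only $j$-uniform kernel bounds (with the index shift in the bad-part claim handled as you describe), and the same Marcinkiewicz and complex interpolation steps. The paper adds two details. First, the separation constants (the $10$ in the bad-part claim and the $100$ in Lemma~\ref{lem:zeta projection}) are enlarged according to the common compact support. Second, the large-radius step uses $|2^{-t}\circ z|\lesssim 2^{-t}\rho(z)^d$ with $M>\Delta+d$.
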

The idea of the proof is essentially the same as in Sections 2--5. Let $M_j^s$ be the distribution defined by
\begin{equation*}
	M_j^s=dm^j*G_s,
\end{equation*}
where $\widehat{G_s}(\xi)=(1+|\xi|^2)^{s/2}$ on $\R^N$, as in Appendix \ref{app:Bessel kernels}. Also, define its dilation $\nu_j^s$ by
\begin{equation*}
	\widehat{\nu^s_j}(\xi)=\widehat{M_j^s}(2^{-j}\circ\xi)
\end{equation*}
and, using the same notation as above, write $A_j^s(f)=f*\nu_j^s$.
\par For $\Re(s)<\delta$, we have the $L^2$-boundedness
\begin{equation*}
	\Big\|{\sup_{j\in\Z} A_j^s(f)}\Big\|_{L^2(\A^N)}\lesssim_s \Norm{f}_{L^2(\A^N)}.
\end{equation*}
Fix a smooth function $\psi\geq0$ with compact support in $\R^N$ such that $\int \psi(x)dx=c$, so that $\psi$ has the same normalization as each $dm^j$.
Let $\psi_j(x)=2^{j\Delta}\psi(2^j\circ x)$ and set $B_j(f)=f*\psi_j$. As we have seen, the crucial point is to show the $L^2(\A^N)$-boundedness of the square function
\begin{equation*}
	S_s(f)(x)=\bigg(\sum_{j\in\Z}|A_j^sf(x)-B_jf(x)|^2\bigg)^{1/2}, \quad s\in\R \text{ and } s<\delta.
\end{equation*}
Indeed, by Plancherel's theorem this is equivalent to
\begin{equation*}
	\sum_{j\in\Z}|\widehat{M_j^s}(2^{-j}\circ\xi)-\widehat{\psi}(2^{-j}\circ\xi)|^2\leq A_s
\end{equation*}
uniformly in $\xi$, for each $s<\delta$. As usual, this is guaranteed by the moderate decay of $|\widehat{M_j^s}(\xi)-\widehat{\psi}(\xi)|$ as both $\rho(\xi)\to0$ and $\rho(\xi)\to\infty$.
The proof is essentially the same as in the parabola case. See \cite{Stein1993} for the details.
\par For $\Re(s)<0$ we have the $L^p$-boundedness
\begin{equation*}
	\Big\|{\sup_{j\in\Z}A_j^s(f)}\Big\|_{L^p(\A^N)}\lesssim_{p,s} \Norm{f}_{L^p(\A^N)}.
\end{equation*}
This is deduced from the weak type $(1,1)$ estimate: Suppose $s<0$. 
For every positive $f\in L^1(\A^N)$ and every $\lambda>0$, there exists a projetcion $e\in\A^N$ such that
\begin{equation*}
	\Norm{eA_j^s(f)e}_\infty \leq \lambda \text{ for all } j\in\Z, \quad \vphi(1-e)\lesssim\frac1{\lambda}\Norm{f}_{L^1(\A^N)}.
\end{equation*}
The proof of the weak type $(1,1)$ estimate follows Section 4 with a few modifications.
Indeed, as in Section 4, we consider the operator and its kernel
\begin{equation*}
	T_j=A_j^s-B_j, \quad \text{ with kernel } K_j(x)=\nu_j^s(x)-\psi_j(x).
\end{equation*}
If we let $\K_{j}(x):=M_j^s(x)-\psi(x)$, then $K_j(x)=2^{j\Delta}\K_{j}(2^j\circ x)$.
\par It will suffice to prove the weak type $(1,1)$ estimate for the operator $f\mapsto\{T_j(f)\}_j$.
We need the following kernel properties:
\begin{prop}
	\label{prop:M_j^s kernel properties}
	Fix $s<0$. The family of kernels $\{\K_j\}_j$ satisfies
	\begin{enumerate}
		\item Each $\K_j$ is a locally integrable function on $\R^N$ that is smooth for $|x|$ large.
		\item There exists $\varepsilon>0$ such that
		      \begin{equation*}
				\int_{\R^N}|\K_j(x-y)-\K_j(x)|dx \lesssim \rho(y)^\varepsilon, \quad \text{ uniformly in } j\in\Z.
			  \end{equation*}
		\item For arbitrary $M>0$ we have
		      \begin{equation*}
				\max\{|\K_j(x)|,\rho(\nabla \K_j(x))\} \lesssim \rho(x)^{-M}, \text{ for large } x.
			  \end{equation*}
			  The estimate above holds uniformly in $j\in\Z$.
		\item The Fourier transforms $\widehat{\K_j}$ are $L^\infty$ functions satisfying
		      \begin{equation*}
			    |\widehat{\K_j}(\xi)|\lesssim
			    \begin{cases}
				    \rho(\xi), &\text{ as } \xi \text{ near } 0, \\
				    \rho(\xi)^{-\delta}, &\text{ as } \xi \text{ near } \infty, \text{ for some } \delta>0. 
			    \end{cases}
		      \end{equation*}
			  The estimate above holds uniformly in $j\in\Z$.
	\end{enumerate}
\end{prop}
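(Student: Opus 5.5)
We prove the four properties by following the proof of Proposition \ref{prop:K(x) kernel preoperties}, with $d\mu$ replaced by $dm^j$. Throughout we write $\K_j=G_s*dm^j-\psi$. The only new point is uniformity in $j$. This comes from the three hypotheses on $\{dm^j\}$: common mass $c$, a common compact support (say in $\{|x|\leq R_0\}$), and the uniform decay $|\widehat{dm^j}(\xi)|\leq A|\xi|^{-\delta}$.

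\emph{Properties (1) and (3).} By Lemma \ref{lem:Bessel kernel properties} in dimension $N$, for real $s<0$ the kernel $G_s$ is a positive $L^1$ function, smooth away from the origin. Since $dm^j$ is a positive finite measure, $G_s*dm^j$ lies in $L^1$ with norm at most $c$, and it is smooth off $\supp dm^j\subset\{|x|\leq R_0\}$. Subtracting the smooth compactly supported $\psi$ gives (1).

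For (3), take $|x|\geq 2R_0$, larger also than the support radius of $\psi$. Then $\psi$ does not contribute, and
\[
|\K_j(x)|+|\nabla\K_j(x)|\leq \int \big(|G_s|+|\nabla G_s|\big)(x-z)\,dm^j(z).
\]
By the exponential decay of $G_s$ and $\nabla G_s$, the integrand is at most $C_L(1+|x|/2)^{-L}$. Integrating against $dm^j$ gives the bound $c\,C_L(1+|x|)^{-L}$, which is uniform in $j$. The comparison between $|x|$ and $\rho(x)$ (the $\R^N$ analogue of (\ref{eq:rho(x) and |x|_infty}), i.e.\ $\rho(x)\leq |x|_\infty$ for $|x|_\infty\geq1$) then converts this into $\rho(x)^{-M}$, after taking $L$ large.

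\emph{Property (2).} Write
\[
\K_j(\cdot-y)-\K_j=\big(G_s(\cdot-y)-G_s\big)*dm^j-\big(\psi(\cdot-y)-\psi\big).
\]
Young's inequality gives
\[
\Norm{\K_j(\cdot-y)-\K_j}_{L^1}\leq c\,\Norm{G_s(\cdot-y)-G_s}_{L^1}+\Norm{\psi(\cdot-y)-\psi}_{L^1}.
\]
The appendix translation estimate (\ref{eq:Bessel translation estimate}) bounds the right side by $|y|^\varepsilon+|y|$, with $0<\varepsilon<\min\{1,-s\}$. For $\rho(y)\leq1$ we have $|y|\lesssim\rho(y)$, so the bound is $\lesssim\rho(y)^\varepsilon$. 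For $\rho(y)>1$ the difference is trivially at most $2(c+\Norm{\psi}_1)\lesssim\rho(y)^\varepsilon$. Both cases are uniform in $j$.

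\emph{Property (4), the main point.} We have
\[
\widehat{\K_j}(\xi)=\widehat{dm^j}(\xi)(1+|\xi|^2)^{s/2}-\widehat\psi(\xi),
\]
which is bounded by $c+\Norm{\psi}_1$.

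\emph{Large $\xi$.} Using the decay hypothesis, $|\widehat{dm^j}(\xi)(1+|\xi|^2)^{s/2}|\leq A|\xi|^{-\delta}$, and $\widehat\psi$ decays rapidly. For $|\xi|\geq1$ we have $|\xi|\gtrsim\rho(\xi)$, so $|\widehat{\K_j}(\xi)|\lesssim\rho(\xi)^{-\delta}$ uniformly in $j$.

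\emph{Small $\xi$.} Since $\widehat{dm^j}(0)=c=\widehat\psi(0)$, we split
\[
\widehat{\K_j}(\xi)=\big(\widehat{dm^j}(\xi)-\widehat{dm^j}(0)\big)+\big(\widehat\psi(0)-\widehat\psi(\xi)\big)+\widehat{dm^j}(\xi)\big((1+|\xi|^2)^{s/2}-1\big).
\]
The uniformity in $j$ requires a Lipschitz bound on $\widehat{dm^j}$ that does not depend on $j$. This holds because $|\nabla\widehat{dm^j}(\xi)|\leq 2\pi\int|x|\,dm^j\leq 2\pi R_0c$, by the common support and common mass. Hence the first term is $\lesssim|\xi|$. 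The second term is $\lesssim|\xi|$ by smoothness of $\psi$, and the third is $\lesssim|\xi|^2$. For $\rho(\xi)\leq1$ we have $|\xi|\lesssim\rho(\xi)$, so $|\widehat{\K_j}(\xi)|\lesssim\rho(\xi)$ uniformly in $j$. This completes (4).
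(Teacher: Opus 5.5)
Your proposal is correct and follows the same route as the paper. With $\K_j=G_s*dm^j-\psi$, you obtain (1)--(3) from the Appendix B bounds (\ref{eq:Bessel kernel decay}) and (\ref{eq:Bessel translation estimate}), made uniform in $j$ by the common mass and common compact support, and (4) from $\widehat{\K_j}(0)=0$, a $j$-uniform Lipschitz bound on $\widehat{dm^j}$, and the assumed uniform Fourier decay; this is the paper's sketch with the details filled in, and the one step left implicit (that $\rho(v)\le|v|_\infty^{1/d}$ for $|v|_\infty\le1$, needed for the $\rho(\nabla\K_j)$ part of (3)) is covered by your choice of large $L$.
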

Compare this with Proposition \ref{prop:K(x) kernel preoperties}. Here the family of kernels $\{\K_j\}$ plays the role of the single kernel $K(x)=K_0(x)$ there.
Indeed, the model case in Sections 3 and 4 corresponds here to the situation where there is only one measure, i.e., the measure $dm^j$ is independent of $j$.
For (1)--(3), apply Appendix \ref{app:Bessel kernels} to $\K_j=G_s*dm^j-\psi$: the common mass and compact support give uniform integrability, smoothness away from the common support, and the bounds (\ref{eq:Bessel translation estimate}) and (\ref{eq:Bessel kernel decay}) uniformly in $j$. The Euclidean decay of arbitrary order implies (3) for the quasi-norm $\rho$ by taking a sufficiently large decay exponent. For (4), use $\widehat{\K_j}(0)=0$, the common compact support and the assumed uniform Fourier decay of $dm^j$.

\par We indicate the main changes needed to adapt the weak type $(1,1)$ argument in Section 4 to this family of kernels.
\par First, in place of Claim \ref{claim:sum int |K_j(x-y)-K_j(x)| estimate}, we need to prove
\begin{equation*}
	\sup_{y\in\R^N} \sum_{j\in\Z} \int_{\rho(x)\geq C\rho(y)}|\nu_j^s(x-2^{-t}\circ y)-\nu_j^s(x)|dx\lesssim 2^{-\alpha t}, \quad \text{ for some } \alpha>0.
\end{equation*}
(To avoid a conflict of notation, we use $t$ instead of $s$ to denote the nonnegative integer here.)
Here $C>1$ is fixed sufficiently large in terms of the common compact support, and the fixed dilation in Lemma \ref{lem:zeta projection} is enlarged accordingly if necessary. On the integration region, the metric triangle inequality gives $\rho(x-\theta(2^{-t}\circ y))\geq(1-C^{-1})\rho(x)$ for $0\leq\theta\leq1$.
For this we need only to use (2) and (3) (for the $\rho(\nabla \K_j)$ part) in Proposition \ref{prop:M_j^s kernel properties} instead of the corresponding properties in Proposition \ref{prop:K(x) kernel preoperties}. The large-radius argument uses $|2^{-t}\circ z|\leq\sqrt{N}\,2^{-t}\rho(z)^d$ for $\rho(z)\geq1$, with $M>\Delta+d$ in the summation.
\par Second, as in the beginning of the proof of Lemma \ref{lem:K(1-phi) estimate}, we need the estimate, for sufficiently large $t$,
\begin{equation*}
	\nu_j^s(x)(1-\phi_{j-\varepsilon t}(x))\lesssim 2^{-j(M-\Delta)}\rho(x)^{-M}\chi_{\{\rho(x)\geq 2^{-j+\varepsilon t-1}\}}(x).
\end{equation*}
For this, we use the bound for $|\K_j|$ in (3) in Proposition \ref{prop:M_j^s kernel properties}.
\par Finally, in the proof of Lemma \ref{lem:K(phi) estimate}, the part ``Estimate of (II)", we need the Fourier transform estimate (4) in Proposition \ref{prop:M_j^s kernel properties}, which is uniform in $j$, instead of the corresponding estimate in Proposition \ref{prop:K(x) kernel preoperties}.
\par The weak type $(1,1)$ estimate yields $L^p$-boundedness for $s<0$, and this extends to all $s\in\mathbb{C}$ with $\Re(s)<0$. Using the interpolation method of Section 5, we have proved Proposition \ref{prop:generalised sup_j A_jf Lp estimate}.

\subsection{Averages over submanifolds of finite type}
\par We now complete the proof of Theorem \ref{thm:k-submanifold case} for general submanifolds of finite type.
\par Suppose that $S$ is a $k$-dimensional submanifold in $\R^n$ that is of type $d$ at $t=0$, given by $t\mapsto\gamma(t)$. We may also assume that $\gamma(0)=0$ (after a translation). By Taylor expansion,
\begin{equation*}
	\gamma(t)=P(t)+R(t),
\end{equation*}
where $P$ is a polynomial (without constant term) of degree $d$, and $R$ vanishes to order $d+1$ at the origin.
The fact that $\gamma$ is of type $d$ at the origin implies that $P$ is also of type $d$, because this property is determined by the derivatives of order at most $d$ at the origin.
\par As in the previous subsection, we write $P=(P_1,\dots,P_n)$, where
\begin{equation*}
	P_j(t)=\sum_{1\leq|\alpha|\leq d} a_{j\alpha}t^\alpha,
\end{equation*}
and define the linear mapping $L:\R^N\to\R^n$ by
\begin{equation*}
	L(x)_j=\sum_{\alpha} a_{j\alpha}x_\alpha, \quad j=1,\dots,n.
\end{equation*}
Now the assumption that $P$ is of type $d$ imlies that the polynomials
\begin{equation*}
	P_1(t),\dots, P_n(t)
\end{equation*}
are linear independent (otherwise $S$ would lie in a hyperplane, which is contradict to the finite type assumption).
As a result, the range of $L$ is the whole space $\R^n$, and there is a linear mapping $Q:\R^n\to\R^N$ so that $L \circ Q=\Id_{\R^n}$.
\par Next, recall the notation $\mfp$, the free polynomial of degree $d$ mapping $\R^k$ to $\R^N$: $\mfp(t)=(t^\alpha)_{1\leq |\alpha|\leq d}$.
Then we have $P(t)=L(\mfp(t))$. Define
\begin{equation*}
	\mathfrak{r}(t):=Q(R(t)) \quad \text{ and } \quad \Gamma(t)=\mfp(t)+\mathfrak{r}(t).
\end{equation*}
Then we have
\begin{equation*}
	L(\Gamma)=L(\mfp)+L(\mathfrak{r})=P+R=\gamma.
\end{equation*}
In other words, $\Gamma(t)$ is a lift of $\gamma(t)$ to $\R^N$ through the linear map $L$.
Therefore, by the descent procedure described in the previous subsection, it will suffice to control the corresponding operator (acting on functions on $\R^N$) given by
\begin{equation*}
	f \mapsto \{M_r^\Gamma(f)\}_{0<r<1},
\end{equation*}
where
\begin{equation*}
	M_r^\Gamma(f)(x)=\frac1{|B_k(r)|}\int_{|t|<r}f(x-\Gamma(t))dt.
\end{equation*}
We need to prove the following lemma:
\begin{lemma}
	\label{lem:sup_r M_r^Gamma Lp boundedness}
	For $1<p\leq\infty$ we have
	\begin{equation*}
		\Big\|{\sup_{0<r<1} M_r^\Gamma(f)}\Big\|_{L^p(\A^N)} \lesssim_p \Norm{f}_{L^p(\A^N)}.
	\end{equation*}
\end{lemma}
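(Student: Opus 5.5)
We reduce Lemma \ref{lem:sup_r M_r^Gamma Lp boundedness} to Proposition \ref{prop:generalised sup_j A_jf Lp estimate}, following \cite[Chapter XI, Section 2]{Stein1993}. First, we replace the sharp cutoff by the smooth one. For $0<r<1$ choose $j\geq0$ with $2^{-j-1}<r\leq2^{-j}$. Since the integrand $f$ is positive, $M_r^\Gamma(f)\lesssim A_j^\Gamma(f)$, where
\[
A_j^\Gamma(f)(x)=2^{kj}\int_{\R^k}f(x-\Gamma(t))\eta(2^jt)dt .
\]
It therefore suffices to bound $\{A_j^\Gamma f\}_{j\geq0}$ in $L^p(\A^N;\ell^\infty)$ for $f\in\A^N_{c,+}$. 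A domination of this kind passes to the maximal norm by the definition of $L^p(\ell^\infty)$ for positive families.

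Next, we rewrite these averages in the form required by Proposition \ref{prop:generalised sup_j A_jf Lp estimate}. The substitution $t\mapsto2^{-j}t$ gives $A_j^\Gamma f=f*d\mu_j$ with
\[
\int g\,d\mu_j=\int g\bigl(\mfp(2^{-j}t)+\mathfrak r(2^{-j}t)\bigr)\eta(t)dt=\int g(2^{-j}\circ x)\,dm^j(x),
\]
where $dm^j$ is the pushforward of $\eta(t)dt$ under
\[
\Gamma_j(t):=\mfp(t)+2^{j}\circ\mathfrak r(2^{-j}t).
\]
For $j<0$ we set $dm^j:=dm^0$ (or simply take trivial measures there), since only $j\geq0$ matters. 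Conditions (1) and (2) are immediate. Each $dm^j$ has mass $\int\eta$, and the supports are uniformly bounded because $\mathfrak r=Q\circ R$ vanishes to order $d+1$. Indeed, the $\alpha$-component of $2^j\circ\mathfrak r(2^{-j}t)$ equals $2^{j|\alpha|}O(|2^{-j}t|^{d+1})=O(2^{-j(d+1-|\alpha|)})$ for $|t|\leq2$ and $j\geq0$, uniformly in $j$. The same computation applied to the derivatives shows that $\Gamma_j\to\mfp$ in $C^{d}$ on $\{|t|\leq2\}$: every derivative up to order $d$ of the perturbation is $O(2^{-j})$.

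The main point is the uniform Fourier decay (3). The map $\mfp$ has type at most $d$ at every point of the compact set $\{|t|\leq2\}$. In fact, for every unit $\eta'\in\R^N$ and every $t_0$ there is $\alpha$ with $1\leq|\alpha|\leq d$ and $|\partial^\alpha(\mfp\cdot\eta')(t_0)|\geq c>0$, by compactness of the unit sphere times the support. Since $\Gamma_j$ is $C^d$-close to $\mfp$ for large $j$, the same lower bound holds with constant $c/2$ for all $j\geq j_0$. The proof of Lemma \ref{lem:decay of Fourier transform of measure} (van der Corput's lemma with a partition of unity) depends only on this quantitative lower bound and on $C^{d+1}$ bounds for the phase. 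Those bounds are also uniform, provided we use one more order of Taylor control: the $(d+1)$-th derivatives of $2^j\circ\mathfrak r(2^{-j}\cdot)$ are bounded since $|\alpha|\leq d$. This yields $|\widehat{dm^j}(\xi)|\leq A|\xi|^{-1/d}$ uniformly in $j\geq j_0$. The finitely many $j<j_0$ we handle separately: each such $A_j^\Gamma$ is convolution with a finite measure, so it is bounded on $L^p$ with positive kernel, and a finite family is trivially controlled in $L^p(\ell^\infty)$ by the sum. Alternatively, each $\Gamma_j$ is itself of finite type, which also gives decay for each of them.

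With (1)--(3) verified, Proposition \ref{prop:generalised sup_j A_jf Lp estimate} with $\delta=1/d$ gives $\|\sup_{j\geq j_0}A_j^\Gamma f\|_{L^p(\A^N)}\lesssim_p\|f\|_{L^p(\A^N)}$ for $1<p\leq\infty$, and adding the finitely many remaining terms proves the lemma. I expect the uniformity in the van der Corput step to be the main obstacle: the constant in Lemma \ref{lem:decay of Fourier transform of measure} must depend only on the lower bound for derivatives and on $C^{d+1}$ norms, so that it is stable under the $C^{d+1}$-small perturbation. After the lemma, Theorem \ref{thm:k-submanifold case} follows by descent, applying Lemma \ref{lem:T bound implies T^L bound}(b) with the linear map $L$, since $L\circ\Gamma=\gamma$ and the measures are positive.
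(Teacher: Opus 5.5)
Your proposal is correct and is essentially the paper's proof, which follows \cite[Chapter XI, Section 2.6]{Stein1993}: you rescale to $\Gamma_j(t)=2^j\circ\Gamma(2^{-j}t)=\mfp(t)+2^j\circ\mathfrak r(2^{-j}t)$, use its convergence to $\mfp$ in $C^{d+1}$ to obtain (1)--(3) uniformly for $j\geq j_0$, apply Proposition \ref{prop:generalised sup_j A_jf Lp estimate}, and bound the finitely many remaining scales trivially, and you also spell out the uniform van der Corput step that the paper delegates to Stein. The only slip is the padding for $j<j_0$: the proposition needs every $dm^j$, $j\in\Z$, to satisfy (1)--(3), so you should take $dm^j:=dm^{j_0}$ there and use (\ref{eq:J subset I maximal norm}), because zero measures violate the common-mass condition (1), while $dm^0$ can fail (3) (as can your alternative claim that every $\Gamma_j$ is of finite type), since $\gamma$ is only assumed of finite type at the origin and may be flat elsewhere on $\{|t|\leq2\}$.
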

\begin{proof}
To prove Lemma \ref{lem:sup_r M_r^Gamma Lp boundedness}, we first define the measure $d\mu_j$ on $\R^N$ by
\begin{equation*}
	\int_{\R^N}f(x)d\mu_j(x):=2^{jk}\int_{\R^k}f(\Gamma(t))\eta(2^jt)dt=\int_{\R^k}f(\Gamma(2^{-j}t))\eta(t)dt, \quad  \text{ for } j\geq j_0;
\end{equation*}
and 
\begin{equation*}
	d\mu_j\equiv 0, \quad  \text{ for } j<j_0.
\end{equation*}
Here $j_0$ is a fixed positive integer that will be determined later.
Write $A_j(f)=f*d\mu_j$. As usual, we may assume $f\in\A_{c,+}$. Then
\begin{equation*}
	\begin{split}
		\Big\|{\sup_{0<r<1} M_r^\Gamma(f)}\Big\|_{L^p(\A^N)}
		&\leq \Big\|{\sup_{0<r<2^{-j_0}} M_r^\Gamma(f)}\Big\|_{L^p(\A^N)}+\Big\|{\sup_{2^{-j_0}<r<1} M_r^\Gamma(f)}\Big\|_{L^p(\A^N)} \\
		&\lesssim \Big\|{\sup_{j\in\Z} A_j(f)}\Big\|_{L^p(\A^N)}+\bigg\|{\int_{|t|\leq1}f(x-\Gamma(t))dt}\bigg\|_{L^p(\A^N)}.
	\end{split}
\end{equation*}
Note that, by Minkowski's inequality, the second term in the second line is bounded by a constant multiple of the $L^p$ norm of $f$.
To treat the first term $\Norm{\sup_j A_jf}_p$, we define the measures $dm^j$ by
\begin{equation*}
	\int_{\R^N}f(x)dm^j(x)=\int_{\R^k}f(2^j\circ\Gamma(2^{-j}t))\eta(t)dt.
\end{equation*}
According to \cite[Chapter XI, Section 2.6]{Stein1993}, if we choose $j_0$ to be sufficiently large, then for $j\geq j_0$ we have 
\begin{equation*}
	\left(\frac{\partial}{\partial t}\right)^\alpha[2^j\circ \Gamma(2^{-j}t)\cdot\zeta]\neq 0, \quad \forall\ \zeta \text{ is a unit vector, }
\end{equation*}
and therefore the family of measures $dm^j$ satisfies the assumptions (1)-(3) stated at the beginning of Section 6.2.
We see that $j_0$ depends only on the submanifold $S$.
Therefore we can invoke Proposition \ref{prop:generalised sup_j A_jf Lp estimate} to deduce the desired result.
\end{proof}

\begin{proof}[Proof of Theorem \ref{thm:k-submanifold case}]
	By the descent method, we obtain Theorem \ref{thm:k-submanifold case} as a direct consequence of Lemma \ref{lem:sup_r M_r^Gamma Lp boundedness}.
\end{proof}

\section{Averages on variable hypersurfaces}
\par In this section, we study operator-valued averages over variable hypersurfaces. More precisely, for each $x\in\R^n$, we consider a family of hypersurfaces $S_{x,t}$, with parameter $t$ in the range $0<t\leq1$, such that, as $t\to0$, the hypersurfaces $S_{x,t}$ shrink to $x$. 
We then define the averages for an $\M$-valued function $f$: 
\begin{equation*}
	(A_tf)(x)=\int_{S_{x,t}}f(y)d\sigma_{x,t}(y), 
\end{equation*}
where $d\sigma$ is a suitably normalized measure on $S_{x,t}$. 
\par We first introduce some notation. For convenience, we temporarily fix the parameter $t$, and consider a mapping $x\mapsto S_x$ that assigns to each $x$ (in some region of $\R^n$) a hypersurface $S_x$.
This mapping will be determined by a smooth real-valued defining function $\Phi(x,y)$ given in some region of $\R^n\times\R^n$ by the condition
\begin{equation}
	\label{eq:S_x def}
	S_x=\{y:\Phi(x,y)=0\}. 
\end{equation}
Define the \emph{rotational curvature} of $\Phi$, denoted by $\operatorname{rotcurv}(\Phi)$, by 
\begin{equation*}
	\operatorname{rotcurv}(\Phi)=\det
\begin{pmatrix}
\Phi & \Phi_{x_1} & \cdots & \Phi_{x_n} \\
\Phi_{y_1} & \Phi_{x_1y_1} & \cdots & \Phi_{x_ny_1} \\
\vdots & \vdots & \ddots & \vdots \\
\Phi_{y_n} & \Phi_{x_1y_n} & \cdots & \Phi_{x_ny_n}
\end{pmatrix}.
\end{equation*}
For further discussion of rotational curvature, see \cite[Chapter XI, Section 3]{Stein1993}. 
Our basic assumption is 
\begin{equation}
	\label{eq:rotcurv neq 0}
	\operatorname{rotcurv}(\Phi)(x,y) \neq 0, \quad \text{ where } \Phi(x,y)=0.
\end{equation}
Condition (\ref{eq:rotcurv neq 0}) implies $\nabla_y\Phi\neq0$ where $\Phi=0$. Thus the hypersurfaces $S_x$, implicitly defined by (\ref{eq:S_x def}), are smooth submanifolds that vary smoothly with $x$. 
\par We now assign a measure to a surface $S\in\R^n$. A single hypersurface $S$ is implicitly defined by 
\begin{equation*}
	S=\{x:\Phi(x)=0\},
\end{equation*}
provided $\nabla\Phi(x)\neq0$ whenever $x\in S$. $\Phi$ is called the \emph{defining function} for $S$. 
We define the Dirac measure $\delta(\Phi)dy$ associated with $\Phi$ by 
\begin{equation*}
	\int_{\R^n} f(y)\delta(\Phi)dy=\lim_{\varepsilon\to0}\frac{1}{2\varepsilon}\int_{-\varepsilon<\Phi(x)<\varepsilon}f(x)dx. 
\end{equation*}
\par To study the variable hypersurfaces, we consider a parametrized family $\Phi_t$, depending smoothly on $t$ in the closed interval $0\leq t\leq1$. 
The corresponding \emph{scaled family}, denoted by $\Phi_t$, is defined by 
\begin{equation*}
	\Phi_t(x,y)=\Phi^t\Big(x,x+\frac{y-x}{t}\Big), \quad \text{ for } t>0. 
\end{equation*}
\par Now we can define the averaging operator. 
For an $\M$-valued function $f$, define the averages 
\begin{equation}
	\label{eq:A_t(f) on hypersurface}
	A_t(f)(x)=\int_{\R^n}f(y)\psi_t(x,y)\delta(\Phi_t)dy, 
\end{equation}
where $\psi_t(x,y)=t^{-n}\psi(t,x,(x-y)/t)$, and $\psi$ is a fixed smooth function with compact support in all variables. 
Note that the measure $\delta(\Phi_t)dy$ is on $\R^n$ (supported on the $n-1$-dimensional surface $S_{x,t}$). The factor $t^{-n}$ is introduced to normalize the total mass of the integral in (\ref{eq:A_t(f) on hypersurface}). 
\par We can now state the main theorem of this section. 
\begin{theorem}
	\label{thm:A_t bounded for p>n/(n-1)}
	Suppose that $\Phi_t$ is a scaled family satisfying the condition 
	\begin{equation}
		\label{eq:t^2n rotcurv>c>0}
		t^{2n}\operatorname{rotcurv}(\Phi_t)(x,y)\geq c>0 \quad \text{ where } \Phi_t(x,y)=0,
	\end{equation}
	for $0<t\leq 1$. Then for $f\in\A_{c,+}$, we have 
	\begin{equation*}
		\Big\|\sup_{0<t\leq 1}A_t(f)\Big\|_{L^p(\A^n)} \lesssim_p \Norm{f}_{L^p(\A^n)},
	\end{equation*}
	provided that $n\geq3$ and $p>n/(n-1)$. For such $p$, the map $f\mapsto \{A_t(f)\}_{0<t\leq1}$ extends to a bounded operator from $L^p(\A^n) \to L^p(\A^n;\ell^\infty)$.
\end{theorem}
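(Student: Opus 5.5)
Following Stein's scalar argument \cite[Chapter XI, Section 3]{Stein1993}, I would combine a dyadic decomposition in frequency with an operator-valued Sobolev embedding in the parameter $t$. The positivity of $A_t$ is used only at the start, to reduce to positive $f\in\A_{c,+}$.

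\textbf{Step 1: Localization.} Since $\psi$ is compactly supported, $A_t(f)(x)$ depends only on $f$ near $x$ at scale $t$. First treat $t\in[1/2,1]$. Then split $(0,1/2]$ into dyadic pieces $t\in[2^{-l-1},2^{-l}]$. For each $l$, cover $\R^n$ by cubes $Q$ of side $2^{-l}$, and rescale $x\mapsto x_Q+2^{-l}x$. The uniform condition (\ref{eq:t^2n rotcurv>c>0}) is designed so that each rescaled family is an unscaled family with $t\in[1/2,1]$ and nonvanishing rotational curvature, with uniform bounds.

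\textbf{Step 2: Reduction to a square function.} Write $\sigma_t=\sum_{k\ge0}\sigma_t^k$, where $\sigma_t^k$ is the Littlewood--Paley piece localized to frequencies $|\xi|\sim2^{k}/t$ in $y$. Let $A_t^k$ be the corresponding operators. The piece $k=0$ is dominated by the noncommutative Hardy--Littlewood maximal function (Theorem \ref{thm: H-L maximal nonisotropic}, isotropic case), provided it is positive; if not, split the kernel into real and imaginary parts and dominate by $|{\rm kernel}|$ as in Section 5.

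For $k\ge1$, use the operator-valued Sobolev inequality
\[
\Big\|\sup_{t\in I}F(t)\Big\|_{L^p(\A)}\lesssim |I|^{-1/p}\|F\|_{L^p(I;L^p(\A))}+\|F\|_{L^p(I;L^p(\A))}^{1-1/p}\|\partial_tF\|_{L^p(I;L^p(\A))}^{1/p}.
\]
For self-adjoint $F$, I would prove this by dominating $\pm F(t)$ by $|F(t_0)|+\int_I|\partial_tF|$ averaged in $t_0$. To obtain the fractional $1/p$ version, interpolate between $p=1$ and $p=\infty$ with the Junge--Xu interpolation of $L^p(\ell^\infty)$.

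\textbf{Step 3: $L^2$ bounds and interpolation.} The $TT^*$ argument with nonvanishing rotational curvature gives
\[
\|A_t^k\|_{L^2\to L^2}\lesssim 2^{-k(n-1)/2},\qquad \|\partial_tA_t^k\|_{L^2\to L^2}\lesssim 2^{k}2^{-k(n-1)/2}.
\]
These are scalar-kernel operators acting on $L^2(\R^n;L^2(\M))$, so the scalar estimates transfer verbatim by Plancherel in $L^2(\M)$. The Sobolev inequality then gives maximal $L^2$ bounds of size $2^{-k(n-2)/2}$ for fixed-$l$ pieces.

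For the $L^1$ side, the kernels satisfy $\|A_t^k\|_{L^1\to L^1}\lesssim1$ and $\|\partial_tA_t^k\|\lesssim2^k$. This gives a maximal $L^1\to L^1(\ell^\infty)$-type bound $\lesssim 2^{k}$, via the $p=1$ Sobolev inequality, which yields strong $L^1(\ell^\infty)$ bounds directly. Interpolating between these bounds gives decay $2^{-k\epsilon(p)}$ exactly when $p>n/(n-1)$, and the series in $k$ is summable.

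\textbf{Step 4: Recombining scales.} For $t\in[2^{-l-1},2^{-l}]$, the rescaled estimates are uniform in $l$ and $Q$. This gives the bound for each dyadic block, but one still has to take the supremum over all $l$. I would handle this with a square function in $l$: for $k\ge1$, the pieces $A^k_t$ with $t\sim2^{-l}$ are nearly orthogonal frequency localizations, so the bound follows from $\sup_l\le(\sum_l|\cdot|^p)^{1/p}$ in $L^p(\ell^\infty)$ for positive majorants. This requires an $\ell^p$ summation argument, using that $\sum_l\|P_l f\|_p^p\lesssim\|f\|_p^p$ for $p\ge2$ and a refined interpolation for $p<2$.

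This recombination across $l$ is the step I expect to be the main obstacle. The plan is to interpolate the $L^2$ square-function bound with a uniform $L^1$ bound on each $l$-block separately. If that fails, I would fall back to vector-valued Littlewood--Paley theory in $L^p(\A;\ell^\infty)$.
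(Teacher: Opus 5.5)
\textbf{Where Steps 1--3 stand.} Steps 1--3 have the same architecture as the paper's proof. The pieces $A_t^k$ sit at frequency $2^k$ in the variable dual to $\Phi_t$. You use the $TT^*$ bounds $2^{-k(n-1)/2}$ and $2^k2^{-k(n-1)/2}$ transferred to $L^2(\A)$, an operator-valued Sobolev inequality in $t$, and interpolation against a bound of size $2^k$, which produces the exponent $1-n+n/p$.

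\textbf{The gap: Step 4 for $p<2$.} Step 4 fails exactly where the theorem has content. Once a maximal $L^2$ bound over all $0<t\le 1$ is known, $p>2$ follows by interpolation with the trivial $L^\infty$ bound. So everything hinges on $n/(n-1)<p<2$, and there your $\ell^p$ recombination over $l$ cannot work. The inequality $\|\sup_l a_l\|_p\le(\sum_l\|a_l\|_p^p)^{1/p}$ is fine for positive majorants. But $\sum_l\|P_lf\|_p^p\lesssim\|f\|_p^p$ is false for $p<2$ already for scalar $f$: take $N$ Littlewood--Paley pieces of equal size on a common set; the left side is of order $N$, the right side of order $N^{p/2}$. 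Interpolating the $L^2$ bound with $L^1$ bounds ``on each $l$-block separately'' only returns block-wise $L^p$ bounds, which puts you back at the same recombination problem. Block-wise $L^1$ bounds cannot be summed over $l$, because there is no orthogonality at $L^1$.

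\textbf{The missing idea.} Recombine the dyadic $t$-blocks only at $L^2$, and get the near-$L^1$ endpoint for all scales at once by crude domination. This is how the paper proceeds.
\begin{enumerate}
\item \emph{Exact orthogonality at $L^2$.} The paper writes $S_{x,t}$ locally as a graph over $y'$ with an amplitude independent of $y_n$, and takes $\Delta_k$ to be a Littlewood--Paley multiplier in $\xi_n$ alone. Then $\tilde A_t^j=\tilde A_t^j\Delta_{m+j}$ holds exactly for $t\in[2^{-m},2^{-m+1}]$. The error $\tilde A_t^j-A_t^j$ has $2^{-jM}$ decay and is dominated by ball averages. Your ``nearly orthogonal frequency localizations'' needs some such device for variable surfaces.
\item \emph{Combining blocks at $L^2$.} The block majorants $a_{j,m}$ are combined as $a_j=(\sum_m a_{j,m}^2)^{1/2}$. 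This dominates every $a_{j,m}$ because the square root is operator monotone, and $\|a_j\|_2^2=\sum_m\|a_{j,m}\|_2^2\lesssim 2^{-j(n-2)}\|f\|_2^2$.
\item \emph{Global crude endpoint.} The kernel of $A_t^j$ is bounded, up to a constant, by $2^jt^{-n}\chi_{\{|x-y|\le Ct\}}$. The noncommutative Hardy--Littlewood maximal inequality then gives $\|\sup_{0<t\le t_0}\tilde A_t^jf\|_{p_1}\lesssim 2^j\|f\|_{p_1}$ for every $1<p_1<\infty$, uniformly over all scales. One needs $p_1>1$, since no strong $L^1(\ell^\infty)$ bound over all $t$ is available.
\item \emph{Interpolation.} Complex interpolation in $L^p(\ell^\infty)$ of these two global estimates for the same linear map $f\mapsto\{\tilde A_t^jf\}_{0<t\le t_0}$, letting $p_1\downarrow 1$, gives the exponent $1-n+n/p<0$.
\end{enumerate}

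\textbf{A secondary gap: the Sobolev inequality.} Your justification does not give the form you use. Averaging $|F(t_0)|+\int_I|\partial_tF|$ over all of $I$ yields at $L^2$ only $|I|^{-1/2}\|F\|+|I|^{1/2}\|\partial_tF\|$, i.e.\ a bound $2^{-k(n-3)/2}$. After interpolation this gives only $p>(n-1)/(n-2)$. Interpolating the cases $p=1$ and $p=\infty$ does not by itself produce the balanced product form $\|F\|^{1-1/p}\|\partial_tF\|^{1/p}$. What you need is the $L^2$ estimate with a free length $l<|I|$ and a majorant independent of $t_0$. Lemma~\ref{lem:Sobolev} obtains this from the operator-valued H\"older inequality, $\frac1l\int_{I_0}|F|\le l^{-1/2}(\int_I|F|^2)^{1/2}$ in the operator order. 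One then takes $l=2^{-m-j}$ on each block.
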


\subsection{Auxiliary lemmas}
\par We introduce the auxiliary lemmas that will be used in the proof. 
\par Suppose $\B$ is a Banach $\M$-bimodule, that is, $\B\subset L^0(\M,\tau)$ is a subspace equipped with a norm $\Norm{\cdot}_\B$ such that 
\begin{enumerate}
	\item $(\B,\Norm{\cdot}_\B)$ is a Banach space; 
	\item $\B$ is an $\M$-bimodule, i.e. $a,b\in\M$ and $x\in\B$ imply $axb\in\B$, and moreover, we have $\Norm{axb}_{\B}\leq \Norm{a}_\infty\Norm{b}_\infty\Norm{x}_\B$. 
\end{enumerate}
For example, the noncommutative $L^p$-space $L^p(\M)$, the Orlicz space $L^\Phi(\M)$, the spaces $L^1+L^\infty(\M)$ and $L^1\cap L^\infty(\M)$ are all Banach $\M$-bimodules. 
It is easy to see that $x\in\B$ implies $|x|\in\B$ and $x^*\in \B$. 
\begin{lemma}
	\label{lem:Sobolev}
	Suppose $I\subset\R$ is a nondegenerate interval, $t_0\in I$, $0<l\leq|I|$. Then for every function $F\in C^1(I;\B)$ with $F^*=F$, we have 
	\begin{equation*}
		F(t_0) \leq l^{-1/2}\bigg(\int_I|F(t)|^2dt\bigg)^{1/2}+l^{1/2}\bigg(\int_I|F'(t)|^2dt\bigg)^{1/2} 
	\end{equation*}
	with respect to the partial order in $L^0(\M)_+$.
\end{lemma}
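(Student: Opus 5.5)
The plan is to reduce the statement to an operator-valued Cauchy--Schwarz inequality, applied to the classical proof of the one-dimensional Sobolev embedding (average, then use the fundamental theorem of calculus).

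\emph{Step 1 (representation).} Since $0<l\leq|I|$ and $t_0\in I$, choose a closed subinterval $J\subset I$ with $|J|=l$ and $t_0\in J$. For $t\in J$ the fundamental theorem of calculus for $C^1(I;\B)$ functions gives $F(t_0)=F(t)-\int_{t_0}^{t}F'(u)\,du$. Averaging in $t$ over $J$ and applying Fubini, I obtain
\[
F(t_0)=\int_I h_1(t)F(t)\,dt+\int_I h_2(u)F'(u)\,du,
\]
where $h_1=l^{-1}\chi_J$ and $h_2$ is a real function supported in $J$. Explicitly, $h_2(u)=-l^{-1}|\{t\in J: t\geq u\}|$ for $u\geq t_0$, and $h_2(u)=l^{-1}|\{t\in J:t\leq u\}|$ for $u<t_0$. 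In both cases $|h_2|\leq 1$, so $\int h_2^2\leq l$. We also have $\int h_1^2=l^{-1}$. Since $F^*=F$, the derivative $F'$ is self-adjoint too. Hence both operators $X_1=\int h_1F$ and $X_2=\int h_2F'$ are self-adjoint elements of $\B$ (Bochner integrals).

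\emph{Step 2 (operator Cauchy--Schwarz).} For a real $h\in L^2(I)$ and a self-adjoint $G$, I will show
\[
\Big(\int h G\Big)^2\leq \int h^2\cdot\int G^2 .
\]
The argument is to expand the positive double integral $\iint_{I\times I}\big(h(s)G(t)-h(t)G(s)\big)^*\big(h(s)G(t)-h(t)G(s)\big)\,ds\,dt\geq0$. The two diagonal terms each give $\int h^2\int G^*G$. The two cross terms each give $-X^*X$, where $X=\int hG$. This yields $X^*X\leq \int h^2\int G^2$ in $L^0(\M)_+$.

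\emph{Step 3 (square roots).} Operator monotonicity of $x\mapsto x^{1/2}$, valid for $\tau$-measurable positive operators, then gives $|X|\leq(\int h^2)^{1/2}(\int |G|^2)^{1/2}$. For self-adjoint $X$ we have $X\leq|X|$. Applying this to $X_1$ with $G=F$, and to $X_2$ with $G=F'$, and adding the two inequalities gives exactly
\[
F(t_0)\leq l^{-1/2}\Big(\int_I|F|^2\Big)^{1/2}+l^{1/2}\Big(\int_I|F'|^2\Big)^{1/2}.
\]
Here I also use that the integrals over $J$ are dominated by those over $I$.

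\emph{Main obstacle.} The main difficulty is technical rather than conceptual. One must make sense of $\int_I|F|^2dt$ as an element of $L^0(\M)_+$ when $\B$ is only a Banach bimodule, and justify the expansion in Step 2 there. I would first reduce to the case where everything lies in a space where products are controlled. This can be done by approximating the integrals by Riemann sums, which is legitimate since $F\in C^1$. The inequality in Step 2 for finite sums $\sum h_iG_i$ is purely algebraic, via the same symmetrization. One then passes to the limit in the measure topology. Operator monotonicity of the square root is closed under such limits, which completes the argument.
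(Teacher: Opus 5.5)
Your argument is correct, and its skeleton is the same as the paper's: pick a subinterval of length $l$ containing $t_0$, apply the fundamental theorem of calculus, average, apply Cauchy--Schwarz, then use operator monotonicity of the square root to pass from $J$ to $I$. The difference lies in where the absolute values enter.

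The paper bounds pointwise before integrating. It writes $F(t_0)\leq |F(s)|+\int_s^{t_0}|F'(u)|\,du$, which uses $\pm F\leq |F|$, $\pm F'\leq |F'|$ and the fact that integration against a positive measure preserves order. Averaging gives $F(t_0)\leq l^{-1}\int_{I_0}|F|+\int_{I_0}|F'|$. The paper then quotes the operator H\"older inequality of Lemma \ref{lem:Holder for operator valued}, which is stated only for positive integrands with positive scalar weights.

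You instead keep the exact signed identity $F(t_0)=\int h_1F+\int h_2F'$. You prove the inequality $X^*X\leq \int h^2\int G^*G$ for real, sign-changing weights by symmetrization, and use $X\leq |X|$ only at the very end.

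Your route is self-contained, since it reproves the $p=2$ case of Lemma \ref{lem:Holder for operator valued}. It also avoids tracking the orientation of $\int_s^{t_0}$, at the cost of computing the kernel $h_2$. The paper's route is shorter because it relies on the cited H\"older lemma. Your Riemann-sum justification of the $L^0(\M)$-valued integrals addresses a point the paper leaves implicit.

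One trivial remark: $J$ need not be closed. If $I$ is open and $l=|I|$, no closed $J$ of length $l$ exists, but your argument never uses closedness.
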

\begin{proof}
	Choose $I_0$ such that $t_0\in I_0\subset I$ and $|I_0|=l$. For $t_0,s\in I_0$, we have 
	\begin{equation*}
		F(t_0)-F(s)=\int_s^{t_0}F'(u)du. 
	\end{equation*}
	Since $F$ and $F'$ are self-adjoint, we have 
	\begin{equation*}
		F(t_0)=F(s)+\int_s^{t_0}F'(u)du \leq |F(s)|+\int_s^{t_0}|F'(u)|du. 
	\end{equation*}
	Taking averages over $s\in I_0$ gives 
	\begin{equation*}
		F(t_0)\leq \frac1{l}\int_{I_0}|F(s)|ds+\int_{I_0}|F'(u)|du. 
	\end{equation*}
	By the operator-valued H\"older's inequality (see Lemma \ref{lem:Holder for operator valued})
	\begin{equation*}
		F(t_0)\leq l^{-1/2}\left(\int_{I_0}|F(s)|^2ds\right)^{1/2}+l^{1/2}\left(\int_{I_0}|F'(u)|^2du\right)^{1/2}. 
	\end{equation*}
	Finally, note that $\int_{I_0}|F(s)|^2ds\leq \int_{I}|F(s)|^2ds$ implies 
	\begin{equation*}
		\left(\int_{I_0}|F(s)|^2ds\right)^{1/2}\leq \left(\int_{I}|F(s)|^2ds\right)^{1/2}. 
	\end{equation*}
	This proves the lemma. 
\end{proof}

\par We next consider an operator-valued Fourier integral operator. 
For $f:\R^n\to\mathcal{S}_{\M}^+$ with compact support, define 
\begin{equation*}
	T_\lambda(f)(\xi)=\int_{\R^n}e^{i\lambda\Phi(x,\xi)}\psi(x,\xi)f(x)dx, 
\end{equation*}
where $\psi$ is a fixed smooth function with compact support in $x$ and $\xi$, and $\Phi$ is a real-valued smooth function on $\R^{2n}$ satisfying 
\begin{equation}
	\label{eq:mix Hessian neq 0}
	\det\left(\left(\frac{\partial^2\Phi(x,\xi)}{\partial x_i \partial \xi_j}\right)_{1\leq i,j\leq n}\right)\neq 0 \quad \text{ on support of } \psi. 
\end{equation}
\begin{lemma}
	\label{lem:oscillatory integral operator estimate}
	Let $T_\lambda$ be the operator defined above, and suppose that the function $\Phi$ satisfies the condition (\ref{eq:mix Hessian neq 0}). Then $T_\lambda$ satisfies 
	\begin{equation*}
		\Norm{T_\lambda(f)}_{L^2(\A^n)}\leq A\lambda^{-n/2}\Norm{f}_{L^2(\A^n)}. 
	\end{equation*}
\end{lemma}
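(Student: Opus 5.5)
The plan is to reduce Lemma \ref{lem:oscillatory integral operator estimate} to the scalar H\"ormander estimate through a $TT^*$ argument. The operator-valued nature of $f$ enters only through the $L^2(\M)$ inner product, which turns every step into a scalar kernel computation. By the operator-valued Plancherel framework in Appendix A, we have $L^2(\A^n)=L^2(\R^n;L^2(\M))$, a Hilbert space with inner product $\langle f,g\rangle=\int\tau(g(x)^*f(x))\,dx$. Since $T_\lambda$ acts only in the spatial variable with a scalar kernel $e^{i\lambda\Phi(x,\xi)}\psi(x,\xi)$, it is of the form $T\otimes\Id_{L^2(\M)}$ on the Hilbert tensor product $L^2(\R^n)\otimes L^2(\M)$. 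Hence its operator norm equals that of the scalar operator $T$ on $L^2(\R^n)$. This already reduces the lemma to the classical result in \cite[Chapter IX, Section 1]{Stein1993}. For completeness, however, I would run the argument directly at the vector-valued level.

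\textbf{Step 1 (kernel of $T_\lambda^*T_\lambda$).} Write
\[ \Norm{T_\lambda f}_{L^2(\A^n)}^2=\int\int K_\lambda(x,y)\,\tau\big(f(y)^*f(x)\big)\,dx\,dy, \]
where
\[ K_\lambda(x,y)=\int e^{i\lambda(\Phi(x,\xi)-\Phi(y,\xi))}\psi(x,\xi)\overline{\psi(y,\xi)}\,d\xi. \]
Since $|\tau(f(y)^*f(x))|\le\Norm{f(x)}_{L^2(\M)}\Norm{f(y)}_{L^2(\M)}$, the bound $\Norm{T_\lambda f}^2\le\int\int|K_\lambda(x,y)|\,F(x)F(y)\,dx\,dy$ follows with $F=\Norm{f(\cdot)}_{L^2(\M)}$. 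Schur's lemma (Lemma \ref{lem:Schur}, in its scalar form) then reduces everything to proving
\[ \sup_x\int|K_\lambda(x,y)|\,dy\lesssim\lambda^{-n}. \]

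\textbf{Step 2 (decay of $K_\lambda$).} Using a partition of unity, I localize $\psi$ to small balls on which the mixed Hessian stays close to a fixed nonsingular matrix. Then, by the mean value theorem,
\[ |\nabla_\xi(\Phi(x,\xi)-\Phi(y,\xi))|\ge c|x-y|. \]
Integrating by parts $N$ times in $\xi$ with the operator $L=\frac{\nabla_\xi\Phi(x,\xi)-\nabla_\xi\Phi(y,\xi)}{i\lambda|\nabla_\xi\Phi(x,\xi)-\nabla_\xi\Phi(y,\xi)|^2}\cdot\nabla_\xi$ gives
\[ |K_\lambda(x,y)|\lesssim_N(1+\lambda|x-y|)^{-N}. \]
Taking $N=n+1$ and integrating in $y$ yields $\lesssim\lambda^{-n}$. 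Finitely many localized pieces are summed by the triangle inequality, and the cross terms between different pieces are handled by the same Schur bound, as in the scalar proof.

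The main obstacle is only the bookkeeping in the localization step, namely ensuring that the phase difference has a nondegenerate gradient uniformly on each piece. This is identical to the scalar case. The noncommutative content is entirely absorbed by the Cauchy--Schwarz bound on $\tau(f(y)^*f(x))$, so no positivity or order-theoretic arguments are needed.
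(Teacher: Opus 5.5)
Your argument is correct and is essentially the paper's: a $TT^*$-type argument (you use $T_\lambda^*T_\lambda$ rather than $T_\lambda T_\lambda^*$, which is immaterial because the mixed-Hessian condition is symmetric in the two variables), the kernel bound $(1+\lambda|x-y|)^{-N}$ obtained by localization and integration by parts as in Stein, and Schur's lemma. Your Cauchy--Schwarz step $|\tau(f(y)^*f(x))|\leq\Norm{f(x)}_{L^2(\M)}\Norm{f(y)}_{L^2(\M)}$ is an explicit version of the ``Banach-valued Schur lemma'' that the paper invokes without detail, and your alternative remark that $T_\lambda$ acts as $T\otimes\Id$ on $L^2(\R^n)\otimes L^2(\M)$ gives the same reduction to the scalar estimate.
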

\begin{proof}
	We use the $TT^*$ method. Write 
	\begin{equation*}
		(T_\lambda T_\lambda^* f)(\xi)=\int_{\R^n}K_{\lambda}(\xi,\eta)f(\eta)d\eta, 
	\end{equation*}
	where the kernel is 
	\begin{equation*}
		K_{\lambda}(\xi,\eta)=\int_{\R^n}e^{i\lambda(\Phi(x,\xi)-\Phi(x,\eta))}\psi(x,\xi)\overline{\psi}(x,\eta)dx. 
	\end{equation*}
	According to \cite[Page 379]{Stein1993}, for $N>0$, the kernel satisfies 
	\begin{equation*}
		|K_\lambda(\xi,\eta)|\lesssim_N \frac{1}{(1+\lambda|\xi-\eta|)^N}. 
	\end{equation*}
	By the Banach-valued version of Schur's lemma, the operator $T_\lambda T_\lambda^*$ has norm bounded by 
	\begin{equation*}
		A'\int_{\R^n}\frac{1}{(1+\lambda|\xi|^N)}d\xi=A\lambda^{-n}. 
	\end{equation*}
	The lemma is proved. 
\end{proof}
As a corollary, we have the following lemma. 
\begin{lemma}
	\label{lem:T_lambda estimate}
	Let 
	\begin{equation*}
		T_\lambda(f)=\int_{\R^{n+1}}e^{i\lambda y_0\Phi(x,y)}f(y)\psi(x,y,y_0)dydy_0. 
	\end{equation*}
	Here $\psi$ is a fixed function on $\R^n\times\R^n\times\R^1$ that has compact support, whose projection on $\R^1$ does not contain the point $y_0\neq0$. The function $\Phi(x,y)$ satisfies 
	\begin{equation}
		\label{eq:rotcurv(Phi) neq 0}
		\operatorname{rot curv}(\Phi)\neq 0 \quad \text{ for all } (x,y)\in \supp(\psi). 
	\end{equation}
	Then 
	\begin{equation}
		\label{eq:T_lambda(f) estimate}
		\Norm{T_\lambda(f)}_{L^2(\A^n)}\leq A\lambda^{-(n+1)/2}\Norm{f}_{L^2(\A^n)}. 
	\end{equation}
\end{lemma}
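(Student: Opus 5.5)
The plan is to reduce Lemma \ref{lem:T_lambda estimate} to Lemma \ref{lem:oscillatory integral operator estimate} in $n+1$ variables, following the classical argument in \cite[Chapter XI, Section 3]{Stein1993}. Set $\tilde y=(y,y_0)\in\R^{n+1}$ and $\tilde\Phi(x,\tilde y)=y_0\Phi(x,y)$. Then $T_\lambda$ is an oscillatory integral operator in $\tilde y$ with phase $\lambda\tilde\Phi$, and we need $n+1$ dual variables on the output side. To get them, introduce an auxiliary variable $x_0$ and consider
\[
\tilde T_\lambda(F)(x,x_0)=\int_{\R^{n+1}}e^{i\lambda y_0(\Phi(x,y)+x_0)}\psi(x,y,y_0)\chi(x_0)F(y,y_0)\,dy\,dy_0 ,
\]
where $\chi$ is a fixed smooth cutoff. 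Put $\tilde\Phi(\tilde x,\tilde y)=y_0(\Phi(x,y)+x_0)$ with $\tilde x=(x,x_0)$.

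The key computation is the mixed Hessian in $(\tilde x,\tilde y)$. Its entries are $\partial_{x_0}\partial_{y_0}\tilde\Phi=1$, $\partial_{x_0}\partial_{y_j}\tilde\Phi=0$, $\partial_{x_i}\partial_{y_0}\tilde\Phi=\Phi_{x_i}$ and $\partial_{x_i}\partial_{y_j}\tilde\Phi=y_0\Phi_{x_iy_j}$. Row operations reduce its determinant to $y_0^{n}$ times the bordered determinant $\operatorname{rotcurv}(\Phi+x_0)$ (up to sign), evaluated at $\Phi+x_0=0$. There this coincides with $\operatorname{rotcurv}(\Phi)$ in form, since the top-left entry becomes $0$ after the shift. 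So the determinant is nonzero on the region where $\Phi+x_0=0$, using (\ref{eq:rotcurv(Phi) neq 0}) and $y_0\neq0$ on the support. Away from that region one must localize. With the cutoff $\chi(x_0)$ chosen so that $x_0$ ranges over a small neighborhood of $-\Phi(x,y)$, continuity keeps the mixed Hessian nondegenerate on a slightly enlarged support.

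Lemma \ref{lem:oscillatory integral operator estimate} in dimension $n+1$ then gives $\|\tilde T_\lambda F\|_{L^2}\lesssim\lambda^{-(n+1)/2}\|F\|_{L^2}$. That lemma's $TT^*$ proof uses only scalar kernel bounds and the Banach-valued Schur lemma, so it transfers verbatim to $\A^{n+1}$.

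To return to $T_\lambda$, apply this estimate to $F(y,y_0)=f(y)$, localized by the $y_0$-support of $\psi$. Then note that $T_\lambda f(x)=\tilde T_\lambda F(x,0)$ up to cutoffs. Setting $x_0=0$ needs a trace-type argument, which I would do as follows. The dependence on $x_0$ is only through $e^{i\lambda y_0x_0}$, so
\[
\tilde T_\lambda F(x,x_0)=\int e^{i\lambda y_0x_0}G(x,y_0)\,dy_0
\]
for some $G$. An alternative is to build the cutoff in $x_0$ into $\psi$ via a partition of unity and absorb it. Stein's actual route avoids restriction entirely: he treats $y_0$ as an extra integration variable and applies the $n$-dimensional lemma to the family $y\mapsto y_0\Phi(x,y)$ for fixed $y_0$, with the extra factor $\lambda^{-1/2}$ coming from integration in $y_0$.

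The main obstacle is extracting this last $\lambda^{-1/2}$ gain correctly. I would try a $TT^*$ argument directly on $T_\lambda$. The kernel of $T_\lambda T_\lambda^*$ in $(y,y_0),(y',y_0')$ is
\[
\int e^{i\lambda(y_0\Phi(x,y)-y_0'\Phi(x,y'))}\psi\bar\psi\,dx .
\]
The goal is to show it decays like $(1+\lambda|(y,y_0)-(y',y_0')|)^{-N}$, and the rotational curvature condition is exactly what makes the map $x\mapsto(y_0\nabla_x\Phi,\Phi)$ locally injective with nonsingular differential, which gives this decay. Schur's lemma over $\R^{n+1}$ then yields $\lambda^{-(n+1)}$ for $T_\lambda T_\lambda^*$ and hence (\ref{eq:T_lambda(f) estimate}). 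Here the $L^2$ norm on the right is taken over $(y,y_0)$; since $y_0$ ranges in a compact set, this is equivalent for $f$ independent of $y_0$.
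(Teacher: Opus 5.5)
Your overall plan (lift to $n+1$ variables, apply Lemma \ref{lem:oscillatory integral operator estimate}, then restrict) matches the paper's, but both concrete steps fail as you set them up. First, the Hessian computation is wrong. For $\tilde\Phi=y_0(\Phi(x,y)+x_0)$ the $x_0$-row of the mixed Hessian is $(1,0,\dots,0)$, so cofactor expansion gives exactly $y_0^n\det(\Phi_{x_iy_j})$. The border entries $\Phi$ and $\Phi_{y_j}$ of $\operatorname{rotcurv}(\Phi)$ never occur in your matrix, so no row operation can produce them. Moreover, $\operatorname{rotcurv}(\Phi)\neq0$ does not imply $\det(\Phi_{x_iy_j})\neq0$. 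For $\Phi=y_n-x_n-|y'-x'|^2$ (the graph situation of \eqref{eq:Phi_t graph form}) one has $\Phi_{x_ny_j}\equiv0$ for all $j$, while $\operatorname{rotcurv}(\Phi)=\pm2^{n-1}$. Hence Lemma \ref{lem:oscillatory integral operator estimate} does not apply to your $\tilde T_\lambda$. The paper, following Stein, uses the multiplicative lift $x_0y_0\Phi(x,y)$ with $x_0$ cut off near $1$. Its mixed Hessian has $x_0$-row $(\Phi,y_0\Phi_{y_1},\dots,y_0\Phi_{y_n})$ and $x_i$-rows $(x_0\Phi_{x_i},x_0y_0\Phi_{x_iy_1},\dots,x_0y_0\Phi_{x_iy_n})$. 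Its determinant is therefore $(x_0y_0)^n\operatorname{rotcurv}(\Phi)$ on the whole support, with no localization to $\Phi=0$.

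Second, the restriction step, which you leave open, is exactly where the additive lift loses. There $\partial_{x_0}$ brings down $i\lambda y_0$, so $x_0\mapsto\tilde T_\lambda F(x,x_0)$ oscillates at frequency $\lambda$. Lemma \ref{lem:Sobolev}, optimized at $l=\lambda^{-1}$, then costs $\lambda^{1/2}$ and leaves only $\lambda^{-n/2}$. The missing idea is the identity $i\lambda y_0\Phi\,e^{i\lambda x_0y_0\Phi}=\frac{y_0}{x_0}\,\partial_{y_0}e^{i\lambda x_0y_0\Phi}$. Integrating by parts in $y_0$ is legitimate, since $f$ is independent of $y_0$ and $\psi$ has compact $y_0$-support. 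It shows that $\partial_{x_0}\tilde T_\lambda f$ is an operator of the same form with no factor $\lambda$. Lemma \ref{lem:Sobolev} with $l$ of order one then passes the bound $\lambda^{-(n+1)/2}$ to $T_\lambda f=\tilde T_\lambda f(\cdot,1)$ without loss.

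Your fallbacks cannot replace this. The kernel you write is an integral over $x\in\R^n$ only, and it cannot decay in all $n+1$ directions of $(y,y_0)-(y',y_0')$. Indeed, testing $T_\lambda^*$ on an $L^2$-normalized bump of width $\lambda^{-1}$ shows that $T_\lambda$ has norm $\gtrsim\lambda^{-n/2}$ from $L^2(\R^{n+1})$ to $L^2(\R^n)$. So any argument for general $F(y,y_0)$ that uses the independence of $y_0$ only at the end must fail. Likewise, applying the $n$-dimensional lemma for fixed $y_0$ again requires $\det(\Phi_{x_iy_j})\neq0$, and integrating over a compact $y_0$-range gives no extra $\lambda^{-1/2}$. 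That is not Stein's argument, which is the multiplicative lift plus restriction described above.
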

\begin{proof}
	We define a new operator $\tilde{T}_\lambda$, mapping (operator-valued) functions on $\R^n$ to functions on $\R^{n+1}$, by 
	\begin{equation}
		\label{eq:tilde T_lambda f}
		(\tilde{T}_{\lambda}f)(x,x_0)=\int_{\R^{n+1}}e^{i\lambda x_0y_0\Phi(x,y)}f(y)\tilde{\psi}(x,y,x_0,y_0)dydy_0. 
	\end{equation}
	Here $\tilde{\psi}(x,y,x_0,y_0)=\eta(x_0)\psi(x,y,y_0)$, with $\eta$ a smooth function of $x_0$, supported in $1/2\leq x_0\leq 3/2$, and so that $\eta(x_0)=1$.
	Now let $\tilde{\Phi}(x,y,x_0,y_0)=x_0y_0\Phi(x,y)$. A direct calculation gives 
	\begin{equation*}
		\det\left(\frac{\partial^2\tilde{\Phi}}{\partial x_i\partial y_j}\right)_{0\leq i,j\leq n}=(x_0y_0)^n\operatorname{rotcurv}(\Phi). 
	\end{equation*} 
	By our assumption (\ref{eq:rotcurv(Phi) neq 0}), the phase function $\tilde{\Phi}$ has a nonvanishing Hessian in the support of $\tilde{\psi}$. 
	Then it follows from Lemma \ref{lem:oscillatory integral operator estimate} (applied in $n+1$ dimensions) that 
	\begin{equation*}
		\|{\tilde{T}_\lambda(f)}\|_{L^2(\A^{n+1})}\leq A\lambda^{-(n+1)/2}\Norm{f}_{L^2(\A^n)}. 
	\end{equation*}
	Next apply the same reasoning to $\frac{\partial}{\partial x_0}T_\lambda f(x,x_0)$. In fact, differentiation with respect to $x_0$ in (\ref{eq:tilde T_lambda f}) merely brings down a factor of $i\lambda y_0\Phi(x,y)$, and note that 
	\begin{equation*}
		i\lambda y_0\Phi(x,y)e^{i\lambda x_0y_0\Phi(x,y)}=\frac{y_0}{x_0}\frac{\partial}{\partial y_0}(e^{i\lambda x_0y_0\Phi(x,y)}).
	\end{equation*}
	Substituting this into \ref{eq:tilde T_lambda f} and integrating by parts, we obtain an integral of the same kind as $\tilde{T}_\lambda$, and again by Lemma \ref{lem:oscillatory integral operator estimate}, 
	\begin{equation*}
		\Big\|\frac{\partial}{\partial x_0}\tilde{T}_\lambda(f)\Big\|_{L^2(\A^{n+1})}\leq A\lambda^{-(n+1)/2}\Norm{f}_{L^2(\A^n)}. 
	\end{equation*}
	Now we have $\tilde{T}_\lambda(f)(x,1)=T_\lambda(f)(x)$. Splitting $\tilde{T}_\lambda(f)$ and $T_\lambda(f)$ into real and imaginary parts gives 
	\begin{equation*}
		\tilde{T}_\lambda(f)=\tilde{T}^{(1)}_\lambda(f)+i\tilde{T}^{(2)}_\lambda(f), \qquad T_\lambda(f)=T_\lambda^{(1)}(f)+iT_\lambda^{(2)}(f). 
	\end{equation*}
	It suffices to show that (\ref{eq:T_lambda(f) estimate}) holds with $T_\lambda$ replaced by each $T_\lambda^{(i)}$ ($i=1,2$). Without loss of generality, we may assume $f$ is self-adjoint, hence both $T_\lambda(f)$ and $\tilde{T}_\lambda(f)$ are self-adjoint. Then by Lemma \ref{lem:Sobolev}, for $i=1,2$, we have 
	\begin{multline*}
		T_\lambda^{(i)}(f)(x)=\tilde{T}_\lambda^{(i)}(f)(x,1) \\
		\leq \left(\int_{1/2}^{3/2}|\tilde{T}_\lambda^{(i)}(f)(x,x_0)|^2dx_0\right)^{1/2}+\left(\int_{1/2}^{3/2}|\frac{\partial}{\partial x_0}\tilde{T}_\lambda^{(i)}(f)(x,x_0)|^2dx_0\right)^{1/2} \\
		:=(I_\lambda)+(II_\lambda), 
	\end{multline*}
	and the same inequality also holds with $T_\lambda^{(i)}(f)(x)$ replaced by $-T_\lambda^{(i)}(f)(x)$. 
	Since both sides are positive operators, we have 
	\begin{equation*}
		\begin{split}
			\|T_\lambda^{(i)}(f)\|_{L^2(\A^n)}
			&\leq \Norm{(I_\lambda)}_{L^2(\A^n)}+\Norm{(II_\lambda)}_{L^2(\A^n)} \\
			&=\left[\tau\int_{\R^n}\int_{1/2}^{3/2} |\tilde{T}_\lambda^{(i)}(f)(x,x_0)|^2dx_0dx\right]^{1/2} \\
			&\qquad\qquad +\left[\tau\int_{\R^n}\int_{1/2}^{3/2} |\frac{\partial}{\partial x_0}\tilde{T}_\lambda^{(i)}(f)(x,x_0)|^2dx_0dx\right]^{1/2} \\
			&\leq \|\tilde{T}_\lambda^{(i)}(f)\|_{L^2(\A^{n+1})}+\Norm{\frac{\partial}{\partial x_0}\tilde{T}_\lambda^{(i)}(f)}_{L^2(\A^{n+1})} \\
			&\lesssim \lambda^{-\frac{n+1}{2}}\Norm{f}_{L^2(\A^n)}. 
		\end{split}
	\end{equation*}
	This completes the proof. 
\end{proof}

\subsection{Proof when $t$ is not small}
In the next two subsections, we complete the proof of Theorem \ref{thm:A_t bounded for p>n/(n-1)}. We first work with smooth compactly supported functions with coefficients in $\mathcal{S}_\M$. By splitting the amplitude into real and imaginary parts, we may assume it is real. All constants below may depend on the fixed phase and amplitude, and in this subsection also on $t_0$. 
\par Fix $0<t_0<1$. We first control the averages $A_t$ for $t_0\leq t\leq1$. Since only the amplitude on $\Phi_t=0$ contributes to the surface integral, we may multiply it by a smooth cutoff supported in a neighborhood where the rotational curvature is bounded away from zero. This leaves $A_t$ unchanged. 
Choose $\alpha$ to be a real even smooth function on $\R^1$ so that $\alpha(\mu)=1$ for $|\mu|\leq1$ and $\alpha(\mu)=0$ for $|\mu|\geq 2$. Then 
\begin{equation*}
	\begin{split}
		\delta(u)=\int_{-\infty}^{\infty}e^{2\pi iu\mu}d\mu=\int_{-\infty}^{\infty}\alpha(\mu)e^{2\pi iu\mu}d\mu+\sum_{j=1}^\infty \int_{-\infty}^{\infty}\beta(2^{-j}\mu)e^{2\pi iu\mu}d\mu, 
	\end{split}
\end{equation*}
where $\beta(\mu)=\alpha(\mu)-\alpha(2\mu)$ is supported in $1/2\leq|\mu|\leq2$. Now write 
\begin{equation*}
	(A_t^jf)(x)=\int_\R\int_{\R^n}\beta(2^{-j}\mu)e^{2\pi i\mu\Phi_t(x,y)}f(y)\psi_t(x,y)dyd\mu, \quad \text{ for } j\geq1; 
\end{equation*} 
and for $j=0$ define 
\begin{equation*}
	(A_t^0f)(x)=\int_\R\int_{\R^n}\alpha(\mu)e^{2\pi i\mu\Phi_t(x,y)}f(y)\psi_t(x,y)dyd\mu.
\end{equation*}
Then we have 
\begin{equation*}
	(A_tf)(x)=\sum_{j=0}^\infty (A_t^jf)(x). 
\end{equation*}
The change of variables $y_0=2^{-j}\mu$ gives, for $j\geq1$, 
\begin{equation*}
	(A_t^jf)(x)=2^j\int_{\R^{n+1}}e^{i\lambda y_0\Phi_t(x,y)}f(y)\psi^*_t(x,y,y_0)dydy_0, \quad \lambda=2\pi\cdot 2^j, 
\end{equation*}
with $\psi^*_t(x,y,y_0)=t^{-n}\psi(t,x,(x-y)/t)\beta(y_0)$. The low-frequency term $j=0$ will be estimated separately below. 
Therefore Lemma \ref{lem:T_lambda estimate} guarantees that
\begin{equation}
	\label{eq:A_t^j L2 bound}
	\|A_t^j(f)\|_{L^2(\A^n)}\leq A\cdot 2^j2^{-\frac{n+1}{2}j}\Norm{f}_{L^2(\A^n)}, \quad t_0\leq t\leq 1. 
\end{equation} 
The same argument applied to $\frac{\partial}{\partial t}A_t^j(f)$ gives 
\begin{equation}
	\label{eq:A_t^j derivative L2 bound}
	\Big\| \frac{\partial}{\partial t}A_t^j(f) \Big\|_{L^2(\A^n)} \leq A\cdot2^{2j}\cdot2^{-\frac{n+1}{2}j}\Norm{f}_{L^2(\A^n)}, \quad t_0\leq t\leq 1. 
\end{equation}
For self-adjoint $f$, the even real cutoff $\beta$ and the real amplitude imply that $A_t^j(f)$ is self-adjoint. Now by Lemma \ref{lem:Sobolev}, for all $t$ with $t_0\leq t\leq1$, we have 
\begin{equation}
	\label{eq:A_t^j(f)<...}
	A_t^j(f) \leq l^{-\frac12}\left(\int_{t_0}^1|A_t^jf|^2dt\right)^{\frac12}+l^{\frac12}\left(\int_{t_0}^1\Big|\frac{\partial}{\partial t}(A_t^jf)\Big|^2dt\right)^{\frac12}, 
\end{equation}
and 
\begin{equation}
	\label{eq:-A_t^j(f)<...}
	-A_t^j(f) \leq l^{-\frac12}\left(\int_{t_0}^1|A_t^jf|^2dt\right)^{\frac12}+l^{\frac12}\left(\int_{t_0}^1\Big|\frac{\partial}{\partial t}(A_t^jf)\Big|^2dt\right)^{\frac12}. 
\end{equation}
The $L^2(\A^n)$-norm of the right-hand side of the two inequalities (\ref{eq:A_t^j(f)<...}) and (\ref{eq:-A_t^j(f)<...}) is controlled by 
\begin{equation*}
	\begin{split}
		&\Norm{\text{RHS}}_{L^2(\A^n)} \leq l^{-\frac12}\bigg\|\left(\int_{t_0}^1|A_t^jf|^2dt\right)^{\frac12}\bigg\|_{L^2(\A^n)}+l^{\frac12}\bigg\|\left(\int_{t_0}^1\Big|\frac{\partial}{\partial t}(A_t^jf)\Big|^2dt\right)^{\frac12}\bigg\|_{L^2(\A^n)} \\
	    &\qquad = l^{-\frac12}\left(\tau\int_{\R^n}\int_{t_0}^1|A_t^jf(x)|^2dtdx\right)^{1/2}+l^{1/2}\left(\tau\int_{\R^n}\int_{t_0}^1\Big|\frac{\partial}{\partial t}(A_t^jf)(x)\Big|^2dtdx\right)^{\frac12} \\
		&\qquad \leq l^{-1/2}(1-t_0)^{1/2} A2^j2^{-\frac{n+1}{2}j}\Norm{f}_{L^2(\A^n)} + l^{1/2}(1-t_0)^{1/2} A2^{2j}2^{-\frac{n+1}{2}j}\Norm{f}_{L^2(\A^n)}, 
	\end{split}
\end{equation*}
where the last inequality follows from (\ref{eq:A_t^j L2 bound}) and (\ref{eq:A_t^j derivative L2 bound}). By taking $l=(1-t_0)2^{-j}$, we obtain 
\begin{equation*}
	\Norm{\text{RHS}}_{L^2(\A^n)} \leq A\cdot2^{-\frac{n-2}{2}j}\Norm{f}_{L^2(\A^n)}.
\end{equation*}
By (\ref{eq:A_t^j(f)<...}) and (\ref{eq:-A_t^j(f)<...}) and the definition of the noncommutative maximal norm, this immediately gives 
\begin{equation*}
	\Big\|\sup_{t_0\leq t\leq1}A_t^j(f)\Big\|_{L^2(\A^n)} \leq A\cdot2^{-\frac{n-2}{2}j}\Norm{f}_{L^2(\A^n)}.
\end{equation*}
This gives the desired $L^2$-estimate for self-adjoint $f$, and hence for general $f$ by decomposition into real and imaginary parts. 
\par Since $t$ is bounded away from $0$, we also have a crude $L^1$-estimate.
Indeed, integrating first in $\mu$, we can write
\begin{equation*}
	A_t^j(f)(x)=2^j\int_{\R^n}\check{\beta}(2^j\Phi_t(x,y))\psi_t(x,y)f(y)dy, \quad j\geq1.
\end{equation*}
Since $t\geq t_0$, the kernels are bounded by $A2^j\chi_K(x)\chi_K(y)$ for a fixed compact set $K\subset\R^n$, independent of $t$ and $j$. Thus, for self-adjoint $f$,
\begin{equation*}
	-A2^j\chi_K(x)\int_K|f(y)|dy
	\leq A_t^j(f)(x)\leq A2^j\chi_K(x)\int_K|f(y)|dy.
\end{equation*}
The common positive majorant on the right has $L^1(\A^n)$-norm at most $A'2^j\Norm{f}_{L^1(\A^n)}$. Decomposing a general $f$ into its real and imaginary parts gives
\begin{equation}
	\label{eq:A_t^j large t L1 maximal}
	\Big\|\sup_{t_0\leq t\leq1}A_t^j(f)\Big\|_{L^1(\A^n)}
	\lesssim 2^j\Norm{f}_{L^1(\A^n)}.
\end{equation}
The same kernel argument with $\check{\alpha}$ handles $j=0$, with a bound independent of $j$, on every $L^p(\A^n)$, $1\leq p\leq\infty$.

\par We now interpolate (\ref{eq:A_t^j large t L1 maximal}) with the $L^2$ maximal estimate above. For this we regard $f\mapsto\{A_t^j(f)\}_{t\in J}$ as a linear operator with values in $L^p(\A^n;\ell^\infty(J))$, where $J\subset[t_0,1]$ is finite. 
By the complex interpolation property recalled in Appendix A.2, for $1<p<2$ we have
\begin{equation*}
	\Big\|\sup_{t_0\leq t\leq1}A_t^j(f)\Big\|_{L^p(\A^n)}
	\lesssim_p 2^{j\delta}\Norm{f}_{L^p(\A^n)}, \qquad
	\delta=-(1-\theta)\frac{n-2}{2}+\theta,
\end{equation*}
where $1/p=(1-\theta)/2+\theta$. The bounds are uniform in $J$, so they pass to the whole parameter interval. This use of vector-valued interpolation replaces the measurable-choice argument in the classical proof; no pointwise supremum of operators is needed.
Since $\delta=1-n+n/p<0$ when $p>n/(n-1)$, the triangle inequality and summation in $j$ yield
\begin{equation}
	\label{eq:A_t large t Lp maximal}
	\Big\|\sup_{t_0\leq t\leq1}A_t(f)\Big\|_{L^p(\A^n)}
	\lesssim_p\Norm{f}_{L^p(\A^n)}, \quad \frac{n}{n-1}<p\leq2.
\end{equation}
Here the case $p=2$ follows directly from the $L^2$ estimates, since $n\geq3$.

\subsection{Proof when $t$ is near zero}
\par We adapt the small-scale argument in \cite[Chapter XI, Section 3.4]{Stein1993}. Choose $t_0>0$ sufficiently small. We first show that, uniformly for $0<t\leq t_0$ and $j\geq1$,
\begin{equation}
	\label{eq:A_t^j small t L2 bounds}
	\begin{split}
		\Norm{A_t^j(f)}_{L^2(\A^n)}&\lesssim 2^{-j(n-1)/2}\Norm{f}_{L^2(\A^n)}, \\
		\Norm{\partial_t A_t^j(f)}_{L^2(\A^n)}&\lesssim t^{-1}2^j2^{-j(n-1)/2}\Norm{f}_{L^2(\A^n)}.
	\end{split}
\end{equation}
If $K_t^j(x,y)$ denotes the kernel of $A_t^j$, dilation of both variables shows that $A_t^j$ has the same $L^2$ operator norm as the operator with kernel $t^nK_t^j(tx,ty)$. The latter kernel equals
\begin{equation*}
	2^j\int_\R e^{i\lambda y_0\Phi^t(tx,tx+y-x)}
	\psi(t,tx,x-y)\beta(y_0)dy_0, \quad \lambda=2\pi\cdot2^j.
\end{equation*}
The rotational curvature of $(x,y)\mapsto\Phi_t(tx,ty)$ is $t^{2n}\operatorname{rotcurv}(\Phi_t)(tx,ty)$. Hence the hypothesis of Theorem \ref{thm:A_t bounded for p>n/(n-1)} gives a uniform lower bound on the relevant support. After restricting the amplitude to a neighborhood of the hypersurface, all the required derivatives of the phase and amplitude are also uniformly bounded as $t\to0$.

\par Although $\psi(t,tx,x-y)$ has compact support in $x-y$, its support in $x$ after dilation by $t>0$ need not remain bounded. Thus we cannot apply Lemma \ref{lem:T_lambda estimate} directly for all $t$ near $0$. In other words, the inequality constants in (\ref{eq:A_t^j small t L2 bounds}) may depend on $t$. 
To overcome this difficulty, we need the following local-to-global argument. 
Take a fixed positive function $0\neq\eta\in C_c^\infty(\R^n)$ with $\supp(\eta)\subset B(0,r)$ (the ball of radius $r$ in $\R^n$). For $x_0\in\R^n$, let $\eta_{x_0}=\eta(x-x_0)$. For any operator $T$ on functions on $\R^n$ and any $x_0\in\R^n$, we define $T_{x_0}(f)=\eta_{x_0}T(f)$. 
\begin{lemma}
	\label{lem:local-global}
	Suppose $T$ is an integral operator on $\R^n$ with kernel $K(x,y)$, which satisfies 
	\begin{equation}
		\label{eq:K finite propagation}
		K(x,y)=0 \quad \text{ whenever } |x-y|>\rho, \ (\rho>0). 
	\end{equation}
	If 
	\begin{equation*}
		\|T_{x_0}(f)\|_{L^2(\A^n)}\leq C\Norm{f}_{L^2(\A^n)}, \quad \text{ uniformly in } x_0\in\R^n. 
	\end{equation*}
	Then we have 
	\begin{equation*}
		\|T(f)\|_{L^2(\A^n)} \lesssim_{\rho,\eta} \Norm{f}_{L^2(\A^n)}. 
	\end{equation*}
\end{lemma}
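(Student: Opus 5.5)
We prove Lemma \ref{lem:local-global} with a standard partition-of-unity argument that uses the finite propagation property (\ref{eq:K finite propagation}). Normalize $\eta$ so that $\int_{\R^n}\eta(x-x_0)^2dx_0=c_\eta>0$; this is a positive constant because $\eta\geq0$ and $\eta\not\equiv0$. For every $x$ we then have
\begin{equation*}
	|T(f)(x)|^2=\frac1{c_\eta}\int_{\R^n}\eta_{x_0}(x)^2|T(f)(x)|^2dx_0
	=\frac1{c_\eta}\int_{\R^n}|T_{x_0}(f)(x)|^2dx_0 ,
\end{equation*}
where the second equality holds because $\eta_{x_0}(x)$ is a real scalar. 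Applying $\tau$, integrating in $x$, and using Fubini's theorem, we get
\[
\Norm{T(f)}_{L^2(\A^n)}^2=c_\eta^{-1}\int_{\R^n}\Norm{T_{x_0}(f)}_{L^2(\A^n)}^2dx_0 .
\]
This is the only point where operator-valued functions enter. Everything reduces to scalar weights times $L^2(\M)$-norms, so no noncommutativity issues arise.

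Next we localize $f$. Since $\eta_{x_0}$ is supported in $B(x_0,r)$ and $K(x,y)=0$ when $|x-y|>\rho$, we have $T_{x_0}(f)=T_{x_0}(f\chi_{B(x_0,r+\rho)})$. The hypothesis then gives
\[
\Norm{T_{x_0}(f)}_{L^2(\A^n)}^2\leq C^2\int_{B(x_0,r+\rho)}\Norm{f(y)}_{L^2(\M)}^2dy .
\]
Integrating in $x_0$ and using Fubini once more, we obtain
\[
\int_{\R^n}\int_{\R^n}\chi_{\{|y-x_0|<r+\rho\}}\Norm{f(y)}_{L^2(\M)}^2\,dy\,dx_0=|B(0,r+\rho)|\,\Norm{f}_{L^2(\A^n)}^2 .
\]
Combining the two displays yields
\[
\Norm{T(f)}^2_{L^2(\A^n)}\leq C^2c_\eta^{-1}|B(0,r+\rho)|\,\Norm{f}_{L^2(\A^n)}^2 ,
\]
which is the claim, with a constant depending only on $\rho$, $r$, and $\eta$.

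The one point that needs care is justifying the localization identity $T_{x_0}f=T_{x_0}(f\chi_{B(x_0,r+\rho)})$ and the use of Fubini. Here $T$ is an integral operator acting on $\mathcal S_\M$-valued compactly supported functions, so $T(f)(x)=\int K(x,y)f(y)dy$ is defined pointwise. We therefore work first on this dense class and then extend $T$ by density once the bound is established. This handles both points without difficulty.
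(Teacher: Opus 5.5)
Your proof is correct and is essentially the paper's argument: localize $f$ to $B(x_0,r+\rho)$ using finite propagation, apply the uniform local bound, and integrate over $x_0$ with Fubini, which gives the factor $\|\eta\|_{L^2}^2$ on the left and $|B(0,r+\rho)|$ on the right. Your constant $C^2$ is the correct one; the paper's display writes $C$ at that step.
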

\begin{proof}
	Note that $T$ has kernel $K$ satisfying (\ref{eq:K finite propagation}), and this implies 
	\begin{equation*}
		T_{x_0}(f)(x)=\eta_{x_0}(x)T(f)(x)=\eta_{x_0}(x)T(f1_{B(x_0,r+\rho)})(x)=T_{x_0}(f1_{B(x_0,r+\rho)})(x). 
	\end{equation*}
	Therefore 
	\begin{equation*}
		\begin{split}
		\int_{\R^n}\tau(|\eta(x-x_0)|^2|Tf(x)|^2)dx
		&=\|T_{x_0}(f)\|_{L^2(\A^n)}^2=\|T_{x_0}(f1_{B(x_0,r+\rho)})\|_{L^2(\A^n)}^2 \\
		&\leq C\int_{|y-x_0|\leq \rho+r}\tau(|f(y)|^2)dy. 
		\end{split}
	\end{equation*}
	Integrating both sides of the preceding inequality over $x_0\in\R^n$, and then using Fubini's theorem, gives 
	\begin{equation*}
		\Norm{\eta}_{L^2(\R^n)}^2\Norm{Tf}_{L^2(\A^n)}^2 \leq C(\rho+r)^n\Norm{f}_{L^2(\A^n)}^2.
	\end{equation*}
	Since we assume $\eta\neq0$, the lemma is proved. 
\end{proof}
To see that the constant in the first estimate in (\ref{eq:A_t^j small t L2 bounds}) is independent of $t$ for all $t$ near $0$, we first apply Lemma \ref{lem:T_lambda estimate} to the localized operator $(A_t^j)_{x_0}$, given by $(A_t^j)_{x_0}(f)=\eta_{x_0}A_t^j(f)$. Since $\eta$ is fixed, we have 
\begin{equation*}
	\Norm{(A_t^j)_{x_0}(f)}_{L^2(\A^n)}\leq A\cdot2^{-j(n-1)/2}\Norm{f}_{L^2(\A^n)}, \quad \text{ for all } x_0\in\R^n, 
\end{equation*}
where $A$ is an absolute constant. Then Lemma \ref{lem:local-global} gives the desired estimate. 
\par For the second estimate in (\ref{eq:A_t^j small t L2 bounds}), differentiating the original kernel in $t$ brings down at most a factor $2^j/t$; after extracting this factor, the same localized argument applies to the resulting amplitudes. This proves the second estimate.

\medskip

\par Because of the factor $t^{-1}$, (\ref{eq:A_t^j small t L2 bounds}) cannot be used directly on the whole interval $(0,t_0]$. We use a local graph representation to separate the relevant frequencies. By a finite partition of unity in the variables $(t,x,(y-x)/t)$, and a rotation on each piece, we may represent the hypersurface $S_{x,t}$ by an equivalent defining function of the form
\begin{equation*}
	\tilde{\Phi}^t(x,y):=\phi_t(x,y')-y_n, \qquad y=(y',y_n),\quad y'\in\R^{n-1}, 
\end{equation*}
where $\phi_t(x,y')$ is smooth for $(t,x,y')\in[0,t_0]\times\R^n\times\R^{n-1}$. The corresponding scaled family is 
\begin{equation}
	\label{eq:Phi_t graph form}
	\tilde{\Phi}_t(x,y)=\phi_t'\Big(x,\frac{y'-x'}{t}\Big)-\frac{y_n-x_n}{t},
	\qquad y=(y',y_n),\quad y'\in\R^{n-1},
\end{equation}
where $\phi_t'(x,z')=\phi_t(x,z'-x')+x_n$ is smooth up to $t=0$ and bounded on the relevant support. 
The change of defining function only changes the smooth density in the surface integral, which we absorb into the amplitude. On the hypersurface, $y_n$ is determined by $(t,x,y')$. Consequently we can choose an amplitude independent of $y_n$,
\begin{equation*}
	\tilde{\psi}_t(x,y)=t^{-n}\tilde{\psi}\Big(t,x,\frac{x'-y'}{t}\Big),
\end{equation*}
with $\tilde{\psi}$ smooth and compactly supported, so that
\begin{equation*}
	A_t(f)(x)=\int_{\R^n}f(y)\tilde{\psi}_t(x,y)\delta(\tilde{\Phi}_t(x,y))dy.
\end{equation*}
We prove the estimates on one such piece; summing the finitely many pieces gives the original operator. We continue to use $\Phi_t$ and $A_t$ for this local representation.

\par Define $\tilde{A}_t^j$ by the same decomposition as in Section 7.2, but with $\tilde{\Phi}_t$ in place of $\Phi_t$ and $\tilde{\psi}_t$ in place of $\psi_t$. Thus
\begin{equation}
	\label{eq:tilde A_t^j graph def}
	\tilde{A}_t^j(f)(x)=\int_\R\int_{\R^n}
	\beta(2^{-j}\mu)e^{2\pi i\mu\Phi_t(x,y)}\tilde{\psi}_t(x,y)f(y)dyd\mu,
	\quad j\geq1,
\end{equation}
and $\tilde{A}_t^0$ is defined using $\alpha(\mu)$. We have $A_t=\sum_{j\geq0}\tilde{A}_t^j$ on smooth functions. (The reader should note that $\tilde{A}_t^j(f)\neq A_t^j(f)$ in general.) 
To compare these operators with compactly supported kernels, choose $\psi'\in C_c^\infty(\R)$ equal to $1$ on $[-C,C]$, with $C$ sufficiently large, and put
\begin{equation*}
	\psi_t(x,y)=\tilde{\psi}_t(x,y)\psi'\Big(\frac{y_n-x_n}{t}\Big).
\end{equation*}
This does not change the surface integral. For the remainder of this subsection, $A_t^j$ denotes the pieces formed with this compactly supported amplitude and the defining function (\ref{eq:Phi_t graph form}). Their $L^2$ estimates are still given by (\ref{eq:A_t^j small t L2 bounds}).

\par The kernel of $\tilde{A}_t^j-A_t^j$ is
\begin{equation*}
	2^j\check{\beta}(2^j\Phi_t(x,y))\tilde{\psi}_t(x,y)
	\left(1-\psi'\Big(\frac{y_n-x_n}{t}\Big)\right).
\end{equation*}
On its support, $|x'-y'|\lesssim t$ and $|\Phi_t(x,y)|\gtrsim1+|y_n-x_n|/t$. The rapid decay of $\check{\beta}$ therefore bounds this kernel, for any sufficiently large $M$ ($M>n$), by
\begin{equation}
	\label{eq:tilde A error kernel}
	A_{M}2^{-j(M-1)}t^{-n}\Big(1+\frac{|x-y|}{t}\Big)^{-M}.
\end{equation}
Its $t$-derivative satisfies the same bound with an additional factor $t^{-1}$, by differentiating the displayed kernel and using further decay of $\check{\beta}$ and its derivative. Schur's lemma then shows that (\ref{eq:A_t^j small t L2 bounds}) holds for $\tilde{A}_t^j$ as well.
The positive kernels in (\ref{eq:tilde A error kernel}) are controlled by a summable combination of normalized ball averages. The strong type $(p,p)$ estimate in \cite[Theorem 4.1]{HongLiaoWang2021}, applied to Euclidean balls in $\R^n$ for $1<p<\infty$, together with the trivial $L^\infty$ bound, gives, by an argument similar to the proof of Lemma \ref{lem:K(1-phi) estimate},
\begin{equation}
	\label{eq:tilde A error maximal}
	\Big\|\sup_{0<t\leq t_0}(\tilde{A}_t^j-A_t^j)(f)\Big\|_{L^p(\A^n)}
	\lesssim_{p,M}2^{-jM}\Norm{f}_{L^p(\A^n)}, \quad 1<p\leq\infty.
\end{equation}
Here and below, kernel majorization is first applied to self-adjoint $f$: the corresponding positive averages of $|f|$ bound the operator from both sides. We then use the characterization of the maximal norm in Appendix A.2 and decompose general $f$ into real and imaginary parts.
The same argument with $\check{\alpha}$ gives
\begin{equation}
	\label{eq:tilde A zero maximal}
	\Big\|\sup_{0<t\leq t_0}\tilde{A}_t^0(f)\Big\|_{L^p(\A^n)}
	\lesssim_p\Norm{f}_{L^p(\A^n)}, \quad 1<p\leq\infty.
\end{equation}

\medskip

\par Next choose a real even function $\tilde{\beta}\in C_c^\infty(\R)$ equal to $1$ for $1/4\leq|u|\leq2$ and supported where $1/8\leq|u|\leq4$. Let $\Delta_k$ be the Fourier multiplier in the last variable defined by
\begin{equation*}
	\widehat{\Delta_k f}(\xi)=\tilde{\beta}(2^{-k}\xi_n)\hat{f}(\xi), \quad k\in\Z.
\end{equation*}
The partial Fourier transform in $y_n$ in (\ref{eq:tilde A_t^j graph def}) is evaluated at $\xi_n=\mu/t$. Hence, whenever $2^{-m}\leq t\leq2^{-m+1}$, the support of $\beta(2^{-j}\mu)$ implies
\begin{equation*}
	\frac14\leq 2^{-(m+j)}\frac{|\mu|}{t}\leq2.
\end{equation*}
Since $\tilde{\psi}_t$ is independent of $y_n$, we obtain the exact identity
\begin{equation*}
	\tilde{A}_t^j\Delta_{m+j}=\tilde{A}_t^j, \quad 2^{-m}\leq t\leq2^{-m+1}.
\end{equation*}
Moreover, bounded overlap of the multiplier supports and Plancherel's identity give
\begin{equation}
	\label{eq:Delta square L2 bound}
	\sum_{k\in\Z}\Norm{\Delta_k f}_{L^2(\A^n)}^2\lesssim\Norm{f}_{L^2(\A^n)}^2.
\end{equation}

\par Decreasing $t_0$ if necessary, we may take $t_0=2^{-m_0}$. For $m\geq m_0+1$, put $I_m=[2^{-m},2^{-m+1}]$. We first take $f=f^*$; our real amplitudes and even real cutoffs ensure that $\Delta_{m+j}f$ and $\tilde{A}_t^j(f)$ are self-adjoint. Applying Lemma \ref{lem:Sobolev} to both signs, with $l=2^{-m-j}$, gives a positive element
\begin{equation*}
	\begin{split}
		a_{j,m}={}&l^{-1/2}\left(\int_{I_m}|\tilde{A}_t^j\Delta_{m+j}f|^2dt\right)^{1/2} \\
		&+l^{1/2}\left(\int_{I_m}|\partial_t\tilde{A}_t^j\Delta_{m+j}f|^2dt\right)^{1/2}
	\end{split}
\end{equation*}
satisfying $-a_{j,m}\leq\tilde{A}_t^j(f)\leq a_{j,m}$ for all $t\in I_m$. Using (\ref{eq:A_t^j small t L2 bounds}) for $\tilde{A}_t^j$, we have
\begin{equation*}
	\begin{split}
		\Norm{a_{j,m}}_{L^2(\A^n)}
		&\lesssim \left(l^{-1/2}2^{-m/2}
		+l^{1/2}2^{m/2+j}\right)2^{-j(n-1)/2}\Norm{\Delta_{m+j}f}_{L^2(\A^n)} \\
		&\lesssim 2^{-j(n-2)/2}\Norm{\Delta_{m+j}f}_{L^2(\A^n)}.
	\end{split}
\end{equation*}
To combine the intervals in the noncommutative setting, set
\begin{equation*}
	a_j=\Bigg(\sum_{m\geq m_0+1}a_{j,m}^2\Bigg)^{1/2}.
\end{equation*}
The sum converges in $L^1(\A^n)$ by (\ref{eq:Delta square L2 bound}). Since the square-root function is operator monotone, $a_{j,m}\leq a_j$ for every $m$. Thus $a_j$ is a common positive majorant for both signs of $\tilde{A}_t^j(f)$ on $(0,t_0]$, and
\begin{equation*}
	\Norm{a_j}_{L^2(\A^n)}^2
	=\sum_{m\geq m_0+1}\Norm{a_{j,m}}_{L^2(\A^n)}^2
	\lesssim 2^{-j(n-2)}\Norm{f}_{L^2(\A^n)}^2.
\end{equation*}
By decomposition into self-adjoint parts, this proves, for general $f$,
\begin{equation}
	\label{eq:tilde A small t L2 maximal}
	\Big\|\sup_{0<t\leq t_0}\tilde{A}_t^j(f)\Big\|_{L^2(\A^n)}
	\lesssim 2^{-j(n-2)/2}\Norm{f}_{L^2(\A^n)}.
\end{equation}

\medskip

\par It remains to pass to $L^p$. The compactly supported kernel of $A_t^j$ is bounded by
\begin{equation*}
	A2^jt^{-n}\chi_{\{|x-y|\leq Ct\}}.
\end{equation*}
Consequently, the noncommutative Hardy--Littlewood maximal inequality, together with (\ref{eq:tilde A error maximal}), gives
\begin{equation}
	\label{eq:tilde A small t crude maximal}
	\Big\|\sup_{0<t\leq t_0}\tilde{A}_t^j(f)\Big\|_{L^{p_1}(\A^n)}
	\lesssim_{p_1}2^j\Norm{f}_{L^{p_1}(\A^n)}, \quad 1<p_1<\infty.
\end{equation}
Fix $n/(n-1)<p<2$. Choose $p_1\in(1,p)$ sufficiently close to $1$, and let $1/p=(1-\theta)/2+\theta/p_1$. Interpolating (\ref{eq:tilde A small t L2 maximal}) and (\ref{eq:tilde A small t crude maximal}) as in Section 7.2 yields
\begin{equation*}
	\Big\|\sup_{0<t\leq t_0}\tilde{A}_t^j(f)\Big\|_{L^p(\A^n)}
	\lesssim_p 2^{j\delta'}\Norm{f}_{L^p(\A^n)}, \qquad
	\delta'=-(1-\theta)\frac{n-2}{2}+\theta<0.
\end{equation*}
Such a choice is possible because $\delta'\to1-n+n/p<0$ as $p_1\downarrow1$. We can therefore sum in $j\geq1$, and use (\ref{eq:tilde A zero maximal}) for $j=0$, to conclude
\begin{equation*}
	\Big\|\sup_{0<t\leq t_0}A_t(f)\Big\|_{L^p(\A^n)}
	\lesssim_p\Norm{f}_{L^p(\A^n)}, \quad \frac{n}{n-1}<p\leq2.
\end{equation*}
Again $p=2$ follows by direct summation. Combining this with (\ref{eq:A_t large t Lp maximal}) proves the assertion on $0<t\leq1$ for this range of $p$.
Finally, the surface measures with their smooth amplitudes have uniformly bounded total variation, so $\sup_{0<t\leq1}\Norm{A_t(f)}_\infty\lesssim\Norm{f}_\infty$. Interpolation with the $L^2$ maximal bound gives the remaining range $2<p<\infty$.
Density then gives the bounded extension to $L^p(\A^n)$; it agrees with the surface averages on smooth functions. This completes the proof of Theorem \ref{thm:A_t bounded for p>n/(n-1)}.
\section{Applications to ergodic theory}
\par We now apply the maximal inequalities to continuous-parameter noncommutative ergodic theory. We prove Theorems \ref{thm:sup N_r(a) Lp bounded} and \ref{thm:sup N_r^P(a) Lp bounded} as applications of the $L^p$ bounds for operator-valued maximal averages in Theorems \ref{thm:k-submanifold case} and \ref{thm:polynomial case}.
We then establish the corresponding individual ergodic theorems in the noncommutative setting.
\subsection{Maximal ergodic theorem}
\par Suppose we are given a $w^*$-continuous group action of $\R^n$ on $\M$:
\begin{equation*}
	\begin{split}
		\R^n \times \M \quad &\longrightarrow \quad \M \\
		(s,a) \quad &\longmapsto  \quad T_s(a)
	\end{split}
\end{equation*}
where the operator group $\{T_s:\M\to\M\}_{s\in\R^n}$ satisfies
\begin{enumerate}
	\item[(i)] Each $T_s$ is a (positive) trace-preserving automorphism on $\M$.
	\item[(ii)] For each $x\in\M$, the map $s\mapsto T_s(x)$ is $w^*$-continuous.
\end{enumerate}
Each $T_s$ acts isometrically on $L^p(\M)$.
Let $\gamma:\R^k\to\R^n$ be a $k$-dimensional submanifold in $\R^n$ with $\gamma(0)=0$.
For each $r>0$ define the ergodic mean $N_r:\M\to\M$ to be the average of $T_s$ along $\gamma$, i.e. 
\begin{equation*}
	N_r(a):=\frac1{|B_k(r)|}\int_{|t|<r}T_{\gamma(t)}(a)dt.
\end{equation*}
Also recall that $M_r$, acting on $\M$-valued functions, is defined by
\begin{equation*}
	M_r(\vphi)(x)=\frac1{|B_k(r)|}\int_{|t|<r}\vphi(x-\gamma(t))dt.
\end{equation*}
A transference argument (as in \cite{Bourgain1988a}, \cite{Bourgain1988b}) yields the following proposition.
\begin{prop}
	\label{prop:transference to ergodic}
	Let $p>0$. Assume that there exists $A>0$ such that
	\begin{equation}
		\label{eq:sup_r M_r(phi) Lp bound}
		\Big\|{\sup_{r>0}M_r(\vphi)}\Big\|_{L^p(\A)} \leq A\Norm{\vphi}_{L^p(\A)}
	\end{equation}
	for all $\vphi\in L^p(\A)$. Then
	\begin{equation}
		\label{eq:sup_r N_r(a) L^p bound}
	\Big\|{\sup_{r>0}N_r(a)}\Big\|_{L^p(\M)} \leq A\Norm{a}_{L^p(\M)}
	\end{equation}
	holds for all $a\in L^p(\M)$.
\end{prop}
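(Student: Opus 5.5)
Our plan is Calderón's transference in the noncommutative form of Hong. First, by monotone convergence of the maximal norms (the $L^p(\M;\ell^\infty)$ norm over an index set is the supremum over finite subsets), it suffices to prove (\ref{eq:sup_r N_r(a) L^p bound}) with $r$ restricted to a finite set $J\subset(0,\infty)$, with a constant independent of $J$. Fix such a $J$; then all the parameters $\gamma(t)$ with $|t|<\max J$ lie in a ball $\{|s|\leq M\}$ of $\R^n$. By density, and since the statement is insensitive to the splitting of $a$ into positive parts, we may take $a\in L^p(\M)$ positive (say in $\mathcal S_\M^+$).

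For $R>0$ large, define the $\M$-valued function $\vphi_R(x)=T_x(a)\chi_{B_n(R+M)}(x)$ on $\R^n$. For $|x|\leq R$ and $r\in J$, we have $M_r(\vphi_R)(x)=\frac1{|B_k(r)|}\int_{|t|<r}T_{x-\gamma(t)}(a)dt=T_x\big(N_r(T_{-\ldots})\big)$; more precisely, using $T_{x-\gamma(t)}=T_x T_{-\gamma(t)}$, one gets $M_r(\vphi_R)(x)=T_x(\tilde N_r(a))$, where $\tilde N_r$ is the average along $-\gamma$. We should therefore set up the transfer with $\vphi_R(x)=T_{-x}(a)\chi_{B_n(R+M)}(x)$, so that $M_r(\vphi_R)(x)=T_{-x}(N_r(a))$ for $|x|\leq R$. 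Since $T_{-x}$ is a trace-preserving $*$-automorphism, it preserves order and the $L^p(\M;\ell^\infty(J))$ norm: if $-b\leq N_r(a)\leq b$ for all $r$, then the same holds after applying $T_{-x}$, and conversely. Hence
\[
 \Big\|\sup_{r\in J}N_r(a)\Big\|_{L^p(\M)}^p=\frac1{|B_n(R)|}\int_{|x|\leq R}\Big\|\sup_{r\in J}M_r(\vphi_R)(x)\Big\|_{L^p(\M)}^p dx .
\]

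The key step, and the main obstacle, is to bound the right-hand side by $\|\sup_{r\in J}M_r(\vphi_R)\|_{L^p(\A)}^p$, i.e.\ to show that restricting the spatial variable to $B_n(R)$ and "integrating pointwise maximal norms" is dominated by the maximal norm on $\A$. I would argue via the infimum characterization: if $F\in L^p_+(\A)$ majorizes $\{M_r(\vphi_R)\}_{r\in J}$ in $\A$, then for a.e.\ $x$, $F(x)$ majorizes the family in $\M$, so the integrand is at most $\|F(x)\|_{L^p(\M)}^p$, and integrating gives $\|F\|_{L^p(\A)}^p$. For general (non-positive) families one uses the factorization definition $x_r=u y_r v$ in the same pointwise way (this is the analogue of the Fubini-type identity (\ref{eq:interange integral for supremum func2})). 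Applying the hypothesis (\ref{eq:sup_r M_r(phi) Lp bound}) then gives
\[
 \Big\|\sup_{r\in J}N_r(a)\Big\|_{L^p(\M)}^p\leq \frac{A^p}{|B_n(R)|}\|\vphi_R\|_{L^p(\A)}^p=A^p\Big(\frac{R+M}{R}\Big)^n\|a\|_{L^p(\M)}^p,
\]
using that $\|T_{-x}(a)\|_{L^p(\M)}=\|a\|_{L^p(\M)}$. Letting $R\to\infty$ and then taking the supremum over finite $J$ yields (\ref{eq:sup_r N_r(a) L^p bound}) with the same constant $A$. Measurability of $x\mapsto T_{-x}(a)$ follows from $w^*$-continuity of the action, so $\vphi_R\in L^p(\A)$.
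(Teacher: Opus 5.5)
Your proposal is correct and follows the paper's argument. The shared steps are: transfer via the truncated function $\vphi(x)=T_{-x}(a)$; the identity $M_r(\vphi)(x)=T_{-x}(N_r(a))=N_r(T_{-x}a)$ on a slightly smaller ball; invariance of the maximal norm under the trace-preserving automorphisms $T_{-x}$; the comparison of $\int\|\sup_r M_r(\vphi)(x)\|_{L^p(\M)}^p\,dx$ with the maximal norm on $\A$, which the paper asserts and you justify via majorants or factorizations; and letting the truncation radius tend to infinity. The differences are cosmetic. You reduce first to finite index sets, whereas the paper uses $r<R$ and lets $R\to\infty$. Note also that reducing to positive $a$ by splitting would cost a constant factor, so to obtain exactly $A$ you should run the factorization version for general $a$ directly, as the paper does for the invariance step.
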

\begin{proof}
	Assume the inequality (\ref{eq:sup_r M_r(phi) Lp bound}) holds. To prove (\ref{eq:sup_r N_r(a) L^p bound}), we let $\vphi(x)=T_{-x}(a)$, $x\in\R^n$. Then
	\begin{equation*}
		\begin{split}
			M_r(\vphi)(x)&=\frac1{|B_k(r)|}\int_{|t|<r}\vphi(x-\gamma(t))dt \\
			&=\frac1{|B_k(r)|}\int_{|t|<r}T_{\gamma(t)}(T_{-x}a)dt \\
			&=N_r(T_{-x}a).
		\end{split}
	\end{equation*}
Now let $J\gg R>0$ be two positive constants satisfying
\begin{equation*}
	J>\gamma(R), \quad \text{ where } \gamma(R):=\sup_{|t|<R}|\gamma(t)|.
\end{equation*}
Let $\psi(x)=\vphi(x)\chi_{\{|x|\leq J\}}(x)$. Then (\ref{eq:sup_r M_r(phi) Lp bound}) gives
\begin{equation}
	\label{eq:sup_r<R M_r(psi) Lp}
	\Big\|{\sup_{r<R}M_r(\psi)}\Big\|_{L^p(\A)}^p\leq \Big\|{\sup_{r>0}M_r(\psi)}\Big\|_{L^p(\A)}^p \leq A^p\Norm{\psi}_{L^p(\A)}^p.
\end{equation}
Note that, for $x\in \R^n$ satisfying $|x|<J-\gamma(R)$ and $r<R$, we have
\begin{equation*}
	M_r(\vphi)(x)=M_r(\psi)(x).
\end{equation*}
Consequently,
\begin{equation}
	\label{eq:sup_r<R M_r(phi)(x) L^p}
	\begin{split}
		\int_{|x|<J-\gamma(R)}\Big\|{\sup_{r<R}M_r(\vphi)(x)}\Big\|_{L^p(\M)}^p dx  
		&=\int_{|x|<J-\gamma(R)}\Big\|{\sup_{r<R}M_r(\psi)(x)}\Big\|_{L^p(\M)}^p dx \\
		&\leq \int_{\R^n}\Big\|{\sup_{r<R}M_r(\psi)(x)}\Big\|_{L^p(\M)}^p dx \\
		&\leq \Big\|{\sup_{r<R} M_r(\psi)}\Big\|_{L^p(\A)}^p
	\end{split}
\end{equation}
and 
\begin{equation}
	\label{eq:phi(x) L^p}
	\Norm{\psi}_{L^p(\A)}^p=\int_{|x|<J}\Norm{\vphi(x)}_{L^p(\M)}^p dx.
\end{equation}
Since each $T_{-x}$ is trace-preserving and positive, for each $x$ we have
\begin{equation}
	\label{eq:trace invarient}
	\Norm{\vphi(x)}_{L^p(\M)}=\Norm{T_{-x}(a)}_{L^p(\M)}=\Norm{a}_{L^p(\M)},
\end{equation}
and moreover, it follows directly from the definition of the $L^p(\ell^\infty)$-norm that
\begin{equation}
	\label{eq:trac invarient supremum}
	\Big\|{\sup_{r<R}M_r(\vphi)(x)}\Big\|_{L^p(\M)}=\Big\|{\sup_{r<R}N_r(T_{-x}(a))}\Big\|_{L^p(\M)}=\Big\|{\sup_{r<R}N_r(a)}\Big\|_{L^p(\M)}.
\end{equation}
More precisely, for any decomposition $N_r(T_{-x}(a))=uy_rv$, with $u,v\in L^{2p}(\M)$ and $y_r\in L^\infty(\M)$, we have $N_r(a)=T_x(uy_rv)=T_x(u)T_x(y_r)T_x(v)$ with $\Norm{T_x(u)}_{2p}=\Norm{u}_{2p}$, $\Norm{T_x(v)}_{2p}=\Norm{v}_{2p}$ and $\Norm{T_x(y_r)}_\infty=\Norm{y_r}_\infty$.
This gives 
\begin{equation*}
	\Big\|{\sup_{r<R}N_r(a)}\Big\|_{L^p(\M)} \leq \Big\|{\sup_{r<R}N_r(T_{-x}(a))}\Big\|_{L^p(\M)}.
\end{equation*}
The reverse inequality can be proved similarly.

From (\ref{eq:trace invarient}) and (\ref{eq:trac invarient supremum}) we find that $\Norm{\vphi(x)}_{L^p(\M)}$ and $\Norm{\sup_{r<R}M_r(\vphi)(x)}_{L^p(\M)}$ are independent of $x$.
Combining this with (\ref{eq:sup_r<R M_r(psi) Lp}), (\ref{eq:sup_r<R M_r(phi)(x) L^p}) and (\ref{eq:phi(x) L^p}) we obtain
\begin{equation*}
	\Big\|{\sup_{r<R}N_r(a)}\Big\|_{L^p(\M)}^p \lesssim \frac{J^n}{(J-\gamma(R))^n}\Norm{a}_{L^p(\M)}^p.
\end{equation*}
Letting $J\to\infty$ and then $R\to\infty$ gives the desired result.
\end{proof}
\begin{proof}[Proof of Theorem \ref{thm:sup N_r(a) Lp bounded} and Theorem \ref{thm:sup N_r^P(a) Lp bounded}]
	\par By Proposition \ref{prop:transference to ergodic}, we obtain Theorem \ref{thm:sup N_r(a) Lp bounded} and Theorem \ref{thm:sup N_r^P(a) Lp bounded} from Theorem \ref{thm:k-submanifold case} and Theorem \ref{thm:polynomial case}, respectively.
\end{proof}

\par We next consider the variable hypersurface averages of Section 7. In this case the averaging measure depends on the base point, and we retain the spatial variable in the ergodic formulation. Recall that $\A=L^\infty(\R^n)\ot\M$, with trace $\nu=\int_{\R^n}\otimes\tau$. Let $\{\mu_{t,x}\}_{0<t<1,\,x\in\R^n}$ be a measurable family of finite positive Borel measures on $\R^n$, and write
\begin{equation}
	\label{eq:variable A_t measure form}
	A_t(f)(x)=\int_{\R^n}f(x+z)d\mu_{t,x}(z).
\end{equation}
For the averages in (\ref{eq:A_t(f) on hypersurface}) with nonnegative amplitude, $\mu_{t,x}$ is the image of the surface measure with its amplitude under $y\mapsto y-x$.
The action $\{T_s\}_{s\in\R^n}$ induces the trace-preserving diagonal action on $\A$ given by
\begin{equation*}
	(\beta_zf)(x)=T_z(f(x+z)), \quad z\in\R^n.
\end{equation*}
We define the associated variable ergodic averages by
\begin{equation}
	\label{eq:variable ergodic averages def}
	\mathcal{N}_t(f)(x)=\int_{\R^n}T_z(f(x+z))d\mu_{t,x}(z), \quad 0<t<1.
\end{equation}
The integral formulas are initially understood on the dense classes specified in the proof below; for general $L^p$ elements we use their bounded extensions.

\begin{prop}
	\label{prop:variable ergodic maximal}
	Let $n\geq2$ and $n/(n-1)<p<\infty$. Suppose that the averages in (\ref{eq:variable A_t measure form}) satisfy
	\begin{equation}
		\label{eq:variable A_t maximal hypothesis}
		\Big\|\sup_{0<t<1}A_t(f)\Big\|_{L^p(\A)}\leq C_p\Norm{f}_{L^p(\A)}.
	\end{equation}
	Then, for every $f\in L^p(\A)$,
	\begin{equation*}
		\Big\|\sup_{0<t<1}\mathcal{N}_t(f)\Big\|_{L^p(\A)}\leq C_p\Norm{f}_{L^p(\A)}.
	\end{equation*}
	The constant is the same as in (\ref{eq:variable A_t maximal hypothesis}) and is independent of the action $\{T_s\}$. Both maximal norms are taken over the entire index set $(0,1)$.
\end{prop}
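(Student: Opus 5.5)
The proof will be an exact conjugation, as anticipated in the introduction. Define $U$ on $\A$ by $(Uf)(x)=T_x(f(x))$, with inverse $(U^{-1}g)(x)=T_{-x}(g(x))$. For $f$ in the dense class of $\mathcal S_\M$-valued simple (or compactly supported continuous) functions, compute
\[
A_t(Uf)(x)=\int_{\R^n}T_{x+z}(f(x+z))\,d\mu_{t,x}(z)=T_x\Big(\int_{\R^n}T_z(f(x+z))\,d\mu_{t,x}(z)\Big)=\big(U\mathcal N_t f\big)(x).
\]
Here we pull the normal $*$-automorphism $T_x$ outside the Bochner/weak$^*$ integral, which is justified by $w^*$-continuity and normality. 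Hence $\mathcal N_t=U^{-1}A_tU$ on this class.

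Next, show that $U$ is a trace-preserving $*$-automorphism of $\A$. It is pointwise a $*$-homomorphism. Measurability follows from the $w^*$-continuity of $s\mapsto T_s$, by approximating with simple functions. Finally, $\nu(Uf)=\int\tau(T_x f(x))\,dx=\nu(f)$. Consequently $U$ extends to an isometry of $L^p(\A)$ for every $p$, and it preserves the maximal norms exactly. For the latter, take any factorization $A_t(Uf)=u\,y_t\,v$ with $u,v\in L^{2p}(\A)$ and $\sup_t\|y_t\|_\infty\le1$. Applying $U^{-1}$ gives a factorization $\mathcal N_t(f)=U^{-1}(u)\,U^{-1}(y_t)\,U^{-1}(v)$ with the same norms. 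Here the key facts are that $U^{-1}$ is multiplicative and extends to $L^{2p}$ isometrically. The same argument applies to the positive-majorant description when the family is positive, exactly as in (\ref{eq:trac invarient supremum}). Thus
\[
\Big\|\sup_{0<t<1}\mathcal N_t(f)\Big\|_{L^p(\A)}=\Big\|\sup_{0<t<1}A_t(Uf)\Big\|_{L^p(\A)}\le C_p\|Uf\|_{L^p(\A)}=C_p\|f\|_{L^p(\A)}.
\]
This uses (\ref{eq:variable A_t maximal hypothesis}) for $Uf$, and the constant is unchanged and independent of the action.

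To pass to general $f\in L^p(\A)$, note that $U$ maps a dense subclass onto a dense subclass. The bounded extension of $\{\mathcal N_t\}$ into $L^p(\A;\ell^\infty((0,1)))$ is then defined as $U^{-1}\circ\{A_t\}\circ U$, with the same bound. For the uncountable index set, first fix finite subsets of $(0,1)$ and then take the supremum, as is done elsewhere in the paper.

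The main technical obstacle is the justification of the intertwining identity and the measurability of $Uf$. This means checking that $x\mapsto T_x(f(x))$ is strongly measurable into $L^p(\M)$, and that $T_x$ commutes with the integral against $\mu_{t,x}$. I would handle this by taking $f=\sum\chi_{E_i}a_i$, with $a_i\in\mathcal S_\M$. For such $f$, continuity of $s\mapsto T_s(a_i)$ in $L^p(\M)$ follows from $w^*$-continuity, trace preservation and uniform convexity for $1<p<\infty$. With that continuity in hand, the integrals become Bochner integrals and normal maps pass through them.
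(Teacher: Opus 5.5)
Your proposal is correct and follows the paper's proof essentially verbatim. Both conjugate by the trace-preserving automorphism $(Uf)(x)=T_x(f(x))$ and verify $\mathcal N_t=U^{-1}A_tU$ on a dense class; the paper uses $\operatorname{span}\{\varphi\otimes a:\varphi\in C_c^\infty(\R^n),\ a\in\M\cap L^p(\M)\}$ where you use simple functions. Both then transfer the factorizations of Definition~\ref{def:NC maximal} under $U^{\pm1}$, which gives equality of the maximal norms with the same constant $C_p$. Your reduction to finite index subsets is unnecessary, since the factorization argument already applies directly to the full index set $(0,1)$.
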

\begin{proof}
	Define the trace-preserving $*$-automorphism $U$ of $\A$ by
	\begin{equation*}
		(Uf)(x)=T_x(f(x)), \qquad (U^{-1}f)(x)=T_{-x}(f(x)).
	\end{equation*}
	The continuity of the action ensures that these maps are well defined on measurable operator-valued functions. Since each $T_x$ preserves $\tau$, $U$ preserves $\nu$ and induces an isometry on $L^p(\A)$. Applying $U$ and $U^{-1}$ to the factors in Definition \ref{def:NC maximal} also gives an isometry on $L^p(\A;\ell^\infty((0,1)))$.
	\par We have the conjugacy relation 
	\begin{equation}
		\label{eq:variable ergodic conjugacy}
		\mathcal{N}_t=U^{-1}A_tU, \quad 0<t<1.
	\end{equation}
	To justify the formula, put
	\begin{equation*}
		\mathscr{E}=\operatorname{span}\{\varphi\otimes a:\varphi\in C_c^\infty(\R^n),\ a\in\M\cap L^p(\M)\}.
	\end{equation*}
	This is a dense subspace of $L^p(\A)$ on which the formula for $A_t$ is initially defined. For $f\in \mathscr{E}$, we have 
	\begin{equation*}
		\begin{split}
			(U^{-1}A_tUf)(x)
			&=T_{-x}\left(\int_{\R^n}T_{x+z}(f(x+z))d\mu_{t,x}(z)\right) \\
			&=\int_{\R^n}T_z(f(x+z))d\mu_{t,x}(z)
			=\mathcal{N}_t(f)(x).
		\end{split}
	\end{equation*}
	Thus $\mathcal{N}_t$ has a bounded extension to $L^p(\A)$ satisfying (\ref{eq:variable ergodic conjugacy}).
	Using the isometric properties of $U$ and (\ref{eq:variable A_t maximal hypothesis}), we obtain 
	\begin{equation*}
		\Big\|\sup_{0<t<1}\mathcal{N}_t(f)\Big\|_{L^p(\A)}
		=\Big\|\sup_{0<t<1}A_t(Uf)\Big\|_{L^p(\A)}
		\leq C_p\Norm{Uf}_{L^p(\A)}=C_p\Norm{f}_{L^p(\A)}.
	\end{equation*}
	The proposition is proved. 
\end{proof}
In particular, Theorem \ref{thm:A_t bounded for p>n/(n-1)} supplies (\ref{eq:variable A_t maximal hypothesis}) for the variable hypersurfaces considered there when $n\geq3$ and $n/(n-1)<p<\infty$.
This gives the following corollary. 
\begin{corollary}
	\label{cor:sup_t N_t Lp bound for hypersurface}
	If $A_t$ is the averaging operator over variable hypersurfaces defined in (\ref{eq:A_t(f) on hypersurface}) satisfying (\ref{eq:t^2n rotcurv>c>0}), and $\mathcal{N}_t$ is defined in (\ref{eq:variable ergodic averages def}), then 
	\begin{equation*}
		\Big\|\sup_{0<t<1}\mathcal{N}_t(f)\Big\|_{L^p(\A)}\leq C_p\Norm{f}_{L^p(\A)}
	\end{equation*}
	holds for $n\geq3$ and $n/(n-1)<p<\infty$. 
\end{corollary}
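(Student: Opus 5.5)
The corollary follows by combining Theorem \ref{thm:A_t bounded for p>n/(n-1)} with Proposition \ref{prop:variable ergodic maximal}. The only work is checking that the hypotheses of the proposition are met by the hypersurface averages of (\ref{eq:A_t(f) on hypersurface}).

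\emph{Measure form.} First we rewrite $A_t$ in the form (\ref{eq:variable A_t measure form}). For fixed $0<t<1$ and $x\in\R^n$, let $\mu_{t,x}$ be the pushforward under $y\mapsto y-x$ of the measure $\psi_t(x,y)\delta(\Phi_t(x,\cdot))\,dy$.
\begin{itemize}
\item Since the amplitude is nonnegative, this is a finite positive Borel measure.
\item It is supported in a compact set of diameter $O(t)$.
\item It depends measurably (indeed smoothly) on $(t,x)$, by the implicit-function description of $S_{x,t}$ coming from $\nabla_y\Phi_t\neq0$.
\end{itemize}
With this choice, $A_t(f)(x)=\int f(x+z)\,d\mu_{t,x}(z)$, and $\mathcal N_t$ as defined in (\ref{eq:variable ergodic averages def}) is exactly the variable ergodic average of the proposition.

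\emph{Maximal hypothesis.} Next we need (\ref{eq:variable A_t maximal hypothesis}) for $n\geq3$ and $n/(n-1)<p<\infty$. Theorem \ref{thm:A_t bounded for p>n/(n-1)}, under the rotational curvature condition (\ref{eq:t^2n rotcurv>c>0}), gives the $L^p(\A;\ell^\infty((0,1]))$ bound in this range of $p$. Restricting the index set from $(0,1]$ to $(0,1)$ can only decrease the maximal norm: any common majorant for the larger family is also a majorant for the subfamily. The theorem is stated for $f\in\A_{c,+}$ and then extended by density and boundedness. This extension is consistent with the proposition, which is proved on the dense class $\mathscr E$ and then passes to bounded extensions.

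\emph{Conclusion.} Proposition \ref{prop:variable ergodic maximal} now applies, since $n\geq3$ in particular satisfies $n\geq2$. It gives $\|\sup_{0<t<1}\mathcal N_t(f)\|_{L^p(\A)}\leq C_p\|f\|_{L^p(\A)}$, with the same constant and independently of the action.

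\emph{Main obstacle.} The only delicate point is compatibility of dense classes. Theorem \ref{thm:A_t bounded for p>n/(n-1)} is proved for $\mathcal S_\M$-valued functions in $\A_{c,+}$, while the conjugacy $U$ need not preserve $\mathcal S_\M$-valuedness. I would resolve this by using the bounded extension of $A_t$ to all of $L^p(\A)$, as in the proposition's proof, so that $U^{-1}A_tU$ makes sense on $L^p(\A)$ and agrees with $\mathcal N_t$ on $\mathscr E$.
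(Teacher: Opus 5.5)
Your proposal is correct and follows the paper's own argument: the corollary comes from feeding the maximal bound of Theorem~\ref{thm:A_t bounded for p>n/(n-1)}, restricted from $(0,1]$ to $(0,1)$, into Proposition~\ref{prop:variable ergodic maximal}, with the nonnegative amplitude supplying the positive measures $\mu_{t,x}$. Your dense-class worry is milder than you suggest, since each $T_x$ is a trace-preserving automorphism and therefore preserves $\mathcal S_\M$; resolving it through the bounded extension of $A_t$, as the paper does in the proof of the proposition, works either way.
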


\subsection{Individual ergodic theorems}
\par We refer to Appendix \ref{subsec:bau convergence} for the definition and basic facts concerning bilateral almost uniform convergence (denoted by b.a.u. for short) and the spaces $L^p(\M;c_0)$.
Our proof follows the approach of Junge and Xu; see \cite{JungeXu2007}.

\subsubsection{Small-scale ergodic theorems} 
\par Recall that the group of operators $(T_s)_{s\in\R^n}$, the submanifold $\gamma$, and the ergodic mean $(N_r)_{r>0}$ were defined in the preceding subsection.
These means are normalized by $|B_k(r)|$ throughout.

\begin{corollary}
	\label{cor:N_r bau as r to 0}
	Let $1<p<\infty$. Then for all $x\in L^p(\M)$, $N_r(x)$ converges to $x$ bilaterally almost uniformly as $r\to0$.
\end{corollary}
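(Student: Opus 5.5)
The plan is the standard Junge--Xu scheme: a maximal inequality together with convergence on a dense subspace. By Theorem \ref{thm:sup N_r(a) Lp bounded}, the map $x\mapsto\{N_r(x)\}_{0<r<1}$ is bounded from $L^p(\M)$ to $L^p(\M;\ell^\infty((0,1)))$ for $1<p<\infty$. The criterion recalled in Appendix \ref{subsec:bau convergence} (following \cite{JungeXu2007}) reduces b.a.u. convergence of $N_r(x)$ to $x$ for every $x\in L^p(\M)$ to two ingredients:
\begin{enumerate}
\item[(a)] a maximal inequality for the family $\{N_r(x)-x\}_{0<r<1}$, which follows from the previous bound and the triangle inequality;
\item[(b)] the identification of a dense subspace $\mathcal D\subset L^p(\M)$ on which $N_r(x)\to x$ in a strong sense.
\end{enumerate}
Concretely, I would show that for $x\in\mathcal D$ the family $\{N_{r}(x)-x\}$, restricted to any sequence $r_m\to0$, lies in $L^p(\M;c_0)$. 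The maximal bound then shows that $\{N_{r_m}(x)-x\}_m\in L^p(\M;c_0)$ for all $x$, since $L^p(\M;c_0)$ is closed in $L^p(\M;\ell^\infty)$. By the appendix this gives b.a.u. convergence along sequences, at least for $p\ge2$ directly and for $p<2$ via the standard column/row or $\ell^\infty$-based variant used by Junge--Xu.

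For the dense class I would take the ``smooth vectors''
\[
\mathcal D=\operatorname{span}\Big\{\int_{\R^n}\phi(s)T_s(y)\,ds:\ \phi\in C_c^\infty(\R^n),\ y\in L^1(\M)\cap\M\Big\}.
\]
It is dense in $L^p(\M)$ by $w^*$-continuity and strong continuity of the action on $L^p$, $p<\infty$, using an approximate identity $\phi_\epsilon$. For $x=\int\phi(s)T_s(y)\,ds$ we have $T_u(x)=\int\phi(s-u)T_s(y)\,ds$, so
\[
\|T_u(x)-x\|_\infty\le\|\phi(\cdot-u)-\phi\|_{L^1}\|y\|_\infty\lesssim |u|\,\|y\|_\infty .
\]
Similarly, $T_u(x)-x=\int(\phi(s-u)-\phi(s))T_s(y)\,ds$ has a uniform $L^p$ norm bound. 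Since $\gamma(0)=0$ and $\gamma$ is continuous, $\sup_{|t|<r}|\gamma(t)|\to0$, and
\[
N_r(x)-x=\frac1{|B_k(r)|}\int_{|t|<r}\big(T_{\gamma(t)}x-x\big)\,dt
\]
has norm $\lesssim\sup_{|t|<r}|\gamma(t)|$ in both $L^\infty$ and $L^p$. Writing $N_r(x)-x=a\,z_r\,b$ with fixed $a,b\in L^{2p}(\M)$ and $\|z_r\|_\infty\to0$ is the expected obstacle. A norm-small difference in $L^\infty\cap L^p$ does not automatically factor this way unless $x$ has finite-trace support control. I would handle this by choosing $y$ with $y=eye$ for a finite-trace projection $e$, but $T_s$ moves the support. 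The first remedy to try is a weaker criterion: if $\sum_m\|N_{r_m}(x)-x\|_{L^p}<\infty$ along a fast sequence, the family lies in $L^p(\M;c_0)$, because an absolutely summable sequence in $L^p$ is dominated by the sum of the moduli. Along a sequence $r_m$ with $\sup_{|t|<r_m}|\gamma(t)|\le 2^{-m}$ this summability holds.

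Finally, passing from sequences to the continuous parameter $r\to0$ uses the maximal inequality over the full uncountable index set. For $x\in\mathcal D$, uniform control $\|N_r(x)-x\|\lesssim\sup_{|t|<r}|\gamma(t)|$ for all $r$ in $(r_{m+1},r_m]$ lets one majorize the block by a single summable term. This gives the continuous-parameter family in $L^p(\M;c_0)$ and hence b.a.u. convergence as $r\to0$.
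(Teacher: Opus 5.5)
Your dense class, the bound $\|N_r(x)-x\|_\infty+\|N_r(x)-x\|_p\lesssim_x\sup_{|t|<r}|\gamma(t)|$, and the sequential step are all fine. In that step, absolute summability in $L^p$ along a fast sequence $r_m$ puts $\{N_{r_m}(x)-x\}_m$ in $L^p(\M;c_0)$, by the triangle inequality in $L^p(\M;\ell^\infty)$. Also, Proposition \ref{prop:Lp(M;c0) prop}(3) gives b.a.u.\ convergence for every $1\le p<\infty$, so no row/column variant is needed when $p<2$.

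The gap is in your final paragraph. B.a.u.\ convergence along each sequence $r_m\to0$ does not give b.a.u.\ convergence as $r\to0$, since the projection may depend on the sequence. So you must show $\{N_r(x)-x\}_{0<r\le1}\in L^p(\M;c_0((0,1]))$ for $x$ in the dense class. Your justification is that the uniform bound $\|N_r(x)-x\|\lesssim\sup_{|t|<r}|\gamma(t)|$ on a block $(r_{m+1},r_m]$ lets you majorize the whole block by one summable term. That does not follow. A uniform bound on $\|z_r\|_p$ or $\|z_r\|_\infty$ over an infinite index set gives no control of $\|\{z_r\}\|_{L^p(\M;\ell^\infty)}$. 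That norm requires a common factorization, or, for self-adjoint families, a common majorant in $L^p_+(\M)$. You noted this obstruction yourself one sentence earlier, and nothing in the proposal overcomes it on an uncountable block.

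The paper closes this step by interpolating your $L^\infty$ estimate against the maximal inequality at a smaller exponent. For $1<q<p$ and any subinterval $I\subset(0,1)$,
\[
\Big\|\sup_{r\in I}\big(N_r(x)-x\big)\Big\|_{L^p(\M)}\le\sup_{r\in I}\|N_r(x)-x\|_\infty^{1-q/p}\,\Big\|\sup_{r\in I}\big(N_r(x)-x\big)\Big\|_{L^q(\M)}^{q/p},
\]
and the last factor is $\lesssim\|x\|_q$ by Theorem \ref{thm:sup N_r(a) Lp bounded} at exponent $q$. Take $I=(0,\varepsilon)$. The tail of the family then tends to $0$ in $L^p(\M;\ell^\infty)$ as $\varepsilon\to0$. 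The truncated families vanish near $0$, so they lie in $L^p(\M;c_0)$, and closedness of that subspace plus your density step finish the proof. No sequential detour is needed.

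Alternatively, you can repair your own framework by getting the majorant from order rather than norm. Take $y\ge0$ and $\phi$ real; such $x$ still span a dense subspace. Positivity of $T_s$ gives
\[
\pm(T_ux-x)\le|u|\,\|\nabla\phi\|_\infty\int_KT_s(y)\,ds\quad\text{for }|u|\le1,
\]
with $K$ a fixed compact set. Hence, for small $\varepsilon$, every $N_r(x)-x$ with $r<\varepsilon$ is two-sidedly dominated by $C\sup_{|t|<\varepsilon}|\gamma(t)|$ times a single element of $L^p_+(\M)$.
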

\begin{proof}
	First note that the $w^*$-continuity of $\{T_s\}_{s\in\R^n}$ on $\M$ implies that $\{T_s\}$ is strongly continuous on $L^p(\M)$ for $1\leq p<\infty$ (i.e., for any $x\in L^p$, $s\mapsto T_s(x)$ is continuous from $\R^n$ to $L^p(\M)$).
	If we define
	\begin{equation*}
		\N_r(x):=\frac1{|B_n(r)|}\int_{B_n(r)}T_s(x)ds,
	\end{equation*}
	where $B_n(r)$ is the $n$-dimensional ball centered at $0$ with radius $r$. Then $\N_r(x)$ converges to $x$ in $L^p(\M)$-norm.
	\par By Proposition \ref{prop:Lp(M;c0) prop} (3), it suffices to prove $\{N_r(x)-x\}_{0<r\leq 1}\in L^p(\M;c_0((0,1]))$. By the usual density argument, we may assume $x=\N_{r_0}(y)$ for some $y\in L^1(\M)\cap\M$ and $0<r_0<1$.
	Fix such an $x$. We show that
	\begin{equation}
		\label{eq:N_r(x) tend to x infty norm}
		\Norm{N_r(x)-x}_{\infty}\to 0 \quad \text{ as } r\to0.
	\end{equation}
	Indeed, for $0<|t|<r<r_0$, we have
	\begin{equation*}
		N_r(x)-x=\frac1{|B_k(r)|}\int_{B_k(r)}(T_{\gamma(t)}(x)-x)dt,
	\end{equation*}
	and the integrand
	\begin{equation*}
		\begin{split}
			\Norm{T_{\gamma(t)}(x)-x}_\infty &=\Norm{T_{\gamma(t)}(\N_{r_0}(y))-\N_{r_0}(y)}_\infty \\
			&=\bigg\|{\frac1{|B_n(r_0)|}\int_{(\gamma(t)+B_n(r_0))\triangle B_n(r_0)} T_s(y) ds}\bigg\|_\infty \\
			&\sim \frac{2|B_{n-1}(r_0)||\gamma(t)|}{|B_n(r_0)|} \Norm{T_s}\Norm{y}_{\infty}, \quad \text{ as } |t|\to0.
		\end{split}
	\end{equation*}
	The notation $A\triangle B:=(A\setminus B)\cup(B\setminus A)$ is the symmetric difference of $A$ and $B$. In the last step we use the asymptotic formula for the measure of the symmetric difference
	\begin{equation*}
		|B_n(r)\triangle (v+B_n(r))| \sim |B_{n-1}(r)||v|, \quad \text{ as } v\to 0.
	\end{equation*}
	Therefore, by Minkowski's inequality, we have
	\begin{equation*}
		\Norm{N_r(x)-x}_\infty \sim \frac{2|B_{n-1}(r_0)|}{|B_n(r_0)|}\Norm{T_s}\Norm{y}_\infty \frac1{|B_k(r)|}\int_{B_k(r)}|\gamma(t)|dt, \quad \text{ as } r\to 0.
	\end{equation*}
	Note that ${|B_{n-1}(r_0)|}/{|B_n(r_0)|}$ is a constant, and the factor $\Norm{T_s}$ is bounded since each $T_s$ is automorphic and hence isometric.
	Recall that $\gamma(0)=0$ and hence its average over $|t|<r$ tends to $0$, we obtain (\ref{eq:N_r(x) tend to x infty norm}).
	Next, we use the following interpolation method: Choose $q\in(1,p)$. For any $0<\varepsilon_1<\varepsilon_2$,
	\begin{equation*}
		\begin{split}
			\Big\|{\sup_{\varepsilon_1<r<\varepsilon_2}(N_r(x)-x)}\Big\|_p
			\leq \sup_{\varepsilon_1<r<\varepsilon_2}\Norm{N_r(x)-x}_\infty^{1-\frac qp}\Big\|{\sup_{\varepsilon_1<r<\varepsilon_2}(N_r(x)-x)}\Big\|_q^{\frac qp}.
		\end{split}
	\end{equation*}
	Then by (\ref{eq:J subset I maximal norm}) and Theorem \ref{thm:sup N_r(a) Lp bounded}, we have
	\begin{equation*}
		\Big\|{\sup_{\varepsilon_1<r<\varepsilon_2}(N_r(x)-x)}\Big\|_q\leq \Big\|{\sup_{0<r<1}(N_r(x)-x)}\Big\|_q\lesssim \Norm{x}_q.
	\end{equation*}
	We conclude that the family (indexed by $\varepsilon$), $\{(N_r(x)-x)\chi_{[\varepsilon,1]}(r)\}_{0<r\leq1}$, is Cauchy in the $L^p(\M;\ell^\infty)$-norm as $\varepsilon\to 0$, and the limit is $\{N_r(x)-x\}_{0<r<1}$.
	By the closedness of the subspace $L^p(\M;c_0((0,1]))$, we obtain $\{N_r(x)-x\}_{0<r<1}\in L^p(\M;c_0((0,1]))$.
\end{proof}

\par We can also consider the case when $\gamma(t)=P(t)$ is given by polynomials with $P(0)=0$, using the normalized means $N_r^P$ defined in Section 1.
In this case, we have b.a.u. convergence as $r$ tends to both $0$ and $\infty$.
\begin{corollary}
	Let $1<p<\infty$. Then for all $x\in L^p(\M)$, $N_r^P(x)$ converges to $x$ bilaterally almost uniformly as $r\to0$.
\end{corollary}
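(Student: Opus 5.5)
The plan is to repeat the argument of Corollary \ref{cor:N_r bau as r to 0}, with the polynomial map $P$ in place of $\gamma$ and Theorem \ref{thm:sup N_r^P(a) Lp bounded} supplying the maximal inequality. By Proposition \ref{prop:Lp(M;c0) prop} (3), it suffices to show that
\[
\{N_r^P(x)-x\}_{0<r\leq1}\in L^p(\M;c_0((0,1])).
\]
Since $L^p(\M;c_0((0,1]))$ is a closed subspace, and the map $x\mapsto\{N^P_r(x)-x\}_{0<r\le1}$ is bounded from $L^p(\M)$ into $L^p(\M;\ell^\infty((0,1]))$, it is enough to prove membership for $x$ in a dense class. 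The boundedness comes from Theorem \ref{thm:sup N_r^P(a) Lp bounded}, restricted to $0<r\le1$ via (\ref{eq:J subset I maximal norm}), together with the trivial bound for the constant family $\{x\}$. For the dense class I take $x=\N_{r_0}(y)$ with $y\in L^1(\M)\cap\M$ and $0<r_0<1$, where $\N_{r_0}$ is the Euclidean ball average. This class is dense because $\N_r(y)\to y$ in $L^p(\M)$, by the strong continuity of the action on $L^p(\M)$ for $p<\infty$.

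For such $x$, the first step is an $L^\infty$ estimate. The symmetric-difference computation gives
\[
\Norm{T_v(x)-x}_\infty\lesssim_{r_0,n}|v|\,\Norm{y}_\infty .
\]
Since $P(0)=0$ and $P$ is continuous, $\sup_{|t|<r}|P(t)|\to0$ as $r\to0$. By Minkowski's inequality,
\[
\Norm{N_r^P(x)-x}_\infty\lesssim \sup_{|t|<r}|P(t)|\,\Norm{y}_\infty\to0 .
\]

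The second step turns this into $c_0$ membership. Fix $q\in(1,p)$. Note that $x\in L^q(\M)$, since $y\in L^1\cap L^\infty$ and $\N_{r_0}$ is bounded on every $L^q$. For $0<\varepsilon_1<\varepsilon_2\le1$ I use the interpolation inequality
\[
\Big\|\sup_{\varepsilon_1<r<\varepsilon_2}(N_r^P(x)-x)\Big\|_p\le \sup_{\varepsilon_1<r<\varepsilon_2}\Norm{N_r^P(x)-x}_\infty^{1-\frac qp}\Big\|\sup_{\varepsilon_1<r<\varepsilon_2}(N_r^P(x)-x)\Big\|_q^{\frac qp},
\]
and bound the $L^q$ maximal factor by $\lesssim_{q,d}\Norm{x}_q$ using Theorem \ref{thm:sup N_r^P(a) Lp bounded}. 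As $\varepsilon_2\to0$ the first factor tends to zero, so the truncated families $\{(N_r^P(x)-x)\chi_{[\varepsilon,1]}(r)\}_{0<r\le1}$ are Cauchy in $L^p(\M;\ell^\infty)$ as $\varepsilon\to0$, and their limit is the full family. Each truncation lies in $L^p(\M;c_0)$, by continuity in $r$ on a compact interval (the interpretation used in Corollary \ref{cor:N_r bau as r to 0}). By closedness, the full family lies in $L^p(\M;c_0((0,1]))$, and b.a.u. convergence follows.

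The main point to check is only that no finite-type hypothesis is needed at small scales. That holds because the only inputs are $P(0)=0$, which drives the $L^\infty$ convergence, and the global maximal bound of Theorem \ref{thm:sup N_r^P(a) Lp bounded}, which holds for arbitrary polynomials with constants depending on $d$. So I expect no genuine obstacle beyond transcribing the argument of Corollary \ref{cor:N_r bau as r to 0}.
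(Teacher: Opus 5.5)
Your proposal is correct and takes the same approach as the paper, whose proof simply says to repeat the argument of Corollary~\ref{cor:N_r bau as r to 0} with the global maximal bound for $N_r^P$ (restricted to $0<r\le1$) and $P(0)=0$ in place of $\gamma(0)=0$. One small correction: the truncated families lie in $L^p(\M;c_0((0,1]))$ not because of continuity in $r$, but simply because they vanish for $r<\varepsilon$ (take a factorization of the full family and multiply its middle factor by $\chi_{[\varepsilon,1]}(r)$).
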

\begin{proof}
	The proof is the same as that of Corollary \ref{cor:N_r bau as r to 0}.
\end{proof}

\medskip

\par The variable ergodic averages also converge at small scales, provided that the measures have mass one and their supports shrink uniformly to the origin. The mass-one condition is an additional assumption; the cutoff measures in (\ref{eq:A_t(f) on hypersurface}) need not satisfy it.
\begin{corollary}
	\label{cor:variable ergodic bau at zero}
	Under the assumptions of Proposition \ref{prop:variable ergodic maximal}, suppose in addition that there exists $R>0$ such that, for every $0<t<1$ and almost every $x\in\R^n$,
	\begin{equation}
		\label{eq:variable measures normalization support}
		\mu_{t,x}(\R^n)=1, \qquad \supp\mu_{t,x}\subseteq B_n(Rt).
	\end{equation}
	Then, for every $f\in L^p(\A)$, we have 
	\begin{equation*}
		\begin{split}
			A_t(f)\overset{b.a.u.}{\longrightarrow} f, \qquad t\to0^+, 
		\end{split}
	\end{equation*}
	and 
	\begin{equation*}
		\begin{split}
			\mathcal{N}_t(f)\overset{b.a.u.}{\longrightarrow} f, \qquad t\to0^+. 
		\end{split}
	\end{equation*}
\end{corollary}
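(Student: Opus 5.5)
The proof follows the Junge--Xu scheme used in Corollary \ref{cor:N_r bau as r to 0}. By Proposition \ref{prop:Lp(M;c0) prop} (3), it suffices to show that
\[
\{A_t(f)-f\}_{0<t<1}\in L^p(\A;c_0((0,1)))
\]
for every $f\in L^p(\A)$ with $n/(n-1)<p<\infty$. The convergence of $\mathcal N_t(f)$ is then deduced by conjugation. The conjugacy (\ref{eq:variable ergodic conjugacy}) gives
\[
\mathcal N_t(f)-f=U^{-1}\big(A_t(Uf)-Uf\big).
\]
Since $U$ is a trace-preserving $*$-automorphism acting factorwise on the decompositions in Definition \ref{def:NC maximal}, it maps $L^p(\A;c_0)$ onto itself. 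Hence membership of $\{A_t(Uf)-Uf\}$ in $L^p(\A;c_0)$ transfers to $\{\mathcal N_t(f)-f\}$. Note also that $Uf$ ranges over all of $L^p(\A)$.

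For the $A_t$ statement we first take a dense class, namely $\mathscr E$: finite sums $\varphi\otimes a$ with $\varphi\in C_c^\infty(\R^n)$ and $a\in\M\cap L^p(\M)$, or more generally compactly supported Lipschitz $\mathcal S_\M$-valued functions. For such $f$, the mass-one normalization in (\ref{eq:variable measures normalization support}) gives
\[
A_t(f)(x)-f(x)=\int_{\R^n}\big(f(x+z)-f(x)\big)\,d\mu_{t,x}(z).
\]
Since $\supp\mu_{t,x}\subseteq B_n(Rt)$ and $\mu_{t,x}$ is positive, this yields
\[
\Norm{A_t(f)-f}_{L^\infty(\A)}\le Rt\,\sup_x\Norm{\nabla f(x)}_\infty\to0 .
\]
Moreover $A_t(f)-f$ is supported in a fixed compact set $K$ for $t<1$. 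So for any fixed $q$ with $n/(n-1)<q<p$, the $L^q$ norms are uniformly bounded by the hypothesis (\ref{eq:variable A_t maximal hypothesis}) at exponent $q$ (if the hypothesis is only available at $p$, we use the support in $K$ and $L^\infty$ bounds instead). Applying the interpolation inequality of Corollary \ref{cor:N_r bau as r to 0}, together with (\ref{eq:J subset I maximal norm}), on the window $\varepsilon_1<t<\varepsilon_2$ gives
\[
\Big\|\sup_{\varepsilon_1<t<\varepsilon_2}(A_t f-f)\Big\|_p\le \sup_{\varepsilon_1<t<\varepsilon_2}\Norm{A_tf-f}_\infty^{1-q/p}\,C^{q/p}\to0 .
\]
Thus the truncations $\{(A_tf-f)\chi_{[\varepsilon,1)}(t)\}$ are Cauchy in $L^p(\A;\ell^\infty)$ as $\varepsilon\to0$. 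Each truncation lies in $L^p(\A;c_0)$, by norm continuity of $t\mapsto A_tf$ on $[\varepsilon,1)$ for nice $f$, so the limit does too.

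To pass to general $f$, we use the maximal bound (\ref{eq:variable A_t maximal hypothesis}) together with the trivial bound for $\{f\}_t$ in $L^p(\ell^\infty)$:
\[
\Norm{\{A_tf-f\}}_{L^p(\A;\ell^\infty)}\le (C_p+1)\Norm f_p .
\]
The map $f\mapsto\{A_tf-f\}$ is therefore bounded into $L^p(\A;\ell^\infty)$, and it sends a dense set into the closed subspace $L^p(\A;c_0)$. Hence it sends all of $L^p(\A)$ into that subspace.

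\textbf{Main obstacle.} The delicate point is the dense-class step. One must check that the integral formulas and the bounded extensions agree, and that the measurability in $x$ of $\mu_{t,x}$ does not spoil the $c_0$ membership of the truncated families. The continuity in $t$ of $A_tf$ is not given abstractly. If it fails, I would instead work with $c_0$ on a countable dense set of $t$, or use the equivalent characterization that the norms over windows $(0,\varepsilon)$ tend to zero. That characterization follows directly from the displayed estimate.
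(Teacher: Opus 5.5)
Your proof is correct in substance, but it is organized differently from the paper's.

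\textbf{The paper's route.} The paper never passes through $L^p(\A;c_0)$ for this corollary. From the maximal bound it takes a factorization $A_t(h)-h=ay_tb$ and sets $q=\chi_{[0,\sqrt\lambda]}(|a^*|)\wedge\chi_{[0,\sqrt\lambda]}(|b|)$. This gives $\nu(1-q)\lesssim\lambda^{-p}\Norm{h}_p^p$ and $\Norm{q(A_th-h)q}_\infty\le\lambda$ for all $t$. It then runs the Banach principle directly, with $g_k\in\mathscr E$ chosen so that $\Norm{g-g_k}_p$ decays fast, and $e=\bigwedge_k q_k$. On the dense class it only needs $\Norm{A_tg_k-g_k}_\infty\to0$. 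So it uses no $L^p$-smallness on windows, no interpolation, and no uncountable-index version of Proposition \ref{prop:Lp(M;c0) prop}.

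\textbf{What your route gains and costs.} You obtain the stronger conclusion $\{A_tf-f\}_{0<t<1}\in L^p(\A;c_0((0,1)))$, and the extension from the dense class becomes a one-line closedness argument. The price is the window estimate on $\mathscr E$, where two points need fixing.

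\emph{First}, under the stated hypotheses the maximal bound holds only at the single exponent $p$. The interpolation with some $q<p$ is therefore unavailable, and you must use your fallback. That fallback has to use the operator side of $\mathscr E$, not just spatial support, because $\chi_K\otimes 1_\M\notin L^p(\A)$ when $\tau(1_\M)=\infty$. Concretely, after splitting into self-adjoint parts, take real $\varphi_i$ and self-adjoint $a_i\in\M\cap L^p(\M)$. Then $A_tf-f=\sum_i(A_t\varphi_i-\varphi_i)\otimes a_i$, and this is dominated from both sides by
\[
\sum_i\sup_{\varepsilon_1<t<\varepsilon_2}\Norm{A_t\varphi_i-\varphi_i}_\infty\,\chi_K\otimes|a_i|\in L^p(\A).
\]
The coefficients are at most $R\varepsilon_2\Norm{\nabla\varphi_i}_\infty\to0$, which gives the window estimate directly with no interpolation at all.

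\emph{Second}, the continuity in $t$ you flag as the main obstacle is not needed. The truncated families vanish on the tail $(0,\varepsilon)$ of the directed set. Setting $y_t=0$ there shows they lie in $L^p(\A;c_0((0,1)))$ as soon as they lie in $L^p(\A;\ell^\infty)$.
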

\begin{proof}
	We first prove the corresponding convergence for $A_t$. Let $\mathscr{E}$ be the dense subspace defined in the proof of Proposition \ref{prop:variable ergodic maximal}. For $g=\varphi\otimes a\in\mathscr{E}$, (\ref{eq:variable measures normalization support}) gives
	\begin{equation*}
		\Norm{A_t(g)-g}_\infty
		\leq\Norm{a}_\infty\sup_{x\in\R^n,\,|z|\leq Rt}|\varphi(x+z)-\varphi(x)|\to0
		\quad \text{ as } t\to0.
	\end{equation*}
	By linearity, the same convergence holds for every $g\in\mathscr{E}$.
	\par We now extend the convergence to all of $L^p(\A)$. 
	By (\ref{eq:variable A_t maximal hypothesis}) and the triangle inequality,
	\begin{equation*}
		\Big\|\sup_{0<t<1}(A_t(h)-h)\Big\|_{L^p(\A)}\leq(C_p+1)\Norm{h}_{L^p(\A)}.
	\end{equation*}
	Consequently, for every $h\in L^p(\A)$ and $\lambda>0$, there is a projection $q\in\A$ satisfying
	\begin{equation}
		\label{eq:variable averages projection estimate}
		\begin{split}
			&\nu(1-q)\leq \tilde{C}_p\lambda^{-p}\Norm{h}_{L^p(\A)}^p, \\
			\text{ and } \quad &\Norm{q(A_t(h)-h)q}_\infty\leq\lambda, \quad \forall \ 0<t<1,
		\end{split}
	\end{equation}
	where $\tilde{C}_p=2^{p+1}(C_p+1)^p$. Indeed, Definition \ref{def:NC maximal} provides a factorization $A_t(h)-h=ay_tb$ with $a,b\in L^{2p}(\A)$ and
	\begin{equation*}
		\sup_{0<t<1}\Norm{y_t}_\infty\leq1, \qquad
		\Norm{a}_{2p}^2=\Norm{b}_{2p}^2\leq2(C_p+1)\Norm{h}_{L^p(\A)},
	\end{equation*}
	after rescaling the factors. Set
	\begin{equation*}
		q=\chi_{[0,\sqrt{\lambda}]}(|a^*|)\wedge\chi_{[0,\sqrt{\lambda}]}(|b|).
	\end{equation*}
	Then $\Norm{qa}_\infty,\Norm{bq}_\infty\leq\sqrt{\lambda}$, which gives the second part of (\ref{eq:variable averages projection estimate}). The first part follows from Chebyshev's inequality:
	\begin{equation*}
		\nu(1-q)\leq\lambda^{-p}\left(\Norm{a}_{2p}^{2p}+\Norm{b}_{2p}^{2p}\right)
		\leq \tilde{C}_p\lambda^{-p}\Norm{h}_{L^p(\A)}^p.
	\end{equation*}
	\par Fix $g\in L^p(\A)$ and $\varepsilon>0$. By density, choose $g_k\in\mathscr{E}$, $k\geq1$, such that
	\begin{equation*}
		\tilde{C}_p2^{kp}\Norm{g-g_k}_{L^p(\A)}^p<\varepsilon2^{-k}.
	\end{equation*}
	Apply (\ref{eq:variable averages projection estimate}) to $h=g-g_k$ and $\lambda=2^{-k}$ to obtain projections $q_k$, and put $e=\bigwedge_{k\geq1}q_k$. Then $\nu(1-e)\leq\sum_{k\geq1}\nu(1-q_k)<\varepsilon$, and for every $k\geq1$,
	\begin{equation*}
		\limsup_{t\to0}\Norm{e(A_t(g)-g)e}_\infty
		\leq2^{-k}+\limsup_{t\to0}\Norm{A_t(g_k)-g_k}_\infty=2^{-k}.
	\end{equation*}
	Letting $k\to\infty$ proves that $A_t(g)\to g$ b.a.u. for the full continuous parameter $t\to0$.
	\par Finally, apply this conclusion to $g=Uf$ and use (\ref{eq:variable ergodic conjugacy}). If $e$ is a projection in the b.a.u. convergence of $A_t(Uf)$, then $U^{-1}(e)$ has a complement with the same trace, and
	\begin{equation*}
		\Norm{U^{-1}(e)(\mathcal{N}_t(f)-f)U^{-1}(e)}_\infty
		=\Norm{e(A_t(Uf)-Uf)e}_\infty\to0.
	\end{equation*}
	This proves the corollary.
\end{proof}
The following consequence is immediate. 
\begin{corollary}
	Under the assumptions of Corollary \ref{cor:sup_t N_t Lp bound for hypersurface}, we have 
	\begin{equation*}
		\begin{split}
			A_t(f)\overset{b.a.u.}{\longrightarrow} m_0f, \qquad t\to0^+, 
		\end{split}
	\end{equation*}
	and 
	\begin{equation*}
		\begin{split}
			\mathcal{N}_t(f)\overset{b.a.u.}{\longrightarrow} m_0f, \qquad t\to0^+, 
		\end{split}
	\end{equation*}
	where the function $m_0(x)=\lim_{t\to0}(A_t1)(x)$. 
\end{corollary}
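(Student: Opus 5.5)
The plan is to repeat the argument of Corollary \ref{cor:variable ergodic bau at zero}, with the constant limit $1$ replaced by the scalar function $m_0$. The mass-one hypothesis there was used only to identify the limit on the dense class, so only that step changes. Throughout, fix $n\geq3$ and $n/(n-1)<p<\infty$, so that Theorem \ref{thm:A_t bounded for p>n/(n-1)} and Corollary \ref{cor:sup_t N_t Lp bound for hypersurface} supply the maximal bounds.

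\emph{Step 1: identify $m_0$ and show uniform convergence of $A_t1$.} Substitute $y=x+tz$ in (\ref{eq:A_t(f) on hypersurface}). Since $\Phi_t(x,x+tz)=\Phi^t(x,x+z)$, and the factor $t^{-n}$ is absorbed by $dy=t^ndz$ in the limiting definition of $\delta(\cdot)$, we get
\[
A_t1(x)=\int_{\R^n}\psi(t,x,-z)\,\delta\big(\Phi^t(x,x+z)\big)\,dz .
\]
The family $\Phi^t$ is smooth up to $t=0$, and $\nabla_y\Phi^t\neq0$ on the relevant support. Moreover $\psi$ has compact support in all variables, in particular in $x$ and $z$. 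Hence the right-hand side is a continuous function of $(t,x)\in[0,1]\times\R^n$ with compact $x$-support. Therefore $m_0(x)=\lim_{t\to0}A_t1(x)$ exists, is bounded and compactly supported, and $A_t1\to m_0$ uniformly in $x$. The same computation shows that the measures $\mu_{t,x}$ are supported in $\{|z|\leq Ct\}$ and have uniformly bounded total variation.

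\emph{Step 2: convergence on the dense class.} Take $g=\varphi\otimes a\in\mathscr E$ and write
\[
A_t(g)(x)-m_0(x)g(x)=a\int\big(\varphi(x+z)-\varphi(x)\big)\,d\mu_{t,x}(z)+a\,\varphi(x)\big(A_t1(x)-m_0(x)\big).
\]
The first term is bounded by $\|a\|_\infty\,\sup_{t,x}\|\mu_{t,x}\|\,\sup_{|z|\leq Ct}\|\varphi(\cdot+z)-\varphi\|_\infty$, which tends to $0$. The second term tends to $0$ in $L^\infty(\A)$ by Step 1. By linearity, $\|A_t(g)-m_0g\|_\infty\to0$ for every $g\in\mathscr E$.

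\emph{Step 3: maximal control and passage to all of $L^p(\A)$.} Multiplication by the bounded scalar function $m_0$ is a bounded map on $L^p(\A)$, and the corresponding family $\{m_0h\}_t$ is constant in $t$. Hence Theorem \ref{thm:A_t bounded for p>n/(n-1)} gives
\[
\Big\|\sup_{0<t<1}\big(A_t(h)-m_0h\big)\Big\|_{L^p(\A)}\lesssim\|h\|_{L^p(\A)}.
\]
From this I would extract the projection estimate (\ref{eq:variable averages projection estimate}) with $h$ replaced by $A_t(h)-m_0h$, by the same factorization and Chebyshev argument. The diagonal choice $g_k\in\mathscr E$, $e=\bigwedge_k q_k$, then gives $A_t(g)\to m_0g$ b.a.u. for every $g\in L^p(\A)$.

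\emph{Step 4: the ergodic averages.} For $\mathcal N_t$, use the conjugacy (\ref{eq:variable ergodic conjugacy}). Because $m_0$ is scalar, $U(m_0f)=m_0\,Uf$, so
\[
U^{-1}(e)\big(\mathcal N_t(f)-m_0f\big)U^{-1}(e)=U^{-1}\Big(e\big(A_t(Uf)-m_0Uf\big)e\Big).
\]
Since $U$ preserves the trace and the operator norm, b.a.u. convergence transfers from $A_t(Uf)$ to $\mathcal N_t(f)$.

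The only genuinely new point, and the one I expect to need care, is Step 1. One has to check that the rescaled surface integral is jointly continuous up to $t=0$, uniformly in $x$. This relies on the nondegeneracy $\nabla_y\Phi^t\neq0$ near $t=0$, which follows from the rotational curvature hypothesis.
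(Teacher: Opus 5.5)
Your proposal is correct and is the argument the paper intends. The paper calls this corollary immediate, meaning one reruns the proof of Corollary \ref{cor:variable ergodic bau at zero} with the limit $g$ replaced by $m_0g$ on $\mathscr E$. That proof consists of support in $|z|\lesssim t$, the maximal bound, the diagonal projection argument, and conjugacy by $U$, and this is exactly what your Steps 2--4 do. Your Step 1 supplies the one ingredient the paper leaves implicit: the rescaling $y=x+tz$ showing $A_t1\to m_0$ uniformly, with $m_0$ bounded and compactly supported. Your justification of it is sound, via the uniform lower bound on $|\nabla_y\Phi^t|$ forced by the scaled rotational curvature condition together with smoothness of $\Phi^t$ up to $t=0$.
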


\subsubsection{Large-scale ergodic theorems} 
\par The b.a.u. convergence as $r\to\infty$ is more subtle.
To state the result, we define the fixed-point subspace as follows:
\begin{equation*}
	\F_p:=\{x\in L^p(\M):T_s(x)=x \text{ for all } s\in\R^n\}, \quad 1<p<\infty.
\end{equation*}

The group of operators $(T_s)_{s\in\R^n}$ has $n$ infinitesimal generators, defined by
\begin{equation}
	\label{eq:A_j generator def}
	A_j(x):=\lim_{\varepsilon\to0}\frac i{\varepsilon} (T_{\varepsilon e_j}-\text{Id})(x)= \Big(i\frac{\partial}{\partial s_j} T_s\Big) \Big|_{s=0}(x), \quad j=1,\dots,n.
\end{equation}
For $x\in L^2(\M)$, the above limit is understood in the $L^2$ sense. 
Since $s\mapsto T_s$ is a unitary representation of $\R^n$ on the Hilbert space $L^2(\M)$, each $A_j$ is (possibly) unbounded and self-adjoint on its domain. They commute strongly with each other since the group is abelian.
The space $\F_2$ is a closed subspace of $L^2(\M)$ and we have the orthogonal decomposition
\begin{equation*}
	L^2(\M)=\F_2\oplus\F_2^\perp.
\end{equation*}
Denote $(A_1,\dots,A_n)$ by $A$. We have
\begin{equation*}
	T_s=\exp(-i \sum_j s_jA_j)=\exp(-i s\cdot A).
\end{equation*}
This implies
\begin{equation*}
	\F_2=\bigcap_i \ker(A_j) \quad \text{ and } \quad \F_2^\perp=\overline{\sum_j \Im(A_j)}^{L^2}.
\end{equation*}

For general $L^p(\M)$, the limit (\ref{eq:A_j generator def}) is understood in the $L^p$ sense, and $A_j$ is an unbounded operator on $L^p(\M)$.
With the notation 
\begin{equation*}
	\F_p^\perp:= \overline{\sum_j \Im(A_j)}^{L^p}, 
\end{equation*}
(a notation that does not assert orthogonality in $L^p(\M)$), we have the canonical direct sum decomposition
\begin{equation*}
	L^p(\M)=\F_p \oplus \F_p^{\perp}.
\end{equation*}
For simplicity, we let $F$ be the projection from $L^p(\M)$ to the subspace $\F_p$ (when $p$ is clear from context).

\medskip

\par Junge and Xu \cite{JungeXu2007} prove that ergodic averages associated with positive contractive operator semigroups on $L^p(\M)$ converge b.a.u. to the projection onto the fixed-point subspace. 
This raises the question of whether the same conclusion holds for ergodic averages over lower-dimensional submanifolds in $\R^n$. 
That is, does $N_r^P(x)$ converge b.a.u. to $F(x)$ for $x\in L^p(\M)$?

\begin{conj}
	Let $1<p<\infty$, and let $P:\R^k\to\R^n$ be a polynomial map of finite type at $t=0$. Then for all $x\in L^p(\M)$, the ergodic mean $N_r^P(x)$ converges to $F(x)$ bilaterally almost uniformly as $r\to\infty$.
\end{conj}

\par Junge and Xu \cite{JungeXu2007} use a telescoping argument to prove large-scale b.a.u. convergence for contractive operator semigroups with indices in $[0,\infty)$. 
However, this method may not apply to averages over lower-dimensional sets. 
Although b.a.u. convergence as $r\to\infty$ remains unproved here, 
it remains true if we restrict the family of ergodic averages to suitable subsequences. 

\begin{corollary}
	Let $1<p<\infty$, and let $P:\R^k\to\R^n$ be a polynomial map of finite type at $t=0$. Then for all $x\in L^p(\M)$, $N_{2^m}^P(x)\to F(x)$ bilaterally almost uniformly when $m\in\Z$ and $m\to+\infty$.
\end{corollary}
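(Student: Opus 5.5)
Following Junge and Xu, we reduce the corollary to showing that $\{N_{2^m}^P(x)-F(x)\}_{m\geq1}$ lies in $L^p(\M;c_0)$. Appendix \ref{subsec:bau convergence} (Proposition \ref{prop:Lp(M;c0) prop}(3)) converts such membership into b.a.u. convergence. The maximal inequality of Theorem \ref{thm:sup N_r^P(a) Lp bounded} gives $\|\sup_m N_{2^m}^P(x)\|_p\lesssim\|x\|_p$ for every $1<p<\infty$, and $L^p(\M;c_0)$ is closed. It therefore suffices to verify the $c_0$ property on a dense subset of $L^p(\M)=\F_p\oplus\F_p^\perp$. On $\F_p$ the statement is trivial, since $N_r^P(x)=x=F(x)$ for every $r$. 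It remains to treat a dense subclass of $\F_p^\perp$, namely finite sums of elements $A_jz$ with $z$ in a nice core. A natural choice is $z=\N_{r_0}(y)$ smoothed along the action with $y\in L^1\cap\M$, so that $z$ and $A_jz$ lie in $L^1\cap L^\infty$.

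For such $x\in\F_p^\perp$ we need $N_{2^m}^P(x)\to0$ strongly enough to place the sequence in $L^p(\M;c_0)$. A convenient sufficient condition is $\sum_m\|N_{2^m}^P(x)\|_p<\infty$, or summability in $L^2$ combined with interpolation as in the proof of Corollary \ref{cor:N_r bau as r to 0}. Indeed, $\|\{a_m\}_{m\geq M}\|_{L^p(\ell^\infty)}\le\sum_{m\ge M}\|a_m\|_p$, so the tails become small and the sequence is a norm limit of finitely supported sequences. Here the lacunary restriction $r=2^m$ is essential: a sum over dyadic scales is available, whereas over the continuum of $r$ it is not.

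The main obstacle is obtaining decay of $\|N_{2^m}^P(x)\|_2$ with a geometric rate in $m$. On $L^2(\M)$ I would use the spectral theorem for the commuting self-adjoint generators $A=(A_1,\dots,A_n)$. This gives $N_r^P=\widehat{d\sigma_r}(A)$, where $\widehat{d\sigma_r}(\xi)=|B_k(r)|^{-1}\int_{|t|<r}e^{-iP(t)\cdot\xi}dt$ is up to normalization the multiplier of the scalar averages. Because $P$ is of finite type at the origin, the van der Corput decay of Lemma \ref{lem:decay of Fourier transform of measure}, in the non-isotropic rescaled form used in Section 6, gives $|\widehat{d\sigma_{2^m}}(\xi)|\lesssim\min\{1,(2^m|\xi|)^{-\delta}\}$ for large $m$. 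Here the top-degree homogeneous part dominates, and this is where the finite-type hypothesis enters. For $x=A_jz$ the spectral measure is weighted by $|\xi_j|^2$, so
\[
\sum_m\|N^P_{2^m}(x)\|_2^2\le\int\sum_m\min\{1,(2^m|\xi|)^{-2\delta}\}\,|\xi|^2\,d E_z(\xi).
\]
Restricting to $m\geq1$, the sum over $m$ is bounded by $C\min\{1,|\xi|^{-2\delta}\}$, so the right-hand side is $\lesssim\|z\|_2^2+\|Az\|_2^2<\infty$. To get genuine summability of norms rather than squares, I would split by spectral region. On $|\xi|\le 2^{-m/2}$ the factor $|\xi|$ gives $2^{-m/2}\|z\|_2$. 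On $|\xi|>2^{-m/2}$ the decay gives $2^{-m\delta/2}\|Az\|_2$. Both are geometric in $m$, hence summable.

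Finally, I would pass from $L^2$ to $L^p$ by interpolation. I would combine the $L^2$ geometric decay with the uniform bound $\|N_r^P(x)\|_q\le\|x\|_q$, for $q$ on the other side of $p$, applied to $x\in L^1\cap L^\infty$. This gives geometric decay of $\|N_{2^m}^P(x)\|_p$ for this dense class. Closedness of $L^p(\M;c_0)$ under the maximal bound then extends the result to all of $\F_p^\perp$. Adding the trivial part on $\F_p$, whose projection $F$ is bounded on $L^p$, yields $N^P_{2^m}(x)\to F(x)$ b.a.u. for every $x\in L^p(\M)$.
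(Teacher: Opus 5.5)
Your argument is correct, but it differs from the paper's in two places.

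\textbf{Dense class.} The paper uses $\mathcal{D}=\operatorname{span}\{\Phi_h(y)\}$ with $\widehat h$ compactly supported in $\R^n\setminus\{0\}$. Each element is then spectrally supported in a compact $K$ away from the origin, so only decay of $\F^{-1}[\mu_r^P]$ on $K$ is needed, with constants depending on $K$. The price is a separate density proof for $\mathcal D$, via approximate identities and the mean ergodic theorem. You use the coboundaries $\sum_j A_jz_j$ instead. Their density in $\F_p^\perp$ is essentially the definition of $\F_p^\perp$, once you note that the smoothed elements $\Phi_h(y)$, $y\in L^1(\M)\cap\M$, form a core for each $A_j$ on $L^p$, with $A_j\Phi_h(y)=-i\Phi_{\partial_jh}(y)\in L^1\cap L^\infty$. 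Your price is that the spectral weight $|\xi_j|^2$ vanishes only at the origin. You therefore need the scale-invariant bound $|\widehat{d\sigma_r}(\xi)|\lesssim\min\{1,(r|\xi|)^{-\delta}\}$ for all $\xi$ and all $r\geq1$.

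\textbf{Passage to $L^p(\M;c_0)$.} The paper uses square-summability in $L^2$ and the bound $\Norm{\{x_m\}}_{L^2(\ell^\infty)}\leq(\sum\Norm{x_m}_2^2)^{1/2}$. It then interpolates $L^p(\M;\ell^\infty)$ norms against the $L^q$ maximal inequality. You get geometric decay of each $\Norm{N_{2^m}^P(x)}_2$ from your spectral splitting. You then interpolate single elements by H\"older, using only $L^q$-contractivity, and kill the tails with the triangle inequality in $L^p(\M;\ell^\infty)$. This is more elementary, and it uses the maximal inequality only in the final closure step.

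\textbf{Two points to fix.}
\begin{enumerate}
\item The scale-invariant decay does not follow from Lemma \ref{lem:decay of Fourier transform of measure} or from the rescalings of Section 6; those concern a fixed smooth cutoff and small scales. What gives it is the polynomial van der Corput estimate over the ball with constants depending only on $d$ and $k$. This is the same input the paper takes from Wright: $|\int_{|t|<1}e^{iQ(t)}dt|\lesssim(\max_{|\alpha|\geq1}|c_\alpha|)^{-\delta}$ for some $\delta=\delta(d,k)>0$. Apply it to $Q(t)=\inn{\xi}{P(rt)}$, and combine it with $\max_\alpha r^{|\alpha|}|\inn{\xi}{a_\alpha}|\geq r\max_\alpha|\inn{\xi}{a_\alpha}|\gtrsim r|\xi|$ for $r\geq1$. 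The last inequality holds because finite type means the coefficient vectors $a_\alpha$ span $\R^n$. Your stated reason, that ``the top-degree homogeneous part dominates,'' is not correct: $\xi$ can be orthogonal to every top-degree coefficient.
\item $\sum_{m\geq1}\min\{1,(2^m|\xi|)^{-2\delta}\}$ grows like $\log(1/|\xi|)$ as $\xi\to0$; it is not bounded by $C\min\{1,|\xi|^{-2\delta}\}$. This is harmless, since the weight $|\xi|^2$ absorbs it and your splitting argument supersedes that display anyway.
\end{enumerate}
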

\begin{proof}
	Let $\mu_r^P$ be the normalized pushforward measure on $\R^n$ by $P$, i.e.,
	\begin{equation*}
		\int_{\R^n} \varphi(x)d\mu_r^P(x):=\frac{1}{|B_k(r)|}\int_{|t|<r}\varphi(P(t))dt, \quad r>0.
	\end{equation*}
	Its inverse Fourier transform is
	\begin{equation*}
		\F^{-1}[\mu_r^P](\xi)=\int_{\R^n}e^{2\pi i\inn{\xi}{x}}d\mu_r^P(x)=\frac1{|B_k(r)|}\int_{|t|<r}e^{2\pi i\inn{\xi}{P(t)}}dt.
	\end{equation*}
	We claim that, for any compact set $K\subset \R^n\setminus\{0\}$, there exist constants $C_K>0$ and $\delta_K>0$ such that
	\begin{equation}
		\label{eq:Fourer decay for compact K}
		\sup_{\xi\in K}\big|\F^{-1}[\mu_r^P](\xi)\big|\leq C_Kr^{-\delta_K}.
	\end{equation}
	Indeed, by assuming $P(0)=0$, we can write
	\begin{equation*}
		P(t)=\sum_{1\leq|\alpha|\leq d} a_\alpha t^\alpha,
	\end{equation*}
	where each coefficient $a_\alpha$ is in $\R^n$. For $\xi\in K$,
	\begin{equation*}
		\inn{\xi}{P(t)}=\sum_{1\leq|\alpha|\leq d} \inn{\xi}{a_\alpha} t^\alpha.
	\end{equation*}
	Since $P$ is a polynomial of finite type, its range is not contained in a hyperplane. For each $\xi\in K$, at least one coefficient $\inn{\xi}{a_\alpha}$ is non-zero.
	By compactness of $K$, after passing to finitely many subregions, we may assume $|\inn{\xi}{a_{\alpha_0}}|\geq c_K>0$ for some fixed multi-index $\alpha_0$.
	A change of variables gives
	\begin{equation*}
		\F^{-1}[\mu_r^P](\xi)=\frac1{|B_k(1)|}\int_{|t|<1}e^{2\pi i\inn{\xi}{P(rt)}}dt.
	\end{equation*}
	The phase polynomial $Q_r(t):=\inn{\xi}{P(rt)}$ has a coefficient of size at least $c_K r^{|\alpha_0|}$, where $c_K>0$ and $|\alpha_0|\geq1$.
	By a version of the van der Corput lemma (see, for example, \cite[Corollary 7.3]{Wright1999}), we obtain the estimate (\ref{eq:Fourer decay for compact K}).
	\par For $h\in\mathcal{S}(\R^n)$ and $y\in L^p(\M)$, define $\Phi_h(y)$ to be the Bochner integral
	\begin{equation*}
		\Phi_h(y):=\int_{\R^n}h(u)T_u(y)du \ \in L^p(\M).
	\end{equation*}
	Let $\mathcal{D}$ be the subspace of $L^1(\M)\cap\M$ defined by
	\begin{equation*}
		\mathcal{D}:=\operatorname{span}\left\{\Phi_h(y): y\in L^1(\M)\cap\M,\ h\in \mathcal{S}(\R^n),\ \supp\ \widehat{h} \Subset \R^n\setminus\{0\}\right\}.
	\end{equation*}
	The notation $A\Subset B$ means that $A$ is compactly contained in $B$.
	It is easy to see that $\mathcal{D}$ is contained in $\ker(F)=\F_p^\perp$. Indeed, since $F$ is the mean ergodic projection associated with the $\R^n$-action, it commutes with $T_u$ and satisfies $F(T_u(y))=T_u(F(y))=F(y)$. Thus
	\begin{equation*}
		F(\Phi_h(y))=\int_{\R^n}h(u)F(T_u(y))du=\left(\int_{\R^n}h(u)du\right) F(y)=\widehat{h}(0)F(y)=0.
	\end{equation*}
	Moreover, we claim that $\mathcal{D}$ is an $L^p$-dense subspace of $\F_p^\perp$. We will prove this claim at the end of the proof. For now, assume this claim.
	\par We now show that (\ref{eq:Fourer decay for compact K}) implies the following $L^2$-estimate: For each fixed $z\in\mathcal{D}$, there exist $C>0$ and $\delta>0$ depending only on $z$ such that
	\begin{equation*}
		\Norm{N_r^P(z)}_{L^2(\M)}\leq C r^{-\delta}\Norm{z}_{L^2(\M)}.
	\end{equation*}
	In fact, recall that $T_s$ can be extended to a bounded operator on $L^2(\M)$, and therefore $s\to T_s$ is a unitary representation of $\R^n$ on $B(L^2(\M))$.
	By Stone's theorem for unitary groups, there exists a spectral measure $dE$ on $\R^n$ such that
	\begin{equation*}
		T_s=\int_{\R^n} e^{2\pi i\inn{s}{\xi}} dE(\xi).
	\end{equation*}
	Then we have
	\begin{equation*}
		\Phi_h=\int_{\R^n}\widehat{h}(-\xi)dE(\xi).
	\end{equation*}
	Now we assume that $\supp\ \widehat{h} \Subset K \Subset \R^n\setminus\{0\}$ for some compact $K\subset\R^n$. So $\Phi_h$ is spectrally supported in $K$.
	Then for $z=\Phi_h(y)$ and $y\in L^1(\M)\cap\M$, we have
	\begin{equation*}
		\begin{split}
			N_r^P(z)
			&= \frac1{|B_k(r)|}\int_{|t|<r} T_{P(t)}(z)dt = \int_{\R^n} T_s(z) d\mu_r^P(s) \\
			&=\left[\int_{\R^n}\int_{\R^n}e^{2\pi i\inn{s}{\xi}}dE(\xi) d\mu_r^P(s)\right](z) \\
			&=\left[\int_{\R^n}\F^{-1}[\mu_r^P](\xi)dE(\xi)\right](z).
		\end{split}
	\end{equation*}
	This implies
	\begin{equation}
		\label{eq:L2 estimate for N_r^P(z)}
		\Norm{N_r^P(z)}_2 \leq \Norm{N_r^P}_{B(L^2(\M))}\Norm{z}_2 \leq \sup_{\xi\in K}\big|\F^{-1}[\mu_r^P](\xi)\big|\Norm{z}_2 \lesssim_K r^{-\delta_K}\Norm{z}_2.
	\end{equation}
	Here the compact set $K$ depends only on $h$, hence on $z\in\mathcal{D}$.
	\par We next prove that
	\begin{equation}
		\label{eq:N^P_2^m(z) in L^p(c_0)}
		\{N_{2^m}^P(z)\}_{m\geq0} \in L^p(\M;c_0),\quad \forall\ z\in \mathcal{D}.
	\end{equation}
	By (\ref{eq:L2 estimate for N_r^P(z)}) we have
	\begin{equation*}
		\Norm{N_{2^m}^P(z)}_2 \lesssim 2^{-m\delta}\Norm{z}_2.
	\end{equation*}
	Recall that we have the following square function bound for the noncommutative maximal norm:
    \begin{equation*}
	    \Norm{\{x_m\}_{m\geq M}}_{L^2(\M;\ell^\infty)}\leq \bigg(\sum_{m\geq M}\Norm{x_m}_{L^2(\M)}^2\bigg)^{1/2}.
    \end{equation*}
	Applying this to $x_m=N_{2^m}^P(z)$, we obtain
	\begin{equation*}
		\Norm{\{N_{2^m}^P(z)\}_{m\geq M}}_{L^2(\M;\ell^\infty)}\lesssim \bigg(\sum_{m\geq M}2^{-2m\delta}\bigg)^{1/2}\Norm{z}_2\to 0 \quad \text{ as } M\to \infty.
	\end{equation*}
	Hence $\{N_{2^m}^P(z)\}_{m\geq 0}$ is in $L^2(\M;c_0)$. This proves (\ref{eq:N^P_2^m(z) in L^p(c_0)}) with $p=2$. For general $1<p<\infty$, we choose $q$ such that $2<p<q<\infty$ if $p>2$ and $1<q<p<2$ if $p<2$.
	By Theorem \ref{thm:sup N_r^P(a) Lp bounded},
	\begin{equation*}
		\Norm{\{N_{2^m}^P(z)\}_{m\geq M}}_{L^q(\M;\ell^\infty)} \leq \Norm{\{N_{r}^P(z)\}_{r>0}}_{L^q(\M;\ell^\infty)} \lesssim_{p,d} \Norm{z}_q.
	\end{equation*}
	Interpolation between the $L^2(\M;\ell^\infty)$-norm, which tends to $0$, and the $L^q(\M;\ell^\infty)$-norm, which is bounded, gives
	\begin{equation*}
		\Norm{\{N_{2^m}^P(z)\}_{m\geq M}}_{L^p(\M;\ell^\infty)} \to 0 \quad \text{ as } M\to\infty.
	\end{equation*}
	This proves (\ref{eq:N^P_2^m(z) in L^p(c_0)}) for general $1<p<\infty$.
	\par We now complete the proof of the corollary. For general $x\in L^p(\M)$, write $x=F(x)+(x-F(x))$. Since $x-F(x) \in \F_p^\perp$, by the density of the subspace $\mathcal{D}$ in $\F_p^\perp$, we can choose $z_j\in\mathcal{D}$ such that
	\begin{equation*}
		\Norm{x-F(x)-z_j}_p \to 0, \quad j\to\infty.
	\end{equation*}
	For each $z_j$, (\ref{eq:N^P_2^m(z) in L^p(c_0)}) gives
	\begin{equation*}
		\{N_{2^m}^P(z_j)\}_{m\geq0} \in L^p(\M;c_0).
	\end{equation*}
	On the other hand, by the maximal ergodic theorem,
	\begin{equation*}
		\Norm{\{N_{2^m}^P(x-F(x))-N_{2^m}^P(z_j)\}_{m\geq0}}_{L^p(\M;\ell^\infty)} \lesssim_{p,d} \Norm{x-F(x)-z_j}_p \to 0.
	\end{equation*}
	Since $L^p(\M;c_0)$ is a closed subspace of $L^p(\M;\ell^\infty)$, we conclude that
	\begin{equation*}
		\{N_{2^m}^P(x-F(x))\}_{m\geq0} \in L^p(\M;c_0).
	\end{equation*}
	Then recall that $F(x)$ is fixed by every $T_s$, hence $N_{2^m}^P(F(x))=F(x)$. Thus
	\begin{equation*}
		\{N_{2^m}^P(x)-F(x)\}_{m\geq0} \in L^p(\M;c_0).
	\end{equation*}
	By the $c_0$-criterion of b.a.u. convergence (see Proposition \ref{prop:Lp(M;c0) prop} (3)), this implies that $N_{2^m}^P(x)\to F(x)$ bilaterally almost uniformly as $m\to +\infty$.
	\par Finally, it remains to show that $\mathcal{D}$ is dense in $\F_p^\perp$. Let $\psi\in C_c^\infty(\R^n)$ satisfy $0\leq\psi\leq1$ and $\psi(\xi)=1$ near $\xi=0$.
	Let $\eta_{\varepsilon,R}(\xi)=\psi(\xi/R)-\psi(\xi/\varepsilon)$. Then $\supp(\eta_{\varepsilon,R})\Subset\R^n\setminus\{0\}$, where $\varepsilon$ and $R$ are positive numbers.
	For $x\in L^p(\M)$, it suffices to approximate $x-F(x)$ by elements in $\mathcal{D}$. Note that
	\begin{equation*}
		\begin{cases}
			\Phi_{\psi(\frac{\cdot}{R})^\vee}(x) \to x, & \text{ as } R\to\infty, \\
			\Phi_{\psi(\frac{\cdot}{\varepsilon})^\vee}(x) \to F(x), & \text{ as } \varepsilon\to 0,
		\end{cases}
		\quad \text{ in } L^p(\M).
	\end{equation*}
	The first limit follows from an approximate identity argument, and the second from the mean ergodic theorem.
	Let $h_{\varepsilon,R}=\check{\eta_{\varepsilon,R}}$. Then for $y\in L^1(\M)\cap\M$,
	\begin{equation*}
		\Phi_{h_{\varepsilon,R}}(y)=\Phi_{\psi(\frac{\cdot}{R})^\vee}(y)-\Phi_{\psi(\frac{\cdot}{\varepsilon})^\vee}(y) \to y-F(y) \quad \text{ in } L^p(\M), \quad \text{ as } \varepsilon\to 0, \ R\to\infty.
	\end{equation*}
	Approximating $x$ further by elements in $L^1(\M)\cap\M$ gives $\overline{\mathcal{D}}^{L^p}=\F_p^\perp$.
\end{proof}

\appendix
\section{Preliminaries on Noncommutative Analysis}
\setcounter{theorem}{0}
\renewcommand{\thetheorem}{A\arabic{theorem}}
\par In the appendix, we recall some preliminaries on noncommutative analysis, which have been used in the paper.

\subsection{Noncommutative $L^p$ spaces}
\par Let $(\M;\tau)$ be a semifinite von Neumann algebra equipped with a normal semifinite faithful (n.s.f. for short) trace $\tau$. Let $\mathcal{S}_{\M}^+$ be all positive operators in $\M$ with $\tau$-finite supports and $\mathcal{S}_\M$ be the linear span of $\mathcal{S}_\M^+$. We define
\[ \Norm{x}_{L^p(\M)}=\Norm{x}_p=\tau(|x|^p)^{1/p}, \quad x\in \mathcal{S}_\M. \]
One can check that $\Norm{\cdot}_p$ is a norm on $\mathcal{S}_\M$ when $1\leq p<\infty$. The noncommutative space, denoted by $L^p(\M,\tau)$, is defined to be the completion of $\mathcal{S}_\M$ under the norm $\Norm{\cdot}_p$. We also set $L^{\infty}(\M,\tau)=\M$ equipped with the operator norm.
For simplicity, we often write $L^p(\M)$ and omit the trace $\tau$ if there is no confusion.
Let $L^0(\M)$ be the algebra of $\tau$-measurable operators affiliated with $\M$. For $p\geq0$, denote by $L^p(\M)_+$ the cone of positive operators in $L^p(\M)$. 
There is a partial order on $L^0(\M)$, $x\leq y$ means $x-y\in L^0(\M)_+$. 
Moreover, the set of all projections in $\M$ is denoted by $\PM$.
\par In operator-valued (also called semi-commutative) case, we consider the tensor von Neumann algebra
\[ \mathcal{A}:=L^{\infty}(\Rn)\overline{\otimes}\M, \]
equipped with the tensor trace $\nu=\int_{\Rn}\otimes\tau$. Then $(\A,\nu)$ is also a semifinite von Neumann algebra with a n.s.f. trace. Then we have the isometry $L^p(\A)\simeq L^p(\Rn;L^p(\M))$. Indeed, let $f=\sum_i a_i\chi_{E_i}$ be a $\mathcal{S}_\M$-valued simple function on $\Rn$, if we define $T(f)=\sum_i a_i\otimes\chi_{E_i}\in L^p(\A)$, then it is easy to see
\[ \Norm{f}_{L^p(\A)}=\left(\int_{\Rn}\tau(|f(x)|^p)dx\right)^{1/p}=\Norm{T(f)}_{L^p(\Rn;L^p(\M))}. \]
Therefore we view operators in $L^p(\A)$ as $p$-integrable vector-valued functions in Bochner sense.
We have the following operator-valued H\"older inequality (see \cite{HongLaiRayXu2026}).
\begin{lemma}
	\label{lem:Holder for operator valued}
	Suppose $(X,\mu)$ be a $\sigma$-finite measure space. Let $\B\subset L^0(\M)$ be a Banach $\M$-bimodule. For $f:X\to \B_+$ and $g:X\to\mathbb{R}_+$ are positive measurable functions, we have 
	\begin{equation*}
		\int_X f(x)g(x)d\mu(x) \leq \left(\int_{X}f(x)^pd\mu(x)\right)^{1/p}\left(\int_{X}g(x)^{p'}d\mu(x)\right)^{1/p'}, 
	\end{equation*}
	where $1\leq p\leq\infty$, $1/p+1/p'=1$. The integral is in the Bochner sense, and the inequlity holds with respect to the partial order on $L^0(\M)$. 
\end{lemma}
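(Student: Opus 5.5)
The plan is to reduce the inequality to an operator Jensen inequality for the operator concave function $u\mapsto u^{1/p}$, after renormalizing by the scalar weight $g$. The endpoint cases are immediate. If $p=\infty$, then $f(x)\leq \sup_x\|f(x)\|\cdot 1$ is read as $\big(\int f^p\big)^{1/p}$ in the usual limiting sense. If $p=1$, then $p'=\infty$ and $f(x)g(x)\leq \|g\|_\infty f(x)$ pointwise, so integrating preserves the order. So fix $1<p<\infty$. Put $G=\int_X g^{p'}d\mu$ and $F=\int_X f^p d\mu$, the latter a positive element of $L^0(\M)$. If $G=0$ then $g=0$ a.e. and there is nothing to prove; if $G=\infty$ the right-hand side is interpreted as infinite. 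We may also discard the set $\{g=0\}$, since it contributes nothing to the left-hand side.

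On $X_+=\{g>0\}$ define the probability measure $d\nu=G^{-1}g^{p'}d\mu$ and the $\B_+$-valued function $h=f\,g^{1-p'}$. Then
\[ \int_X fg\,d\mu = G\int_{X_+} h\,d\nu ,\qquad \int_{X_+} h^p\,d\nu = G^{-1}\int_{X_+} f^p g^{p(1-p')+p'}\,d\mu = G^{-1}F, \]
where the last identity uses $p(1-p')+p'=0$. It then suffices to prove the operator Jensen inequality
\[ \int_{X_+} h\,d\nu=\int_{X_+}\big(h^p\big)^{1/p}d\nu\leq \Big(\int_{X_+}h^p\,d\nu\Big)^{1/p}, \]
because this gives $\int fg\,d\mu\leq G\,(G^{-1}F)^{1/p}=F^{1/p}G^{1/p'}$. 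Here the scalar factor $G$ commutes with everything, so pulling it out of the functional calculus is harmless.

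For the Jensen step I would use that $\varphi(u)=u^{1/p}$ is operator concave on $[0,\infty)$ when $0<1/p\leq1$ (Löwner--Heinz). Operator concavity gives $\sum_i\lambda_i\varphi(X_i)\leq\varphi(\sum_i\lambda_iX_i)$ for finitely many positive operators $X_i$ and convex weights $\lambda_i$. First I would establish the inequality for bounded $h^p$ that is a simple function with respect to $\nu$, where it is exactly this finite convex-combination statement. I would then approximate a general Bochner integrable $h^p$ by simple functions $s_k\to h^p$ in $\B$, taking them in $L^1(\nu;\B)$ and, after truncation, with values in $\M$. To pass to the limit I would use two facts: $\varphi$ is operator monotone and norm continuous on bounded sets, and the positive cone of $L^0(\M)$ is closed under convergence in measure. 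The latter in particular covers convergence in the norm of the bimodule $\B$, which embeds continuously into $L^0(\M)$.

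The main obstacle is this limiting step when $f$ takes unbounded values in $L^0(\M)$, since the functional calculus $\varphi(\cdot)$ then acts on unbounded operators. I would handle it by replacing $f$ with the spectral truncations $f_N=f\chi_{[0,N]}(f)$ and letting $N\to\infty$. Since $f_N\leq f$, we have $\int f_N^p\leq F$, and operator monotonicity of $u^{1/p}$ then bounds the right-hand side by $F^{1/p}G^{1/p'}$ uniformly in $N$. On the left, $\int f_Ng\,d\mu$ increases to $\int fg\,d\mu$ in measure, by normality of the order and monotone convergence for Bochner integrals. Closedness of the cone then transfers the inequality to $f$.
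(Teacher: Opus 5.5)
The paper gives no proof of this lemma; it is quoted from Hong--Lai--Ray--Xu. Your proposal therefore has to stand on its own.

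\textbf{What is correct.} The algebraic core works. With $d\nu=G^{-1}g^{p'}d\mu$ on $X_+=\{g>0\}$ and $h=fg^{1-p'}$, you get $\int_X fg\,d\mu=G\int h\,d\nu$. You also get $\int h^p\,d\nu=G^{-1}\int_{X_+}f^p\,d\mu\le G^{-1}F$; this is an inequality rather than the equality you wrote, which is harmless by operator monotonicity of $u^{1/p}$. For simple $h^p$ the Jensen step is exactly L\"owner--Heinz operator concavity of $u^{1/p}$, which, unlike operator convexity of $u^p$, covers every $p\ge1$.

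\textbf{The gap: the passage from simple to general functions.} If $s_k\to h^p$ are simple, the right-hand side $(\int s_k\,d\nu)^{1/p}$ can be controlled. For the left-hand side, however, you need $\int s_k^{1/p}\,d\nu\to\int h\,d\nu$, i.e.\ $s_k^{1/p}\to h$ in $L^1(\nu;\B)$, and neither of your two facts gives this.
\begin{itemize}
\item The approximation $s_k\to h^p$ holds only in the norm of the bimodule in which $h^p$ is Bochner integrable, not uniformly in operator norm, even when $h$ is bounded. So operator-norm continuity of $u^{1/p}$ on bounded sets is beside the point.
\item A general bimodule norm gives no modulus of continuity for $u\mapsto u^{1/p}$; already in $L^q$ one needs a Birman--Koplienko--Solomyak type inequality.
\item Operator monotonicity would only help with approximants from above, $s_k\ge h^p$. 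Simple majorants of an operator-valued function with nearly the same integral are not available in general.
\item $f^p$ is usually not $\B$-valued (for $\B=L^2(\M)$, $f^2$ lives in $L^1(\M)$), so you must fix a second bimodule $\mathcal C$ carrying $f^p$.
\end{itemize}
Your truncation $f_N$ does not address this. The obstruction is that the two sides live in different topologies, not that the values are unbounded. Also, $f_N\le f$ does not imply $f_N^p\le f^p$ for $p>1$; here it holds only because $f_N$ is a function of $f$.

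\textbf{One way to close the gap.} Work with $\varphi_\delta(u)=(u+\delta)^{1/p}-\delta^{1/p}$, $\delta>0$. With $\alpha=1/p<1$,
\[
\varphi_\delta(u)=\frac{\sin\alpha\pi}{\pi}\int_0^\infty\frac{s}{s+\delta}\cdot\frac{u}{u+s+\delta}\,s^{\alpha-1}\,ds .
\]
\begin{itemize}
\item Since $A\mapsto(A+t)^{-1}$ is operator convex, $\varphi_\delta$ is operator concave on $L^0(\M)_+$.
\item The identity $\frac{A}{A+t}-\frac{B}{B+t}=t(B+t)^{-1}(A-B)(A+t)^{-1}$ and the bimodule property give $\|\varphi_\delta(A)-\varphi_\delta(B)\|_{\mathcal C}\le\alpha\delta^{\alpha-1}\|A-B\|_{\mathcal C}$.
\item Take $s_k$ to be the conditional expectations of $h^p$ on suitable finite partitions. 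These are positive, satisfy $\int s_k\,d\nu=\int h^p\,d\nu$, and converge in $L^1(\nu;\mathcal C)$. So $\int\varphi_\delta(s_k)\,d\nu\le\varphi_\delta(\int h^p\,d\nu)$ passes to the limit in $\mathcal C$.
\item Finally $0\le u^{1/p}-\varphi_\delta(u)\le\delta^{1/p}$ gives $\int h\,d\nu\le(\int h^p\,d\nu)^{1/p}+\delta^{1/p}$ for every $\delta>0$; let $\delta\to0$.
\end{itemize}

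\textbf{The case the paper actually uses.} For $p=2$ (in Lemma \ref{lem:Sobolev}) no approximation is needed. Integrating the positive matrix $\left(\begin{smallmatrix} f^2 & fg\\ fg & g^2\end{smallmatrix}\right)$ gives a positive matrix with scalar corner $G$. A Schur complement then yields $(\int fg)^2\le G\int f^2$, and operator monotonicity of the square root finishes.
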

The Fourier transform of $f\in L^1(\A)$ is defined by 
\begin{equation*}
	\hat{f}(\xi)=\F[f](\xi)=\int_{\R^n}f(x)e^{-2\pi i\inn{x}{\xi}}dx, 
\end{equation*}
For $f\in L^2(\A)$, we have the operator-valued Plancherel formula 
\begin{equation*}
	\Norm{f}_{L^2(\A)}=\|\hat{f}\|_{L^2(\A)}.
\end{equation*}
For more details, we refer to the reader the survey article \cite{PisierXu2003}.

\subsection{Vector-valued noncommutative $L^p$ spaces}
\par In this subsection, we introduce two vector-valued noncommutative $L^p$ spaces, $L^p(\M;\ell^\infty)$ and $L^p(\M;\ell^1)$.

\begin{definition}
	\label{def:NC maximal}
	Let $1\leq p\leq\infty$. For any index set $I$, define the space $L^p(\M;\ell^{\infty}(I))$ to be the space of all sequences $x=\{x_i\}_{i\in I}$, which admit a factorization of the following form: there exist $a,b\in L^{2p}(\M)$ and a sequence $y_i \in L^{\infty}(\M)$ such that $x_i=ay_ib$ for each $i\in I$. The norm of $x\in L^p(\M;\ell^{\infty}(I))$ is given by
	\[ \Norm{\{x_i\}_{i\in I}}_{L^p(\M;\ell^{\infty}(I))}=\inf\Big\{\Norm{a}_{L^{2p}(\M)}\sup_{j\in I}\Norm{y_j}_{L^{\infty}(\M)}\Norm{b}_{L^{2p}(\M)}\Big\}, \]
	where the infimum is taken over all factorizations of $x$ as above. For a sequence $\{x_i\}_{i\in I}\subset L^{1,\infty}(\M)$, we also define the following quasi-norm
	\[ \Norm{\{x_i\}_{i\in I}}_{\Lambda^{1,\infty}(\M;\ell^{\infty}(I))}=\sup_{\lambda>0}\lambda\inf_{e\in\PM}\Big\{\tau(e^{\bot}):\Norm{ex_ie}_{\infty}\leq\lambda \text{ for all } i\in I \Big\}. \]
\end{definition}

From the definition, it is easy to see, for $J\subset I$ be an index subset of $I$, we have 
\begin{equation}
	\label{eq:J subset I maximal norm}
	\Norm{\{x_i\}_{i\in J}}_{L^p(\M;\ell^\infty(J))} \leq \Norm{\{x_i\}_{i\in I}}_{L^p(\M;\ell^\infty(I))}
\end{equation}
and 
\begin{equation*}
	\Norm{\{x_i\}_{i\in J}}_{\Lambda^{1,\infty}(\M;\ell^\infty(J))} \leq \Norm{\{x_i\}_{i\in I}}_{\Lambda^{1,\infty}(\M;\ell^\infty(I))}.
\end{equation*}

\par If $x=\{x_i\}_{i\in I}$ is a sequence of self-adjoint elements, then $x\in L^p(\M;\ell^{\infty}(I))$ if and only if there exists a positive element $z\in L^p(\M)$ such that $-z\leq x_i\leq z$ for all $i\in I$, and
\[ \Norm{\{x_i\}_{i\in I}}_{L^p(\M;\ell^{\infty}(I))}=\inf\left\{\Norm{z}_{L^p(\M)}: -z\leq x_i\leq z \text{ for all } i\in I\right\}, \]
\[ \Norm{\{x_i\}_{i\in I}}_{\Lambda^{1,\infty}(\M;\ell^{\infty}(I))}=\sup_{\lambda>0}\lambda\inf_{e\in\PM}\left\{\tau(e^{\bot}):-\lambda\leq ex_ie\leq \lambda \text{ for all } i\in I\right\}. \]
In the following, for $\{x_i\}_{i\in I}\subset \M$, we also use the following notations
\begin{equation}
	\label{eq:NC maximal}
	\Big\|{\sup_{i\in I}x_i}\Big\|_{L^p(\M)}:=\Norm{\{x_i\}_{i\in I}}_{L^p(\M;\ell^{\infty}(I))}.
\end{equation}
and
\begin{equation*}
	\Big\|{\sup_{i\in I}x_i}\Big\|_{L^{1,\infty}(\M)}:=\Norm{\{x_i\}_{i\in I}}_{\Lambda^{1,\infty}(\M;\ell^{\infty}(I))}.
\end{equation*}
If the index set $I$ is countable, we simply write $L^p(\M;\ell^\infty)$ and $\Lambda^{1,\infty}(\M;\ell^\infty)$.
\begin{remark}
	In the noncommutative settting, one cannot define the maximal functions by taking pointwise supremum as in the classical cases, since the cone of positive operators are not totally ordered. Definition \ref{def:NC maximal} gives the noncommutative analogue of $L^p$-norm (and $L^{1,\infty}$-quasinorm) of classical maximal functions. That is the reason why we adopt the notation in (\ref{eq:NC maximal}).
\end{remark}

\begin{definition}
	Let $1\leq p\leq\infty$. Define the space $L^p(\M;\ell^1)$ as the space of all sequence $x=\{x_i\}$ in $L^p(\M)$ which can be decomposed as
	\begin{equation*}
		x_i=\sum_{j\geq1}u_{ji}^*v_{ji},\quad \text{ for all } i,
	\end{equation*}
	for two families $\{u_{ji}\}$ and $\{v_{ji}\}$ in $L^{2p}(\M)$ such that
	\begin{equation*}
		\sum_{j,i\geq1}u_{ji}^*u_{ji}\in L^p(\M) \quad \text{ and }\quad \sum_{j,i\geq1}v_{ji}^*v_{ji}\in L^p(\M).
	\end{equation*}
	Here all sums are required to be convergent in $L^p(\M)$ (relative to the $w^*$-topology in the case of $p=\infty$). We equip the following norm for $x\in L^p(\M;\ell^1)$
	\begin{equation*}
		\Norm{x}_{L^p(\M;\ell^1)}=\inf\bigg\{\Big\|\sum_{j,i\geq1}u_{ji}^*u_{ji}\Big\|_{L^p(\M)}^{1/2}\Big\|\sum_{j,i\geq1}v_{ji}^*v_{ji}\Big\|_{L^p(\M)}^{1/2}\bigg\},
	\end{equation*}
	where the infimum runs over all decomposition of $x$ above.
\end{definition}
\par When the sequence $x=\{x_i\}$ is positive (i.e. $x_i\geq0$ for all $i$), then it is easy to check that $\{x_i\}\in L^p(\M;\ell^1)$ iff $\sum_ix_i\in L^p(\M)$, and
\begin{equation*}
	\Norm{\{x_i\}}_{L^p(\M;\ell^1)}=\Big\|\sum_ix_i\Big\|_{L^p(\M)}.
\end{equation*}
\par We have the following duality proposition.
\begin{proposition}[{\cite[Theorem 4.1.4]{PisierXu2003}}]
	\label{prop:duality between L^p l^1 and L^p l^infty}
	The space $L^p(\M;\ell^\infty)$ and $L^p(\M;\ell^1)$, w.r.t. their norms above, are Banach spaces. For $1\leq p<\infty$ and $p'$ be the dual index of $p$, we have
	\begin{equation*}
		L^p(\M;\ell^1)^*\simeq L^{p'}(\M;\ell^{\infty})
	\end{equation*}
	isometrically, via the following duality bracket
	\begin{equation*}
		\inn{x}{y}=\sum_i\tau(x_iy_i)
	\end{equation*}
	for $x\in L^p(\M;\ell^1)$ and $y\in L^{p'}(\M;\ell^\infty)$.
\end{proposition}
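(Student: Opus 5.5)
The plan is to prove the two inequalities between the norms separately. First, every $y\in L^{p'}(\M;\ell^\infty)$ defines a bounded functional on $L^p(\M;\ell^1)$ of norm at most $\Norm{y}$. Second, every bounded functional arises in this way with $\Norm{y}\leq\Norm{\phi}$. I would first work with a finite index set $I$ and elements of $\mathcal S_\M$. For general $I$, the bounds are uniform in finite subsets $J\subset I$, so (\ref{eq:J subset I maximal norm}) together with density of finitely supported sequences in $L^p(\M;\ell^1)$ ($p<\infty$) gives the result.

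\emph{Boundedness of the bracket.} Take a decomposition $x_i=\sum_j u_{ji}^*v_{ji}$ and a factorization $y_i=az_ib$ with $a,b\in L^{2p'}(\M)$ and $\sup_i\Norm{z_i}_\infty\leq1$. By traciality,
\[
\tau(x_iy_i)=\sum_j\tau\big((u_{ji}b^*)^*(v_{ji}a)z_i\big).
\]
Cauchy--Schwarz in $L^2(\M)$, applied to the double sum over $(j,i)$, gives
\[
\Big|\sum_i\tau(x_iy_i)\Big|\leq\Big(\sum_{j,i}\Norm{u_{ji}b^*}_2^2\Big)^{1/2}\Big(\sum_{j,i}\Norm{v_{ji}a}_2^2\Big)^{1/2}.
\]
Here $\Norm{z_i}_\infty\leq1$ has been used to drop $z_i$. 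Next,
\[
\sum_{j,i}\Norm{u_{ji}b^*}_2^2=\tau\Big(b\Big(\sum_{j,i}u_{ji}^*u_{ji}\Big)b^*\Big)\leq\Big\|\sum_{j,i}u_{ji}^*u_{ji}\Big\|_p\Norm{b}_{2p'}^2
\]
by Hölder, and the $v$-term is handled in the same way. Taking infima over decompositions and factorizations yields $|\inn{x}{y}|\leq\Norm{x}_{L^p(\ell^1)}\Norm{y}_{L^{p'}(\ell^\infty)}$. The same computation shows that the infimum defining $\Norm{\cdot}_{L^p(\ell^1)}$ is a norm and that both spaces are complete, by the usual absolutely-summable-series argument.

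\emph{Surjectivity and isometry.} Let $\phi\in L^p(\M;\ell^1)^*$. Since $x\mapsto(0,\dots,x,\dots,0)$ is contractive from $L^p(\M)$, each restriction $\phi_i$ is given by some $y_i\in L^{p'}(\M)$ with $\phi(x)=\sum_i\tau(x_iy_i)$. The real task is to factor $y_i=az_ib$ with $\Norm{a}_{2p'}\Norm{b}_{2p'}\sup_i\Norm{z_i}_\infty\leq\Norm{\phi}$.

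I would proceed as follows. Consider the sesquilinear form
\[
B(u,v)=\sum_{j,i}\phi_i(u_{ji}^*v_{ji})
\]
on families in $L^{2p}(\M)$. Boundedness of $\phi$ gives
\[
|B(u,v)|\leq\Norm{\phi}\,\Big\|\sum u^*u\Big\|_p^{1/2}\Big\|\sum v^*v\Big\|_p^{1/2}.
\]
By the arithmetic--geometric mean inequality this can be replaced by $\tfrac12\Norm{\phi}\big(\Norm{\sum u^*u}_p+\Norm{\sum v^*v}_p\big)$, which is convex in the positive elements $\sum u^*u$ and $\sum v^*v$. A Hahn--Banach / minimax separation over the unit ball of $L^{p'}(\M)_+$ (a Pietsch-type factorization) then produces positive states $\alpha,\beta\in L^{p'}(\M)_+$ of norm $\leq1$ such that
\[
|\phi_i(u^*v)|\leq\Norm{\phi}\,\tau(u\alpha u^*)^{1/2}\tau(v^*\beta v)^{1/2}
\]
for all $i$ and all single elements $u,v$. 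Setting $a=\beta^{1/2}$ and $b=\alpha^{1/2}$, both in $L^{2p'}$, this inequality says that $y_i$ defines a bounded operator between the weighted $L^2$ spaces determined by $\alpha$ and $\beta$. Hence $y_i=az_ib$ with $\Norm{z_i}_\infty\leq\Norm{\phi}$, after handling supports as in a Radon--Nikodym argument.

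I expect this separation/factorization step to be the main obstacle. Two points need care there: keeping the constant exactly $1$, which is why the AM--GM homogeneity trick is used, and the non-unital semifinite case, which I would treat by first restricting to $e\M e$ with $\tau(e)<\infty$ and then passing to the limit using weak$^*$ compactness of the balls.
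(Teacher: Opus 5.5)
\textbf{Overall.} The paper gives no proof of this proposition; it only quotes it. Your route is the standard proof of this duality and is sound in outline: bound the bracket by Cauchy--Schwarz and H\"older, then run a Ky Fan (Pietsch) minimax over $\{\alpha\in L^{p'}(\M)_+:\|\alpha\|_{p'}\le1\}$ to produce densities $\alpha,\beta$. The AM--GM rescaling does keep the constant equal to $\|\phi\|$.

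\textbf{The reduction to finite index sets fails as written.} Inequality (\ref{eq:J subset I maximal norm}) only says that restriction to $J$ is contractive. It does not show that uniform bounds $\|\{y_i\}_{i\in J}\|_{L^{p'}(\M;\ell^\infty(J))}\le\|\phi\|$ over finite $J$ give $\{y_i\}_{i\in I}\in L^{p'}(\M;\ell^\infty(I))$ with norm $\le\|\phi\|$. Taking weak limits of factorizations $a_Jz^J_ib_J$ does not help, because products are not weakly continuous. There are two easy fixes:
\begin{itemize}
\item run the minimax directly on the whole countable index set, since nothing in it uses finiteness; or
\item pass to a weak cluster point of the pairs $(\alpha_J,\beta_J)$, which enter the domination inequality linearly, and factor only once at the end.
\end{itemize}
For the same reason the detour through $e\M e$ is unnecessary and would reintroduce the limit-of-products problem. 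The set above is weakly compact (weak$^*$ compact when $p=1$) for any semifinite trace, and $\alpha\mapsto\tau(\alpha X)$, $X\in L^p(\M)$, is continuous on it; that is all the minimax lemma needs. Note also that $\alpha,\beta$ are densities, not states.

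\textbf{The weight on $v$.} It must be $\tau(\beta v^*v)=\tau(v\beta v^*)$, because it comes from $\|\sum v^*v\|_p$; you wrote $\tau(v^*\beta v)$. Only with the corrected weight does the choice $a=\beta^{1/2}$, $b=\alpha^{1/2}$ fit.

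\textbf{Getting $z_i\in\M$.} A bounded operator between weighted $L^2$ spaces is not yet an element of $\M$. The missing ingredient is the module property. Define $\Theta_i(u\alpha^{1/2},v\beta^{1/2})=\tau(u^*vy_i)$. This is well defined, bounded by $\|\phi\|\,\|u\alpha^{1/2}\|_2\|v\beta^{1/2}\|_2$, and satisfies $\Theta_i(m\xi,\eta)=\Theta_i(\xi,m^*\eta)$ for $m\in\M$. Hence the operator it induces from the $L^2$-closure of $L^{2p}(\M)\beta^{1/2}$ to that of $L^{2p}(\M)\alpha^{1/2}$, extended by $0$, commutes with left multiplication by $\M$. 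By the commutation theorem it is right multiplication by some $z_i\in\M$ with $\|z_i\|\le\|\phi\|$. Since every element of $L^p(\M)$ is a product $u^*v$ with $u,v\in L^{2p}(\M)$, this gives $y_i=\beta^{1/2}z_i\alpha^{1/2}$.

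\textbf{The Banach-space assertions.} The bracket estimate does not prove them.
\begin{itemize}
\item For $L^p(\M;\ell^1)$, subadditivity comes from balancing $u\mapsto tu$, $v\mapsto t^{-1}v$ and then concatenating decompositions.
\item For $L^q(\M;\ell^\infty)$ you need the row/column trick with $(aa^*+a'a'^*)^{1/2}$ and $(b^*b+b'^*b')^{1/2}$.
\item For $1<q\le\infty$ both norm and completeness also follow a posteriori from the isometric identification with a dual space. The case $q=1$ is not covered by the duality and needs the direct argument.
\end{itemize}
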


\par We end this subsection with a simple result on complex interpolation of these vector-valued noncommutative $L^p$-spaces.
\begin{prop}[{\cite[Proposition 2.5]{JungeXu2007}}]
	Let $1\leq p_0<p<p_1\leq \infty$ and $0<\theta<1$ satisfy $\frac 1p=\frac{1-\theta}{p_0}+\frac{\theta}{p_1}$. Then we have isometrically
	\begin{equation*}
		L^p(\M;\ell^1)=(L^{p_0}(\M;\ell^1),L^{p_1}(\M;\ell^1))_{\theta}
	\end{equation*}
	and
	\begin{equation*}
		L^p(\M;\ell^\infty)=(L^{p_0}(\M;\ell^\infty),L^{p_1}(\M;\ell^\infty))_{\theta}.
	\end{equation*}
\end{prop}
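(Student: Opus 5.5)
The plan is to prove both identities together. For each space I would establish the ``easy'' contractive inclusion by an explicit analytic family of factorizations, and then obtain the reverse inclusion by duality, using Proposition \ref{prop:duality between L^p l^1 and L^p l^infty} and Calder\'on's duality theorem for the complex method. Throughout I would first reduce to a finite index set $I$ and to $\M$ with $\tau(1)<\infty$ and then pass to the limit. All norms involved are monotone in $I$ (cf. (\ref{eq:J subset I maximal norm})) and compatible with increasing projections of finite trace. So the general case follows once constants are uniform, which they will be since the argument gives isometries.

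\emph{Step 1: the inclusion $L^p(\M;\ell^\infty)\subset (L^{p_0}(\M;\ell^\infty),L^{p_1}(\M;\ell^\infty))_\theta$ with norm $\le1$.} Take $x_i=ay_ib$ with $\Norm{a}_{2p}=\Norm{b}_{2p}=1$ and $\sup_i\Norm{y_i}_\infty\le1$. Write polar decompositions $a=u|a|$ and $b^*=v|b^*|$, and set $\frac1{p(z)}=\frac{1-z}{p_0}+\frac z{p_1}$. Define
\[
a(z)=u|a|^{p/p(z)},\qquad b(z)=\big(v|b^*|^{p/p(z)}\big)^*,\qquad x_i(z)=a(z)y_ib(z).
\]
On $\Re z=0$ these factors have $L^{2p_0}$-norm $1$, and on $\Re z=1$ they have $L^{2p_1}$-norm $1$. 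The family $z\mapsto\{x_i(z)\}$ is bounded and analytic into the sum space, with $x(\theta)=x$, so $\Norm{x}_\theta\le1$. The same construction applied to the decomposition $x_i=\sum_j u_{ji}^*v_{ji}$, with $|U|^{p/p(z)}$ for $U=\big(\sum u^*u\big)^{1/2}$ realized through the column map in $L^{2p}(\M\ot B(\ell^2))$, gives $L^p(\M;\ell^1)\subset(L^{p_0}(\M;\ell^1),L^{p_1}(\M;\ell^1))_\theta$ contractively.

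\emph{Step 2: reverse inclusions by duality.} For finite $I$ and $1\le p_0<p_1\le\infty$ with $p_0<\infty$, the intersection is dense in each endpoint $\ell^1$-space, because finite-trace elements of $\M$ are dense in $L^{p_0}$. Calder\'on's duality theorem then gives $(X_0,X_1)_\theta^*=(X_0^*,X_1^*)^\theta$, and the upper and lower methods coincide here by reflexivity or finite-dimensional approximation in $I$. Applying Step 1's $\ell^\infty$-inclusion at the dual exponents $p_0'$, $p_1'$, $p'$, and dualizing via Proposition \ref{prop:duality between L^p l^1 and L^p l^infty}, yields $\Norm{x}_{L^p(\ell^1)}\le\Norm{x}_{(L^{p_0}(\ell^1),L^{p_1}(\ell^1))_\theta}$. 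Symmetrically, Step 1's $\ell^1$-inclusion dualizes to the reverse $\ell^\infty$-inclusion. Combining this with Step 1 gives both isometric identities.

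The main obstacle is the endpoint $p_1=\infty$ (and dually $p_0=1$). There $L^\infty(\M;\ell^\infty)$ is not the dual of a space in which the intersection is dense in the needed way, so Calder\'on duality must be applied with care. I would first handle $1<p_0<p_1<\infty$, where everything is reflexive. Then I would reach the endpoints by the reiteration theorem combined with a direct argument for the ``easy'' side: Step 1 already covers $p_1=\infty$. For the hard side I would use the order characterization of the maximal norm for self-adjoint families ($-z\le x_i\le z$) together with finite trace of $\M$, so that $L^\infty\subset L^q$ for large $q$. A limiting argument as $q\to\infty$ should then suffice, with the uniform constant $1$ preserved.
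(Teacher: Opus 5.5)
The paper gives no proof of this proposition; it imports it from Junge--Xu, so your argument has to stand on its own. Step 1 and the case $1<p_0<p_1<\infty$ of Step 2 are sound. One detail in the $\ell^1$ part of Step 1: define $x_i(z)$ directly from $U^{p/p(z)}$ and $V^{p/p(z)}$, not as $\sum_j u_{ji}(z)^*v_{ji}(z)$, because $z\mapsto (U^{p/p(z)})^*$ is anti-analytic; analyticity also needs the usual finite-spectrum approximation.

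\textbf{The gap.} It sits in your last paragraph, the only place where the endpoint cases $p_0=1$ or $p_1=\infty$ are treated.

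First, the obstruction is misidentified. $L^\infty(\M;\ell^\infty)=L^1(\M;\ell^1)^*$, and for finite $I$ the intersection $L^{p_0'}(\M;\ell^1)\cap L^1(\M;\ell^1)$ is dense in $L^1(\M;\ell^1)$. So the $\ell^\infty$ identity with $p_1=\infty$ and $p_0>1$ is already covered by your Calder\'on argument. The space whose dual the duality proposition does not supply is $L^\infty(\M;\ell^1)$. It enters in the $\ell^1$ identity when $p_1=\infty$, and in the $\ell^\infty$ identity when $p_0=1$, through the dual couple $(L^\infty(\M;\ell^1),L^{p_1'}(\M;\ell^1))$.

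Second, your proposed remedy does not reach these cases.
\begin{itemize}
\item Reiteration only produces spaces between two already identified interpolation spaces of a couple. It cannot bootstrap from interior couples to $(L^{p_0},L^\infty)$ or $(L^1,L^{p_1})$.
\item The inclusion $L^\infty\subset L^q$ under $\tau(1)<\infty$ says nothing at $p_0=1$. There finite trace gives $L^q\subset L^1$, which is the wrong direction.
\item The limit $q\to\infty$ would need a lower semicontinuity of the maximal or $\ell^1$ norms in the exponent, which you do not establish.
\item Passing through self-adjoint parts costs a factor $2$, which destroys the isometry.
\end{itemize}

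\textbf{The fix.} Drop Calder\'on's duality theorem and use only the three-lines lemma for the pairing $\langle x,y\rangle=\sum_i\tau(x_iy_i)$.
\begin{itemize}
\item Directly from the factorizations and Cauchy--Schwarz, $|\langle x,y\rangle|\leq\Norm{x}_{L^q(\M;\ell^1)}\Norm{y}_{L^{q'}(\M;\ell^\infty)}$ for every $1\leq q\leq\infty$, including $q=\infty$.
\item Let $f$ be admissible for $x$ in the $\ell^\infty$ couple. Let $g$ be the analytic family your Step 1 builds for $y\in L^{p'}(\M;\ell^1)$ in the couple $(L^{p_0'}(\M;\ell^1),L^{p_1'}(\M;\ell^1))$, with values in the intersection. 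Then $z\mapsto\langle f(z),g(z)\rangle$ is bounded and analytic, and three lines gives $|\langle x,y\rangle|\leq\Norm{x}_\theta\Norm{y}_{L^{p'}(\M;\ell^1)}$.
\item Since $1<p<\infty$, the cited duality $L^{p'}(\M;\ell^1)^*=L^p(\M;\ell^\infty)$ turns this into $\Norm{x}_{L^p(\M;\ell^\infty)}\leq\Norm{x}_\theta$. This holds first for $x$ in the intersection, then by density.
\item Exchanging the roles, and norming $x\in L^p(\M;\ell^1)$ against $L^{p'}(\M;\ell^\infty)$, gives the $\ell^1$ identity.
\end{itemize}
Only the interior exponent $p$ needs an actual duality theorem, so all endpoints are covered and the finite-trace reduction becomes unnecessary. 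Finite $I$ is still useful: there $L^q(\M;\ell^1(I))$ is isomorphic to $\ell^1_{|I|}(L^q(\M))$, so the $\ell^1$ interpolation space lies in $L^p(\M;\ell^1)$ as a set. General $I$ then follows by truncation, which is contractive on every space and converges in $L^q(\M;\ell^1)$ for $q<\infty$.
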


\par For more information about vector-valued $L^p$-space, see \cite{PisierXu2003}.

\subsection{Bilateral almost uniform convergence}
\label{subsec:bau convergence}
\par In classical harmonic analysis, the maximal inequalities always implies the corresponding pointwise convergence results if the class of Schwartz functions is dense in the $L^p$ space. This also holds for noncommutative cases.
To introduce the noncommutative analogue of pointwise convergence, we give the definition of almost uniform convergence, which was first introduced by Lance \cite{Lance1976}.

\begin{definition}
	\label{def:bau def}
	For a sequence $\{x_n\}_n \subset L^0(\M)$, we say that $x_n \to x$ bilaterally almost uniformly (b.a.u.) if and only if for every $\epsilon>0$, there is a projection $e\in\M$ such that
	\begin{enumerate}
		\item $\Norm{e(x_n-x)e}_{\infty}\to 0$ as $n\to\infty$;
		\item $\tau(1-e)\leq\epsilon$.
	\end{enumerate}
\end{definition}
Note that for any measure space with finite measure, this definition \ref{def:bau def} coincide with the classical almost everywhere convergence by Egorov's theorem.
\begin{definition}
	The space $L^p(\M;c_0)$ is defined as the space of all sequences $\{x_n\}\subset L^p(\M)$ such that there are $a,b\in L^{2p}(\M)$ and $\{y_n\}\subset\M$ satisfying
	\begin{equation*}
		x_n=ay_nb \quad \text{ and }\quad \lim_{n\to\infty}\Norm{y_n}_\infty=0.
	\end{equation*}
\end{definition}
We recall some properties of the space $L^p(\M;c_0)$.
\begin{prop}
	\label{prop:Lp(M;c0) prop}
	Suppose $1\leq p<\infty$.
	\begin{enumerate}
		\item $L^p(\M;c_0)$ is the closure in $L^p(\M;\ell^\infty)$ of finite sequences in $L^p(\M)$.
		\item $L^p(\M;c_0)$ is a closed subspace of $L^p(\M;\ell^\infty)$, and for $\{x_n\}\in L^p(\M;c_0)$,
		      \begin{equation*}
				\Big\|{\sup_n x_n}\Big\|_p=\inf\{\Norm{a}_{2p}\sup_{n}\Norm{y_n}_\infty\Norm{b}_{2p}\},
			  \end{equation*}
			  where the infimum runs over all factorizations of $\{x_n\}$ as in the definition.
		\item If $\{x_n\}\in L^p(\M;c_0)$, then $x_n$ converges b.a.u. to $0$.
	\end{enumerate}
\end{prop}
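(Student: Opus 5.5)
The proof treats the three items in the order (3), then the closedness and the norm formula in (2), and deduces (1) last. All arguments use only Definition \ref{def:NC maximal} and elementary operator inequalities.

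\emph{Item (3).} Let $x_n=ay_nb$ with $a,b\in L^{2p}(\M)$ and $\|y_n\|_\infty\to0$. Given $\epsilon>0$, choose $\lambda>0$ large and set
\[
e=\chi_{[0,\lambda]}(|a^*|)\wedge\chi_{[0,\lambda]}(|b|).
\]
Then $\|ea\|_\infty\le\lambda$ and $\|be\|_\infty\le\lambda$. Chebyshev's inequality gives $\tau(1-e)\le\lambda^{-2p}(\|a\|_{2p}^{2p}+\|b\|_{2p}^{2p})$, which is at most $\epsilon$ once $\lambda$ is large enough. Hence $\|ex_ne\|_\infty\le\lambda^2\|y_n\|_\infty\to0$, which is b.a.u. convergence to $0$. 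This is the same projection trick used in the proof of Corollary \ref{cor:variable ergodic bau at zero}.

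\emph{Closedness in (2).} The key step is a gluing lemma. Suppose $x=\sum_{k\ge1}d^{(k)}$ in $L^p(\M;\ell^\infty)$, where each $d^{(k)}=\{d^{(k)}_n\}_n$ is either finitely supported in $n$ or already lies in $L^p(\M;c_0)$. Suppose further that
\[
d^{(k)}_n=a_kz_{k,n}b_k,\qquad \sup_n\|z_{k,n}\|_\infty\le1,\qquad \|a_k\|_{2p}=\|b_k\|_{2p}\le\delta_k,
\]
with $\sum_k\delta_k^2$ finite and $z_{k,n}\to0$ as $n\to\infty$ for each fixed $k$.
\begin{itemize}
\item Fix weights $w_k\uparrow\infty$ with $\sum_k w_k\delta_k^2<\infty$; for instance $\delta_k=2^{-k}$ and $w_k=2^k$ work.
\item Define $A=(\sum_k w_ka_ka_k^*)^{1/2}$ and $B=(\sum_k w_kb_k^*b_k)^{1/2}$. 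Then $\|A\|_{2p}^2\le\sum_k w_k\delta_k^2$, and the same bound holds for $B$.
\item Since $w_ka_ka_k^*\le A^2$, Douglas' factorization lemma gives $a_k=Au_k$ with $\|u_k\|_\infty\le w_k^{-1/2}$. Likewise $b_k=v_kB$ with $\|v_k\|_\infty\le w_k^{-1/2}$.
\item Set $y_n=\sum_k u_kz_{k,n}v_k$. This series converges in norm, because $\|u_kz_{k,n}v_k\|_\infty\le w_k^{-1}$ and $\sum_k w_k^{-1}<\infty$.
\item Then $x_n=Ay_nB$. For each $K$, $\limsup_n\|y_n\|_\infty\le\sum_{k>K}w_k^{-1}$, since the first $K$ terms tend to $0$ as $n\to\infty$. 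Letting $K\to\infty$ gives $\|y_n\|_\infty\to0$.
\end{itemize}
Now let $x^{(k)}\in L^p(\M;c_0)$ with $x^{(k)}\to x$ in $L^p(\M;\ell^\infty)$. Pass to a subsequence with $\|x^{(k)}-x^{(k-1)}\|<4^{-k}$ and put $d^{(k)}=x^{(k)}-x^{(k-1)}$. Choose near-optimal factorizations of each $d^{(k)}$ and balance the two factors so that $\delta_k\lesssim2^{-k}$. The lemma then places $x$ in $L^p(\M;c_0)$, so $L^p(\M;c_0)$ is closed.

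\emph{Norm formula in (2).} The $c_0$-infimum is trivially at least $\|\sup_nx_n\|_p$, since a $c_0$-factorization is in particular an $\ell^\infty$-factorization. For the reverse inequality, fix $\eta>0$. Start the series with $d^{(1)}=x$ itself, using an $\ell^\infty$-factorization of norm at most $(1+\eta)\|\sup_nx_n\|_p$. The difficulty is that this factorization need not have $z_{1,n}\to0$. So instead take $d^{(1)}$ to be a $c_0$-element very close to $x$, and let the correcting terms $d^{(k)}$, $k\ge2$, have total size at most $\eta$. Use weight $w_1=1$ on the main term and large weights on the tail. Then $\|A\|_{2p}\|B\|_{2p}\sup_n\|y_n\|_\infty\le\|\sup_nx_n\|_p+O(\eta)$. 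This weighting is the step I expect to be the main obstacle: the weights must keep the main term's contribution sharp while still making the tail summable.

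\emph{Item (1).} First, every $x\in L^p(\M;c_0)$ is a limit of finite sequences. Writing $x_n=ay_nb$, the truncation error $\{x_n1_{n>N}\}_n=\{a(y_n1_{n>N})b\}_n$ has $\ell^\infty$-norm at most $\|a\|_{2p}\sup_{n>N}\|y_n\|_\infty\|b\|_{2p}$, which tends to $0$ as $N\to\infty$. Conversely, finite sequences lie in $L^p(\M;c_0)$, and this space is closed by (2). Hence $L^p(\M;c_0)$ is exactly the closure of the finite sequences.
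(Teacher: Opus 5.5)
Your proof of (3), the truncation estimate in (1), and the gluing lemma itself are correct. The gap is in how you apply the lemma in the closedness step.

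The lemma needs factorizations $d^{(k)}_n=a_kz_{k,n}b_k$ that are \emph{both} of size $\lesssim 4^{-k}$ \emph{and} of $c_0$-type, i.e.\ $z_{k,n}\to0$ as $n\to\infty$. From $d^{(k)}=x^{(k)}-x^{(k-1)}\in L^p(\M;c_0)$ and $\|d^{(k)}\|_{L^p(\M;\ell^\infty)}<4^{-k}$ you only obtain small $\ell^\infty$-factorizations, whose middle terms need not tend to $0$. The existence of a small factorization with $z_{k,n}\to0$ is exactly the norm formula in (2). You prove that formula only afterwards, and your sketch of it again takes ``$d^{(1)}$ a $c_0$-element very close to $x$'' together with near-optimal $c_0$-factorizations of the correcting terms. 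As written, closedness and the norm formula each presuppose the other.

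The repair uses only tools you already have. Do the truncation argument of (1) first. It uses a single $c_0$-factorization of a given element, not closedness. It shows that every element of $L^p(\M;c_0)$, hence of its closure, is an $L^p(\M;\ell^\infty)$-limit of finitely supported sequences. Now take the approximants $x^{(k)}$ finitely supported. For a finitely supported $d^{(k)}$, any near-optimal $\ell^\infty$-factorization becomes a $c_0$-factorization of no larger norm after setting $z_{k,n}=0$ off the support, so your lemma applies verbatim.

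The same device settles the norm formula, and the weighting is not really an obstacle. Take $d^{(1)}=\{x_n1_{n\le N_1}\}_n$, whose norm is at most $\|\sup_nx_n\|_p$. For $k\ge2$ take blocks $d^{(k)}=\{x_n1_{N_{k-1}<n\le N_k}\}_n$ with $\|d^{(k)}\|\le\eta^24^{-k}$. With $w_1=1$ and $w_k=2^k/\eta$ you get $\|A\|_{2p}^2,\|B\|_{2p}^2\le(1+\eta)\|\sup_nx_n\|_p+O(\eta)$ and $\sup_n\|y_n\|_\infty\le1+\eta/2$.

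Alternatively, you can prove the norm formula first and without gluing:
\begin{itemize}
\item Take a $c_0$-factorization $x_n=ay_nb$ and a near-optimal one $x_n=\alpha z_n\beta$ with $\sup_n\|z_n\|_\infty\le1$.
\item Put $A=(\alpha\alpha^*+\epsilon aa^*)^{1/2}$ and $B=(\beta^*\beta+\epsilon b^*b)^{1/2}$.
\item Factor $\alpha,a$ through $A$ and $\beta,b$ through $B$ by Douglas' lemma.
\item Compress the two resulting middle sequences by the support projections of $A$ and $B$. They agree there, so the common middle term is bounded both by $1$ and by $\epsilon^{-1}\|y_n\|_\infty$.
\item Finally, $\|A\|_{2p}^2\le\|\alpha\|_{2p}^2+\epsilon\|a\|_{2p}^2$, and similarly for $B$.
\end{itemize}
With the norm formula in hand, your original closedness argument with $c_0$-approximants becomes valid.

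After either reordering your argument is complete and self-contained, modulo the standard fact that finite sequences lie in $L^p(\M;c_0)$. The paper itself gives no proof of this proposition and simply refers to Junge--Xu.
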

For proofs and more details, see \cite{JungeXu2007}.
\begin{remark}
	The definition of the space $L^p(\M;c_0)$ (Definition \ref{def:bau def}) can be extended similarly for uncountable directed set $I$, denoted by $L^p(\M;c_0(I))$. Proposition \ref{prop:Lp(M;c0) prop} also holds for space $L^p(\M;c_0(I))$.
\end{remark}

\section{Bessel potential multipliers and kernels}
\label{app:Bessel kernels}
\par We collect the properties used in Sections 3--6. Throughout this appendix the ambient space is $\R^n$, with $n=2$ in the parabola case and $n=N$ in Section 6. We retain the multiplier
\[
 F_s(\xi)=(1+|\xi|^2)^{s/2}, \qquad G_s=\F^{-1}[F_s],
\]
and the convention $\hat f(\xi)=\int_{\R^n}f(x)e^{-2\pi i\inn{x}{\xi}}dx$.

\subsection{Integral representation and elementary properties}
\label{app:Bessel integral}
\par For $\Re(s)<0$, the Gamma-function identity gives
\[
 F_s(\xi)=\frac1{\Gamma(-s/2)}\int_0^\infty e^{-t}e^{-t|\xi|^2}t^{-s/2-1}dt.
\]
With our Fourier convention,
\begin{equation*}
 K_t(x):=\F^{-1}[e^{-t|\xi|^2}](x)
 =\left(\frac\pi t\right)^{n/2}e^{-\pi^2|x|^2/t}, \qquad t>0.
\end{equation*}
In particular, $\int_{\R^n}K_t(x)dx=1$ and $\partial_tK_t=(4\pi^2)^{-1}\Delta K_t$.
Writing $\sigma=\Re(s)<0$, we have
\[
 \frac1{|\Gamma(-s/2)|}\int_0^\infty e^{-t}t^{-\sigma/2-1}\Norm{K_t}_{L^1(\R^n)}dt
 =\frac{\Gamma(-\sigma/2)}{|\Gamma(-s/2)|}<\infty.
\]
Thus the following integral converges in $L^1(\R^n)$, and Fubini's theorem shows that its Fourier transform is $F_s$:
\begin{equation}
 \label{eq:G_s expression}
 G_s(x)=\frac1{\Gamma(-s/2)}\int_0^\infty K_t(x)e^{-t}t^{-s/2-1}dt, \qquad x\neq0.
\end{equation}
Differentiation under the integral on compact sets away from $0$ shows that $G_s$ is smooth there. It is radial for every $\Re(s)<0$. If $s<0$ is real, the integrand is positive and strictly decreasing as a function of $|x|>0$, so $G_s$ is positive and radially decreasing. Moreover, for every $\Re(s)<0$,
\begin{equation*}
 \int_{\R^n}G_s(x)dx
 =\frac1{\Gamma(-s/2)}\int_0^\infty e^{-t}t^{-s/2-1}dt=1.
\end{equation*}
In particular, $\Norm{G_s}_{L^1}=1$ for real $s<0$. Finally, if $s=\sigma+it$ and $\sigma<0$, taking absolute values in (\ref{eq:G_s expression}) gives
\[
 |G_s(x)|\leq\frac1{|\Gamma(-s/2)|}\int_0^\infty K_u(x)e^{-u}u^{-\sigma/2-1}du
 =\frac{\Gamma(-\sigma/2)}{|\Gamma(-s/2)|}G_\sigma(x),
\]
which proves (\ref{eq:G_s domination}).

\subsection{Pointwise behavior and derivatives}
\label{app:Bessel decay}
\par Let $s<0$ be real and put $r=|x|>0$. In terms of the modified Bessel function $K_\nu$, (\ref{eq:G_s expression}) reads
\begin{equation*}
 G_s(x)=\frac{2\pi^{n/2}}{\Gamma(-s/2)}(\pi r)^{-(n+s)/2}K_{(n+s)/2}(2\pi r).
\end{equation*}
The standard asymptotics give
\begin{equation*}
 G_s(x)\sim
 \begin{cases}
  c_{s,n}r^{-n-s}, & -n<s<0,\\
  c_{s,n}\log(1/r), & s=-n,\\
  c_{s,n}, & s<-n,
 \end{cases}
 \qquad r\to0,
\end{equation*}
and
\begin{equation*}
 G_s(x)\sim C_{s,n}r^{-(n+s+1)/2}e^{-2\pi r}, \qquad r\to\infty,
\end{equation*}
where the constants are positive. For the modified Bessel representation and these asymptotics, see \cite[Chapter II, Sections 3--4, especially (4,1)--(4,3)]{AronszajnSmith1961}. The normalization here is obtained from the kernel of order $-s$ in that reference by multiplying its value at $2\pi x$ by $(2\pi)^n$.

\par The first derivative estimate needed below follows directly from the heat representation. Indeed,
\[
 |\nabla K_t(x)|=\frac{2\pi^2|x|}{t}K_t(x).
\]
For $r\geq1$, split $e^{-t-\pi^2r^2/t}$ into two equal factors. The inequality $t+\pi^2r^2/t\geq2\pi r$ bounds one factor by $e^{-\pi r}$, while the other is at most $e^{-t/2-\pi^2/(2t)}$. The latter is integrable against every power of $t$. Hence (\ref{eq:G_s expression}) and its differentiated form give
\[
 |G_s(x)|+|\nabla G_s(x)|\lesssim_s (1+r)e^{-\pi r}, \qquad r\geq1.
\]
Consequently, for every $M>0$ and $|\alpha|\leq1$,
\begin{equation}
 \label{eq:Bessel kernel decay}
 |\partial^\alpha G_s(x)|\leq C_{s,\alpha,M}(1+|x|)^{-M}, \qquad |x|\geq1.
\end{equation}

\par We also record the integral translation estimate used in Section 4 and its higher-dimensional analogue in Section 6. Since $\Norm{\nabla K_t}_{L^1}\lesssim t^{-1/2}$ and $\Norm{K_t}_{L^1}=1$,
\[
 \Norm{K_t(\cdot-y)-K_t}_{L^1}\lesssim\min\{1,|y|t^{-1/2}\}
 \leq |y|^\varepsilon t^{-\varepsilon/2}, \qquad 0<\varepsilon\leq1.
\]
For $0<\varepsilon<\min\{1,-s\}$, integration in (\ref{eq:G_s expression}) therefore yields
\begin{equation}
 \label{eq:Bessel translation estimate}
 \Norm{G_s(\cdot-y)-G_s}_{L^1(\R^n)}
 \lesssim \frac{|y|^\varepsilon}{\Gamma(-s/2)}\int_0^\infty e^{-t}t^{-(s+\varepsilon)/2-1}dt
 \lesssim_{s,\varepsilon}|y|^\varepsilon.
\end{equation}

\subsection{General real parameters}
\par For every real $s$, $F_s$ is a smooth function of at most polynomial growth, so $G_s$ is a tempered distribution. Since $F_s$ is radial and $F_s(-\xi)=\overline{F_s(\xi)}$, its inverse Fourier transform is radial and real. These distributional properties suffice for the self-adjointness argument in Section 3.

\bibliographystyle{plain}
\bibliography{references}

\end{document}